\documentclass[11pt]{article}
\usepackage{mathtools}
\usepackage{amsthm, amssymb}
\usepackage{setspace, bigints}
\usepackage{mathrsfs}
\usepackage{mleftright}
\usepackage{bm}
\usepackage{bbm}
\usepackage[margin=0.75 in]{geometry}
\usepackage{caption}
\usepackage{subcaption}
\usepackage[toc,page]{appendix}     
\usepackage{pdfpages}
\usepackage{epstopdf}
\usepackage{booktabs}
\usepackage{graphicx}
\usepackage{listings}
\usepackage{xcolor}
\usepackage{tabularx, multirow}
\usepackage[ruled,linesnumbered]{algorithm2e}
\usepackage{tabu}
\usepackage{comment}
\usepackage[sort]{natbib}
\usepackage{authblk}

\usepackage{amsthm}

\newtheoremstyle{plainupright}
  {\topsep}   
  {\topsep}   
  {}          
  {}          
  {\bfseries} 
  {.}         
  {.5em}      
  {}          

\theoremstyle{plainupright}
\newtheorem{assumption}{Assumption}

\usepackage{empheq}
\usepackage{accents}
\usepackage{bm}
\newcommand*{\B}[1]{\ifmmode\bm{#1}\else\textbf{#1}\fi}

\newcommand{\trieq}[1]{\stackrel{\triangle_{#1}}{=}}

\newcommand{\be} {\begin{eqnarray*}}
\newcommand{\ee} {\end{eqnarray*}}

\DeclareMathOperator{\W}{W}

\DeclareMathOperator{\tr}{Tr}
\DeclareMathOperator{\lip}{Lip}
\DeclareMathOperator{\cof}{cof}

\DeclareMathOperator{\eot}{EOT}
\DeclareMathOperator{\id}{Id}
\DeclareMathOperator{\KL}{KL}
\DeclareMathOperator{\Law}{Law}

\DeclareMathOperator{\vol}{Vol}
\DeclareMathOperator{\osc}{Osc}
\DeclareMathOperator{\lhs}{LHS}
\DeclareMathOperator{\rhs}{RHS}
\DeclareMathOperator{\schro}{ScB}
\DeclareMathOperator{\poly}{Poly}
\DeclareMathOperator{\FI}{FI}

\newcommand{\argmin}{\mathop{\rm argmin~}}
\newcommand{\argmax}{\mathop{\rm argmax~}}

\newcommand{\dd}{{\rm d}}

\newcommand{\wht}{\widehat}

\newcommand{\wt}{\widetilde}

\newcommand{\rytodo}[1]{\textcolor{red}{#1}}
\newcommand{\cdn}[1]{\textcolor{brown}{#1}}
\newcommand{\td}{{\rm dist}_{\mb T^d}}

\newcommand{\myqed}{\hfill\ensuremath{\diamondsuit}}

\def\m{\mathcal}
\def\ms{\mathscr}
\def\mb{\mathbb}

\def\mx{\mbox}

\newcommand{\matnorm}[1]{{\left\vert\kern-0.25ex\left\vert\kern-0.25ex\left\vert #1 
    \right\vert\kern-0.25ex\right\vert\kern-0.25ex\right\vert}}

\newcommand{\ri}{\textrm{(i)}}
\newcommand{\rii}{\textrm{(ii)}}
\newcommand{\riii}{\textrm{(iii)}}
\newcommand{\Lip}{{\rm{Lip}}}

\newtheorem{theorem}{Theorem}[section]
\newtheorem{lemma}[theorem]{Lemma}
\newtheorem{defn}[theorem]{Definition}
\newtheorem{proposition}[theorem]{Proposition}
\newtheorem{corollary}[theorem]{Corollary}
\newtheorem{rem}[theorem]{Remark}

\numberwithin{equation}{section}

\usepackage{hyperref}
\usepackage[capitalize]{cleveref}

\usepackage{xspace}

\newcommand\yh{\color{purple}}
\title{Quantitative Stability of Many-Marginal Schr\"odinger Bridge}

\author{Rentian Yao}
\author{Young-Heon Kim}
\author{Geoffrey Schiebinger}
\affil{Department of Mathematics, University of British Columbia \authorcr Email: \{rentian2, yhkim, geoff\}@math.ubc.ca}
\date{\vspace{-2em}}
\date{}

\begin{document}

\maketitle



\begin{abstract}
In this paper, we explore quantitative stability of multi-marginal Schr\"odinger bridges with respect to the marginal constraints.
We focus on the case where the number of marginal constraints is large (i.e. ``many-marginals"). When this number increases, 
we show that the Kullback--Leibler (KL) divergence between two multi-marginal Schr\"odinger bridges, as measures on the path space, can be asymptotically bounded by the terminal marginal KL divergence and a time-integrated squared discrepancy {that combines} Wasserstein-2 geodesic velocity fields with a log-density gradient term.
Our stability upper bound is also asymptotically tight: it converges to zero as the number of marginal constraints increases with unperturbed marginal constraints. To the best of our knowledge, this is the first such stability result that addresses the many-marginal regime, giving error estimates that are asymptotically independent of the number of marginals. 

To achieve our result, the key step is to derive an asymptotic  expansion  (of order $k\ge 2$) of Schr\"odinger potentials with respect to a diminishing regularization coefficient. This result can also be applied to deriving asymptotic expansions of entropic Brenier maps in entropic optimal self-transport problems. As byproducts of our analyses, we also establish the asymptotic expansion of entropic optimal transport cost with respect to the diminishing regularization coefficient when two marginal constraints are sufficiently close. We also prove  a stability property of the Schr\"odinger functional.
\end{abstract}

\section{Introduction and Main Results}\label{sec: intro+main}


The notion of Schr\"odinger bridges (SB)~\citep{Schrodinger1932theorie, leonard2014survey, chen2021stochastic} has been introduced as a probabilistic concept for inferring the trajectories of random particles, see e.g., ~\citep{lavenant2024toward,yao2025learning}. Namely, given the distributions of particles at different time points, which are induced by a certain stochastic dynamics, how can we infer the trajectories of those particles in a systematic way? A Schr\"odinger bridge is a probability measure on the path-space that describes such trajectories and is a solution to minimizing the Kullback--Leibler (KL) divergence with respect to the Wiener measure, the probability distribution of the Brownian motion. This problem has seen fascinating recent developments, especially due to its profound applications~\citep{lavenant2024toward, de2021diffusion, cuturi2013sinkhorn} and its connection to entropic optimal transport (EOT)~\citep{mikami2004monge, mikami2002optimal, leonard2014survey}.
A particular interest is in multi-marginal Schr\"odinger bridges where many time points and corresponding marginal distributions are given. This is relevant, for example, when one considers trajectory inference problems in mathematical biology, where one wants to understand developmental procedures while only knowing the data samples at each given time point, without observing how they move~\citep{schiebinger2019optimal, lavenant2024toward, chizat2022trajectory, yao2025learning}. With the possible applications to such statistical inference problems, it is important to understand how stable the Schr\"odinger bridges are under the perturbations of the given marginal data at those given time points.

In this paper, we explore quantitative stability of multi-marginal Schr\"odinger bridges, where marginals come from curves in the space of probability measures (as illustrated in Figure~\ref{fig: illstration}). 
This setting naturally emerges from trajectory inference problems in mathematical biology, where the density evolution curve represents the evolution of gene expression levels in cell populations~\cite{lavenant2024toward,chizat2022trajectory, schiebinger2021reconstructing}.
We focus on the setting where the number of marginal constraints may grow to infinity within a finite time horizon. This limit is essential for consistently recover the ground-truth in the limit of infinitely many time-points~\cite{lavenant2024toward}. 


Here, we briefly summarize our main result, leaving its rigorous statement in Theorem~\ref{thm: multiSB_stable}.




\paragraph{Main Theorem (informal)}
Given two density evolution curves $(\rho_t^\mu)_{t\in[0, 1]}$ and $(\rho_t^\nu)_{t\in[0, 1]}$, as well as two sequences of marginal constraints $\bm\mu^m = (\rho^\mu_{t_0}, \cdots, \rho_{t_m}^\mu)$ and $\bm\nu^m = (\rho_{t_0}^\nu, \cdots, \rho_{t_m}^\nu)$ collected at time points $0 = t_0 < t_1 < \cdots < t_m = 1$, the associated Schr\"odinger bridges  $R^{\bm\mu^m}$ and $R^{\bm\nu^m}$ satisfy 
\begin{align*}
\KL(R^{\bm\nu^m}\,\|\,R^{\bm\mu^m})
\lesssim
\left(
\begin{array}{c}
\textnormal{quantities depending only on the marginals } \bm\nu^m \\
\textnormal{and the geodesic interpolations connecting } \bm\mu^m
\end{array}
\right)
+ O\Big(\frac{1}{m}\Big)
\end{align*}
under appropriate assumptions. Here, $O(1/m)$ represents a term converging to zero as $m\to\infty$, and the inequality $\lesssim$ is independent of $m$ so that the estimate is valid as $m\to \infty$.
Our estimate is asymptotically tight in the sense that the first term on the right-hand side also converges to zero as $m\to\infty$, when the two sequences of marginal constraints are identical.

\paragraph{Comparison to prior work}
Using the Markov property of multi-marginal Schr\"odinger bridges (see Section~\ref{sec: multi-margin SB} for more details), one can reduce computing multi-marginal Schr\"odinger bridges to solving $m$ entropically-regularized optimal transport (EOT) problems. These EOT problems have the regularization coefficient $\varepsilon = \frac{1}{m}$ when the time points are equally spaced in $[0, 1]$. 
Therefore, the stability of multi-marginal Schr\"odinger bridge problems with an increasing number of marginal constraints, is  naturally related to EOT problems with a diminishing regularization coefficient  $\varepsilon \to 0$.
This is the regime in which existing approaches 
fail to analyze the stability of EOT or SB.
Existing works either allow perturbation of only one marginal constraint~\citep{chiarini2024semiconcavity, delalande2022nearly, kitagawa2025stability} or consider only the regime where $\varepsilon$ is fixed or bounded from below, directly applying $L^\infty$ estimates when analyzing EOT problems~\citep{carlier2024displacement, rigollet2025sample, ghosal2022stability, eckstein2022quantitative}, leading to a stability bound of order $O\big(\poly(\varepsilon)e^{1/\varepsilon}\big)$.
They ignore that consecutive marginal constraints become closer as the regularization coefficient decreases to zero, which we utilize in the present paper.


To clearly explain the intuition behind establishing our stability result, let $\phi_j^\mu$ denote the Schr\"odinger potential for solving the EOT problem with marginals $\rho_{t_{j-1}}^\mu, \rho_{t_j}^\mu$ and regularization coefficient $\varepsilon = \frac{1}{m}$; let $\phi_j^\nu$ be defined similarly. In the small $\varepsilon$ regime, the Schr\"odinger potential $\phi_j^\mu$ is approximately the Kantorovich potential between $\rho_{t_{j-1}}^\mu$ and $\rho_{t_j}^\mu$ with a correction term due to entropic regularization. Both terms vanish as $\varepsilon\to 0$, indicating that $\phi_j^\mu\to 0$. A similar argument applies to $\phi_j^\nu$, so the difference $\phi_j^\mu - \phi_j^\nu$ remains as a small order term $o_m(1)$ even when the marginal constraints change significantly. Based on this intuition, our approach performs a higher-order analysis of the Schr\"odinger potentials, which 
takes care of the effects of both marginal constraints and the regularization coefficient.
Figure~\ref{fig: illstration} below also illustrates the key idea of our analysis.

\begin{figure}[h]
\centering
\includegraphics[width=0.5\linewidth]{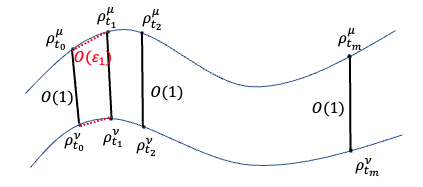}
\caption{\textbf{Why existing stability analyses for multi-marginal Schr\"odinger bridges fail as the EOT regularization coefficient $\varepsilon  = m^{-1}\to 0^+$}: 
Existing results bound the difference of Schr\"odinger potentials along two curves $\rho_t^\mu$ and $\rho_t^\nu$ using the distances between their marginal constraints $\rho_{t_j}^\mu$ and $\rho_{t_j}^\nu$, which are typically of order $O(1)$  with a poor scaling $O(\poly(\varepsilon)e^{1/\varepsilon})$ with respect to $\varepsilon$. As $\varepsilon = 1/m \to 0^+$, the total discrepancy between two multi-marginal Schr\"odinger bridges diverges. 
In contrast, we observe that because adjacent marginals $\rho_{t_{j-1}}^\mu$ and $\rho_{t_j}^\mu$ are close, the corresponding Schr\"odinger potential is of order at most $O(\varepsilon_j)$. Exploiting this observation and summing these over $j$ can lead to a stability bound of order $O(1)$. (See Theorem~\ref{thm: multiSB_stable}).}
\label{fig: illstration}
\end{figure}



\paragraph{Organization of the paper}
The paper is structured as follows: The remainder of the Section~\ref{sec: intro+main} introduces the background on Schr\"odinger bridges and entropic optimal transport, as well as the main results of this paper. Section~\ref{sec: EOT expansion} derives the asymptotic expansion of entropic optimal transport cost with diminishing regularization coefficient using Fourier series. In Section~\ref{sec: stability}, we establish a stability result for the dual functional of entropic optimal transport. We prove the main results of this paper in Section~\ref{sec: main pf}, while deferring the proofs of technical lemmas to the appendices.

\subsection{Preliminaries}\label{sec: prelim}
Let us assume that the ambient space is the flat torus $\mb T^d = [0, 2\pi)^d = \mb R^d/2\pi\mb Z^d$, equipped with the metric
\begin{align*}
\td(x, y) \coloneqq \min_{k\in\mb Z^d} \|x - y - 2\pi k\|,
\end{align*}
where $\|\cdot\|$ represents the usual $\ell^2$-norm on the Euclidean space $\mb R^d$. Let $\ms P(\mb T^d)$ denote the set of all probability distributions over $\mb T^d$, and by $\ms P_{ac}(\mb T^d)$ the subset of $\ms P(\mb T^d)$ consisting of distributions that are absolutely continuous with respect to the Lebesgue measure on $\mb T^d$. Let $\m K_\varepsilon$ be the Gaussian kernel on $\mb T^d$, defined by
\begin{align}\label{eqn: Gaussian kernel}
\m K_\varepsilon(x) \coloneqq \frac{1}{\sqrt{(2\pi\varepsilon)^d}}\sum_{k\in\mb Z^d}e^{-\frac{\|x-2\pi k\|^2}{2\varepsilon}},\quad x\in\mb T^d.
\end{align}
Then, it is known that $\m K_\varepsilon$ solves the heat equation $\partial_t\m K_t(x) = \frac{1}{2}\Delta\m K_t(x)$ for $t>0$, where $\Delta$ is the Laplace--Beltrami operator on the flat torus.
Set the associated cost function $$c_\varepsilon(x, y) \coloneqq -\varepsilon\log\m K_\varepsilon(x-y).$$
Let $\Omega = \m C([0, 1]; \mb T^d)$ denote the path space of all continuous functions mapping from the time interval $[0, 1]$ to the metric space $\mb T^d$.

\subsubsection{Optimal transport}
We briefly review some basic concepts of optimal transport theory here, while for more details, we refer the reader to standard references, including \cite[Section 1.3.2]{santambrogio2015optimal} especially for optimal transport on the flat torus $\mb T^d$ with quadratic cost.

Given two probability distributions $\mu, \nu\in\ms P(\mb T^d)$, the squared Wasserstein-2 ($W_2$)  distance between $\mu$ and $\nu$ is defined by
\begin{align}\label{eqn: w2-def}
\W_2^2(\mu, \nu) \coloneqq \min_{\pi\in\Pi(\mu, \nu)} \int\!\!\!\int_{\mb T^d\times\mb T^d} \frac{1}{2}\td(x, y)^2\,\dd\pi(x, y),
\end{align}
where $\Pi(\mu, \nu)$ denotes the set of all joint distributions over $\mb T^d \times \mb T^d$ with marginals $\mu$ and $\nu$.  The optimal solution $\pi_0\in\Pi(\mu, \nu)$ to the above minimization problem, also known as {\em the optimal transport plan}, exists. Moreover, when $\mu\in\ms P_{ac}(\mb T^d)$ admits a density, it is uniquely determined by the dual problem of~\eqref{eqn: w2-def}. Specifically, the dual formulation is
\begin{align}
\begin{aligned}
\W_2^2(\mu, \nu) = \max_{\phi\in L^1(\mu), \psi\in L^1(\nu)} &\int_{\mb T^d}\phi\,\dd\mu + \int_{\mb T^d}\psi\,\dd\nu,\\
\textrm{s.t.}\quad\quad & \phi(x) + \psi(y) \leq \frac{1}{2}\td(x, y)^2.
\end{aligned}
\end{align}
The optimal dual solution $(\phi_0, \psi_0)$ to the above maximization problem is called {\em the Kantorovich potentials}. They are Lipschitz functions on $\mathbb{T}^d$.
The Kantorovich potentials induce an optimal transport map $T_0$ defined a.e. by $T_0(x) \coloneqq x - \nabla\phi_0(x)$, which pushes $\mu$ to $\nu$, i.e.
\begin{align*}
\nu = (T_0)_\#\mu,
\qquad\mx{where}\quad
[(T_0)_\#\mu](A) \coloneqq \mu(T_0^{-1}(A))
\end{align*}
for any measurable set $A\subset \mb T^d$. The optimal transport plan, which is the primal solution, is then given by
\begin{align*}
\pi_0 = (\id, T_0)_\#\mu,
\end{align*}
i.e. $\pi_0(A\times B) = \mu(A\cap T_0^{-1}(B))$ holds for any measurable sets $A, B\subset \mb T^d$.

Given the optimal transport map $T_0$, we can define the geodesics interpolation between $\mu$ and $\nu$ by
\begin{align*}
    \rho_t = [t T_0 + (1-t)\id]_\#\mu,
    \quad t\in[0, 1],
\end{align*}
which satisfies $\rho_0 = \mu$ and $\rho_1 = \nu$.
Geometrically, the interpolation $(\rho_t)_{0\leq t\leq 1}$ is the shortest path, i.e. the geodesic, connecting $\mu$ and $\nu$ under the $\W_2$ distance. By the Benamous--Brenier's Theorem, there exists a velocity vector field $v_t = \nabla\Phi_t$ for $0\leq t\leq 1$, such that the geodesics interpolation $(\rho_t)_{0\leq t\leq 1}$
satisfies the continuity equation
\begin{align*}
\partial_t\rho_t + \nabla\cdot(\rho_t\nabla\Phi_t) = 0,
\quad t\in[0,1],\, x\in\mb T^d.
\end{align*}

\subsubsection{Entropic optimal transport}
We recall some basic notions of entropic optimal transport (EOT).
Given a regularization coefficient $\varepsilon > 0$, the EOT has the cost between two probability distributions $\mu, \nu\in\ms P(\mb T^d)$ defined as
\begin{align}\label{eqn: general_EOT}
\eot_{\varepsilon}(\mu, \nu) = \min_{\pi\in\Pi(\mu, \nu)} \int\!\!\!\int_{\mb T^d\times\mb T^d} c_\varepsilon(x, y)\,\dd\pi(x, y) + \varepsilon\KL(\pi\,\|\,\mu\otimes\nu),
\end{align}
where $\KL(\pi\,\|\,\mu \otimes \nu)$ is the Kullback--Leibler (KL) divergence between $\pi$ and the product measure $\mu \otimes \nu$.
Analogous to the optimal transport problem, the minimization problem~\eqref{eqn: general_EOT} also admits a dual formulation, written as
\begin{align}\label{eqn: duality}
\begin{aligned}
\eot_{\varepsilon}(\mu, \nu) 
&= \max_{\phi\in L^1(\mu), \psi\in L^1(\nu)} \int_{\mb T^d}\phi\,\dd\mu 
+ \int_{\mb T^d}\psi\,\dd\nu
- \varepsilon\int\!\!\!\int_{\mb T^d\times\mb T^d} e^{\frac{\phi(x) + \psi(y) - c_\varepsilon(x, y)}{\varepsilon}}\,\dd\mu(x)\dd\nu(y) + \varepsilon.
\end{aligned}
\end{align}
Notice that the functional on the righthand side is strictly concave in $\phi$ and $\psi$; thus the dual solution is unique.
The dual solution $(\phi_{\varepsilon}, \psi_{\varepsilon})$ is a pair of functions called {\em Schr\"odinger potentials}. 
The Schr\"odinger potentials are Lipschitz continuous and satisfy the following Schr\"odinger system:
\begin{align}\label{eqn: Schro_sys}
\begin{aligned}
\phi_{\varepsilon}(x) &= \m T_{\nu}^{\varepsilon}[\psi_{\varepsilon}](x) \coloneqq -\varepsilon\log\bigg(\int_{\mb T^d}e^{\frac{\psi_{\varepsilon}(y) - c_\varepsilon(x, y)}{\varepsilon}}\,\dd\nu(y)\bigg),\\
\psi_{\varepsilon}(y) &= \m T_\mu^{\varepsilon}[\phi_{\varepsilon}](y) \coloneqq -\varepsilon\log\bigg(\int_{\mb T^d}e^{\frac{\phi_{\varepsilon}(x) - c_\varepsilon(x, y)}{\varepsilon}}\,\dd\mu(x)\bigg).
\end{aligned}
\end{align}
Notice that the functional $\phi \mapsto \m T^{\varepsilon}_\mu [\phi]$ is concave, as it is nothing but the negative LogSumExp function that is convex due to Jensen's inequality.
Using the notation $\m T^{\varepsilon}_\cdot$, we can define the Schr\"odinger functionals as follows:
\begin{subequations}\label{eqn: dual functional}
\begin{align}
\m I_{\mu,\nu}^\varepsilon[\phi] &\coloneqq \int_{\mb T^d} \phi\,\dd\mu + \int_{\mb T^d} \m T_\mu^\varepsilon[\phi]\,\dd\nu,\\
\overline{\m I}^\varepsilon_{\mu, \nu}[\psi] &\coloneqq \int_{\mb T^d} \m T_\nu^\varepsilon[\psi]\,\dd\mu + \int_{\mb T^d}\psi\,\dd\nu. 
\end{align}
\end{subequations}
These are concave functionals, but not strictly, as one can observe  $\m I_{\mu,\nu}^\varepsilon[\phi] = \m I_{\mu,\nu}^\varepsilon[\phi + C]$ for any constant $C$.
Now, the dual formulation~\eqref{eqn: duality} can also be expressed as
\begin{align}\label{eqn: strong_dual}
\eot_{\varepsilon}(\mu, \nu) 
= \max_{\phi\in L^1(\mu)} \m I_{\mu, \nu}^\varepsilon[\phi]
= \max_{\psi\in L^1(\nu)} \overline{\m I}_{\mu, \nu}^\varepsilon[\psi],
\end{align}
with the Schr\"odinger potentials being the optimal dual solution, which satisfies
\begin{align*}
\phi_\varepsilon = \argmax_{\phi\in L^1(\mu)}\m I_{\mu, \nu}^\varepsilon[\phi]
\quad\mx{and}\quad
\psi_\varepsilon = \argmax_{\psi\in L^1(\nu)}\overline{\m I}_{\mu, \nu}^\varepsilon[\psi]
\end{align*} 
Given the dual solution $(\phi_\varepsilon, \psi_\varepsilon)$, the optimal coupling $\pi_{\varepsilon}$ for solving the primal problem~\eqref{eqn: general_EOT} can be constructed via
\begin{align}\label{eqn: SB_primal_sol}
\frac{\dd \pi_{\varepsilon}}{\dd\mu\otimes\nu}(x, y) = e^{\frac{\phi_{\varepsilon}(x) + \psi_{\varepsilon}(y) - c_\varepsilon(x, y)}{\varepsilon}}.
\end{align}
In practice, the Schr\"odinger potentials can be efficiently computed using the celebrated Sinkhorn algorithm~\citep{sinkhorn1967diagonal, cuturi2013sinkhorn}. As a result, both the optimal coupling $\pi_{\varepsilon}$ and the EOT cost can be computed effectively.


\subsubsection{Schr\"odinger bridges}
The Schr\"odinger bridge (SB) problem seeks a probability distribution over the path space $\Omega$ that is closest to the Weiner measure while satisfying given marginal constraints. Specifically, let $W\in\ms P(\Omega)$ be the Wiener measure, which is the law of the standard Brownian motion on $\mb T^d$ over the time interval $[0, 1]$. 
Then, given a source distribution $\mu\in\ms P(\mb T^d)$ and a target distribution $\nu\in\ms P(\mb T^d)$, the SB problem is formulated as the entropy minimization problem 
\begin{align*}
R^{\mu,\nu} = \argmin_{R\in\ms P(\Omega): R_0 = \mu, R_1 = \nu} \KL(R\,\|\,W),
\end{align*}
where $R_t\in\ms P(\mb T^d)$ denotes the marginal distribution of $R\in\ms P(\Omega)$ at time $t\in[0, 1]$. That is, if a stochastic process  $X_{[0, 1]}$ follows the distribution $R$, then $X_t$ is distributed according to $R_t$.
Here, the optimal solution  $R^{\mu, \nu}$ is uniquely determined under natural assumptions, e.g. $\mu, \nu$ are absolutely continuous probability measures (see, e.g.,~\citep{nutz2021introduction} for a detailed proof).

The SB problem is closely realted to the EOT problem. In fact,~\cite{leonard2014survey} showed that the optimal solution $R^{\mu, \nu}$ satisfies
\begin{align}\label{eqn: SB-2-margin}
R^{\mu, \nu}(\dd\omega) = \int\!\!\!\int_{\mb T^d\times\mb T^d} W(\dd\omega\,|\,\omega_0 = x, \omega_1 = y) \pi_1(\dd x, \dd y)
\end{align}
for every continuous path $\omega\in\m C([0, 1]; \mb T^d)$. Here, the probability measure $\pi_1$ is the solution to the EOT problem~\eqref{eqn: general_EOT} with regularization coefficient $\varepsilon=1$. Equation~\eqref{eqn: SB-2-margin} shows that the Schr\"odinger bridge is nothing but the collection of Brownian bridges where the initial and final points are coupled via the entropic optimal transport plan.

\subsubsection{Multi-marginal Schr\"odinger bridges}\label{sec: multi-margin SB}
This framework naturally extends to the case of multi-marginal constraints, leading to the multi-marginal Schr\"odinger bridge problem. Let $0 = t_0 < t_1 < \cdots < t_m =1$ be $m+1$ time points, and let $\bm\mu^m = (\mu_0, \cdots, \mu_m)\in\ms P(\mb T^d)^{\otimes(m+1)}$ be a sequence of prescribed marginals. Then, the multi-marginal SB problem is given by:
\begin{align}\label{eqn:SB-multi}
R^{\bm \mu^m} \coloneqq \argmin_{R_{t_j} = \mu_j, j=0,1,\cdots, m} \KL(R\,\|\,W).
\end{align}
Here, the optimal solution  $R^{\bm \mu^m}$ is uniquely determined under natural assumptions, e.g. $\mu_j$ are absolutely continuous probability measures (e.g. see~\citep{nutz2021introduction} for a detailed proof).
Analogous to the result~\eqref{eqn: SB-2-margin} in the two-marginal case, the optimal solution $R^{\bm\mu^m}$ admits the representation:
\begin{align}\label{eqn: Markov}
R^{\bm\mu^m}(\dd\omega) = \int_{(\mb T^d)^{m+1}} W(\dd \omega\,|\,\omega_0 = x_0, \cdots, \omega_m = x_m)\,\pi^{\bm\mu^m}(\dd x_0, \cdots. \dd x_m).
\end{align}
Here $\pi^{\bm\mu^m}$ is a probability distribution over $(\mb T^d)^{m+1}$ which defines a Markov transition kernel over $\mb T^d$ by
\begin{align}\label{eqn: MSB_Markov}
\pi^{\bm\mu^m}(\dd x_0, \cdots, \dd x_m)
= \pi_{0,1}(\dd x_0, \dd x_1)\pi_{1,2}(x_1, \dd x_2)\cdots \pi_{m-1, m}(x_{m-1}, \dd x_m),
\end{align}
with $\pi_{j-1, j}$ being the primal solution for solving $\eot_{\varepsilon_j}(\mu_{j-1},\mu_j)$ with the regularization coefficient \begin{align}\label{eqn:e-j}\varepsilon_j = t_j - t_{j-1}.
\end{align}
The Markov structure in \eqref{eqn: Markov}  allows the multi-marginal SB problem to be decomposed into a sequence of two-marginal SB problems.

It is important to note that the regularization coefficient in \eqref{eqn:e-j} dimishes as the number of time points $m \to \infty.$

\subsection{Main results}
The main focus of this paper is the quantitative stability of the optimal solution  $R^{\bm\mu^m}$ of~\eqref{eqn:SB-multi} with respect to 
the marginal constraints $\bm{\mu}^m$, allowing the number of marginal constraints $m+1$ to grow to infinity. 
A main challenge is that in this case the regularization coefficients $\varepsilon_j$ given in~\eqref{eqn:e-j} diminishes.
Our analysis relies on the asymptotic expansion of the Schr\"odinger potentials and the EOT cost in~\eqref{eqn: general_EOT} as the regularization coefficient $\varepsilon \to 0^+$. These related results 
are of independent interest,
as the EOT problem with vanishing regularization has recently attracted growing attention due to its potential applications, e.g., in finite-sample analysis for trajectory inference~\citep{lavenant2024toward, yao2025learning} and score function approximation~\citep{agarwal2024iterated, mordant2024entropic}.

We start with the overarching regularity  assumption on the marginal distributions:
\begin{assumption}\label{assump: density_expansion}
Let $(\rho_t^\mu)_{t\in[0, 1]}$ be a density evolution on $\mb T^d$ and assume that  $\rho_t^\mu(x)\in\m C^\infty([0, 1]\times\mb T^d)$. Assume there exists a function $\Phi_t^\mu(x)\in\m C^\infty([0, 1]\times\mb T^d)$, satisfying the continuity equation 
\begin{align}\label{eqn: CE}
   \partial_t\rho_t^\mu + \nabla\cdot(\rho_t^\mu\nabla\Phi_t^\mu) = 0 .
\end{align} 
Therefore, there exists a sequence of  smooth functions $\rho_t^{(1)}, \rho_t^{(2)}, \cdots \in \m C^\infty([0, 1]\times\mb T^d$), such that $\rho_{t+\varepsilon}^\mu$ admits an expansion
\begin{align*}
\rho_{t+\varepsilon}^\mu(x) = \rho_t^\mu(x) + \varepsilon\rho_t^{(1)}(x) + \varepsilon^2\rho_t^{(2)}(x) + \cdots
\end{align*}
whenever $t, t+\varepsilon\in[0, 1]$.
\myqed
\end{assumption}

We emphasize that the above smoothness assumption is made for simplicity. It can be relaxed to $\rho_t^\mu(x)\in\m C^{k,\alpha}([0, 1]\times\mb T^d)$ for some $\alpha\in(0, 1)$ and $k = O(\max\{K, d\})$, if a $K$-th order Taylor expansion of the Schr\"odinger potentials in Theorem~\ref{thm: asymp_schro_potential} below is desired. 
Our main case corresponds to $K=3$, 
which leads to the following main theorem of this paper.

\begin{theorem}\label{thm: multiSB_stable}
Let $(\rho_t^\mu)_{t\in[0, 1]}$ and $(\rho_t^\nu)_{t\in[0, 1]}$ be two density evolution curves over $\mb T^d$. 
Given time points $0 = t_0 < t_1 < \cdots < t_m = 1$, take $\bm\mu^m = (\rho^\mu_{t_0}, \cdots, \rho_{t_m}^\mu)$ and $\bm\nu^m = (\rho_{t_0}^\nu, \cdots, \rho_{t_m}^\nu)$ as marginal constraints.
Suppose that there exists $L_\rho\in(0, 1)$, such that
\begin{align*}
L_\rho \leq \rho_t^\mu(x), \rho_{t_j}^\nu(x) \leq L_\rho^{-1} \quad \hbox{uniformly for all $t\in[0, 1]$, $j\in\{0, 1, \cdots, m\}$, and $x\in\mb T^d$.}
\end{align*}

Let $R^{\bm\nu^m}$ and $R^{\bm\mu^m}$ be the multi-marginal Schr\"odinger bridges \eqref{eqn:SB-multi} connecting $\bm \nu^m$ and $\bm\mu^m$, respectively.
Under Assumption~\ref{assump: density_expansion} on $(\rho_t^\mu)_{t\in[0, 1]}$, 
we have 
\begin{align}\label{eqn: stability_bound}
\KL(R^{\bm\nu^m}\,\|\,R^{\bm\mu^m})
\leq \KL(\rho_1^\nu\,\|\,\rho_1^\mu) + \frac{1}{2}\sum_{j=1}^m\int_{t_{j-1}}^{t_j}\!\int_{\mb T^d}\Big\|\nabla\bar \Phi_{t_{j-1}}^\mu - \nabla\bar\Phi_t^\nu - \frac{1}{2}\nabla\log\frac{\rho_{t_{j-1}}^\mu}{\bar\rho_t^\nu}\Big\|^2\,\dd \bar\rho_t^\nu\dd t + O\Big(\frac{1}{m}\Big),
\end{align}
where given any $\alpha\in(0, 1)$, the big-O notation omits a constant depending only on $d$, $\alpha$, $L_\rho$,  
\begin{align*}
\begin{cases}
\sum_{j=1}^m \varepsilon_j^{-1} \W_2^2(\rho_{t_{j-1}}^\nu, \rho_{t_j}^\nu),\,\,
\sup_{t\in[0,1]} \big\|\frac{\partial}{\partial t}\Phi_t^\mu\big\|_{\m C^2},\,\,
\sup_{t\in[0,1]}\|\Phi_t^\mu\|_{\m C^{4}},\,\mx{and}\\
\sup_{t\in[0, 1]}\big\{\|\rho_t^{(k)}\|_{\m C^{\max\{2+\lceil\frac{d+1}{2}\rceil, 12\}+k',\alpha}}: 2k+k'=14,\,\,k,k'\in\mb N\big\},\\
\end{cases}
\end{align*}
Also, $(\bar\rho_t^\mu)_{t\in[0, 1]}$ and $(\bar\rho_t^\nu)_{t\in[0, 1]}$ are the piecewise Wasserstein $\W_2$-geodesics connecting $\bm\mu^m$, $\bm\nu^m$, respectively, with velocity vector fields $\nabla\bar\Phi_t^\mu$, $\nabla\bar\Phi_t^\nu$, respectively, satisfying
\[
\partial_t\bar\rho_t^\mu + \nabla\cdot(\bar\rho_t^\mu\nabla\bar\Phi_t^\mu) = 0,  \qquad 
\partial_t\bar\rho_t^\nu + \nabla\cdot(\bar\rho_t^\nu\nabla\bar\Phi_t^\nu) = 0,\quad t\in[t_{j-1}, t_j] \hbox{ for each $j=1, ..., m$}.
\]
\myqed
\end{theorem}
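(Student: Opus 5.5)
The plan is to reduce the path-space KL to a sum of two-marginal quantities, one for each consecutive pair of marginals, and to control each of these by the Schr\"odinger functional.

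\textbf{Step 1: reduce to the discrete Markov chains.} By~\eqref{eqn: Markov}, both bridges are mixtures of the same Wiener bridges $W(\cdot\,|\,\omega_{t_0}=x_0,\dots,\omega_{t_m}=x_m)$. The chain rule for KL therefore gives $\KL(R^{\bm\nu^m}\|R^{\bm\mu^m})=\KL(\pi^{\bm\nu^m}\|\pi^{\bm\mu^m})$. Both $\pi^{\bm\nu^m}$ and $\pi^{\bm\mu^m}$ are Markov by~\eqref{eqn: MSB_Markov}, so their time reversals are Markov as well. I would disintegrate backwards from $x_m$, which gives
\begin{align*}
\KL(\pi^{\bm\nu^m}\|\pi^{\bm\mu^m})=\KL(\rho_1^\nu\|\rho_1^\mu)+\sum_{j=1}^m\int \KL\big(\pi^\nu_{j-1|j}(\cdot|y)\,\big\|\,\pi^\mu_{j-1|j}(\cdot|y)\big)\,\dd\rho^\nu_{t_j}(y).
\end{align*}
This produces exactly the terminal term in~\eqref{eqn: stability_bound}.

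\textbf{Step 2: rewrite each summand via the Schr\"odinger functional.} Using~\eqref{eqn: SB_primal_sol}, the backward kernel of $\pi^\mu_{j-1,j}$ is
\begin{align*}
\pi^\mu_{j-1|j}(\dd x|y)\propto e^{(\phi^\mu_j(x)-c_{\varepsilon_j}(x,y))/\varepsilon_j}\,\rho^\mu_{t_{j-1}}(x)\,\dd x,
\end{align*}
with normalisation $e^{\psi^\mu_j(y)/\varepsilon_j}$. Set $\tilde\phi_j:=\phi_j^\mu+\varepsilon_j\log(\rho^\mu_{t_{j-1}}/\rho^\nu_{t_{j-1}})$. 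Then the $\mu$-kernel is the Gibbs kernel relative to $\rho^\nu_{t_{j-1}}$ with potential $\tilde\phi_j$, and its normaliser is $\m T^{\varepsilon_j}_{\rho^\nu_{t_{j-1}}}[\tilde\phi_j]$. Expanding the log-ratio and integrating it against $\pi^\nu_{j-1,j}$ gives the $j$-th summand as
\begin{align*}
\frac{1}{\varepsilon_j}\Big(\eot_{\varepsilon_j}(\rho^\nu_{t_{j-1}},\rho^\nu_{t_j})-\m I^{\varepsilon_j}_{\rho^\nu_{t_{j-1}},\rho^\nu_{t_j}}[\tilde\phi_j]\Big),
\end{align*}
which is the suboptimality gap of the Schr\"odinger functional, normalised by $\varepsilon_j$.

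\textbf{Step 3: bound the gap by a quadratic Dirichlet-type quantity.} Here I would invoke the stability property of the Schr\"odinger functional from Section~\ref{sec: stability}. It should bound the gap by roughly $\tfrac{\varepsilon_j}{2}\int_0^1\!\int\|\nabla(\tilde\phi_j-\phi^\nu_j)/\varepsilon_j\|^2$ along the entropic or geodesic interpolation between $\rho^\nu_{t_{j-1}}$ and $\rho^\nu_{t_j}$, up to higher-order errors. The two potentials would then be identified through expansions.
\begin{itemize}
\item For the smooth curve, the asymptotic expansion Theorem~\ref{thm: asymp_schro_potential} (with $K=3$) gives $-\nabla\phi^\mu_j/\varepsilon_j=\nabla\Phi^\mu_{t_{j-1}}+\tfrac12\nabla\log\rho^\mu_{t_{j-1}}+O(\varepsilon_j)$, up to the paper's sign and normalisation conventions. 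Only the $\mu$ curve needs to be smooth, which is why only $\mu$-quantities enter the constant.
\item The $-\varepsilon_j\log\rho^\nu_{t_{j-1}}$ part of $\tilde\phi_j$ supplies the $\bar\rho^\nu_t$ piece of the logarithmic term.
\item For the $\nu$ side I would not expand $\phi^\nu_j$. Instead I would compare the gap with the Benamou--Brenier action of the $\W_2$-geodesic $\bar\rho^\nu_t$ on $[t_{j-1},t_j]$. I would use the known small-$\varepsilon$ relation between EOT and $\W_2^2$ plus entropy, together with the EOT expansion of Section~\ref{sec: EOT expansion}.
\end{itemize}
Combining these pieces should give $\tfrac12\int_{t_{j-1}}^{t_j}\!\int\|\nabla\bar\Phi^\mu_{t_{j-1}}-\nabla\bar\Phi^\nu_t-\tfrac12\nabla\log(\rho^\mu_{t_{j-1}}/\bar\rho^\nu_t)\|^2\dd\bar\rho^\nu_t\dd t+O(\varepsilon_j^2)$ per step. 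Errors are to be controlled using $\sum_j\varepsilon_j^{-1}\W_2^2(\rho^\nu_{t_{j-1}},\rho^\nu_{t_j})$, which keeps them summable. Summing then gives the $O(1/m)$ remainder.

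\textbf{Main obstacle.} The hard step is Step 3. It needs a lower bound on $\m I^{\varepsilon}$ around its maximiser that is quadratic, uniform in $\varepsilon\to0$, and valid when the $\nu$-marginals have only density bounds rather than smoothness. Naive strong-concavity (log-Sobolev) arguments for the Gibbs kernel give constants like $e^{1/\varepsilon}$. My first attempt would be to interpolate linearly between the two potentials and differentiate $\m I^\varepsilon$ twice along this path. The second derivative is a conditional variance under the entropic kernel, at spatial scale $\sqrt{\varepsilon}$. I would bound it using the fact that the kernel is close to a Gaussian of variance $\varepsilon$, displaced by the near-identity transport. Via the expansion theorem, the variance then converts to $\varepsilon\,\|\nabla(\cdot)/\varepsilon\|^2_{L^2}$ with only polynomial losses.
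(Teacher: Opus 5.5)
Your Steps 1--2 are correct. They are Lemma~\ref{lem:KL-decomp-EOT} in different bookkeeping. Since $\m T^{\varepsilon_j}_{\rho^\nu_{t_{j-1}}}[\tilde\phi_j]=\psi^\mu_j$, one has $\m I^{\varepsilon_j}_{\rho^\nu_{t_{j-1}},\rho^\nu_{t_j}}[\tilde\phi_j]=\int\phi^\mu_j\,\dd\rho^\nu_{t_{j-1}}+\int\psi^\mu_j\,\dd\rho^\nu_{t_j}-\varepsilon_j\KL(\rho^\nu_{t_{j-1}}\|\rho^\mu_{t_{j-1}})$, and the KL terms telescope to $\KL(\rho^\nu_1\|\rho^\mu_1)$.

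The gap is in Step 3. Theorem~\ref{thm: stability} is a quantitative \emph{concavity} estimate: it bounds the variance $\mb V_\mu(\chi)$ of a perturbation by a multiple of the suboptimality gap. That bounds the gap from \emph{below}, whereas you need an upper bound. Bounding the gap above by a Dirichlet energy of $\tilde\phi_j-\phi^\nu_j$ would need two things, uniformly in $\varepsilon_j$: upper bounds on the conditional variances of the entropic kernels along the segment, and an identification of $\nabla\phi^\nu_j/\varepsilon_j$. Both require regularity of the $\nu$-problem, and the theorem assumes only that $\rho^\nu_{t_j}$ are bounded above and below. Theorem~\ref{thm: asymp_schro_potential} and the EOT expansion of Section~\ref{sec: EOT expansion} apply only to the smooth curve. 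Even there they control the potentials in $L^2$ modulo a constant, not their gradients, so your first bullet is not available as stated either. The ``main obstacle'' you name is one the proof does not need and could not overcome under these hypotheses.

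The missing idea is that you never need to compare $\tilde\phi_j$ with $\phi^\nu_j$; instead, split the gap into its two pieces.
\begin{itemize}
\item Bound $\eot_{\varepsilon_j}(\rho^\nu_{t_{j-1}},\rho^\nu_{t_j})$ from above by a primal competitor: the dynamic formulation evaluated along the $\W_2$-geodesic $\bar\rho^\nu$ (Proposition~\ref{lem:Conforti}). This holds for every $\varepsilon$ and uses no regularity of $\nu$.
\item The other piece, $\m I[\tilde\phi_j]$, contains only $\phi^\mu_j,\psi^\mu_j$, integrated (not differentiated) against $\nu$-marginals. So Theorem~\ref{thm: asymp_schro_potential} with $K=3$ applies in its $L^2$ form: the constant $c_j$ cancels and $\rho^\nu/\rho^\mu$ is bounded.
\item With $\log u_{t,0}=-\Phi^\mu_t-\frac12\log\rho^\mu_t$, $\log v_{t,0}=\Phi^\mu_t-\frac12\log\rho^\mu_t$ and $u^\dagger_{t,0}u_{t,1}+v^\dagger_{t,0}v_{t,1}=-\frac12 v_{t,0}\Delta v^\dagger_{t,0}$, integrate by parts along the continuity equation of $\bar\rho^\nu$. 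This completes the square, with remainders controlled via Kantorovich--Rubinstein by $\sum_j\varepsilon_j^{-1}\W_2^2(\rho^\nu_{t_{j-1}},\rho^\nu_{t_j})$ (Lemma~\ref{lem: simplification}).
\end{itemize}

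This square contains the true velocity $\nabla\Phi^\mu_{t_{j-1}}$, not $\nabla\bar\Phi^\mu_{t_{j-1}}$. Your plan switches between them without comment. The replacement needs the separate estimate \eqref{eqn: change_velocity}, $\|\nabla\Phi^\mu_{t_{j-1}}-\nabla\bar\Phi^\mu_{t_{j-1}}\|_{L^2}=O(\varepsilon_j)$. The paper proves it by expanding $\W_2^2(\rho^\mu_{t_{j-1}},\rho^\mu_{t_j})$ to second order, and $\int\Phi^\mu_{t_{j-1}}\,\dd(\rho^\mu_{t_j}-\rho^\mu_{t_{j-1}})$ to second order in two ways.

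Your third bullet does point toward the Benamou--Brenier comparison. But you use it only inside the unavailable quadratic-stability bound, not as a stand-alone upper bound on the EOT term, which is what makes the argument work.
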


The upper bound in the quantitative stability result \eqref{eqn: stability_bound} consists of three terms. 
The first two terms directly reflect the effect of the difference in the given marginal distributions. The included density ratio term naturally arises from EOT with diminishing regularization coefficient~\citep{conforti2021formula, chizat2020faster}. 
The last term $O(m^{-1})$ represents additional high-order error which vanishes as the number $m$ of marginal constraints increases. It occurs after expanding the Schr\"odinger potentials with respect to the regularization coefficient.


Theorem~\ref{thm: multiSB_stable} is established by first using the connection between the primal solution and the dual solution (i.e. the Schr\"odinger potentials) of EOT as provided in~\eqref{eqn: SB_primal_sol}, and then analyzing the first and second order terms after expanding the Schr\"odinger potentials with respect to the regularization coefficient. This expansion step is one of the key results in this paper, which is summarized in Theorem~\ref{thm: asymp_schro_potential}.

\begin{rem}[Regularity assumptions]
We emphasize that the stability result in Theorem~\ref{thm: multiSB_stable} requires Assumption~\ref{assump: density_expansion} to hold only for $(\rho_t^\mu)_{t\in[0, 1]}$, while imposing very mild assumptions on $(\rho_t^\nu)_{t\in[0, 1]}$. 
We discuss here this regularity assumption from several perspectives.

(1) It is reasonable to only impose the regularity assumption on $(\rho_t^\mu)_{t\in[0, 1]}$. In the potential mathematical biology application of Theorem~\ref{thm: multiSB_stable}, $(\rho_t^\mu)_{t\in[0, 1]}$ represents the ground truth evolution of gene expression levels in cell populations, which is typically assumed to exhibit certain smoothness.  
By contrast, we place fewer assumptions on $(\rho_t^\nu)_{t\in[0, 1]}$, since in practice $\rho_t^\nu$ usually corresponds to an estimator; imposing additional constraints on the estimator will complicate its computation.

(2) We would like to stress again that the smoothness required for $\rho_t^\mu$ in the statement is not sharp. It also indicates that the dependence of the omitted constant in $O(1/m)$ on the regularity of $\rho_t^\mu$ is not optimal.
In fact, both this result and Theorem~\ref{thm: asymp_schro_potential} rely on Theorem~\ref{thm: eot_expansion}, which provides a general Taylor expansion of the EOT cost at arbitrary order. In the proof of Theorem~\ref{thm: eot_expansion}, we repeatedly use $\m C^{k,\alpha}$-norm to bound the $L^2$-norm of the  $k$-th order derivative of the Schr\"odinger potentials for simplicity, rather than using the Sobolev norm $H^k$. A more precise analysis would instead require a lower-order Holder norm together with a Sobolev norm of appropriate order.
\myqed
\end{rem}

\begin{rem}[More general ambient space]
We emphasize that the choice of the flat torus $\mb T^d$ is mainly for simplicity. Our analysis extends to any compact 
smooth Riemannian manifolds without boundary.  
\myqed
\end{rem}

\begin{rem}[Connection with~\citep{yao2025convergence} and comparison with Girsanov's theorem]
If there also exists a vector field $\nabla\Phi_t^\nu$ such that the continuity equation $\partial_t\rho_t^\nu + \nabla\cdot(\rho_t^\nu\nabla\Phi_t^\nu) = 0$ holds, then $\rho_t^\mu$ and $\rho_t^\nu$ are marginal distributions of the solutions to
the following SDEs: 
\begin{align*}
\dd X_t^\mu &= \Big[\nabla\Phi_t^\mu(X_t^\mu) + \frac{1}{2}\nabla\log\rho_t^\mu(X_t^\mu)\Big]\,\dd t + \dd W_t,\\
\dd  X_t^\nu &= \Big[\nabla\Phi_t^\nu(X_t^\nu) + \frac{1}{2}\nabla\log\rho_t^\nu(X_t^\nu)\Big]\,\dd t + \dd W_t.
\end{align*}
The main result in a recent work~\citep{yao2025convergence} implies that 
\begin{align*}
\lim_{m\to\infty} \KL\big(\Law(X^\mu)\,\|\,R^{\bm\mu^m}\big)
= \lim_{m\to\infty} \KL\big(\Law(X^\nu)\,\|\,R^{\bm\nu^m}\big) = 0,
\end{align*}
when the drift coefficients of the above SDEs are smooth enough. 
It means that the many-marginal Schr\"odinger bridges $R^{\bm\mu^m}$ and $R^{\bm\nu^m}$ converge to the law of the above SDEs as $m\to \infty$. Therefore, one may expect that
\begin{align*}
\lim_{m\to\infty} \KL(R^{\bm\nu^m}\,\|\,R^{\bm\mu^m}) 
&= \KL(\rho_0^\nu\,\|\,\rho_0^\mu) + \frac{1}{2}\int_0^1\!\!\int_{\mb T^d}\Big\|\nabla\Phi_t^\mu - \nabla\Phi_t^\nu + \frac{1}{2}\nabla\log\frac{\rho_t^\mu}{\rho_t^\nu}\Big\|^2\,\dd \rho_t^\nu\dd t\\
&= \KL(\rho_1^\nu\,\|\,\rho_1^\mu) + \frac{1}{2} \int_0^1\!\!\int_{\mb T^d}\Big\|\nabla\Phi_t^\mu - \nabla\Phi_t^\nu - \frac{1}{2}\nabla\log\frac{\rho_t^\mu}{\rho_t^\nu}\Big\|^2\,\dd \rho_t^\nu\dd t.
\end{align*}
Here, the first line follows from the classic Girsanov's Theorem, while the second line can be derived by using integration by parts.

From this similarity, one may view Theorem~\ref{thm: multiSB_stable} as a quantitative version of the Girsanov's theorem.
 In our  upper bound~\eqref{eqn: stability_bound} the vector fields $\nabla \Phi^\mu_{t}$ and $\nabla \Phi^\nu_t$, and the marginals $\rho^\mu_t$, $\rho^\nu_t$ are replaced with  the vector fields $\nabla \bar\Phi^\mu_{t_j}$ and $\nabla \bar \Phi^\nu_t$ and the marginals from the piecewise $\W_2$ geodesics.
Importantly, these are  the terms that are determined only by the marginal distributions $\bm\mu^m$, $\bm\nu^m$, and the corresponding estimate holds for any finite number of marginals in the many-marginal Schr\"odinger bridge problem. 
Moreover, we do not impose any regularity assumptions on $\rho_t^\nu$; this feature is important when we apply our stability estimate to those $\bm \nu^m$ obtained as empirical approximations of $\bm \mu^m$.
\myqed
\end{rem}


The following proposition establishes the asymptotic tightness of Theorem~\ref{thm: multiSB_stable}, showing that the upper bound in~\eqref{eqn: stability_bound} is of order of $O(\frac{1}{m})$ when the two sets of marginal constraints in the Schr\"odinger bridge problem coincide.
Its proof is deferred to Appendix~\ref{app: pf vs_conforti}.
\begin{proposition}[Asymptotic tightness]\label{prop: vs_conforti}
Under Assumption~\ref{assump: density_expansion}, in the special case where $\rho_t^\mu = \rho_t^\nu$ for all $t\in[0,1]$, 
the upper bound turns into 
\begin{align*}
\frac{1}{2}\sum_{j=1}^m\int_{t_{j-1}}^{t_j}\!\int_{\mb T^d}\Big\|\nabla\bar \Phi_{t_{j-1}}^\mu - \nabla\bar\Phi_t^\mu - \frac{1}{2}\nabla\log\frac{\rho_{t_{j-1}}^\mu}{\bar\rho_t^\mu}\Big\|^2\,\dd \bar\rho_t^\mu\dd t + O\Big(\frac{1}{m}\Big) = O\Big(\frac{1}{m}\Big).
\end{align*}
\end{proposition}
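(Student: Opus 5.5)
We need to show that, when $\rho^\mu=\rho^\nu$, the integral term is $O(1/m)$. The plan is to compare the piecewise geodesic on $[t_{j-1},t_j]$ with the smooth curve $\rho^\mu_t$ and its smooth velocity $\nabla\Phi^\mu_t$ from Assumption~\ref{assump: density_expansion}. With those replacements the integrand becomes small in two ways: the velocity difference is $O(\varepsilon_j)$, and the log-ratio $\log(\rho_{t_{j-1}}^\mu/\bar\rho_t^\mu)$ is $O(\varepsilon_j)$ in $\m C^1$. Squaring gives $O(\varepsilon_j^2)$ per unit time, and integrating over an interval of length $\varepsilon_j$ gives $O(\varepsilon_j^3)$. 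Summing over $j$ yields $O(\sum_j\varepsilon_j^3)\le O(\max_j\varepsilon_j^2)$, which is certainly $O(1/m)$ under the (implicit) quasi-uniform spacing $\max_j\varepsilon_j\lesssim 1/m$. Even the crude bound $\max_j\varepsilon_j\cdot\sum_j\varepsilon_j\to 0$ would suffice if only the error were tracked to order $O(\varepsilon_j)$ in $\m C^1$.

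\textbf{Step 1: regularity of the geodesic on each interval.} Fix $j$ and write $\varepsilon=\varepsilon_j$. The Kantorovich potential $\varphi_j$ between $\rho^\mu_{t_{j-1}}$ and $\rho^\mu_{t_j}$ solves the Monge--Amp\`ere equation $\rho_{t_j}^\mu(x-\nabla\varphi_j)\det(I-\nabla^2\varphi_j)=\rho^\mu_{t_{j-1}}(x)$. By Assumption~\ref{assump: density_expansion} the two densities differ by $O(\varepsilon)$ in every $\m C^{k,\alpha}$. Caffarelli-type regularity, or the implicit function theorem around $\varphi=0$ for the linearized operator $\nabla\cdot(\rho\nabla\cdot)$, then gives $\varphi_j=\varepsilon\,\Phi^\mu_{t_{j-1}}+O(\varepsilon^2)$ in $\m C^{3}$. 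The point here is that the linearization at $\varphi=0$ is exactly the continuity equation $\rho^{(1)}_{t_{j-1}}=-\nabla\cdot(\rho^\mu_{t_{j-1}}\nabla\Phi^\mu_{t_{j-1}})$. This is the step I expect to be the main obstacle, namely getting uniform-in-$j$ higher-order control of the Monge--Amp\`ere solution. I would argue it by Newton/IFT in $\m C^{k,\alpha}$ with the lower bound $L_\rho$ providing uniform ellipticity. Alternatively, the paper's Theorem~\ref{thm: asymp_schro_potential} expansion of potentials may already contain this as its $\varepsilon$-leading term.

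\textbf{Step 2: Eulerian velocity and density.} The geodesic velocity is $\nabla\bar\Phi^\mu_t=\varepsilon^{-1}\nabla\varphi_j\circ(\id-s\nabla\varphi_j/\varepsilon\cdot\varepsilon)^{-1}$, with $s=(t-t_{j-1})/\varepsilon$. The inverse map is an $O(\varepsilon)$ perturbation of the identity in $\m C^2$. Hence $\nabla\bar\Phi^\mu_t-\nabla\bar\Phi^\mu_{t_{j-1}}=O(\varepsilon)$ uniformly. Likewise $\bar\rho^\mu_t$ is the pushforward under a map $\id-O(\varepsilon)$ of $\rho^\mu_{t_{j-1}}$, so $\bar\rho_t^\mu=\rho^\mu_{t_{j-1}}(1+O(\varepsilon))$ in $\m C^1$ via the change-of-variables formula. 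Since densities are bounded below by $L_\rho$, we get $\nabla\log(\rho^\mu_{t_{j-1}}/\bar\rho^\mu_t)=O(\varepsilon)$ uniformly.

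\textbf{Step 3: conclusion.} Combining the two bounds, the integrand is bounded by $C\varepsilon_j^2$ with $C$ depending only on the listed norms and $L_\rho$. The $j$-th integral is therefore at most $C\varepsilon_j^3$ against the probability measure $\bar\rho_t^\mu\,\dd t$. Summing over $j$ gives $C\sum_j\varepsilon_j^3\le C\max_j\varepsilon_j^2=O(1/m)$. Adding the $O(1/m)$ remainder already present in~\eqref{eqn: stability_bound}, and noting $\KL(\rho_1^\mu\|\rho_1^\mu)=0$, proves the proposition.
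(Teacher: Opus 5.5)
Your proposal is correct and follows essentially the same route as the paper. The paper also linearizes the Monge--Amp\`ere equation via the implicit function theorem, with a divergence-form linearized operator (Lemmas~\ref{lem: jtsmooth-Kanpot} and~\ref{lem: uniform-in-time-OTmap}), to show that $JT^\mu-I_d$ and its first derivatives are $O(\varepsilon_j)$. It then handles the velocity difference by the mean value theorem and the log-ratio gradient by differentiating $\log\rho_{t_{j-1}}^\mu=\log\bar\rho_t^\mu\circ T_t+\log\det JT_t$, which gives an integrand of order $\varepsilon_j^2$ exactly as you find.

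Two small slips. First, with your convention $T=\id-\nabla\varphi_j$ the leading term is $-\varepsilon_j\Phi^\mu_{t_{j-1}}$; this is harmless since only norms enter. Second, your suggested fallback through Theorem~\ref{thm: asymp_schro_potential} would not deliver this step, because that result controls entropic potentials only in $L^2$, not the Kantorovich potential in $\m C^3$.
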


\paragraph{Asymptotics when $\varepsilon\to 0^+$.}
Based on the Markov property~\eqref{eqn: Markov} of the multi-marginal Schr\"odinger bridge problem and the structure~\eqref{eqn: SB_primal_sol} of the primal solution in the two-marginal case, it can be shown (see Lemma~\ref{lem:KL-decomp-EOT})  that 
\begin{align}\label{eqn:KL-potentials}
\KL(R^{\bm\nu^m}\,\|\,R^{\bm\mu^m})
\stackrel{}{=} \sum_{j=1}^m \frac{1}{\varepsilon_j}\bigg[\eot_{\varepsilon_j}(\rho_{t_{j-1}}^\nu, \rho_{t_j}^\nu) - \int_{\mb T^d}\phi_j^\mu\,\dd\rho_{t_{j-1}}^\nu - \int_{\mb T^d}\psi_j^\mu\,\dd\rho_{t_j}^\nu\bigg] + \sum_{j=0}^m  \KL(\rho_{t_j}^\nu\,\|\,\rho_{t_j}^\mu),
\end{align}
where $(\phi_j^\mu, \psi_j^\mu)$ are the Schr\"odinger potentials for $\eot_{\varepsilon_j}(\rho_{t_{j-1}}^\mu, \rho_{t_j}^\mu)$ with the regularization coefficient $\varepsilon_j = t_j - t_{j-1}$.
Here, note that the last term $\sum_{j=0}^m \KL(\rho_{t_j}^\nu\,\|\,\rho_{t_j}^\mu)$ is of order $O(m)$, so certain cancellations must occur to yield a constant-order estimate. In fact, such cancellations indeed arise when evaluating the integral terms, which capture the dependence structure between the two sets of marginal constraints $\bm\nu^m$ and $\bm\mu^m$ in the KL divergence between two multi-marginal Schr\"odinger bridges.
As illustrated in Figure~\ref{fig: illstration}, the key to proving Theorem~\ref{thm: multiSB_stable} is the fact that $\phi_j^\mu$ and $\psi_j^\mu$ are small when $\varepsilon_j$ is small. Due to the $\varepsilon_j^{-1}$ scaling and the summation in the above formula, it suffices to expand $\phi_j^\mu$ and $\psi_j^\mu$ to second order in $\varepsilon_j$, with the first-order terms together contributing an $O(m)$ quantity after summation, partially canceling 
the sum of KL divergences. The expansion is justified by the following theorem, of which the proof is deferred to Section~\ref{sec: main pf expansion}.



\begin{theorem}[Asymptotics of Schr\"odinger potentials]\label{thm: asymp_schro_potential}
Suppose $(\rho_t^\mu)_{t\in[0, 1]}$ is a strictly positive density evolution satisfying Assumption~\ref{assump: density_expansion}. Moreover, assume that there exists a constant $L_\rho\in(0, 1)$ such that 
\begin{align*}
\hbox{$L_\rho \leq \rho_t^\mu(x) \leq L_\rho^{-1}$ for all $x\in\mb T^d$ and $t\in [0,1]$.}
\end{align*}
Then, for each $K \in \mb N^\ast$, there exists a constant
\begin{align*}
    \hbox{$\varepsilon_{\rm thres} > 0$ depending only on $K, d, L_\rho$, and $\sup_{0\leq t\leq 1}\big\{\|\rho_t^{(i)}\|_{\m C^{s+4K-2i+5}}: 0\leq i\leq 2K+2\big\}$}
\end{align*}
 for which the following holds:
If $\max_{j\in[m]}\varepsilon_j \leq \varepsilon_{\rm thres}$, then for every $j\in[m]\coloneqq\{1, 2,\cdots,m\}$, 
there exist continuous functions $\{f_{j,k}: j\in[m], 1\leq k<K\}$ and $\{g_{j,k}: j\in[m], 1\leq k<K\}$ 
on $[0,1] \times \mathbb{T}^d$
satisfying
\begin{align}\label{eqn: Sch-K-expansion}
\min_{c_j\in\mb R}\bigg\{\Big\|\phi_j^\mu + c_j - \sum_{k=1}^{K-1} \varepsilon_j^k f_{j,k}\Big\|_{L^2(\rho_{t_{j-1}}^\mu)} +
\Big\|\psi_j^\mu - c_j - \sum_{k=1}^{K-1} \varepsilon_j^k g_{j,k}\Big\|_{L^2(\rho_{t_j}^\mu)}\bigg\}
\leq O(\varepsilon_j^K),
\end{align}
where  the big-O notation depends only on 
\begin{align*}
\alpha\in(0, 1),\,\, d,\,\,K,\,\,\mbox{and}\,\, \sup\big\{\|\rho_t^{(k)}\|_{\m C^{\max\{2+\lceil\frac{d+1}{2}\rceil, 4K\}+k',\alpha}}: t\in[0, 1],\,\, 2k+k'=4K+2,\,\,k,k'\in\mb N\big\}.
\end{align*}
\myqed
\end{theorem}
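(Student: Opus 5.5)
We prove the expansion~\eqref{eqn: Sch-K-expansion} in three stages: build an approximate solution of the Schr\"odinger system~\eqref{eqn: Schro_sys}, show it almost maximizes the Schr\"odinger functional, and convert that near-optimality into an $L^2$ bound via strong concavity. Fix $j$, write $\varepsilon=\varepsilon_j$, $\mu=\rho^\mu_{t_{j-1}}$, $\nu=\rho^\mu_{t_j}$. By Assumption~\ref{assump: density_expansion}, $\nu=\mu+\varepsilon\rho^{(1)}_{t_{j-1}}+\varepsilon^2\rho^{(2)}_{t_{j-1}}+\cdots$, so the two marginals differ only by a smooth expansion in $\varepsilon$. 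The first-order term is governed by the continuity equation: $\rho^{(1)}=-\nabla\cdot(\mu\nabla\Phi^\mu_{t_{j-1}})$.

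\textbf{Stage 1: formal ansatz.} Substitute $\phi=-c+\sum_{k\ge1}\varepsilon^kf_k$ and $\psi=c+\sum_{k\ge1}\varepsilon^kg_k$ into $\psi=\m T^\varepsilon_\mu[\phi]$ and $\phi=\m T^\varepsilon_\nu[\psi]$. The kernel $e^{-c_\varepsilon/\varepsilon}=\m K_\varepsilon$ is the heat kernel. For smooth $h$, the integral $\int \m K_\varepsilon(x-y)e^{h(y)/\varepsilon}\rho(y)\,\dd y$ with $h=O(\varepsilon)$ admits a Laplace/heat-semigroup expansion $e^{\varepsilon\Delta/2}$ in powers of $\varepsilon$, with remainders controlled by $\m C^{k,\alpha}$ norms. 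Matching orders gives a triangular hierarchy:
\begin{itemize}
\item At order $\varepsilon$, $f_1+g_1=\log$-density corrections.
\item At the next order, an elliptic equation $\nabla\cdot(\mu\nabla f_1)=$ (data built from $\rho^{(1)}$ and $\Delta\mu$). This is solvable on $\mb T^d$ because the right side integrates to zero, and $\mu\ge L_\rho$ gives uniform ellipticity.
\item Each higher order gives an equation of the form $\nabla\cdot(\mu\nabla f_k)=F_k(f_1,\dots,f_{k-1},g_1,\dots,\rho^{(i)})$, with $g_k$ then determined.
\end{itemize}
Schauder estimates give $f_k,g_k\in\m C^{s-2k,\alpha}$. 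Each order consumes roughly two to four derivatives, which accounts for the regularity index $4K+2$ in the statement. We expect $f_1$ to be essentially $-\Phi^\mu_{t_{j-1}}$ plus a $\tfrac12\log\mu$-type term, consistent with Theorem~\ref{thm: multiSB_stable}.

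\textbf{Stage 2: near-optimality of the truncation.} Truncate at order $K'$, with $K'$ slightly larger than $K$ to absorb losses, to get $\tilde\phi=\sum_{k<K'}\varepsilon^kf_k$. The residual $\m T^\varepsilon_\nu[\m T^\varepsilon_\mu[\tilde\phi]]-\tilde\phi$ is then $O(\varepsilon^{K'})$ uniformly. This is where the $\m C^{k,\alpha}$ bounds enter, through Taylor remainders of the heat-kernel expansion. Hence the gradient (first variation) of the Schr\"odinger functional $\m I^\varepsilon_{\mu,\nu}$ at $\tilde\phi$, namely $\mu-(\text{first marginal of the induced coupling})$, is $O(\varepsilon^{K'-1})$ in a suitable norm. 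By concavity, $\m I[\phi^\mu_j]-\m I[\tilde\phi]\lesssim\varepsilon^{2K'-c}$. Equivalently, we can appeal to the expansion of the EOT cost from Theorem~\ref{thm: eot_expansion}, which should match $\m I[\tilde\phi]$ to high order.

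\textbf{Stage 3 (main obstacle): strong concavity at scale $\varepsilon$.} We need a lower bound of the form
\begin{align*}
\m I[\phi^\mu_j]-\m I[\tilde\phi]\ \gtrsim\ \varepsilon^{-1}\min_c\|\phi^\mu_j-\tilde\phi+c\|^2_{H^{-1}\text{ or }L^2}\cdot(\text{small factor}).
\end{align*}
This is the stability property of the Schr\"odinger functional proved in Section~\ref{sec: stability}. The difficulty is that the curvature degenerates as $\varepsilon\to0$. The Hessian of $-\m I$ is $\varepsilon^{-1}$ times the variance operator of the entropic coupling, and that variance controls only a Poincar\'e-type quantity $\sim\varepsilon\|\nabla u\|^2$. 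This is why the expansion is taken to order $K'>K$: the loss of powers of $\varepsilon$ and of derivatives in the stability estimate must be absorbed. We plan to establish the curvature bound by comparing the coupling to a Gaussian kernel at scale $\sqrt\varepsilon$ and using $L_\rho\le\mu\le L_\rho^{-1}$, together with a Poincar\'e inequality on $\mb T^d$ to pass from gradient control to $L^2$ modulo constants. Combining the three stages gives~\eqref{eqn: Sch-K-expansion} for $\phi$. The $\psi$ part follows by applying $\m T^\varepsilon_\mu$, which is $1$-Lipschitz in sup norm, or by the symmetric argument. All constants are uniform in $j$ because all bounds are taken as suprema over $t\in[0,1]$, and $\varepsilon_{\rm thres}$ is chosen so that the remainder terms are dominated.
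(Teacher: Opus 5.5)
Your overall architecture matches the paper's: approximate potentials from the heat-kernel hierarchy (which is exactly the system $\schro$ of Theorem~\ref{thm: existence}), near-optimality for $\mathcal{I}^\varepsilon_{\mu,\nu}$, the stability estimate of Theorem~\ref{thm: stability}, then variance to $L^2$. The quantitative core, however, has gaps.

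\textbf{Stage~3.} Your plan to prove the curvature bound yourself would not work. The second derivative $-\frac{\dd^2}{\dd\lambda^2}\mathcal{I}[\phi+\lambda\chi]=\varepsilon^{-1}\mathbb{E}_{Y\sim\nu}\mathbb{V}_{\pi_Y[\phi+\lambda\chi]}(\chi)$ must be bounded below along the whole segment from $\phi_j^\mu$ to $\widetilde\phi$, not just at the optimum. Along that segment the conditional laws are tilted by $e^{\lambda\chi/\varepsilon}$, and a priori you only know $\osc(\chi)\le 2\varepsilon\|\log U\|_\infty+4\pi^2 d=O(1)$. Under such a tilt there is no comparison with a Gaussian at scale $\sqrt\varepsilon$, so the local-variance-plus-Poincar\'e argument has nothing to start from. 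This is why Theorem~\ref{thm: stability} is proved via Pr\'ekopa--Leindler (which uses only semiconcavity of $c_\varepsilon$ and holds at every $\phi$), Boman chains and the total-variance lemma. It is also why its constant contains $8\osc(\chi)^2/(3\varepsilon)$: with $\osc(\chi)=O(1)$ you lose a full factor, $\mathbb{V}_\mu(\chi)\lesssim\varepsilon^{-1}\big(\mathcal{I}[\phi_j^\mu]-\mathcal{I}[\widetilde\phi]\big)$. Cite that theorem and track this loss. Likewise, for Stage~1 cite Theorem~\ref{thm: existence}: the zero-mean solvability at each higher order is a nontrivial identity, not automatic.

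\textbf{Bookkeeping in Stage~2.} Concavity alone does not give your $\varepsilon^{2K'-c}$. It gives only
$\mathcal{I}[\phi_j^\mu]-\mathcal{I}[\widetilde\phi]\le\int(\phi_j^\mu-\widetilde\phi)(1-e^{r/\varepsilon})\,\dd\mu\le\mathbb{V}_\mu(\phi_j^\mu-\widetilde\phi)^{1/2}\|1-e^{r/\varepsilon}\|_{L^2(\mu)}$,
with $r=\widetilde\phi-\mathcal{T}_\nu^\varepsilon\mathcal{T}_\mu^\varepsilon[\widetilde\phi]$. The mean can be subtracted because $\mu-\pi^\nu[\widetilde\phi]$ has zero mass. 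Combined with Theorem~\ref{thm: stability}, this yields $\mathbb{V}_\mu^{1/2}\lesssim\varepsilon^{-1}\|1-e^{r/\varepsilon}\|_{L^2(\mu)}$. That is a valid route, and more economical than the paper's: it needs only $K+O(1)$ orders of the hierarchy, hence less regularity.

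Your ``equivalently'' via Theorem~\ref{thm: eot_expansion} is not equivalent. The two-sided EOT expansion (primal coupling plus Lemma~\ref{lem: estim_dual_lb}) bounds the gap only by $O(\varepsilon^{K'+1})$. After the $\varepsilon^{-1}$ loss this gives $\mathbb{V}=O(\varepsilon^{K'})$, so you need $K'=2K$. That doubling is what the paper does, and it drives the $4K+2$ index in the statement.

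\textbf{Common constant.} You never produce the shared constant $c_j$. Stability applied to $\phi$ and to $\psi$ separately gives $\phi_j^\mu-\widetilde\phi\approx c'$ in $L^2(\rho_{t_{j-1}}^\mu)$ and $\psi_j^\mu-\widetilde\psi\approx c''$ in $L^2(\rho_{t_j}^\mu)$, with unrelated constants. The sup-norm $1$-Lipschitz property of $\mathcal{T}_\mu^\varepsilon$ cannot transport an $L^2$ bound. You need $\int(\phi_j^\mu-\widetilde\phi)\,\dd\mu+\int(\psi_j^\mu-\widetilde\psi)\,\dd\nu\approx 0$. The paper gets this from strong duality and Theorem~\ref{thm: eot_expansion}. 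In your route it follows from the smallness of $\mathcal{I}[\phi_j^\mu]-\mathcal{I}[\widetilde\phi]$ together with $\widetilde\psi\approx\mathcal{T}_\mu^\varepsilon[\widetilde\phi]$ uniformly, but it must be stated.
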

\begin{rem}
    At a first glance, the estiamte \eqref{eqn: Sch-K-expansion} looks nothing but a simple consequence of the Taylor's theorem, as one can get the expansion of $\phi^\mu_j, \psi^\nu_j$'s in $\varepsilon$. However, then the error term depends on  $\phi^\mu_j, \psi^\mu_j$.  What is nontrivial in our case is that the error term in the right-hand side $O(\varepsilon_j^K)$ depends only on the given data $\rho^\mu_t$, while those $\phi^\mu_j, \psi^\mu_j$'s are (dual) solutions of  $\eot_{\varepsilon_j}(\rho_{t_{j-1}}^\mu, \rho_{t_j}^\mu)$. 
\end{rem}

\begin{rem}[Coefficient functions of the expansion]
The coefficient functions $\{f_{j, k}: j\in[m], 1\leq k < K\}$ and $\{g_{j,k}: j\in[m], 1\leq k<K\}$ can be explicitly constructed after solving a system of functional  equations \eqref{eqn: asymp-Schro} for $\schro(\rho_{t_{j-1}}^\mu, \rho_{t_{j-1}}^{(1)}$, $\rho_{t_{j-1}}^{(2)}, \cdots, \rho_{t_{j-1}}^{(K)})$ defined in Section~\ref{sec: EOT expansion}. This system arises naturally from a Fourier transform argument inspired by~\citep{mordant2024entropic}. We postpone the derivation of this system of equations and the existence of its solution in Section~\ref{sec: EOT expansion}, and the explicit construction of $f_{j,k}, g_{j,k}$ is referred to~\eqref{eqn: explicit_construct} in its proof.
\end{rem}

Theorem~\ref{thm: asymp_schro_potential} can also be applied to entropic optimal self-transport problems, where two marginal constraints in the EOT problem are same. It has been shown that self-transport problem is related to the self-attention mechanism in artificial neural networks~\citep{sander2022sinkformers} and score function estimation~\citep{mordant2024entropic}. Recently,~\cite{agarwal2024iterated, agarwal2025langevin} explored the quantitative difference between the entropic Brenier maps (gradient of Schr\"odinger potentials in self-transport problems) and its first-order expansion with respect to $\varepsilon$, known as the score function (gradient of log-density function), on the Euclidean space. They showed that the $L^2$-norm of the difference depends on the Fisher information of the density function. This aspect is aligned with the appearance of the log density ratio in~\eqref{eqn: stability_bound}.

The following corollary of Theorem~\ref{thm: asymp_schro_potential} generalizes the result in~\cite{agarwal2024iterated, agarwal2025langevin} to expansions at any arbitrary order on the flat torus $\mb T^d$. Its proof is deferred to Appendix~\ref{app: pf Brenier_map}.
\begin{corollary}[Estimating entropic Brenier maps in self-transport problems]\label{coro: Brenier_map}
Suppose $\rho(x)\in\m C^\infty(\mb T^d)$ is a probability density function over $\mb T^d$ bounded away from zero. Let $\phi_\varepsilon^{\rm self}$ be the Schr\"odinger potential for the entropic optimal self-transport problem $\eot_\varepsilon(\rho, \rho)$. Then, for any given $K\in\mb N^\ast$ and $\alpha\in(0, 1)$, there exists functions $f_1, \cdots, f_{K-1}\in\m C^\infty(\mb T^d)$ and a constant $C> 0$ depending only on
\begin{align*}
\alpha,\,\,d,\,\,K,\,\,L_\rho,\,\,\mbox{and}\,\, \sup_{t\in[0, 1]}\big\{\|\rho_t^{(k)}\|_{\m C^{\max\{2+\lceil\frac{d+1}{2}\rceil, 12K\}+k',\alpha}}: , 2k+k'=12K+2,\,\,k,k'\in\mb N\big\}
\end{align*}
such that
\begin{align*}
\Big\|\nabla \phi_\varepsilon^{\rm self} - \sum_{k=1}^{K-1} \varepsilon^k \nabla f_k\Big\|_{L^2(\rho)} \leq C \varepsilon^K.
\end{align*}
\myqed
\end{corollary}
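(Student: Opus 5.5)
Our plan is to deduce the corollary from Theorem~\ref{thm: asymp_schro_potential} applied to the constant curve $\rho_t^\mu\equiv\rho$, with a single step $m=1$ and $\varepsilon_1=\varepsilon$. The constant curve trivially satisfies Assumption~\ref{assump: density_expansion}: take $\Phi_t\equiv 0$ and $\rho_t^{(k)}\equiv 0$ for all $k\ge1$. The lower and upper bounds on $\rho$ give $L_\rho$.

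Since the marginals coincide, the self-transport problem is symmetric. By uniqueness of the dual solution (up to the constant shift), $\phi_\varepsilon^{\rm self}=\psi_\varepsilon^{\rm self}$ after normalization. So it suffices to control the $\phi$ part of~\eqref{eqn: Sch-K-expansion}. The main difficulty is that~\eqref{eqn: Sch-K-expansion} gives only an $L^2(\rho)$ bound on the potential modulo a constant, $\|\phi_\varepsilon^{\rm self}+c-\sum_{k<K'}\varepsilon^k f_k\|_{L^2}\le C\varepsilon^{K'}$, whereas we need a bound on gradients.

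To upgrade to gradients, we apply Theorem~\ref{thm: asymp_schro_potential} at a higher order $K'$, roughly $3K$, which explains the $12K$ regularity index in the constant. We then interpolate. Set $h_\varepsilon=\phi_\varepsilon^{\rm self}+c-\sum_{k<K'}\varepsilon^kf_k$. By Gagliardo--Nirenberg/Sobolev interpolation on $\mb T^d$,
\[
\|\nabla h_\varepsilon\|_{L^2}\le C\|h_\varepsilon\|_{L^2}^{1/2}\|h_\varepsilon\|_{H^2}^{1/2},
\]
and $L^2(\rho)$ and Lebesgue $L^2$ norms are comparable via $L_\rho$. This needs a uniform $H^2$ (or $\m C^2$) bound on $\phi_\varepsilon^{\rm self}$ that is independent of $\varepsilon$, and such a bound is the crux. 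For self-transport with smooth $\rho$ bounded below, one expects $\phi_\varepsilon^{\rm self}=O(\varepsilon)$ in $\m C^k$. We would obtain this from the Schr\"odinger system~\eqref{eqn: Schro_sys}: differentiating under the integral, $\nabla\phi_\varepsilon$ and $\nabla^2\phi_\varepsilon$ are the conditional mean and covariance of $(x-y)$ under the entropic plan, rescaled. Combined with the Gaussian concentration of the heat kernel $\m K_\varepsilon$ at scale $\sqrt\varepsilon$, this gives bounds polynomial in $\varepsilon^{-1}$ at worst. Alternatively, the $\m C^{k,\alpha}$ estimates for the potentials used in the proof of Theorem~\ref{thm: eot_expansion} may give uniform bounds directly.

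Even with only a polynomial bound $\|h_\varepsilon\|_{H^2}\le C\varepsilon^{-p}$, choosing $K'$ with $(K'-p)/2\ge K$ yields
\[
\Big\|\nabla\phi_\varepsilon^{\rm self}-\sum_{k<K'}\varepsilon^k\nabla f_k\Big\|_{L^2(\rho)}\le C\varepsilon^{K}.
\]
The terms with $K\le k<K'$ are $O(\varepsilon^K)$, since the $f_k$ are smooth with norms controlled by the data; this uses the explicit construction~\eqref{eqn: explicit_construct}. Dropping these terms leaves the truncation at $K-1$. The constant $c$ disappears under the gradient, which is why the statement is on gradients.

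The main obstacle is the $\varepsilon$-uniform (or at most polynomially growing) higher-derivative bound on $\phi_\varepsilon^{\rm self}$. We would first try to extract it from the regularity estimates already used in Section~\ref{sec: EOT expansion}, and otherwise from the heat-kernel representation.
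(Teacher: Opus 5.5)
Your route is sound but differs from the paper's. The paper also applies Theorem~\ref{thm: asymp_schro_potential} at a higher order, $S=3K$, which is where the $12K$ index comes from. It then turns the variance bound into a gradient bound without any two-sided Hessian information on $\phi_\varepsilon^{\rm self}$. It adds a multiple of $\td(x_0,\cdot)^2$ so that the potential and the proxy become convex on small balls. It then applies the Gagliardo--Nirenberg-type inequality for differences of convex functions from \citep{kitagawa2025stability}, $\int_Q\|\nabla(\varphi_1-\varphi_2)\|^2\,\dd\rho_Q\lesssim\mb V_{\rho_Q}(\varphi_1-\varphi_2)^{1/3}$, and sums over a Boman-chain covering using bounded overlap. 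The exponent $1/3$ is what forces $S=3K$. Your $L^2$--$H^2$ interpolation loses only a square root plus whatever power of $\varepsilon^{-1}$ the Hessian costs. It is therefore more elementary (no external convexity lemma, no covering) and needs a lower expansion order. The price is a two-sided, possibly crude, bound on $\nabla^2\phi_\varepsilon^{\rm self}$.

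That bound, which you flag as the crux, follows from your first suggestion. Your second suggestion will not work: the H\"older estimates of Section~\ref{sec: EOT expansion} concern the proxies $u_k,v_k$, not the true potential. Unfold the periodic kernel as $\int_{\mb T^d}\m K_\varepsilon(x-y)w(y)\,\dd y=\int_{\mb R^d}G_\varepsilon(x-\tilde y)\tilde w(\tilde y)\,\dd\tilde y$. Here $w=e^{\psi_\varepsilon/\varepsilon}\rho$ is extended periodically and $G_\varepsilon$ is the Euclidean heat kernel. This gives
\[
\nabla^2\phi_\varepsilon^{\rm self}(x)=I_d-\varepsilon^{-1}\mathrm{Cov}_{\pi_x}(\tilde Y),\qquad \pi_x(\dd\tilde y)\propto G_\varepsilon(x-\tilde y)\tilde w(\tilde y)\,\dd\tilde y .
\]
We have $\mathrm{Cov}_{\pi_x}(\tilde Y)\preceq\mb E_{\pi_x}\|x-\tilde Y\|^2\,I_d$. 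For each fixed $y\in\mb T^d$, the Gaussian weights over the lifts $y+2\pi k$ have second moment at most a constant $C_d$ when $\varepsilon\le 1$, whatever $\psi_\varepsilon$ is. Hence $(1-C_d\varepsilon^{-1})I_d\preceq\nabla^2\phi_\varepsilon^{\rm self}\preceq I_d$, so $p=1$.

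On the torus the interpolation is just integration by parts: $\|\nabla h\|_{L^2}^2=-\int(h-c)\Delta h\le\|h-c\|_{L^2}\|\Delta h\|_{L^2}\lesssim\varepsilon^{K'-1}$. So $K'=2K+1$ suffices, and the required regularity index $4K'=8K+4$ does not exceed the paper's $12K$.

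The uniform one-sided bound here is the upper one, $\nabla^2\phi_\varepsilon^{\rm self}\preceq I_d$. The paper convexifies with a semiconvexity constant $\beta$ that it allows to depend on $\varepsilon$. The constant in the convex-function inequality depends on gradient bounds of the convexified functions, so that argument is uniform in $\varepsilon$ only if one works with $-\phi_\varepsilon^{\rm self}$ and this semiconcavity instead. Your route avoids the issue, since a polynomial loss is absorbed by raising $K'$.
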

Here again, the nontrivial part is that the constant $C$ depends only on the given data $\rho^\mu_t$, not on the Schr\"odinger potential $\phi_\varepsilon^{\rm self}$ derived from the data.

The proof of Theorem~\ref{thm: asymp_schro_potential} relies on two-step estimates. First, we show in Theorem~\ref{thm: eot_expansion} that the cost value $\eot_\varepsilon(\rho_t^\mu, \rho_{t+\varepsilon}^\mu)$ admits a Taylor expansion with respect to the regularization coefficient $\varepsilon$.  Next, we prove a quantitative stability result, 
Theorem~\ref{thm: stability}, of the EOT cost as a functional of the Schr\"odinger potential by applying the duality formulation~\eqref{eqn: duality}. 
This stability result implies that, for any function $\phi$ at which the dual functional value defined in~\eqref{eqn: strong_dual} is close to $\eot_\varepsilon(\rho_t^\mu, \rho_{t+\varepsilon}^\mu)$ up to order $\varepsilon^{2K+1}$, 
the function $\phi$ itself is close to the true Schr\"odinger potential for solving $\eot_\varepsilon(\rho_t^\mu, \rho_{t+\varepsilon}^\mu)$ up to order $\varepsilon^K$ in $L^2$ sense.
Therefore, we can first propose an explicit form for the Taylor expansion of $\phi_j^\mu$ with respect to $\varepsilon_j = t_j - t_{j-1}$, then evaluate the dual functional in~\eqref{eqn: strong_dual} at this candidate function, and finally show that the resulting dual functional value and $\eot_{\varepsilon_j}(\rho_{t_{j-1}}^\mu, \rho_{t_j}^\mu)$ share the same low-order Taylor expansion in $\varepsilon_j$ using Theorem~\ref{thm: eot_expansion}.
The detailed proof of Theorem~\ref{thm: asymp_schro_potential} is deferred to Section~\ref{sec: main pf expansion}.



\subsection{Existing literature}
In this section, we review prior results most relevant to our analysis. To prove Theorem~\ref{thm: multiSB_stable}, it suffices to establish the asymptotic behavior of the Schr\"odinger potentials as $\varepsilon\to 0^+$. For this purpose, we first summarize two essential components: the quantitative stability results for the Schr\"odinger functionals $\m I_{\mu, \nu}^\varepsilon$ and $\overline{\m I}_{\mu, \nu}^\varepsilon$ defined in~\eqref{eqn: dual functional}, and the asymptotic expansions of the EOT cost with respect to the regularization coefficient $\varepsilon$. After reviewing these two topics, we then discuss existing studies on the asymptotics of the Schr\"odinger potentials themselves.

\paragraph{Stability of Schr\"odinger bridge problem.}
\cite{carlier2024displacement} studied the displacement smoothness of the multi-marginal Schr\"odinger bridge problem, showing that the change in Schr\"odinger potentials under the $\m C^k$-norm can be bounded by the $\W_2$-perturbation of the marginal constraints, with a constant of order $O(\poly(\varepsilon)e^{\frac{c}{\varepsilon}})$; notice that this constant blows up as $\varepsilon \to 0^+$.
A similar exponential dependence on the regularization coefficient also appears in \cite[Theorem 17]{deligiannidis2024quantitative}, which established the stability of the product of Schr\"odinger potentials in the $L^\infty$-norm.
Recently, a growing number of works have investigated the stability of (entropic) optimal transport with improved dependence on the regularization coefficient $\varepsilon$  under various settings. Most of these focus on the case where only one of the two marginals is perturbed, which is closely related to the linearized OT problem~\citep{wang2013linear}.
For example,~\cite{divol2025tight} established a Lipschitz bound between the $L^2$-norm of the difference in the gradients of the Schr\"odinger potentials---also known as \emph{entropic Brenier maps}---and the $\W_2$-distance between the marginals, with a constant of order $O(\varepsilon^{-1})$. 
Similarly, \cite{chiarini2024semiconcavity} analyzed stability of the optimal coupling in the EOT problem when the sum of cost function and the Schr\"odinger potential  $\phi_\varepsilon + c_\varepsilon(x,\cdot)$ is semiconcave. They showed that the KL divergence between optimal couplings can be controlled by the KL divergence between the perturbed marginals and their squared $\W_2$-distance with a coefficient of order $O(\varepsilon^{-1})$.
Another line of work on the stability of (entropic) optimal transport relies on the Brascamp--Lieb inequality~\citep{delalande2023quantitative} or the Prekopa--Leindler's inequality~\citep{delalande2022nearly}. 
Their approaches have been extended to the manifold setting using a covering argument~\citep{kitagawa2025stability}.
The developed techniques have recently been applied to study the statistical rates of estimating Wasserstein barycenters~\citep{carlier2024quantitative} and to analyze the convergence of the Sinkhorn algorithm~\citep{chizat2025sharper}.

\paragraph{Asymptotics of the EOT cost.} 

In the continuous setting on Euclidean space,~\cite{duong2013wasserstein, erbar2015large} established a first-order asymptotic expansion of the functional $\eot_\varepsilon(\cdot, \nu)$ with quadratic cost in $\varepsilon$ for a given probability distribution $\nu$, in the sense of $\Gamma$-convergence. \cite{pal2024difference} analyzed the expansion of $\eot_\varepsilon(\mu, \nu)$ in $\varepsilon$ for a general cost function given two probability distributions $\mu$ and $\nu$, using an explicit approximation of the primal solution to the EOT problem. Furthermore, by leveraging the dynamical formulation of entropic optimal transport~\cite[][Section 4.5]{chen2021stochastic}, the second-order asymptotic expansion of $\eot_\varepsilon(\mu, \nu)$ in $\varepsilon$ is derived on the Euclidean space by~\cite{chizat2020faster} and on a smooth, connected, and complete Riemannian manifold without boundary by~\cite{conforti2021formula}, when the cost function is induced by the heat kernel.
In our case, we establish an expansion in arbitrary order (Theorem~\ref{thm: eot_expansion}) when the marginal constraints also get closer as the regularization coefficient $\varepsilon$ decreases.
In the semi-descrete setting,~\cite{altschuler2022asymptotics} derived a similar asymptotic expansion with a different second-order term.

\paragraph{Asymptotics of Schr\"odinger potentials.}
Most existing results on this line focus on entropic Brenier maps, that is, the gradients of Schr\"odinger potentials. In the continuous setting,~\cite{pooladian2021entropic} showed that the entropic Brenier map converges to the optimal transport map in $L^2$-norm at a rate of $O(\varepsilon)$. The papers~\citep{agarwal2024iterated, agarwal2025langevin} studied the entropic optimal self-transport problem, where both marginal constraints in the EOT problem are the same. They established the $L^2$-convergence of the entropic Brenier map to its first-order asymptotic expansion, which includes a score function term. \cite{mordant2024entropic} used Fourier transform method to provide an informal explanation of the asymptotic expansion (in higher order) of Schr\"odinger potentials in the entropic self-transport setting. We use this idea to get our higher order expansion in  Theorem~\ref{thm: asymp_schro_potential} which is beyond the self-transport case.

We note that in the semi-discrete setting, the convergence rate of the entropic Brenier map to the optimal transport map deteriorates to $O(\sqrt{\varepsilon})$ in $L^2$ sense~\citep{pooladian2023minimax}. A recent work~\citep{sadhu2025approximation} established a convergence rate of $O(\varepsilon^{1+\alpha} \vee \varepsilon^2 \log^3(1/\varepsilon))$ in the dual Hölder norm $(\m C^\alpha)^*$ for $\alpha \in (0, 1]$.

\subsection{Other notation}
Let $B_{\mb T^d}(x, R) \coloneqq \{y\in\mb T^d: \td(x, y) < R\}$ denote the open ball centered at $x$ with radius $R > 0$ in $\mb T^d$. 
Let $1_Q(x)$ be the indicator function of a set $Q$, taking value $1$ for $x\in Q$ and $0$ otherwise. 
Let $H^p(\mb T^d) = W^{p,2}(\mb T^d)$ be the Sobolev space (of $L^2$-type) over the flat torus $\mb T^d$.
For a distribution $\rho\in\ms P_{ac}(\mb T^d)$, we define its negative self-entropy as $\m H(\rho) = \int_{\mb T^d}\log (\dd\rho/\dd x) \dd\rho$. Let $\m C(\mb T^d)$ denote the set of all continuous functions on $\mb T^d$.



\section{Expansion of Entropic Optimal Transport Cost}\label{sec: EOT expansion}
In this section, we develop the expansion of $\eot_{\varepsilon}(\rho_t^\mu, \rho_{t+\varepsilon}^\mu)$ with respect to the regularization coefficient $\varepsilon>0$ under Assumption~\ref{assump: density_expansion}. Let $(\phi_{t, \varepsilon}, \psi_{t,\varepsilon})$ denote the Schr\"odinger potentials for the entropic optimal transport problem $\eot_\varepsilon(\rho_t, \rho_{t+\varepsilon})$. We also use the notation 
\begin{align}\label{eqn:notation-u-v}
u_{t,\varepsilon} = e^{\frac{\phi_{t,\varepsilon}}{\varepsilon}}
\quad\mx{and}\quad
v_{t,\varepsilon} = e^{\frac{\psi_{t,\varepsilon}}{\varepsilon}}.
\end{align}
The main theorem of this section, that is, Theorem~\ref{thm: eot_expansion} below, is obtained from the following natural idea.
Suppose $u_{t,\varepsilon}$ and $v_{t,\varepsilon}$ and their reciprocals admit Taylor expansion with respect to $\varepsilon$:
\begin{align}\label{eqn:def-u-k}
u_{t,\varepsilon} = \sum_{k=0}^\infty \varepsilon^k u_{t,k},\qquad
\frac{1}{u_{t,\varepsilon}} = \sum_{k=0}^\infty\varepsilon^k u_{t,k}^\dagger,\qquad
v_{t,\varepsilon} = \sum_{k=0}^\infty \varepsilon^k v_{t,k},
\quad\mx{and}\quad
\frac{1}{v_{t,\varepsilon}} = \sum_{k=0}^\infty \varepsilon^k v_{t,k}^\dagger.
\end{align}
Then, by defining the following high-order approximations of $u_{t,\varepsilon}$ and $v_{t,\varepsilon}$,
\begin{align}\label{eqn: proxy_Schro_sys}
U_{t,K}(x):= \sum_{k=0}^K \varepsilon^k u_{t,k}(x)
\quad\,\,\mx{and}\quad\,\,
V_{t,K}(y) := \sum_{k=0}^K \varepsilon^k v_{t,k}(y),
\end{align}
we expect the approximations $\phi_{t,\varepsilon}(x)\approx \varepsilon\log U_{t,K}(x)$ and $\psi_{t,\varepsilon}(y) \approx \varepsilon\log V_{t,K}(y)$ up to high-order terms. Applying the dual formulation~\eqref{eqn: duality} of the EOT problem leads to the heuristic estimate
\[
\eot_\varepsilon(\rho_t, \rho_{t+\varepsilon}) \approx \varepsilon\bigg[\int_{\mb T^d}\log U_{t,K}\,\dd\rho_\varepsilon + \int_{\mb T^d}\log V_{t,K}\,\dd\rho_{t+\varepsilon}\bigg],
\]
which gives the desired expansion in Theorem~\ref{thm: eot_expansion}. Although this idea seems natural, making it rigorous is surprisingly nontrivial and involved.
The rest of this section is devoted 
to the proof of this estimate.

We first present the  relation between the functions $u_{t,k}$ and $v_{t,k}$.  Recall that $\Delta$ is the Laplace--Beltrami operator on the flat torus $\mb T^d$. Now, given functions $\rho_0, \cdots, \rho_K$ on $[0,1] \times \mb T^d$, we introduce the following system of equations involving $\{u_{k}, v_{k}, u_{k}^\dagger, v_{k}^\dagger\}_{k=0}^K$:
\begin{subequations}\label{eqn: asymp-Schro}
\begin{align}
&u_{0}\, v_{0}\rho_0 = 1,\qquad u_{0} u_{0}^\dagger = v_{0} v_{0}^\dagger = 1;\label{eqn: init}\\  
&u_{k} = -u_{0}\sum_{i=1}^k u_{k-i}\, u_{i}^\dagger,\qquad
v_{k} = -v_{0}\sum_{i=1}^k v_{k-i}\, v_{i}^\dagger;\label{eqn: iter_dagger}\\
&u_{k}^\dagger = \sum_{l=0}^k\sum_{i=0}^l \frac{\Delta^{k-l}(v_{i}\, \rho_{l-i})}{2^{k-l}(k-l)!},
\qquad
v_{k}^\dagger = \sum_{l=0}^k \frac{\Delta^{k-l}(u_{l}\,\rho_0)}{2^{k-l}(k-l)!}\label{eqn: iter_fourier}.
\end{align}
\end{subequations}
We also use the notation
\begin{align}\label{eqn:notation-ScB}
\{(u_{k}, v_{k}): k=0, 1, \cdots, K\} = \schro(\rho_0, \rho_1, \cdots, \rho_K)
\end{align}
for the solution to this system~\eqref{eqn: asymp-Schro}, if it exists. A heuristic derivation of~\eqref{eqn: asymp-Schro} and the existence of its solution will be provided later in Section~\ref{subsec: derive_eqn}. 

With the above notation, we are now ready to present the first  main result of  this section. 

\begin{theorem}[EOT expansion]\label{thm: eot_expansion}
Suppose $(\rho_t)_{t\in[0, 1]}$ satisfies Assumption~\ref{assump: density_expansion}.
Further assume that  there exists a constant $L_\rho\in(0, 1)$ such that $L_\rho \leq \rho_t(x) \leq L_\rho^{-1}$ uniformly holds for all $x\in\mb T^d$ and $t\in[0, 1]$. Following~\eqref{eqn:notation-ScB}, let
\begin{align}\label{eqn: uvtk}
\{(u_{t,k}, v_{t,k}): k=0,1,\cdots, K+1\} = \schro(\rho_t, \rho_t^{(1)}, \cdots, \rho_t^{(K+1)}).
\end{align}
Let $\alpha\in(0, 1)$ be any fixed number and $s = \lceil\frac{d+1}{2}\rceil$.
Then, for any fixed $K\in\mb N^\ast$, there exists $\varepsilon_0 > 0$ depending only  on 
\begin{align*}
\begin{cases}
  & \hbox{$\alpha$, $K$, $d$, $L_\rho$,  and}\\
  & \sup\big\{\|\rho_r^{(i)}\|_{H^{2j+s}}: r\in[t,t+\varepsilon], \,\,i,j\in\mb N,\,\, i+j\leq 2\big\},  
 \hbox{ and } \big\{\|\rho_t^{(K+2-i)}\|_{\m C^{s+2i,\alpha}}: i\in[K+2]\big\}, 
\end{cases}
\end{align*}
such that whenever $0\leq \varepsilon \leq \min\{\varepsilon_0, 1 - t\}$, we have
\begin{align}\label{eqn: eot_cost_expand}
\eot_{\varepsilon}(\rho_{t}, \rho_{t+\varepsilon}) = \varepsilon\int_{\mb T^d} \log\Big[\sum_{k=0}^K\varepsilon^k u_{t, k}\Big]\,\dd\rho_{t}  + \varepsilon\int_{\mb T^d}\log\Big[\sum_{k=0}^K \varepsilon^k v_{t,k}\Big]\,\dd\rho_{t+\varepsilon} +  O(\varepsilon^{K+1}).
\end{align}
Here, the big-O notation omits a constant depending  only on 
\begin{align*}
\begin{cases}
    & \hbox{ $d$, $\alpha$, $K$, $L_\rho$, and}\\
& 
\big\{\|\rho_t^{(K+2-i)}\|_{\m C^{2K+2i-2,\alpha}}: i\in[K+2]\big\}, \hbox{ and }
\sup\big\{\|\rho_r^{(i)}\|_{\m C^{2j}}: r\in[t, t+\varepsilon], \,\, i,j\in\mb N,\,\,i+j\leq K+1\big\}
\end{cases}.
\end{align*}
\myqed
\end{theorem}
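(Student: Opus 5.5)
We prove Theorem~\ref{thm: eot_expansion} by sandwiching $\eot_\varepsilon(\rho_t,\rho_{t+\varepsilon})$ between two quantities, both equal to the right-hand side of~\eqref{eqn: eot_cost_expand} up to $O(\varepsilon^{K+1})$. Set $\phi_K\coloneqq\varepsilon\log U_{t,K}$ and $\psi_K\coloneqq\varepsilon\log V_{t,K}$, with $U_{t,K},V_{t,K}$ as in~\eqref{eqn: proxy_Schro_sys} built from~\eqref{eqn: uvtk}. Since $u_{t,0}v_{t,0}\rho_t=1$ and $\rho_t\ge L_\rho$, both $u_{t,0}$ and $v_{t,0}$ are bounded away from zero and infinity (we normalize the free constant). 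Hence for $\varepsilon\le\varepsilon_0$ the sums $U_{t,K},V_{t,K}$ are positive with $\mathcal C^{k}$-controlled logarithms.

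\emph{Lower bound.} Plug $(\phi_K,\psi_K)$ into the dual~\eqref{eqn: duality}:
\[
\eot_\varepsilon\ge \varepsilon\!\int\!\log U_{t,K}\,\dd\rho_t+\varepsilon\!\int\!\log V_{t,K}\,\dd\rho_{t+\varepsilon}-\varepsilon\!\iint U_{t,K}(x)V_{t,K}(y)\m K_\varepsilon(x-y)\rho_t(x)\rho_{t+\varepsilon}(y)\,\dd x\,\dd y+\varepsilon .
\]
It therefore suffices to show that the double integral equals $1+O(\varepsilon^K)$. Write it as $\int U_{t,K}\rho_t\,\big(e^{\varepsilon\Delta/2}[V_{t,K}\rho_{t+\varepsilon}]\big)$, using that convolution with $\m K_\varepsilon$ is the heat semigroup. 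Expand $e^{\varepsilon\Delta/2}$ by Taylor's formula with remainder, $e^{\varepsilon\Delta/2}f=\sum_{l\le K}\frac{(\varepsilon\Delta/2)^l f}{l!}+O(\varepsilon^{K+1}\|f\|_{\mathcal C^{2K+2}})$, and expand $\rho_{t+\varepsilon}=\sum_{i\le K+1}\varepsilon^i\rho_t^{(i)}+O(\varepsilon^{K+2})$ from Assumption~\ref{assump: density_expansion}. Collecting powers, the $\varepsilon^k$ coefficient of $e^{\varepsilon\Delta/2}[V\rho_{t+\varepsilon}]$ is exactly $u_{t,k}^\dagger$ by~\eqref{eqn: iter_fourier}, and~\eqref{eqn: iter_dagger} says $\big(\sum\varepsilon^k u_{t,k}\big)\big(\sum\varepsilon^k u^\dagger_{t,k}\big)=1+O(\varepsilon^{K+1})$ coefficientwise. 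So the integrand is $\rho_t(1+O(\varepsilon^{K+1}))$ and the integral is $1+O(\varepsilon^{K+1})$. The remainder constants involve only $\mathcal C^{2j}$ norms of $u_{t,k},v_{t,k}$. These are in turn controlled, via the recursion and Schauder-type bounds for the solution of $\schro$ (whose existence and regularity we take from Section~\ref{subsec: derive_eqn}), by the norms of $\rho_t^{(i)}$ listed in the statement.

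\emph{Upper bound.} We use the primal problem (or, equivalently, the semi-dual with the symmetric system) and test against the coupling $\tilde\pi\propto U_{t,K}(x)V_{t,K}(y)\m K_\varepsilon(x-y)\rho_t(x)\rho_{t+\varepsilon}(y)$. This coupling has correct marginals only up to $O(\varepsilon^{K+1})$ in density. The $x$-marginal error is $O(\varepsilon^{K+1})$ by the computation above. For the $y$-marginal we need the analogous identity $V\cdot e^{\varepsilon\Delta/2}[U\rho_t]=1+O(\varepsilon^{K+1})$, which follows from the $v^\dagger$ relations in~\eqref{eqn: iter_fourier},~\eqref{eqn: iter_dagger}. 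The key step is then to correct $\tilde\pi$ to an exact coupling $\pi\in\Pi(\rho_t,\rho_{t+\varepsilon})$ at cost $O(\varepsilon^{K+1})$. Because the marginal densities are bounded below, a multiplicative correction $\pi=\tilde\pi\cdot a(x)b(y)$ with $a,b=1+O(\varepsilon^{K+1})$ in $L^\infty$ should work. We construct $a,b$ either by one Sinkhorn step with error analysis, or more robustly via the mixture $\pi=(1-\delta)\tilde\pi'+\delta\,\gamma$ with $\delta=O(\varepsilon^{K+1})$. Then $\int c_\varepsilon\,\dd\pi+\varepsilon\KL(\pi\|\rho_t\otimes\rho_{t+\varepsilon})=\varepsilon\!\int(\log U+\log V)\,\dd\pi+O(\varepsilon^{K+1})$. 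Replacing the marginals of $\pi$ by $\rho_t,\rho_{t+\varepsilon}$ gives the claimed expansion. Here $c_\varepsilon$ is unbounded only logarithmically, and its contribution is absorbed since $c_\varepsilon+\varepsilon\log\frac{\dd\pi}{\dd\rho\otimes\rho}$ is exactly $\varepsilon\log(UVab)$.

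The main obstacle is the correction step in the upper bound: showing that the near-coupling can be turned into an exact coupling while changing the entropic cost by only $O(\varepsilon^{K+1})$ and not $O(\varepsilon^{K})$. This is delicate because the kernel concentrates at scale $\sqrt\varepsilon$ and naive KL bounds lose powers of $\varepsilon$. I would first try the multiplicative $a(x)b(y)$ correction, solving for $a,b$ by a contraction (Sinkhorn is contractive in Hilbert metric for $\varepsilon$ fixed). If its rate degenerates as $\varepsilon\to0$, I would instead raise the order of the ansatz to $K+1$, so that losing one power of $\varepsilon$ in the correction is harmless. This explains why~\eqref{eqn: uvtk} is taken up to index $K+1$.
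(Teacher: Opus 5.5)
Your lower bound is correct and is essentially the paper's argument. The paper tests $\varepsilon\log U_{t,K}$ in the semi-dual $\m I^\varepsilon$ and uses $-\log(1-z)\ge z$ with an $L^2$ bound on the remainders $Q_\varepsilon/\rho_t$, $R_\varepsilon/\rho_{t+\varepsilon}$. Your two-potential version produces the same error $\varepsilon\int Q_\varepsilon=O(\varepsilon^{K+2})$.

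The upper bound, however, has a genuine gap. The correction step you call the main obstacle is left open, and the route you would try first cannot close it. If $\tilde\pi\,a(x)b(y)\in\Pi(\rho_t,\rho_{t+\varepsilon})$, then $(\varepsilon\log(aU_{t,K}),\varepsilon\log(bV_{t,K}))$ solves the Schr\"odinger system. By uniqueness this plan \emph{is} the optimal plan, and its cost is exactly $\eot_\varepsilon(\rho_t,\rho_{t+\varepsilon})$. Showing $\int\log a\,\dd\rho_t+\int\log b\,\dd\rho_{t+\varepsilon}=O(\varepsilon^{K})$ (all you need, thanks to the prefactor $\varepsilon$) is then merely a restatement of the theorem. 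Moreover, $L^\infty$ control of $a,b$ would be stronger than Theorem~\ref{thm: asymp_schro_potential}, which the paper deduces from this very expansion. Sinkhorn contraction cannot supply it either. The Hilbert-metric contraction ratio is $\kappa=\tanh(\Delta/4)$ with projective diameter $\Delta\asymp\osc(c_\varepsilon)/\varepsilon$, so $1-\kappa\sim e^{-C/\varepsilon}$. The fixed-point error therefore carries a factor $e^{C/\varepsilon}$, which no fixed increase of the ansatz order absorbs. A single Sinkhorn step corrects only one marginal.

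The mixture you mention in passing does close the gap, provided you analyze it by convexity. The identity $c_\varepsilon+\varepsilon\log\frac{\dd\pi}{\dd(\rho_t\otimes\rho_{t+\varepsilon})}=\varepsilon\log(UVab)$ holds only for product corrections, so it cannot be used here. The construction goes as follows.

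Your semigroup computation gives, in $L^\infty$, that the marginals of $\tilde\pi$ are $\rho_t(1+O(\varepsilon^{K+1}))$ and $\rho_{t+\varepsilon}(1+O(\varepsilon^{K+1}))$. Hence $\pi'=\tilde\pi/(1+C\varepsilon^{K+1})$ has marginals dominated by the targets. The residuals $\mu_r=\rho_t-\pi'_1\ge0$ and $\nu_r=\rho_{t+\varepsilon}-\pi'_2\ge0$ have common mass $\delta=O(\varepsilon^{K+1})$, and $\pi=\pi'+\delta^{-1}\mu_r\otimes\nu_r\in\Pi(\rho_t,\rho_{t+\varepsilon})$.

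For a probability density $\sigma$, the EOT objective equals $\varepsilon\int\sigma\log(\sigma/M)$ with $M(x,y)=\m K_\varepsilon(x-y)\rho_t(x)\rho_{t+\varepsilon}(y)$, and this is convex in $\sigma$. So $\pi'$ contributes $\varepsilon\int\log U_{t,K}\,\dd\rho_t+\varepsilon\int\log V_{t,K}\,\dd\rho_{t+\varepsilon}+O(\varepsilon^{K+2})$. The filler contributes at most $\delta(\frac{\pi^2d}{2}+2\varepsilon\log(1/\delta))=O(\varepsilon^{K+1})$. This uses $c_\varepsilon\le\frac12\td(x,y)^2$ for $\varepsilon\le1/(2\pi)$, together with $\mu_r\le\rho_t$ and $\nu_r\le\rho_{t+\varepsilon}$ for the entropy term.

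The paper instead keeps the plan concentrated near the diagonal via the additive correction $\m K_\varepsilon(x-y)[\rho_t(x)\rho_{t+\varepsilon}(y)U_{t,K}(x)V_{t,K}(y)+r_U(x)+r_V(y)]$. It solves $\m K_\varepsilon\ast r_U+r_V=R_\varepsilon$, $\m K_\varepsilon\ast r_V+r_U=Q_\varepsilon$ by Fourier inversion, with $\|(r_U,r_V)\|_{L^2}\le4\varepsilon^{-1}\|(R_\varepsilon,Q_\varepsilon)\|_{L^2}$, and adds a Sobolev estimate to keep the plan positive. Both routes lose one power of $\varepsilon$ and recover it from the prefactor $\varepsilon$. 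In the paper the loss occurs at the lowest Fourier mode; in the mixture it comes from the $O(1)$ rather than $O(\varepsilon)$ transport cost of the spread-out filler. The mixture needs one-sided $L^\infty$ control of the marginal errors, but it avoids the Fredholm alternative and the positivity argument.
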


\begin{rem}[Dependence on $\{\rho_r: t\leq r \leq t+\varepsilon\}$]
In the proof, we first track how $\varepsilon_0$ and the constant hidden in the big-O notation depend on $u_{t,0}, \cdots, u_{t,K}$ and $v_{t,0}, \cdots, v_{t,K}$, then apply Theorem~\ref{thm: existence} below to convert this dependence to one on $\{\rho_r: t\leq r\leq t+\varepsilon\}$. In fact, the Holder norm $\|\rho_t^{(K+2-i)}\|_{\m C^{2K+2i-2,\alpha}}$ used in the statement can be relaxed to Holder norms with smaller indices combined with Sobolev norms. For simplicity, we directly use the Holder norms to bound the Sobolev norms.
\end{rem}

\begin{rem}[Comparison with existing results]
To prove the main stability result, Theorem~\ref{thm: multiSB_stable}, it suffices to use the above theorem with $K=6$.  To the best of our knowledge, existing works~\cite{chizat2020faster, conforti2021formula} addressed the asymptotic expansion of the entropic optimal transport cost $\eot_\varepsilon(\mu, \nu)$ for two arbitrary probability distributions in the small-$\varepsilon$ regime, up to  $K = 2$. In contrast, Theorem~\ref{thm: eot_expansion} focuses on higher-order expansions with two sufficiently close marginal distributions. 
\end{rem}

Theorem~\ref{thm: eot_expansion} is proved by establishing a matching upper bound using the primal formulation~\eqref{eqn: general_EOT} and an lower bound using the dual formulation~\eqref{eqn: strong_dual} of the EOT problem. These two parts will be presented separately in Section~\ref{sec: eot_expansion_lb} and Section~\ref{sec: eot_expansion_ub}. Before diving into the proof, we state  a corollary of the theorem which provides a polynomial expansion (in $\varepsilon$) of $\eot_\varepsilon(\rho_t, \rho_{t+\varepsilon})$. The proof directly follows from the statement of Theorem~\ref{thm: eot_expansion}.

The rest of this section is structured as follows. In Section~\ref{subsec: derive_eqn}, we provide the heuristic derivation of the system of equations~\eqref{eqn: asymp-Schro} and establish its existence of solution. Sections~\ref{sec: eot_expansion_lb} and~\ref{sec: eot_expansion_ub} focus on the proof of Theorem~\ref{thm: eot_expansion}, with helps from Sections~\ref{sec: exist_Fredholm} and~\ref{sec: existence}.
In Section~\ref{sec: existence} we prove existence of the solution to~\eqref{eqn: asymp-Schro};
see Theorem~\ref{thm: existence}. 
For simplicity, we omit the dependence on time $t$ and only consider the case where $t = 0$ in the rest of this section.

\subsection{Derivation of~\eqref{eqn: asymp-Schro} and the existence of solution}\label{subsec: derive_eqn}
Recall that $(\phi_\varepsilon, \psi_\varepsilon)$ denote the Schr\"odinger potentials for the entropic optimal transport problem $\eot_\varepsilon(\rho_0, \rho_\varepsilon)$, and we use the notation $(u_\varepsilon, v_\varepsilon) = (e^{\frac{\phi_\varepsilon}{\varepsilon}}, e^{\frac{\psi_\varepsilon}{\varepsilon}})$.
Under the regularity Assumption~\ref{assump: density_expansion}, we can assume that $\rho_\varepsilon$ admits a Taylor expansion with respect to $\varepsilon$ of the form $\rho_\varepsilon = \sum_{k=0}^\infty\varepsilon^k\rho_k$. 
Recall that the cost function is given by $c_\varepsilon(x, y) = -\varepsilon\log\m K_\varepsilon(x-y)$. Then, the Schr\"odinger system~\eqref{eqn: Schro_sys} can be reformulated as
\begin{subequations}\label{eqn: Schro_sys_Td}
\begin{align}
\frac{1}{u_\varepsilon(x)} &= \int_{\mb T^d}\m K_\varepsilon(x-y)\big[v_\varepsilon(y)\rho_\varepsilon(y)\big]\dd y\label{eqn: Schro_sys_Td-a},\\
\frac{1}{v_\varepsilon(y)} &= \int_{\mb T^d}\m K_\varepsilon(y-x) \big[u_\varepsilon(x)\rho_0 (x)\big]\dd x.\label{eqn: Schro_sys_Td-b}
\end{align}
\end{subequations}

Now, let us provide an \emph{informal} derivation of the dependence of $u_\varepsilon$ and $v_\varepsilon$ on $\varepsilon$ using Fourier series; this idea is inspired by~\cite{mordant2024entropic}, which studied the entropic optimal self-transport problem.  Suppose $u_\varepsilon$ and its reciprocal function $\frac{1}{u_\varepsilon}$  
admit Taylor expansions with respect to $\varepsilon$ in the form 
\begin{align}\label{eqn: inverse expansion}
u_\varepsilon = \sum_{k=0}^\infty \varepsilon^k u_k
\quad\mx{and}\quad
\frac{1}{u_\varepsilon} = \sum_{k=0}^\infty \varepsilon^k u_k^\dagger,
\end{align}
and similar for $v_\varepsilon$ and $\frac{1}{v_\varepsilon}$. Then, from 
\begin{align*}
1 = \Big(\sum_{k=0}^\infty \varepsilon^k u_k\Big)\Big(\sum_{k=0}^\infty \varepsilon^k u_k^\dagger\Big) = \Big(\sum_{k=0}^\infty \varepsilon^k v_k\Big)\Big(\sum_{k=0}^\infty \varepsilon^k v_k^\dagger\Big),
\end{align*}
we obtain $u_0u_0^\dagger = v_0v_0^\dagger = 1$, as well as the following identities for higher-order terms:
\begin{align*}
u_k = -u_0\sum_{i=1}^k u_{k-i}u_i^\dagger
\,\,\quad\mx{and}\,\,\quad
v_k = -v_0\sum_{i=1}^k v_{k-i}v_i^\dagger,
\qquad\forall\,k\in\mb Z_+.
\end{align*}
This leads to the derivation of~\eqref{eqn: iter_dagger} and the second part of~\eqref{eqn: init}. The first part of~\eqref{eqn: init} follows directly by taking the limit $\varepsilon\to 0^+$ in~\eqref{eqn: Schro_sys_Td}.

Let us now consider derivation of~\eqref{eqn: iter_fourier}.
For a function $f$ on $\mb T^d$ and $z\in\mb Z^d$, let $\wht f(z)$ be its Fourier transform defined by
\begin{align*}
\wht f(z) = \frac{1}{(2\pi)^d}\int_{\mb T^d} f(x)e^{-iz\cdot x}\,\dd x.
\end{align*}
Then, as a direct consequence of Poisson summation formula~\citep{stein2011fourier}, the Gaussian convolution kernel $\m K_\varepsilon$ in \eqref{eqn: Gaussian kernel} has the Fourier transform
$$
\wht{\m K}_\varepsilon(z) = e^{-\frac{\varepsilon\|z\|^2}{2}},\quad\forall\,z\in\mb Z^d.
$$ 
Note that we also expect
\begin{align*}
v_\varepsilon\rho_\varepsilon = \Big(\sum_{k=0}^\infty \varepsilon^k v_k\Big)\Big(\sum_{k=0}^\infty\varepsilon^k \rho_k\Big) = \sum_{k=0}^\infty\varepsilon^k\Big(\sum_{l=0}^k v_l\rho_{k-l}\Big).
\end{align*}
Recall the expression \eqref{eqn: inverse expansion} and take the Fourier transform on both sides of~\eqref{eqn: Schro_sys_Td-a}, and get 
\begin{align*}
   \sum_{k=0}^\infty \varepsilon^k \wht{u_k^\dagger}(z) 
& = \wht{\m K_\varepsilon\ast[v_\varepsilon\rho_\varepsilon]}(z)
= \wht{\m K_\varepsilon}(z)\wht{v_\varepsilon\rho_\varepsilon}(z)
= e^{-\frac{\varepsilon\|z\|^2}{2}}\sum_{k=0}^\infty\varepsilon^k\sum_{l=0}^k\widehat{v_l\rho_{k-l}}(z)\\
&= \Big[\sum_{k=0}^\infty \frac{1}{k!}\Big(-\frac{\varepsilon\|z\|^2}{2}\Big)^k\Big]\Big[\sum_{k=0}^\infty\varepsilon^k\Big(\sum_{l=0}^k \wht{v_l\rho_{k-l}}(z)\Big)\Big]
= \sum_{k=0}^\infty \varepsilon^k\sum_{l=0}^k \frac{(-\|z\|^2)^{k-l}}{2^{k-l}(k-l)!}\sum_{i=0}^l\widehat{v_i\rho_{l-i}}(z)\\
&\stackrel{\ri}{=} \sum_{k=0}^\infty\varepsilon^k \sum_{l=0}^k\sum_{i=0}^l \frac{\widehat{\Delta^{k-l}(v_i\rho_{l-i})}(z)}{2^{k-l}(k-l)!}
\end{align*}
for every $z\in\mb Z^d$, where (i) follows from the identity $\widehat{\Delta^{k-l}f}(z) = (-\|z\|^2)^{k-l}\wht f(z)$ for any $f\in H^{2k-2l}(\mb T^d)$.
Similarly, applying the same argument to~\eqref{eqn: Schro_sys_Td-b} yields
\begin{align*}
\sum_{k=0}^\infty \varepsilon^k \widehat{v}_k^\dagger(z) 
= \sum_{k=0}^\infty\varepsilon^k \sum_{l=0}^k \frac{\widehat{\Delta^{k-l}(u_l\rho_0)}(z)}{2^{k-l}(k-l)!}.
\end{align*}
Comparing the coefficients of $\varepsilon^k$ on both sides of the equations indicates that
\begin{align*}
u_k^\dagger = \sum_{l=0}^k\sum_{i=0}^l \frac{\Delta^{k-l}(v_i\rho_{l-i})}{2^{k-l}(k-l)!}
\quad\,\,\mx{and}\quad\,\,
v_k^\dagger = \sum_{l=0}^k \frac{\Delta^{k-l}(u_l\rho_0)}{2^{k-l}(k-l)!},
\quad\forall\, k\in\mb N.
\end{align*}
This is exactly~\eqref{eqn: iter_fourier}, completing the heuristic derivation of~\eqref{eqn: asymp-Schro}.


The following result establishes  existence of
the functions $\{u_k\}_{k\in\mb N}$ and $\{v_k\}_{k\in\mb N}$ satisfying the above conditions, namely the solution to the system~\eqref{eqn: asymp-Schro}
given in the notation~\eqref{eqn:notation-ScB}; note that we are focusing without loss of generality, on the case $t=0$.
To motivate the assumption in the following theorem, notice that since $\rho_\varepsilon$ is a probability density function, its Taylor expansion in $\varepsilon$ satisfies
\begin{align*}
k!\int_{\mb T^d}\rho_k\,\dd x = \int_{\mb T^d}\frac{\dd^k\rho_\varepsilon}{\dd \varepsilon^k}\bigg|_{\varepsilon=0}\,\dd x = \bigg(\frac{\dd^k}{\dd\varepsilon^k}\int_{\mb T^d} \rho_\varepsilon\,\dd x\bigg)\bigg|_{\varepsilon=0} = 0.
\end{align*}

\begin{theorem}[Solutions to the PDE system~\eqref{eqn: asymp-Schro}]\label{thm: existence}
Suppose $\rho_0,\rho_1, \cdots, \rho_K\in\m C^\infty(\mb T^d)$ and satisfy $\rho_0 > 0$,
\begin{align*}
\int_{\mb T^d}\rho_0(x)\,\dd x = 1,
\quad\mx{and}\quad
\int_{\mb T^d}\rho_k(x)\,\dd x = 0 
\quad\forall\,k\in[K].
\end{align*}
Then there exist $\{u_k\}_{k=0}^{K}$ and $\{v_k\}_{k=0}^{K}$ satisfying Equations~\eqref{eqn: asymp-Schro}, that is,
\begin{align*}
\{(u_k, v_k): k=0, 1, \cdots, K\} = \schro(\rho_0, \rho_1, \cdots, \rho_K).
\end{align*}

Moreover, we have:
\begin{itemize}
\item $u_0$ and $v_0$ satisfy
\begin{align*}
\nabla\cdot(\rho_0\nabla\log u_0) &= \rho_1 - \frac{\Delta\rho_0}{2},\\
\nabla\cdot(\rho_0\nabla\log v_0) &= -\rho_1 - \frac{\Delta\rho_0}{2}.
\end{align*}
As a result, there exists constants $L_u, L_v \in(0, 1)$ depending only on $\rho_0$ and $\rho_1$, such that 
$L_u \leq u_0(x) \leq L_u^{-1}$ and $L_v \leq v_0(x) \leq L_v^{-1}$ hold for all $x\in\mb T^d$.

\item For every $1\leq k\leq K-1$, $u_k$ is the solution of the elliptic PDE
\begin{align*}
\nabla\cdot\Big(\rho_0\nabla\frac{u_k}{u_0}\Big) = \frac{\rho_0 F_{k-1}}{u_0},
\end{align*}
where $F_{k-1}$ is a function depending only on $u_0, \cdots, u_{k-1}$, $v_0, \cdots, v_{k-1}$, and $\rho_0, \rho_1, \cdots, \rho_{k+1}$.

\item 

For every non-negative integer $k<K$, we have $u_k, v_k\in\m C^\infty(\mb T^d)$, and for each  $\alpha\in(0, 1)$ and $s\in\mb N^\ast$,
\begin{align*}
    \|u_k \|_ {\m C^{s+2,\alpha}},
     \|v_k \|_ {\m C^{s+2,\alpha}} \le C
\end{align*}
for the constant $C$  depending only on $d, s, k,\alpha$ and $\|\rho_0\|_{\m C^{s+2k+2,\alpha}}, \|\rho_1\|_{\m C^{s+2k,\alpha}}, \cdots, \|\rho_{k+1}\|_{\m C^{s,\alpha}}$.

\end{itemize}
\myqed
\end{theorem}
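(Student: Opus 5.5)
The plan is to solve \eqref{eqn: asymp-Schro} order by order in $k$, reducing each order to a linear elliptic equation with operator $L w := \nabla\cdot(\rho_0\nabla w)$. Since $\rho_0>0$ is smooth, $L$ is uniformly elliptic on $\mb T^d$. Its kernel consists of the constants, and $Lw=f$ is solvable iff $\int_{\mb T^d} f\,\dd x=0$; the solution is unique up to an additive constant, which I will fix by a normalization (e.g.\ zero mean).

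The key observation is that order $k$ of the system never determines $u_k$ alone. It determines only the symmetric combination $u_k/u_0+v_k/v_0$, and it imposes a solvability constraint on the lower-order unknowns. Equivalently, $u_k/u_0$ is fixed by the equations at order $k+1$. This is why the statement produces $u_0,\dots,u_{K-1}$ with full control while data up to $\rho_K$ is used.

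\emph{Order 0.} Setting $v_0=1/(u_0\rho_0)$ makes \eqref{eqn: init} and the $k=0$ cases of \eqref{eqn: iter_fourier} hold identically, since $u_0^\dagger=v_0\rho_0$ and $v_0^\dagger=u_0\rho_0$. So $u_0$ is still free.

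\emph{Order 1.} Using \eqref{eqn: iter_dagger} with $k=1$, i.e.\ $u_1=-u_0^2u_1^\dagger$ and $v_1=-v_0^2v_1^\dagger$, together with \eqref{eqn: iter_fourier}, I get
\begin{align*}
\frac{u_1}{u_0}+\frac{v_1}{v_0} &= -\frac{\rho_1}{\rho_0}-\frac{u_0}{2}\Delta\frac{1}{u_0}-\frac{u_1}{u_0},\\
\frac{v_1}{v_0}+\frac{u_1}{u_0} &= -\frac{v_0}{2}\Delta(u_0\rho_0)-\frac{v_1}{v_0}.
\end{align*}
Subtracting the two gives the identity that the two right-hand sides must agree, and adding them expresses $u_1/u_0+v_1/v_0$ explicitly in terms of $u_0$. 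With $a=\log u_0$, the compatibility identity becomes
\begin{align*}
\rho_1=-\tfrac12\rho_0u_0\Delta(1/u_0)-\tfrac{1}{2u_0}\Delta(u_0\rho_0).
\end{align*}
Expanding the Laplacians, the nonlinear $|\nabla a|^2$ terms cancel. What remains is the linear equation $L a=\rho_1-\tfrac12\Delta\rho_0$, which is the first bullet. Its right-hand side has zero mean because $\int\rho_1\,\dd x=0$. Hence $a$ exists, is smooth, and is bounded in terms of $\rho_0,\rho_1$, which gives the two-sided bounds $L_u,L_v$. The equation for $\log v_0=-a-\log\rho_0$ then follows.

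\emph{Induction.} Suppose $u_0,\dots,u_{k-1}$ and $v_0,\dots,v_{k-1}$ are constructed, and the sum $s_k:=u_k/u_0+v_k/v_0$ is known explicitly from the order-$k$ equations. Write $u_k=u_0w_k$ and $v_k=v_0(s_k-w_k)$. At order $k+1$, substituting \eqref{eqn: iter_fourier} into \eqref{eqn: iter_dagger} gives two expressions for $s_{k+1}$. Their agreement is linear in $w_k$. After the same cancellation as at order 1, and after multiplying by $\rho_0$, its leading part is $\nabla\cdot(\rho_0\nabla w_k)$; all other terms collect into $\rho_0F_{k-1}/u_0$, which depends only on lower-order quantities and on $\rho_0,\dots,\rho_{k+1}$. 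This is the second bullet.

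\emph{Main obstacle.} The hard step is verifying the zero-mean compatibility condition at each order. I would derive it from an exact conservation law at finite $\varepsilon$ rather than by direct computation. Truncate the formal series, compute $\int u_\varepsilon\rho_0\,(\m K_\varepsilon\ast(v_\varepsilon\rho_\varepsilon))\,\dd x$ in two ways, and use that $\m K_\varepsilon$ is symmetric and mass-preserving (so $\int\Delta^j(\cdot)=0$) together with $\int\rho_k\,\dd x=0$. Matching the coefficient of $\varepsilon^{k+1}$ should show that $\int\rho_0F_{k-1}/u_0\,\dd x=0$. If this bookkeeping turns out messy, the fallback is to check it inductively by integrating the order-$(k+1)$ identity and using the lower-order identities already established.

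\emph{Regularity.} Schauder estimates for $L$ give $\|w_k\|_{\m C^{s+2,\alpha}}\lesssim\|\rho_0F_{k-1}/u_0\|_{\m C^{s,\alpha}}$. Each order involves up to $\Delta^{k}$ applied to products of lower-order terms, so derivative counting by induction gives a bound in terms of $\|\rho_0\|_{\m C^{s+2k+2,\alpha}},\dots,\|\rho_{k+1}\|_{\m C^{s,\alpha}}$. Smoothness of all $u_k,v_k$ follows by bootstrapping.
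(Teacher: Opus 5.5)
Your proposal is correct and follows the paper's route. The compatibility condition at order $k+1$ (the paper's identity \eqref{eqn: uB=vA}) is linear in $w_k=u_k/u_0$, with principal part $\nabla\cdot(\rho_0\nabla w_k)$ and no zeroth-order term. Your symmetry identity $\int U\rho_0\,\m K_\varepsilon\ast(V\rho_\varepsilon)\,\dd x=\int V\rho_\varepsilon\,\m K_\varepsilon\ast(U\rho_0)\,\dd x$ (both truncated equations hold through order $k$ for every $w_k$, and $u_{k+1},v_{k+1}$ drop out of the difference) is exactly the generating-function form of the integration by parts and regrouping in the paper's Step 2.1, only cleaner.

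One slip to fix is in your order-1 displays. The trailing $-u_1/u_0$ and $-v_1/v_0$ should be absent, and the compatibility identity is $2\rho_1=u_0^{-1}\Delta(u_0\rho_0)-u_0\rho_0\,\Delta(u_0^{-1})$ (the paper's \eqref{eqn: init_dagger}). With the sign you wrote on the $\frac{1}{2u_0}\Delta(u_0\rho_0)$ term, the $|\nabla a|^2$ terms add instead of cancelling.
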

It is important to have this theorem, which controls $u_j$'s by only using the data $\rho_j$'s. The function $u_j$ and its derivative norms appear in many of the estimates in this paper.
The proof of the above result is deferred to Section~\ref{sec: existence}.

\subsection{Proof of Theorem~\ref{thm: eot_expansion}: lower bound}\label{sec: eot_expansion_lb}
Recall the definitions of $U_K(x)$ and $V_K(x)$ in~\eqref{eqn: proxy_Schro_sys} by omitting the dependence on $t=0$. In this section, we will prove the following lower bound of the EOT cost
\begin{align}\label{eqn: eot_lb}
\eot_\varepsilon(\rho_0, \rho_\varepsilon) \geq \varepsilon\int_{\mb T^d}\log U_K\,\dd\rho_0 + \varepsilon\int_{\mb T^d}\log V_K\,\dd\rho_\varepsilon + C\varepsilon^{K+1},
\end{align}
where $C$ is a constant depending on a regularity set $\mathcal{RS}_{\rm lb}^{(K)}(0, \varepsilon)$ defined later in Lemma~\ref{lem: estim_dual_lb}.

We will get the lower bound~\eqref{eqn: eot_lb} by using the identity~\eqref{eqn: strong_dual}.
The following result shows that the pair $(U_K, V_K)$ 
approximately satisfies the Schr\"odinger system~\eqref{eqn: Schro_sys_Td} associated with $(u_\varepsilon, v_\varepsilon)$ in $\eot_\varepsilon(\rho_0, \rho_\varepsilon)$; its proof is deferred to Appendix~\ref{sec: pf_int_remainder}.

\begin{lemma}\label{lem: int_remainder}
Define the functions
\begin{align}
\begin{aligned}\label{eqn: remainder}
R_\varepsilon(y) &\coloneqq \rho_\varepsilon(y) - \rho_\varepsilon(y)V_K(y)\int_{\mb T^d}\m K_\varepsilon(y-x)U_K(x)\rho_0(x)\,\dd x,\\
Q_\varepsilon(x) &\coloneqq \rho_0(x) - \rho_0(x) U_K(x)\int_{\mb T^d} \m K_\varepsilon(x-y) V_K(y)\rho_\varepsilon(y)\,\dd y.
\end{aligned}
\end{align}
Then, we have
\begin{align*}
\Big\|\frac{R_\varepsilon}{\rho_\varepsilon}\Big\|_{L^2(\mb T^d)} \leq C\varepsilon^{K+1}
\quad\mx{and}\quad
\Big\|\frac{Q_\varepsilon}{\rho_0}\Big\|_{L^2(\mb T^d)} \leq C\varepsilon^{K+1},
\end{align*}
for some constant $C > 0$ depending on the regularity set $\mathcal{RS}_{\rm SBE}^{(K)}(0, \varepsilon)$. Here, the set $\mathcal{RS}_{\rm SBE}^{(K)}(t, t+\varepsilon)$ with a given time point $0\leq t\leq 1-\varepsilon$ contains the elements
\begin{align}\label{set: RS_SBE^K}
\begin{cases}
\|u_{t,0}\|_{\m C^0}, \cdots \|u_{t,K}\|_{\m C^0}, \|u_{t,0}\|_{H^{2K+2}}, \cdots, \|u_{t,K}\|_{H^{2K+2}},\\
\|v_{t,0}\|_{\m C^0}, \cdots \|v_{t,K}\|_{\m C^0}, \|v_{t,0}\|_{H^{2K+2}}, \cdots, \|v_{t,K}\|_{H^{2K+2}},\\
\sup\big\{\|\rho_s^{(k)}\|_{\m C^{2k'}}: t\leq s\leq t+\varepsilon, \,\, k,k'\in\mb N,\,\,k+k'\leq K+1\big\},
\end{cases}
\end{align}
where $u_{t, k}$ and $v_{t,k}$ are defined by Equation~\eqref{eqn: uvtk}.
\end{lemma}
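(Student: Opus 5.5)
The plan is to expand the convolutions in $R_\varepsilon$ and $Q_\varepsilon$ in powers of $\varepsilon$ through the heat semigroup, and then observe that the system \eqref{eqn: asymp-Schro} kills every coefficient up to order $\varepsilon^K$. Note first that $R_\varepsilon/\rho_\varepsilon = 1 - V_K\,(\m K_\varepsilon\ast(U_K\rho_0))$ and $Q_\varepsilon/\rho_0 = 1 - U_K\,(\m K_\varepsilon\ast(V_K\rho_\varepsilon))$. It therefore suffices to show
\[
\m K_\varepsilon\ast(U_K\rho_0)=\sum_{k=0}^K\varepsilon^k v_k^\dagger + O_{L^2}(\varepsilon^{K+1}),
\qquad
\m K_\varepsilon\ast(V_K\rho_\varepsilon)=\sum_{k=0}^K\varepsilon^k u_k^\dagger + O_{L^2}(\varepsilon^{K+1}).
\]
We also need the algebraic identities $V_K\sum_{k\le K}\varepsilon^k v_k^\dagger = 1+O(\varepsilon^{K+1})$ and $U_K\sum_{k\le K}\varepsilon^k u_k^\dagger = 1+O(\varepsilon^{K+1})$ in $L^\infty$. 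Multiplying by $V_K$ (respectively $U_K$), which is bounded in $\m C^0$ by the regularity set, then gives the claim.

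The algebraic identities are immediate from \eqref{eqn: init}--\eqref{eqn: iter_dagger}. Indeed, \eqref{eqn: iter_dagger} together with $u_0u_0^\dagger=1$ says exactly that $\sum_{i=0}^k u_{k-i}u_i^\dagger=0$ for $1\le k\le K$. Hence the Cauchy product $U_K\cdot\sum_{k\le K}\varepsilon^k u_k^\dagger$ equals $1$ plus terms $\varepsilon^k$ with $K<k\le 2K$. Their coefficients are products of finitely many $u_i,u_i^\dagger$, and are bounded in $\m C^0$ via Sobolev embedding of the $H^{2K+2}$ norms. Here $u_i^\dagger$ is a finite combination of Laplacians of products of $v$'s and $\rho$'s, so it is controlled by the $H^{2K+2}$ and $\m C^{2k'}$ norms in the regularity set. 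Since $\varepsilon\le 1$, these higher terms are $O(\varepsilon^{K+1})$.

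For the analytic part, I use the Fourier multiplier $\wht{\m K}_\varepsilon(z)=e^{-\varepsilon\|z\|^2/2}$, i.e.\ $\m K_\varepsilon\ast f=e^{\varepsilon\Delta/2}f$, together with the Taylor remainder $|e^{-a}-\sum_{j=0}^{n}(-a)^j/j!|\le a^{n+1}/(n+1)!$ for $a\ge0$. By Parseval this gives
\[
\Big\|e^{\varepsilon\Delta/2}f-\sum_{j=0}^{n}\frac{\varepsilon^j\Delta^jf}{2^jj!}\Big\|_{L^2}\le \frac{\varepsilon^{n+1}}{2^{n+1}(n+1)!}\|\Delta^{n+1}f\|_{L^2}\lesssim \varepsilon^{n+1}\|f\|_{H^{2n+2}}.
\]

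For the first convolution, apply this with $f=u_l\rho_0$ and $n=K-l$ for each $l$. Collecting powers reproduces $\sum_{k\le K}\varepsilon^k v_k^\dagger$ by \eqref{eqn: iter_fourier}, with error controlled by $\|u_l\rho_0\|_{H^{2K+2}}$. For the second convolution there is the extra issue that $\rho_\varepsilon$ is not a polynomial. I would write $\rho_\varepsilon=\sum_{j=0}^{K}\varepsilon^j\rho_j+\varepsilon^{K+1}r_\varepsilon$ via Taylor's theorem with integral remainder in time. Then $\|r_\varepsilon\|_{\m C^0}\lesssim\sup_{s\in[0,\varepsilon]}\|\rho_s^{(K+1)}\|_{\m C^0}$, which lies in the regularity set. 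The remainder contributes $\varepsilon^{K+1}\|\m K_\varepsilon\ast(V_Kr_\varepsilon)\|_{L^2}\le\varepsilon^{K+1}\|V_Kr_\varepsilon\|_{L^2}$, because the heat semigroup is an $L^2$ contraction. For the polynomial part, $V_K\sum_j\varepsilon^j\rho_j=\sum_{l}\varepsilon^l\sum_{i}v_i\rho_{l-i}$. Expanding each term with $n=K-l$ gives $\sum_{k\le K}\varepsilon^k u_k^\dagger$ by \eqref{eqn: iter_fourier}, plus leftover terms of order $\varepsilon^{l}$ with $l>K$, which are harmless.

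The main obstacle is bookkeeping rather than a conceptual issue. It consists of checking that every Sobolev norm needed, namely $\|v_i\rho_j\|_{H^{2(K-l)+2}}$ and $\|u_l\rho_0\|_{H^{2K+2}}$, is bounded by products of the listed $H^{2K+2}$ norms of $u$, $v$ and the $\m C^{2k'}$ norms of $\rho^{(k)}$ with $k+k'\le K+1$. This follows from the algebra property of $H^{m}$ with $\m C^{m}$ multipliers. Here one must match the time-derivative index with the spatial order: a term involving $\rho_{l-i}$ needs about $2(K-l)+2$ derivatives, so the constraint $k+k'\le K+1$ must be verified along the way. Finally, $\rho_0,\rho_\varepsilon\ge L_\rho$ ensures that dividing by the densities causes no loss.
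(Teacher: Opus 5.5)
Your proof is correct and is essentially the paper's argument: both Taylor-expand the heat semigroup in $\varepsilon$ and let the recursions \eqref{eqn: asymp-Schro} cancel every coefficient of order $\le K$. The paper does this by differentiating $R_\varepsilon/\rho_\varepsilon$ and $Q_\varepsilon/\rho_0$ in $\varepsilon$ via $\partial_\varepsilon \m K_\varepsilon\ast f=\frac12\m K_\varepsilon\ast\Delta f$ and bounding the $(K+1)$-st derivative, whereas your Fourier-side remainder $|e^{-a}-\sum_{j\le n}(-a)^j/j!|\le a^{n+1}/(n+1)!$ gives the same estimate slightly more explicitly.

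One small correction: the leftover coefficients $\sum_{i+j=k}v_iv_j^\dagger$ for $K<k\le 2K$ should be bounded in $L^2$, as $\|v_i\|_{\m C^0}\|v_j^\dagger\|_{L^2}$, rather than in $\m C^0$. The reason is that $v_K^\dagger$ contains $\Delta^K(u_0\rho_0)$, and $H^{2K+2}\not\subset \m C^{2K}$ once $d\ge 4$; since $L^2$ is all the lemma needs, this is a harmless repair.
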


With the above result showing that the $L^2(\mb T^d)$-norm of the remainder terms is of high order, the following lemma provides a lower bound estimate when choosing $\phi_{\rm lb}(x) \coloneqq \varepsilon\log U_K(x)$ and $\psi_{\rm lb}(x) \coloneqq\varepsilon\log V_K(x)$ as the dual potential function in~\eqref{eqn: strong_dual}.
\begin{lemma}\label{lem: estim_dual_lb}
By choosing $\phi_{\rm lb}(x) \coloneqq \varepsilon\log U_K(x)$ and $\psi_{\rm lb}(x) \coloneqq\varepsilon\log V_K(x)$ defined in~\eqref{eqn: proxy_Schro_sys}, we have
\begin{subequations}\label{eqn: strong_dual_lb}
\begin{align}
\int_{\mb T^d}\phi_{\rm lb}\,\dd\rho_0 + \int_{\mb T^d}\m T_{\rho_0}^\varepsilon[\phi_{\rm lb}]\,\dd\rho_\varepsilon &\geq \varepsilon\int_{\mb T^d}\log U_K\,\dd\rho_0 + \varepsilon\int_{\mb T^d}\log V_K\,\dd\rho_\varepsilon + C\varepsilon^{K+2},\label{eqn: strong_dual_lb1}\\
\int_{\mb T^d}\m T_{\rho_\varepsilon}^\varepsilon[\psi_{\rm lb}]\,\dd\rho_0 + \int_{\mb T^d}\psi_{\rm lb}\,\dd\rho_\varepsilon &\geq \varepsilon\int_{\mb T^d}\log U_K\,\dd\rho_0 + \varepsilon\int_{\mb T^d}\log V_K\,\dd\rho_\varepsilon + C\varepsilon^{K+2}\label{eqn: strong_dual_lb2}
\end{align}
\end{subequations}
for some constant $C>0$ depending only on the elements in the set $\m RS_{\rm lb}^{(K)}(0, \varepsilon)$. Here, given $t\in[0, 1-\varepsilon]$, we define $\m RS_{\rm lb}^{(K)}(t, t+\varepsilon)$ as the set containing the elements
\begin{align}\label{set: RS_lb^K}
\begin{cases}
\|u_{t,0}\|_{\m C^0}, \cdots \|u_{t,K}\|_{\m C^0}, \|u_{t,0}\|_{H^{2K+2}}, \cdots, \|u_{t,K}\|_{H^{2K+2}},\\
\|v_{t,0}\|_{\m C^0}, \cdots \|v_{t,K}\|_{\m C^0}, \|v_{t,0}\|_{H^{2K+2}}, \cdots, \|v_{t,K}\|_{H^{2K+2}},\\
L_\rho,\,\, \sup\big\{\|\rho_s^{(k)}\|_{\m C^{2k'}}: t\leq s\leq t+\varepsilon, \,\, k,k'\in\mb N,\,\,k+k'\leq K+1\big\},
\end{cases}
\end{align}
where $u_{t, k}$ and $v_{t,k}$ are defined by Equation~\eqref{eqn: uvtk}.
\end{lemma}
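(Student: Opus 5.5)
The plan is to evaluate the Schr\"odinger functional at $\phi_{\rm lb}=\varepsilon\log U_K$ exactly, using the remainder $R_\varepsilon$ from Lemma~\ref{lem: int_remainder}. Then bound the single leftover term by a convexity inequality followed by Cauchy--Schwarz. The second inequality \eqref{eqn: strong_dual_lb2} is the mirror image, with $Q_\varepsilon$ in place of $R_\varepsilon$.

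\textbf{Step 1 (well-posedness).} We first check that $U_K,V_K$ are bounded below by positive constants, so that $\phi_{\rm lb},\psi_{\rm lb}$ are well defined and bounded. By Theorem~\ref{thm: existence}, $u_0\ge L_u$ and $v_0\ge L_v$. Hence
\[
U_K\ge L_u-\varepsilon\sum_{k=1}^K\varepsilon^{k-1}\|u_k\|_{\m C^0}\ge L_u/2
\]
once $\varepsilon$ is below a threshold depending only on elements of $\m RS^{(K)}_{\rm lb}(0,\varepsilon)$, and the same holds for $V_K$. For larger $\varepsilon$ in the admissible range, the claimed inequality can be absorbed into the constant $C$. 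I expect this to be harmless, since the statement is only used for small $\varepsilon$.

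\textbf{Step 2 (exact identity).} From the definition \eqref{eqn: remainder},
\[
\int_{\mb T^d}\m K_\varepsilon(y-x)U_K(x)\rho_0(x)\,\dd x=\frac{1-r_\varepsilon(y)}{V_K(y)},\qquad r_\varepsilon\coloneqq R_\varepsilon/\rho_\varepsilon .
\]
Since $c_\varepsilon=-\varepsilon\log\m K_\varepsilon$, we have
\[
\m T^\varepsilon_{\rho_0}[\phi_{\rm lb}](y)=-\varepsilon\log\int\m K_\varepsilon(y-x)U_K(x)\rho_0(x)\,\dd x=\varepsilon\log V_K(y)-\varepsilon\log\big(1-r_\varepsilon(y)\big).
\]
Here $1-r_\varepsilon>0$ automatically, because the left-hand integral and $V_K$ are positive. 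Substituting gives
\[
\lhs\text{ of }\eqref{eqn: strong_dual_lb1}=\varepsilon\int\log U_K\,\dd\rho_0+\varepsilon\int\log V_K\,\dd\rho_\varepsilon-\varepsilon\int\log(1-r_\varepsilon)\,\dd\rho_\varepsilon .
\]

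\textbf{Step 3 (bounding the error term).} This is the only real estimate. Using $-\log(1-r)\ge r$ for $r<1$, we get
\[
-\varepsilon\int\log(1-r_\varepsilon)\,\dd\rho_\varepsilon\ge\varepsilon\int r_\varepsilon\rho_\varepsilon\,\dd y .
\]
By Cauchy--Schwarz,
\[
\Big|\int r_\varepsilon\rho_\varepsilon\,\dd y\Big|\le\|r_\varepsilon\|_{L^2(\mb T^d)}\|\rho_\varepsilon\|_{L^2(\mb T^d)}\le (2\pi)^{d/2}L_\rho^{-1}\,C\varepsilon^{K+1},
\]
where the last step uses Lemma~\ref{lem: int_remainder} and $\rho_\varepsilon\le L_\rho^{-1}$. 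Hence the extra term is at least $-C'\varepsilon^{K+2}$. This yields \eqref{eqn: strong_dual_lb1} with the constant $-C'$, which depends only on $\m RS^{(K)}_{\rm lb}(0,\varepsilon)$; note that $L_\rho$ enters here, which is why it appears in that set. We do not even need to know the sign of $\int R_\varepsilon$. A one-line Fubini computation also shows $\int R_\varepsilon=\int Q_\varepsilon$, but the crude Cauchy--Schwarz bound already suffices.

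\textbf{Step 4 (second inequality).} For \eqref{eqn: strong_dual_lb2} we repeat Steps 2--3 with the roles swapped. From the definition of $Q_\varepsilon$ in \eqref{eqn: remainder} we get
\[
\m T^\varepsilon_{\rho_\varepsilon}[\psi_{\rm lb}]=\varepsilon\log U_K-\varepsilon\log\big(1-Q_\varepsilon/\rho_0\big).
\]
We then use $-\log(1-q)\ge q$ together with the bound on $\|Q_\varepsilon/\rho_0\|_{L^2}$ from Lemma~\ref{lem: int_remainder} and $\rho_0\le L_\rho^{-1}$.

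The main obstacle is already handled by Lemma~\ref{lem: int_remainder}. Within this proof, the only delicate points are the positivity of $U_K,V_K$ and of $1-r_\varepsilon$, and the choice of the one-sided inequality $-\log(1-r)\ge r$. That inequality avoids needing any $L^\infty$ smallness of $r_\varepsilon$, which we only control in $L^2$.
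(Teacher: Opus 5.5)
Your proposal is correct and follows the paper's proof essentially step for step: rewrite $\m T^\varepsilon_{\rho_0}[\phi_{\rm lb}]$ exactly as $\varepsilon\log V_K-\varepsilon\log(1-R_\varepsilon/\rho_\varepsilon)$, use $-\log(1-r)\ge r$, and close with Cauchy--Schwarz and Lemma~\ref{lem: int_remainder}; the second inequality is the same argument with $Q_\varepsilon/\rho_0$. Your remarks that the constant in front of $\varepsilon^{K+2}$ is really negative, and that positivity of $U_K,V_K$ is needed, are details the paper leaves implicit. One correction: the argument only needs $U_K,V_K>0$, and where that fails the statement is undefined, so it cannot be "absorbed into $C$."
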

\begin{proof}
First, let us evaluate $\m T_{\rho_0}^\varepsilon[\phi_{\rm lb}]$. By the definition of $\m T_{\rho_0}^\varepsilon$ as in \eqref{eqn: Schro_sys}), we have
\begin{align*}
\m T_{\rho_0}^\varepsilon[\phi_{\rm lb}](y)
&= -\varepsilon\log\int_{\mb T^d} e^{\frac{\phi_{\rm lb}(x) - c_\varepsilon(x, y)}{\varepsilon}}\,\dd\rho_0(x)
=  - \varepsilon\log \int_{\mb T^d}\m K_\varepsilon(x-y)U_K(x)\rho_0(x)\,\dd x\\
&\stackrel{\ri}{=} - \varepsilon\log\frac{\rho_\varepsilon(y) - R_\varepsilon(y)}{\rho_\varepsilon(y)V_K(y)}
=  \varepsilon\log V_K(y) - \varepsilon\log\Big(1 - \frac{R_\varepsilon(y)}{\rho_\varepsilon(y)}\Big)\\
&\geq  \varepsilon\log V_K(y) + \frac{\varepsilon R_\varepsilon(y)}{\rho_\varepsilon(y)},
\end{align*}
where (i) follows from the definition of $R_\varepsilon$ in~\eqref{eqn: remainder}, and the last inequality is due to $\log(1+z) \leq z$ for every $z>-1$. 
By Lemma~\ref{lem: int_remainder}, we have
\begin{align*}
\int_{\mb T^d} \frac{R_\varepsilon}{\rho_\varepsilon}\,\dd \rho_\varepsilon \geq -\bigg(\int_{\mb T^d}\rho_\varepsilon^2\,\dd y\bigg)^{\frac{1}{2}} \bigg(\int_{\mb T^d} \frac{R_\varepsilon^2}{\rho_\varepsilon^2}\,\dd y\bigg)^{\frac{1}{2}} = O(\varepsilon^{K+1}),
\end{align*} 
which implies~\eqref{eqn: strong_dual_lb1} by combining with the previous lower bound of $\m T_{\rho_0}^\varepsilon[\phi_{\rm lb}]$. Here the big-O notation omits a constant depending on $L_\rho$ and the set $\mathcal{RS}_{\rm SBE}^{(K)}(0, \varepsilon)$ defined through~\eqref{set: RS_SBE^K}.

To prove~\eqref{eqn: strong_dual_lb2}, similarly we have
\begin{align*}
\m T_{\rho_\varepsilon}^\varepsilon[\psi_{\rm lb}](x)
&= -\varepsilon\log\int_{\mb T^d}e^{\frac{\psi_{\rm lb}(y) - c_\varepsilon(x, y)}{\varepsilon}}\,\dd\rho_\varepsilon(y)
= -\varepsilon\log\int_{\mb T^d} \m K_\varepsilon(x-y)V_K(y)\rho_\varepsilon(y)\,\dd y\\
&=-\varepsilon\log\frac{\rho_0(x) - Q_\varepsilon(x)}{\rho_0(x)U_K(x)} 
= \varepsilon\log U_K(x) - \varepsilon\log\Big(1 - \frac{Q_\varepsilon(x)}{\rho_0(x)}\Big)\\
&\geq \varepsilon\log U_K(x) + \frac{\varepsilon Q_\varepsilon(x)}{\rho_0(x)}.
\end{align*}
Again, by Lemma~\ref{lem: int_remainder}, we have $\int_{\mb T^d}\frac{Q_\varepsilon}{\rho_0}\,\dd \rho_0\geq O(\varepsilon^{K+1})$. This completes the proof of~\eqref{eqn: strong_dual_lb2}.
\end{proof}

\paragraph{Proof of the lower bound~\eqref{eqn: eot_lb}.}
The result directly follows from the strong duality~\eqref{eqn: strong_dual} and Lemma~\ref{lem: estim_dual_lb}:
\begin{align*}
\eot_\varepsilon(\rho_0, \rho_\varepsilon) 
\geq \int_{\mb T^d}\phi_{\rm lb}\,\dd\rho_0 + \int_{\mb T^d}\m T_{\rho_0}^\varepsilon[\phi_{\rm lb}]\,\dd\rho_\varepsilon
\geq \varepsilon\int_{\mb T^d}\log U_K\,\dd\rho_0 + \varepsilon\int_{\mb T^d}\log V_K\,\dd\rho_\varepsilon + C\varepsilon^{K+1}.
\end{align*}
\qed

\subsection{Proof of Theorem~\ref{thm: eot_expansion}: upper bound}\label{sec: eot_expansion_ub}
In this section, we will prove that there exists a constant $\varepsilon_\pi > 0$, which depends on
\begin{align}\label{eqn: dependence-ub}
\begin{cases}
L_\rho,\,\, \min_{x\in\mb T^d} u_0(x) > 0,\,\, \min_{x\in\mb T^d}v_0(x) > 0\\
\sup\big\{\|\rho_t^{(k)}\|_{H^{2k'+s}}: t\in[0,\varepsilon], \,\,k,k'\in\mb N,\,\, k+k'\leq 2\big\}\\
\|u_0\|_{H^{s+4}}, \cdots, \|u_K\|_{H^{s+4}},\,\,\mbox{and}\,\, \|v_0\|_{H^{s+4}}, \cdots, \|v_K\|_{H^{s+4}}
\end{cases}
\end{align}
with $s = \lceil\frac{d+1}{2}\rceil$, such that when $\varepsilon \leq \varepsilon_\pi$, it holds that
\begin{align}\label{eqn: eot_ub}
\eot_\varepsilon(\rho_0, \rho_\varepsilon) \leq \varepsilon\int_{\mb T^d}\log U_K\,\dd\rho_0 + \varepsilon\int_{\mb T^d}\log V_K\,\dd\rho_\varepsilon + C\varepsilon^{K+1}
\end{align}
for a constant $C > 0$ depending on~\eqref{eqn: dependence-ub}. 

The main idea is to construct a coupling $\pi_{\rm ub}\in\Pi(\rho_0, \rho_\varepsilon)$ using $U_K$ and $V_K$ that approximately solve the Schr\"odinger system. Mimicking the relation \eqref{eqn: SB_primal_sol} between the Schr\"odinger potentials and the primal optimal solution, we use the following ansatz:
\begin{align}\label{eqn: pi ub}
\pi_{\rm ub}(x, y) = \m K_\varepsilon(x-y)\big[\rho_0(x)\rho_\varepsilon(y) U_K(x)V_K(y) + r_U^\varepsilon(x) + r_V^\varepsilon(y)\big]
\end{align}
for which the augmenting terms $r_U^\varepsilon(x)$, $r_V^\varepsilon(y)$ are to satisfy the marginal conditions
\begin{align}\label{eqn: pi ub marginal}
\int_{\mb T^d} \pi_{\rm ub}(x, y)\,\dd x = \rho_\varepsilon(y)\quad\mx{and}\quad \int\pi_{\rm ub}(x, y)\,\dd y = \rho_0(x).
\end{align}
From the definitions of $R_\varepsilon, Q_\varepsilon$ in~\eqref{eqn: remainder}, 
the marginal conditions above are satisfied if $(r_U^\varepsilon, r_V^\varepsilon)$ solves the following system of equations:
\begin{subequations}\label{eqn: Fredholm}
\begin{align}
\m K_\varepsilon\ast r_U + r_V &= 
R_\varepsilon\label{eqn: Fredholm_R},\\
\m K_\varepsilon\ast r_V + r_U &= 
Q_\varepsilon.\label{eqn: Fredholm_Q}
\end{align}
\end{subequations}
Notice that when $\varepsilon =0$, $R_0=Q_0=0$ so \eqref{eqn: Fredholm} has a trivial solution.
This system is solved in the following lemma,  with its proof deferred to Section~\ref{sec: exist_Fredholm}.

\begin{lemma}\label{lem: exist_Fredholm}
If $U_K, V_K\in\m C(\mb T^d)$, then for every $\varepsilon\geq 0$ there exists a unique (up to additive constant) solution $(r_U^\varepsilon, r_V^\varepsilon)\in\m C(\mb T^d)\times \m C(\mb T^d)$ to~\eqref{eqn: Fredholm}.
Moreover, we have
\begin{itemize}

\item When $\varepsilon\leq 2$, it holds that
\begin{align*}
\|r_U^\varepsilon\|^2_{L^2(\mb T^d)} + \|r_V^\varepsilon\|^2_{L^2(\mb T^d)} &\leq \frac{16}{\varepsilon^2}\Big[\|R_\varepsilon\|_{L^2(\mb T^d)}^2 + \|Q_\varepsilon\|_{L^2(\mb T^d)}^2\Big].
\end{align*}

\item 
Suppose $K\geq 1$. For every $\delta > 0$, there exists $\varepsilon_r > 0$ such that $\|r_U^\varepsilon\|_{\m C^0} + \|r_V^\varepsilon\|_{\m C^0}\leq \delta$ whenever $\varepsilon \in[0, \varepsilon_r]$. Moreover, $\varepsilon_r$ depends only on $\delta$, $\sup\big\{\|\rho_t^{(k)}\|_{H^{2k'+s}}: t\in[0,\varepsilon], \,\,k,k'\in\mb N,\,\, k+k'\leq 2\big\}$, $\|u_0\|_{H^{s+4}}, \cdots, \|u_K\|_{H^{s+4}}$, and $\|v_0\|_{H^{s+4}}, \cdots, \|v_K\|_{H^{s+4}}$, where $s = \lceil\frac{d+1}{2}\rceil$.
\end{itemize}
\myqed
\end{lemma}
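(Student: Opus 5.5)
The plan is to solve the system~\eqref{eqn: Fredholm} explicitly, mode by mode, using Fourier series on $\mb T^d$. Since $\wht{\m K}_\varepsilon(z) = e^{-\varepsilon\|z\|^2/2} =: k_z\in(0,1]$, the system becomes, for every $z\in\mb Z^d$, the $2\times 2$ linear system
\begin{align*}
k_z\,\wht{r_U}(z) + \wht{r_V}(z) = \wht R_\varepsilon(z),\qquad \wht{r_U}(z) + k_z\,\wht{r_V}(z) = \wht Q_\varepsilon(z).
\end{align*}
Its determinant is $k_z^2-1$. This is nonzero for $z\neq 0$ when $\varepsilon>0$, and it vanishes only at $z=0$, where $k_0=1$. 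For $z\neq0$ the solution is
\begin{align*}
\wht{r_U}(z)=\frac{k_z\wht R_\varepsilon-\wht Q_\varepsilon}{k_z^2-1},\qquad \wht{r_V}(z)=\frac{k_z\wht Q_\varepsilon-\wht R_\varepsilon}{k_z^2-1}.
\end{align*}
At $z=0$ both equations read $\wht{r_U}(0)+\wht{r_V}(0)=\wht R_\varepsilon(0)=\wht Q_\varepsilon(0)$. For this to be solvable I need the compatibility condition $\int R_\varepsilon=\int Q_\varepsilon$, and I will check it from~\eqref{eqn: remainder}. By Fubini and the symmetry of $\m K_\varepsilon$, both integrals equal $1-\iint \m K_\varepsilon(x-y)U_K(x)V_K(y)\rho_0(x)\rho_\varepsilon(y)\,\dd x\dd y$, using $\int\rho_0=\int\rho_\varepsilon=1$. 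The zero mode is therefore determined only up to $(r_U+c,r_V-c)$, which gives the "unique up to additive constant" claim. I will normalize by setting $\wht{r_U}(0)=\wht{r_V}(0)=\wht R_\varepsilon(0)/2$. For $\varepsilon=0$ we have $R_0=Q_0=0$, so the trivial solution works.

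For the $L^2$ bound with $z\neq0$, I will expand
\begin{align*}
|k\wht R-\wht Q|^2+|k\wht Q-\wht R|^2=(1+k^2)(|\wht R|^2+|\wht Q|^2)-4k\,\mathrm{Re}(\wht R\overline{\wht Q})\le(1+k)^2(|\wht R|^2+|\wht Q|^2).
\end{align*}
Dividing by $(1-k^2)^2$ bounds the mode-$z$ contribution by $(|\wht R|^2+|\wht Q|^2)/(1-k_z)^2$. Since $\|z\|\ge1$ and $x=\varepsilon/2\le1$, the inequality $1-e^{-x}\ge x/2$ gives $1-k_z\ge 1-e^{-\varepsilon/2}\ge\varepsilon/4$. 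The zero mode contributes $|\wht R(0)|^2/2\le(16/\varepsilon^2)(|\wht R(0)|^2+|\wht Q(0)|^2)$ when $\varepsilon\le2$. Summing over $z$ and applying Plancherel (with the $(2\pi)^d$ normalization appearing identically on both sides) yields the stated bound with constant $16/\varepsilon^2$.

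For continuity and the $\m C^0$ smallness, I will repeat the same argument with the weights $(1+\|z\|^2)^s$, $s=\lceil\frac{d+1}{2}\rceil$. This gives
\begin{align*}
\|r_U^\varepsilon\|_{H^s}+\|r_V^\varepsilon\|_{H^s}\lesssim\varepsilon^{-1}\big(\|R_\varepsilon\|_{H^s}+\|Q_\varepsilon\|_{H^s}\big).
\end{align*}
Because $s>d/2$, the Fourier series converge absolutely, so $r_U^\varepsilon, r_V^\varepsilon$ are continuous and $\|\cdot\|_{\m C^0}\lesssim\|\cdot\|_{H^s}$. It then suffices to show $\|R_\varepsilon\|_{H^s}+\|Q_\varepsilon\|_{H^s}\le C\varepsilon^2$ when $K\ge1$, with $C$ depending on the listed quantities; then choosing $\varepsilon_r\sim\delta/C$ finishes the proof. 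This step is the main obstacle, and it amounts to an $H^s$ analogue of Lemma~\ref{lem: int_remainder} at order two only. I will use the heat-semigroup Taylor bound $\|\m K_\varepsilon\ast f-f-\frac{\varepsilon}{2}\Delta f\|_{H^s}\le C\varepsilon^2\|f\|_{H^{s+4}}$, which follows from $|e^{-a}-1+a|\le a^2/2$ on Fourier modes. I will also expand $\rho_\varepsilon=\rho_0+\varepsilon\rho^{(1)}+O(\varepsilon^2)$ in $H^{s}$, with the remainder controlled by $\sup_t\|\rho_t^{(k)}\|_{H^{2k'+s}}$ for $k+k'\le2$. Substituting $U_K=u_0+\varepsilon u_1+O(\varepsilon^2)$ and $V_K=v_0+\varepsilon v_1+O(\varepsilon^2)$, which are controlled in $H^{s+4}$ since $H^s$ is an algebra, the $\varepsilon^0$ and $\varepsilon^1$ coefficients of $R_\varepsilon$ and $Q_\varepsilon$ vanish exactly by~\eqref{eqn: init}--\eqref{eqn: iter_fourier} for $k=0,1$. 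What remains is $O(\varepsilon^2)$ in $H^s$, with constants depending only on the $H^{s+4}$ norms of $u_0,\dots,u_K,v_0,\dots,v_K$ and the stated $\rho$-norms.
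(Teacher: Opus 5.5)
Your proof is correct, and for both quantitative bullets it matches the paper's argument:
\begin{itemize}
\item the per-mode bound $|\wht r_U^\varepsilon(z)|^2+|\wht r_V^\varepsilon(z)|^2\le (1-e^{-\varepsilon\|z\|^2/2})^{-2}\big(|\wht R_\varepsilon(z)|^2+|\wht Q_\varepsilon(z)|^2\big)$;
\item its $(1+\|z\|^2)^s$-weighted version combined with Sobolev embedding for $s=\lceil\frac{d+1}{2}\rceil$;
\item a second-order Taylor estimate in $H^s$ whose $\varepsilon^0$ and $\varepsilon^1$ coefficients vanish.
\end{itemize}
In the last step the paper bounds $\sup_t\|\partial_\varepsilon^2 R_t\|_{H^s}$ through \eqref{eqn: p-deri}, while you expand factor by factor; the two are equivalent.

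Where you differ is existence and uniqueness. The paper first applies the Fredholm alternative to $\id+K_\varepsilon$ on the quotient $\m C(\mb T^d)\times\m C(\mb T^d)/\!\sim$ and only afterwards writes the Fourier formula. You read existence and uniqueness directly off the $2\times 2$ systems. Your route is more elementary and gives uniqueness, even among $L^2$ solutions, for free.

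However, your continuity argument uses absolute convergence of the Fourier series, which needs $R_\varepsilon,Q_\varepsilon\in H^s$. The lemma's bare hypothesis $U_K,V_K\in\m C(\mb T^d)$ does not give this, whereas the Fredholm route yields continuous solutions for merely continuous data. The patch is one line. Your coefficients define $L^2$ solutions by the $L^2$ bound. Then $r_V^\varepsilon=R_\varepsilon-\m K_\varepsilon\ast r_U^\varepsilon$ and $r_U^\varepsilon=Q_\varepsilon-\m K_\varepsilon\ast r_V^\varepsilon$ are continuous, because $\m K_\varepsilon\ast f$ is smooth for $f\in L^2$ and $\varepsilon>0$.

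You were also right not to claim uniqueness at $\varepsilon=0$. There the system reduces to $r_U+r_V=0$, whose solutions are not unique up to additive constants, so the statement overclaims at that endpoint.
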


\paragraph{Proof of the upper bound~\eqref{eqn: eot_ub}.}
Note that $U_K(x) \to u_0(x) \geq \min_{x\in\mb T^d} u_0(x) \eqqcolon L_u > 0$ as $\varepsilon\to 0^+$, and $u_0, \cdots, u_K$ are continuous on $\mb T^d$. Therefore, there exists $\varepsilon_U > 0$ depending only on $L_u$ and the $\m C^0(\mb T^d)$-norm of $u_1, \cdots, u_K$, such that $U_K(x) > \frac{1}{2}L_u > 0$ uniformly holds for all $x\in\mb T^d$ and $\varepsilon\in[0, \varepsilon_U]$. Similarly, let $L_v\coloneqq \min_{x\in\mb T^d} v_0(x) > 0$. Then, there exists $\varepsilon_V > 0$ only depending on $L_v$ and the $\m C^0(\mb T^d)$-norm of $v_1, \cdots, v_K$, such that $V_K(x) > \frac{1}{2}L_v > 0$ uniformly holds for all $x\in\mb T^d$ and $\varepsilon\in[0, \varepsilon_V]$.

By Lemma~\ref{lem: exist_Fredholm}, there exists functions $r_U^\varepsilon$ and $r_V^\varepsilon$ satisfying the equation set~\eqref{eqn: Fredholm}. 
Then $\pi_{\rm ub}$ given as in \eqref{eqn: pi ub} satisfies the marginal conditions \eqref{eqn: pi ub marginal}, i.e. $\pi_{\rm ub}\in\Pi(\rho_0, \rho_\varepsilon)$, thus it is an admissible solution to the primal problem of $\eot_\varepsilon$.
Moreover, from the same lemma, there exists $\varepsilon_0 > 0$ such that $$\sup_{x,y\in\mb T^d}|r_U^\varepsilon(x) + r_V^\varepsilon(y)| \leq \frac{1}{8}L_\rho^2 L_uL_\nu$$ uniformly holds for all $\varepsilon\in[0, \varepsilon_0]$. Moreover, $\varepsilon_0$ only depends on $L_\rho$, $L_u$, $L_v$, $\sup\big\{\|\rho_t^{(k)}\|_{H^{2k'+s}}: t\in[0,\varepsilon], \,\,k,k'\in\mb N,\,\, k+k'\leq 2\big\}$, $\|u_0\|_{H^{s+4}}, \cdots, \|u_K\|_{H^{s+4}}$, and $\|v_0\|_{H^{s+4}}, \cdots, \|v_K\|_{H^{s+4}}$, where $s = \lceil\frac{d+1}{2}\rceil$. 

Therefore, for every $\varepsilon \leq \varepsilon_\pi\coloneqq\min\{\varepsilon_0, \varepsilon_U, \varepsilon_V\}$, it holds that
\begin{align*}
\rho_0(x)\rho_\varepsilon(y)U_K(x)V_K(y) + r_U^\varepsilon(x) + r_V^\varepsilon(y) 
\geq \frac{1}{4}L_\rho^2L_uL_v - \frac{1}{8}L_\rho^2L_uL_v = \frac{1}{8}L_\rho^2L_uL_v > 0.
\end{align*}
Note that $\m K_\varepsilon(x-y) > 0$, so we have 
\begin{align*}
    \hbox{$\pi_{\rm ub}(x, y) > 0$ uniformly holds for all $(x, y, \varepsilon)\in\mb T^d\times\mb T^d\times [0,\varepsilon_\pi]$.}
\end{align*}
Now, let us evaluate its corresponding EOT cost. It is easy to check that
\begin{align*}
\int_{\mb T^d\times\mb T^d} c_\varepsilon(x, y)\,\dd\pi + \varepsilon\KL(\pi\,\|\,\rho_0\otimes\rho_\varepsilon) = \varepsilon\KL(\pi\,\|\,\pi_{\rho_0, \varepsilon}) - \varepsilon\int_{\mb T^d}\rho_\varepsilon\log\rho_\varepsilon
\end{align*}
holds for every $\pi\in\Pi(\rho_0, \rho_\varepsilon)$, where $\pi_{\rho_0, \varepsilon}(x, y) \coloneqq \rho_0(x)\m K_\varepsilon(y-x)\in\ms P_{ac}(\mb T^d\times\mb T^d)$. Therefore, we have
\begin{align}\label{eqn: temp_ub1}
\eot_\varepsilon(\rho_0, \rho_\varepsilon) \leq \varepsilon\KL(\pi_{\rm ub}\,\|\,\pi_{\rho_0,\varepsilon}) - \varepsilon\int_{\mb T^d}\rho_\varepsilon\log\rho_\varepsilon. 
\end{align}
To evaluate the KL divergence term, note that
\begin{align*}
&\quad\,\KL(\pi_{\rm ub}\,\|\,\pi_{\rho,\varepsilon})
= \mb E_{(X, Y)\sim \pi_{\rm ub}}\Big[\log\frac{\rho_0(X)\rho_\varepsilon(Y)\m K_\varepsilon(X-Y)\big[U_K(X)V_K(Y) + \frac{r_U^\varepsilon(X) + r_V^\varepsilon(Y)}{\rho_0(X)\rho_\varepsilon(Y)}\big]}{\rho_0(X)\m K_\varepsilon(X-Y)}\Big]\\
&=\mb E_{\pi_{\rm ub}}[\log\rho_\varepsilon(Y)] + \mb E_{\pi_{\rm ub}}[\log U_K(X)] + \mb E_{\pi_{\rm ub}}[\log V_K(Y)] + \mb E_{\pi_{\rm ub}}\Big[\log\Big(1 + \frac{r_U^\varepsilon(x) + r_V^\varepsilon(y)}{\rho_0(X)\rho_\varepsilon(Y)U_K(X)V_K(Y)}\Big)\Big]\\
&\leq \int_{\mb T^d}\rho_\varepsilon\log\rho_\varepsilon + \int_{\mb T^d}\log U_K\,\dd\rho_0 + \int_{\mb T^d}\log V_K\,\dd\rho_\varepsilon + \int_{\mb T^d} \frac{r_U^\varepsilon(x)+r_V^\varepsilon(y)}{\rho_0(x)\rho_\varepsilon(y) U_K(x)V_K(y)}\,\dd\pi_{\rm ub}.
\end{align*}
In the last line, we use the fact that $\pi_{\rm ub}\in\Pi(\rho_0, \rho_\varepsilon)$ and $\log(1+z) \leq z$ for every $z > -1$. Combining with~\eqref{eqn: temp_ub1}, we have
\begin{align}\label{eqn: eotub}
\eot_\varepsilon(\rho_0, \rho_\varepsilon)
\leq \varepsilon\int_{\mb T^d}\log U_K\,\dd\rho_0 + \varepsilon\int_{\mb T^d}\log V_K\,\dd\rho_\varepsilon + \varepsilon\int_{\mb T^d} \frac{r_U^\varepsilon(x)+r_V^\varepsilon(y)}{\rho_0(x)\rho_\varepsilon(y) U_K(x)V_K(y)}\,\dd\pi_{\rm ub}.
\end{align}
Since $L_\rho\leq \rho_0(x), \rho_\varepsilon(y)\leq L_\rho^{-1}$,  $U_K(x) \geq \frac{1}{2}L_u$, and $V_K(y) \geq \frac{1}{2}L_v$, we have
\begin{align*}
\int_{\mb T^d}\frac{r_U^\varepsilon(x)+r_V^\varepsilon(y)}{\rho_0(x)\rho_\varepsilon(y) U_K(x)V_K(y)}\,\dd\pi_{\rm ub}
&\leq \int\Big|\frac{r_U^\varepsilon(x) + r_V^\varepsilon(y)}{L_\rho^2\cdot \frac{1}{2}L_u\cdot\frac{1}{2}L_v}\Big|\,\dd\pi_m \leq \frac{4L_uL_v}{L_\rho^2}\Big[\|r_U^\varepsilon\|_{L^2(\rho_0)} + \|r_V^\varepsilon\|_{L^2(\rho_\varepsilon)}\Big]\\
&\leq \frac{4L_\rho^{-1} L_uL_v}{L_\rho^2}\Big[\|r_U^\varepsilon\|_{L^2(\mb T^d)} + \|r_V^\varepsilon\|_{L^2(\mb T^d)}\Big]\\
&\stackrel{\ri}{\leq} \frac{16\sqrt{2} L_uL_v}{\varepsilon L_\rho^3} \sqrt{\|R_\varepsilon\|_{L^2(\mb T^d)}^2 + \|Q_\varepsilon\|_{L^2(\mb T^d)}^2}\\
&\stackrel{\rii}{=} O(\varepsilon^K).
\end{align*}
Here, (i) follows from Lemma~\ref{lem: exist_Fredholm}, and (ii) follows from Lemma~\ref{lem: int_remainder}; the big-O notation omits a constant only depending on $\|u_0\|_{\m C^0}, \cdots \|u_K\|_{\m C^0}$, $\|u_0\|_{H^{2K+2}}, \cdots, \|u_K\|_{H^{2K+2}}$, $\|v_0\|_{\m C^0}, \cdots, \|v_K\|_{\m C^0}$, $\|v_0\|_{H^{2K+2}}, \cdots, \|v_K\|_{H^{2K+2}}$, $L_\rho, L_u, L_v$, and $\sup\big\{\|\rho_t^{(k)}\|_{\m C^{2k'}}: 0\leq t\leq\varepsilon, \,\, k,k'\in\mb N,\,\,k+k'\leq K+1\big\}$. Combining with~\eqref{eqn: eotub}, the proof is completed.
\qed

\subsection{Proof of Theorem~\ref{thm: existence}}\label{sec: existence}
To establish the result, we first need the following version of the Schauder estimates of divergence form elliptic PDEs; see \cref{sec:Schauder-Lemma} for a proof. 
\begin{lemma}\label{lem: Schauder}
Let $\alpha\in(0, 1)$ and $s\in\mb N$ be two given constants.
Assume $\rho_0\in\m C^{s+1,\alpha}(\mb T^d)$ and satisfies $0< L_\rho \leq \rho_0(x) \leq L_\rho^{-1}$ with some constant $L_\rho\in(0, 1)$. Suppose $f\in \m C^{s,\alpha}(\mb T^d)$ and satisfies $\int_{\mb T^d}f\,\dd x = 0$. Then, the following elliptic PDE in divergence form 
\begin{align*}
\nabla\cdot(\rho_0\nabla u) = f
\end{align*}
has a (unique) solution $u\in\m C^{s+2,\alpha}(\mb T^d)$ such that $\int_{\mb T^d}u\,\dd x = 0$. Moreover, there exists a constant $C = C(\alpha, s, d, L_\rho, \|\rho_0\|_{\m C^{s+1,\alpha}}) > 0$, such that
\begin{align*}
\|u\|_{\m C^{s+2,\alpha}} \leq C\|f\|_{\m C^{s,\alpha}}.
\end{align*}
\end{lemma}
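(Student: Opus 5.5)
The plan is the standard route for divergence-form equations on a compact manifold without boundary: first obtain a weak solution by Lax--Milgram in the mean-zero subspace of $H^1(\mb T^d)$, then upgrade its regularity with classical interior Schauder theory, which is automatically global on $\mb T^d$ by periodicity.

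\emph{Existence and uniqueness of a weak solution.} Let $\dot H^1 \coloneqq \{u\in H^1(\mb T^d): \int u\,\dd x = 0\}$ and consider the bilinear form $a(u,w) = \int_{\mb T^d}\rho_0\nabla u\cdot\nabla w\,\dd x$.
\begin{itemize}
\item The form is bounded, with constant $L_\rho^{-1}$.
\item It is coercive on $\dd{}$ mean-zero functions, with constant $L_\rho c_P$, where $c_P>0$ is the Poincar\'e constant of the torus.
\item Since $\int f = 0$, the functional $w\mapsto -\int f w$ is well defined and continuous on $\dot H^1$, and testing against constants is compatible.
\end{itemize}
Lax--Milgram therefore gives a unique $u\in\dot H^1$ with $\|u\|_{H^1}\le C(L_\rho,d)\|f\|_{L^2}\le C\|f\|_{\m C^0}$. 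For uniqueness among classical solutions, note that if $\nabla\cdot(\rho_0\nabla u)=0$, testing with $u$ gives $\int\rho_0|\nabla u|^2 = 0$. Hence $u$ is constant, and zero mean forces $u\equiv 0$.

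\emph{Regularity.} Lift to a $2\pi$-periodic function on $\mb R^d$. Because $\rho_0\in\m C^{s+1,\alpha}$, the equation can be rewritten in nondivergence form,
\[
\rho_0\Delta u + \nabla\rho_0\cdot\nabla u = f,
\]
which has $\m C^{s,\alpha}$ coefficients and ellipticity constant $L_\rho$.
\begin{enumerate}
\item De Giorgi--Nash, or simply $\m C^{0,\alpha}$ coefficients in divergence form (Gilbarg--Trudinger Thm.~8.32--8.34), gives $u\in\m C^{1,\alpha}$ with
\[
\|u\|_{\m C^{1,\alpha}}\le C\big(\|u\|_{L^2}+\|f\|_{\m C^{0}}\big).
\]
\item Interior Schauder estimates (Gilbarg--Trudinger Thm.~6.2 and 6.17) on a ball of radius $2\pi\sqrt d$ covering a fundamental domain then give $u\in\m C^{2,\alpha}$.
\item Higher regularity follows by induction on $s$. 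Differentiate the equation in direction $e_i$: the function $\partial_i u$ solves
\[
\nabla\cdot(\rho_0\nabla\partial_i u) = \partial_i f - \nabla\cdot(\partial_i\rho_0\nabla u),
\]
whose right-hand side lies in $\m C^{s-1,\alpha}$ with norm bounded by $\|f\|_{\m C^{s,\alpha}} + \|\rho_0\|_{\m C^{s+1,\alpha}}\|u\|_{\m C^{s+1,\alpha}}$. Applying the step-$(s-1)$ estimate to $\partial_i u$ (minus its mean, which is zero on the torus) yields the $\m C^{s+2,\alpha}$ bound.
\end{enumerate}

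\emph{The quantitative estimate.} This step needs the most care. The Schauder estimates carry the lower-order term $\|u\|_{\m C^0}$, and that term must be controlled by $\|f\|$. The energy bound gives $\|u\|_{H^1}\lesssim\|f\|_{L^2}$, and De Giorgi--Nash--Moser ($L^\infty$ bound for divergence-form equations with bounded measurable coefficients) turns this into $\|u\|_{\m C^0}\lesssim \|u\|_{L^2}+\|f\|_{L^\infty}$. Alternatively, one can argue by contradiction and compactness using the uniqueness shown above. Feeding this into the chain of Schauder estimates produces $\|u\|_{\m C^{s+2,\alpha}}\le C\|f\|_{\m C^{s,\alpha}}$. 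Throughout, the constant depends only on $\alpha$, $s$, $d$, $L_\rho$ and $\|\rho_0\|_{\m C^{s+1,\alpha}}$.
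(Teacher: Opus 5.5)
Your proposal is correct. On the one step with real content, controlling the lower-order term $\|u\|_{\m C^0}$ left over by the Schauder estimate, it takes a different route from the paper.

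The paper takes existence for granted and cites the global Schauder estimate $\|u\|_{\m C^{s+2,\alpha}}\le C'(\|u\|_{\m C^0}+\|f\|_{\m C^{s,\alpha}})$ on $\mb T^d$ for all $s$ at once. It then proves $\|u\|_{\m C^0}\le C''\|f\|_{\m C^{0,\alpha}}$ by contradiction. If the bound failed uniformly over densities with $L_\rho\le\rho\le L_\rho^{-1}$ and $\|\rho\|_{\m C^{1,\alpha}}\le M$, there would be mean-zero $u_n$ with $\|u_n\|_{\m C^{0,\alpha}}=1$ and $\|\nabla\cdot(\rho_n\nabla u_n)\|_{\m C^{0,\alpha}}\to 0$. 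Arzel\`a--Ascoli would then produce a nonzero mean-zero solution of $\nabla\cdot(\rho^\ast\nabla u^\ast)=0$, which is impossible. This is exactly the ``contradiction and compactness'' option you mention in passing.

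Your main argument instead chains the Lax--Milgram energy bound with De Giorgi--Nash--Moser local boundedness on the periodic lift. This buys two things:
\begin{itemize}
\item The $\m C^0$ bound is effective: its constant depends only on $d$ and $L_\rho$, and it needs only $\|f\|_{L^\infty}$. The paper's constant is non-explicit and depends on $\|\rho_0\|_{\m C^{1,\alpha}}$ through the compactness step.
\item Your Lax--Milgram step actually proves the existence and uniqueness asserted in the lemma, which the paper's proof never addresses.
\end{itemize}
In exchange, the paper needs nothing beyond the Schauder estimate itself, while you import a deep theorem. Your induction on $s$ by differentiating the equation is a valid substitute for citing the higher-order Schauder estimate directly.

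Two citations are loose, though neither affects correctness. De Giorgi--Nash alone gives only $\m C^{0,\beta}$, not $\m C^{1,\alpha}$. Gilbarg--Trudinger Theorem 6.17 presupposes $u\in\m C^2$, so it cannot by itself lift a $\m C^{1,\alpha}$ weak solution to $\m C^{2,\alpha}$. Since $\rho_0\in\m C^{1,\alpha}$ is Lipschitz, the clean path is interior $H^2$-regularity of the weak solution (Gilbarg--Trudinger Theorem 8.8). After that, $u$ is a strong solution of the nondivergence-form equation, and Gilbarg--Trudinger Theorem 9.19 yields $u\in\m C^{s+2,\alpha}$ directly.
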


We are now ready to prove the existence of the series of equations~\eqref{eqn: asymp-Schro}.

\begin{proof}[\bf Proof of Theorem~\ref{thm: existence}]
The proof is based on induction. First we need to collect some identities. A point of these identities is to express objects with index $k$ in terms of objects with index less than $k$.
We especially use the notation 
\begin{align}\label{eqn:notation-Delta-s}
f \trieq{s} g
\end{align}
to indicate that $f-g$ is a function that does not depend on $\{u_i\}_{i\geq s}$, $\{v_i\}_{i\geq s}$, $\{u_i^\dagger\}_{i\geq s}$, and $\{v_i^\dagger\}_{i\geq s}$; this will be useful in our induction argument.

For any integer $k\geq 1$, Equation~\eqref{eqn: iter_fourier} implies
\begin{align}
\begin{aligned}\label{eqn: defAB}
v_k^\dagger - u_k\rho_0 &= \sum_{l=0}^{k-1} \frac{\Delta^{k-l}(u_l\rho_0)}{2^{k-l}(k-l)!} \eqqcolon A_{k-1},\\
u_k^\dagger - v_k\rho_0 &= \sum_{l=0}^{k-1}\sum_{i=0}^l \frac{\Delta^{k-l}(v_i\rho_{l-i})}{2^{k-l}(k-l)!} + \sum_{i=0}^{k-1}v_i\rho_{k-i} \eqqcolon B_{k-1},
\end{aligned}
\end{align}
and Equation~\eqref{eqn: iter_dagger} implies
\begin{align*}
u_k + u_0^2u_k^\dagger &= -u_0\sum_{i=1}^{k-1}u_{k-i}u_i^\dagger \eqqcolon C_{k-1},\\
v_k + v_0^2v_k^\dagger &= -v_0\sum_{i=1}^{k-1}v_{k-i}v_i^\dagger \eqqcolon D_{k-1}.
\end{align*}
It is also easy to check that
\begin{align}\label{eqn: bridge}
v_0^\dagger D_{k-1} - v_0A_{k-1}
= v_0^\dagger v_k + u_0^\dagger u_k = u_0^\dagger C_{k-1} - u_0B_{k-1}
\end{align}
holds for all $k\geq 1$, and
\begin{align*}
A_0 = \frac{\Delta(u_0\rho_0)}{2},
\quad
B_0 = \frac{\Delta(v_0\rho_0)}{2} + v_0\rho_1,
\quad
C_0 = D_0 = 0.
\end{align*}

In the notation in \eqref{eqn:notation-Delta-s}, the equation \eqref{eqn: bridge} implies $$v_0^\dagger v_k \trieq{k} - u_0^\dagger u_k,$$ since $A_{k-1}, B_{k-1}, C_{k-1}, D_{k-1}$ do not depend on $u_k, v_k, u_k^\dagger, v_k^\dagger$.

Also, we shall note that
\begin{align*}
\sum_{i=0}^k v_{k-i} A_i
&= \sum_{i=0}^k v_{k-i}(v_{i+1}^\dagger - u_{i+1}\rho_0)
= \sum_{i=0}^k v_{k-i}v_{i+1}^\dagger - \rho_0\sum_{i=0}^k v_{k-i}u_{i+1}\\
&\stackrel{\ri}{=} -v_0^\dagger v_{k+1} - \rho_0 \sum_{i=0}^{k+1}v_iu_{k-i+1} + \rho_0 u_0 v_{k+1}\\
&= -\rho_0\sum_{i=0}^{k+1}v_iu_{k-i+1}.
\end{align*}
Similarly, we can get
\begin{align}\label{eqn: uB=vA}
\sum_{i=0}^k u_{k-i}B_i = -\rho_0\sum_{i=0}^{k+1} v_iu_{k-i+1} = \sum_{i=0}^k v_{k-i}A_i.
\end{align}

Now, we are ready to prove the statement.

\vspace{0.5em}
\noindent\underline{Step 1: The equalities hold for $k=1$.} In this step, we will show
\begin{align}\label{eqn: PDEu0}
\nabla\cdot(\rho_0\nabla \log u_0) = \rho_1 - \frac{1}{2}\Delta\rho_0,
\end{align}
and use this PDE to derive the regularity of $u_0$ and $v_0$.

Recall that we have
\begin{align}
&u_1 = -u_0^2 u_1^\dagger,
\qquad 
v_1 = -v_0^2 v_1^\dagger,\notag\\
&u_1^\dagger = \frac{\Delta(v_0\rho_0)}{2} + v_0\rho_1 + v_1\rho_0,
\qquad
v_1^\dagger = \frac{\Delta(u_0\rho_0)}{2} + u_1\rho_0\label{eqn: uv1dagger}.
\end{align}
First, we will show
\begin{align}\label{eqn: init_dagger}
u_0^\dagger \Delta v_0^\dagger - v_0^\dagger\Delta u_0^\dagger &= 2\rho_1
\quad\,\,\mx{and}\quad\,\,
u_0^\dagger v_0^\dagger = \rho_0
\end{align}
with $\rho_0 > 0$ and $\int_{\mb T^d}\rho_1 = 0$.

Note that $u_0u_0^\dagger = v_0v_0^\dagger = 1$. Therefore, $\rho_0u_0v_0 = 1$ directly implies $u_0^\dagger v_0^\dagger = \rho_0$. Meanwhile, we get $u_0\rho_0 = v_0^\dagger$ and $v_0\rho_0 = u_0^\dagger$, and 
\begin{align*}
\rho_0u_1 = -\rho_0u_0^2u_1^\dagger = -u_0v_0^\dagger u_1^\dagger
\quad\mx{and}\quad
\rho_0v_1 =- \rho_0v_0^2 v_1^\dagger = -v_0u_0^\dagger v_1^\dagger.
\end{align*}
Now, by the definition of $u_1^\dagger$ and $v_1^\dagger$ in~\eqref{eqn: uv1dagger}, we get
\begin{align*}
v_0^\dagger u_1^\dagger
&= v_0^\dagger\Big[\frac{\Delta u_0^\dagger}{2} + v_0\rho_1 - v_0u_0^\dagger v_1^\dagger\Big]
= \frac{v_0^\dagger\Delta u_0^\dagger}{2} + \rho_1 - u_0^\dagger v_1^\dagger,\\
u_0^\dagger v_1^\dagger 
&= u_0^\dagger \Big[\frac{\Delta v_0^\dagger}{2} - u_0 v_0^\dagger u_1^\dagger\Big]
= \frac{u_0^\dagger \Delta v_0^\dagger}{2} - v_0^\dagger u_1^\dagger.
\end{align*}
Subtracting the first equation with the second equation yields~\eqref{eqn: init_dagger}.

Next, we will use Equation~\eqref{eqn: init_dagger} to get the elliptic PDE~\eqref{eqn: PDEu0}. Note that
\begin{align}\label{eqn:logu0d}
\nabla\cdot(\rho_0\nabla\log u_0^\dagger)
&= \nabla\cdot\Big(\rho_0 \frac{\nabla u_0^\dagger}{u_0^\dagger}\Big)
= \nabla\cdot(v_0^\dagger \nabla u_0^\dagger).
\end{align}
In the last equality, we use the fact that $\rho_0 / u_0^\dagger = \rho_0 u_0 = v_0^\dagger$. Similarly, we get
\begin{align*}
\nabla\cdot(\rho_0\nabla\log v_0^\dagger) = \nabla\cdot(u_0^\dagger \nabla v_0^\dagger).
\end{align*}
On the other hand, we have
\begin{align*}
\nabla\cdot(\rho_0\nabla\log v_0^\dagger)
= \nabla\cdot(\rho_0\nabla\log(u_0\rho_0)) = \nabla\cdot(\rho_0\nabla\log u_0) + \Delta\rho_0.
\end{align*}
Using $\log u_0 = -\log u_0^\dagger$, we have
\begin{align*}
\nabla\cdot(\rho_0\nabla\log u_0^\dagger) 
&= \Delta\rho_0 - \nabla\cdot(\rho_0\nabla\log v_0^\dagger)
= \Delta\rho_0 - \nabla\cdot(u_0^\dagger\nabla v_0^\dagger).
\end{align*}
Combining with~\eqref{eqn:logu0d}, we get
\begin{align*}
\nabla\cdot(\rho_0\nabla\log u_0^\dagger) 
&= \frac{1}{2}\Big[\Delta\rho_0 - \nabla\cdot(u_0^\dagger\nabla v_0^\dagger)\Big] + \frac{1}{2}\nabla\cdot(v_0^\dagger\nabla u_0^\dagger)\\
&= \frac{1}{2}\Delta\rho_0 + \frac{1}{2}\nabla\cdot(v_0^\dagger \nabla u_0^\dagger - u_0^\dagger \nabla v_0^\dagger)\\
&=\frac{1}{2}\Delta\rho_0 + \frac{1}{2}\Big[v_0^\dagger\Delta u_0^\dagger - u_0^\dagger \Delta v_0^\dagger\Big]\\
&= \frac{1}{2}\Delta\rho_0  - \rho_1.
\end{align*}
Then applying again $\log u_0 = -\log u_0^\dagger$ implies the elliptic PDE~\eqref{eqn: PDEu0}.

Since $\int_{\mb T^d}\frac{1}{2}\Delta\rho_0 - \rho_1 = 0$, the solution $u_0$ of the above equation exists. Moreover, for any $s\in\mb N^\ast$ and $\alpha\in(0, 1)$, Lemma~\ref{lem: Schauder} implies that
\begin{align*}
\|\log u_0\|_{\m C^{s+2,\alpha}} \leq C(\alpha, s, d, L_\rho, \|\rho_0\|_{\m C^{s+1,\alpha}})\Big[\|\rho_1\|_{\m C^{s,\alpha}} + \frac{1}{2}\|\rho_0\|_{\m C^{s+2,\alpha}}\Big].
\end{align*}
holds for some constant $C(\alpha, s, d, L_\rho, \|\rho_0\|_{\m C^{s,\alpha}}) > 0$. The same upper bound also holds for $\|\log v_0\|$. Then, we know $u_0, v_0\in\m C^{s+2,\alpha}$ also holds for similar estimates with a difference constant.

\vspace{0.5em}
\noindent\underline{Step 2: The equalities hold for $1, 2, \cdots, k+1$.}
The induction hypothesis indicates the existence of $\{u_n\}_{n=0}^{k-1}$, $\{v_n\}_{n=0}^{k-1}$, $\{u_n^\dagger\}_{n=0}^{k-1}$, and $\{v_n^\dagger\}_{n=0}^{k-1}$. Now, we only need to prove the existence of $u_k$ and $v_k$, after which we know $v_k^\dagger = u_k\rho_0 + A_{k-1}$ and $u_k^\dagger = v_k\rho_0 + B_{k-1}$ automatically. The key is to establish the following elliptic PDE for $u_k$:
\begin{align}\label{eqn: uk_PDE}
\nabla\cdot\big(\rho_0\nabla(u_ku_0^\dagger)\big) =  \rho_0 u_0^\dagger F_{k-1},
\end{align}
where $F_{k-1}$ is a function depending only on $\{u_n\}_{n=0}^{k-1}$, $\{v_n\}_{n=0}^{k-1}$, $\{u_n^\dagger\}_{n=0}^{k-1}$, and $\{v_n^\dagger\}_{n=0}^{k-1}$.

From Equation~\eqref{eqn: uB=vA}, we have $\sum_{n=0}^k u_n B_{k-n} = \sum_{n=0}^k v_n A_{k-n}$. Note that the left-hand side is
\begin{align*}
\lhs &\trieq{k} u_0 B_k + u_kB_0
\trieq{k} u_0\Big[\frac{\Delta(\rho_0 v_k)}{2} + \rho_1 v_k\Big]  + u_kB_0\\
&\trieq{k} u_0\Big[\frac{\Delta(u_0^\dagger v_0^\dagger v_k)}{2} + \rho_1 v_0 v_0^\dagger v_k\Big] + u_kB_0\\
&\trieq{k} -u_0\Big[\frac{\Delta(u_0^\dagger u_0^\dagger u_k)}{2} + \rho_1v_0 u_0^\dagger u_k\Big] + u_k B_0\\
&\trieq{k} -\frac{u_0^\dagger}{2}\Delta u_k - 2\langle\nabla u_0^\dagger, \nabla u_k\rangle + \Big[B_0 - \rho_1 v_0 - \frac{u_0}{2}\Delta(u_0^\dagger)^2\Big]u_k.
\end{align*}
In these equalities, we use the facts that $v_0^\dagger v_k \trieq{k} - u_0^\dagger u_k$ derived from~\eqref{eqn: bridge}, $\rho_0 = u_0^\dagger v_0^\dagger$ and $u_0u_0^\dagger = v_0v_0^\dagger = 1$. Similarly, we have
\begin{align*}
\rhs &\trieq{k} v_0A_k + v_k A_0
\trieq{k} v_0 \cdot\frac{\Delta(u_k\rho_0)}{2} + A_0v_0v_0^\dagger v_k
\trieq{k} \frac{v_0\Delta(u_k\rho_0)}{2} - A_0v_0 u_0^\dagger u_k.
\end{align*}
Therefore, the above arguments imply
\begin{align*}
0 &\trieq{k} \frac{v_0\Delta(u_k\rho_0)}{2} - A_0v_0 u_0^\dagger u_k - \bigg[
-\frac{u_0^\dagger}{2}\Delta u_k - 2\langle\nabla u_0^\dagger, \nabla u_k\rangle + \Big[B_0 - \rho_1 v_0 - \frac{u_0}{2}\Delta(u_0^\dagger)^2\Big]u_k\bigg]\\
&\trieq{k} u_0^\dagger\Delta u_k + \langle 2\nabla u_0^\dagger + v_0\nabla\rho_0, \nabla u_k\rangle + \Big[\frac{v_0\Delta\rho_0}{2} - A_0v_0 u_0^\dagger - B_0 + \rho_1v_0 + \frac{u_0}{2}\Delta(u_0^\dagger)^2\Big]u_k\\
&\trieq{k} u_0^\dagger\Big[\Delta u_k + \langle 2\nabla\log u_0^\dagger + \nabla\log\rho_0, \nabla u_k\rangle + \frac{1}{\rho_0}\Big(\frac{\Delta\rho_0}{2} - A_0 u_0^\dagger - B_0v_0^\dagger + \rho_1 + \frac{u_0^2\rho_0}{2}\Delta(u_0^\dagger)^2\Big)u_k\Big]\\
&\trieq{k} u_0^\dagger\Big[\Delta u_k + \langle 2\nabla\log u_0^\dagger + \nabla\log\rho_0, \nabla u_k\rangle + \Big(\Delta\log u_0^\dagger + \|\nabla\log u_0^\dagger\|^2 + \langle\nabla\log\rho_0, \nabla\log u_0^\dagger\rangle\Big)u_k\Big]
\end{align*}
Define the elliptic operator
\begin{align*}
\m L u_k &\coloneqq \Delta u_k + \langle 2\nabla\log u_0^\dagger + \nabla\log\rho_0, \nabla u_k\rangle + \Big(\Delta\log u_0^\dagger + \|\nabla\log u_0^\dagger\|^2 + \langle\nabla\log\rho_0, \nabla\log u_0^\dagger\rangle\Big)u_k\\
&= u_0\big[\Delta(u_ku_0^\dagger) + \langle\nabla\log\rho_0, \nabla(u_ku_0^\dagger)\rangle\big]
= \frac{u_0}{\rho_0}\nabla\cdot(\rho_0\nabla(u_ku_0^\dagger)).
\end{align*}

Then, the above argument indicates that Equation~\eqref{eqn: uB=vA} implies $u_0^\dagger \m L u_k \trieq{k} 0$. Thus, we get 
$$\m L u_k \trieq{k} 0.$$ 
By the notation $\trieq{k}$ defined in~\eqref{eqn:notation-Delta-s}, it means the existence of a function $F_{k-1}$, depending only $\{u_i\}_{i<k}$, $\{v_i\}_{i<k}$, $\{u_i^\dagger\}_{i<k}$, and $\{v_i^\dagger\}_{i<k}$, such that
\begin{align*}
\m L u_k = F_{k-1}.
\end{align*}
Equation~\eqref{eqn: uB=vA} implies that 
\begin{align}\label{eqn: expression-Fk1}
F_{k-1} \coloneqq \m L u_k + u_0\Big[\sum_{n=0}^k u_{k-n}B_n - \sum_{n=0}^k v_{k-n}A_n\Big].
\end{align}
Then, using the definition of the operator $\m L$, $u_k$ is the solution to the following strictly elliptic PDE:
\begin{align}
\nabla\cdot\big(\rho_0\nabla(u_ku_0^\dagger)\big) = \frac{\rho_0 F_{k-1}}{u_0} = \rho_0 u_0^\dagger F_{k-1}.
\end{align}
This is exactly Equation~\eqref{eqn: uk_PDE}.

The rest of the proof can be decomposed into two parts: existence of the solution $u_k$ and the estimation of its Holder norm.

\underline{Step 2.1: existence of solution.} To prove~\eqref{eqn: uk_PDE} has a solution, we only need to show that 
$$\int_{\mb T^d} \rho_0 u_0^\dagger F_{k-1}\,\dd x = 0.$$ 
By the expression of $F_{k-1}$ in Equation~\eqref{eqn: expression-Fk1}, we have
\begin{align*}
\int_{\mb T^d} \rho_0 u_0^\dagger F_{k-1}\,\dd x
= \big\langle \rho_0 u_0^\dagger, \m L u_k\rangle_{L^2(\mb T^d)} + \int_{\mb T^d}\rho_0\Big[\sum_{n=0}^k u_{k-n}B_n - \sum_{n=0}^k v_{k-n}A_n\Big]\,\dd x.
\end{align*}
Since $\langle\rho_0 u_0^\dagger, \m L u_k\rangle_{L^2(\mb T^d)} = \langle\m L^\ast(\rho_0 u_0^\dagger), u_k\rangle_{L^2(\mb T^d)} = 0$, we only need to prove that
\begin{align}\label{eqn: uB=vA-int}
\int_{\mb T^d}\rho_0\sum_{n=0}^k u_{k-n}B_n\,\dd x = \int_{\mb T^d}\rho_0\sum_{n=0}^k v_{k-n}A_n\,\dd x.
\end{align}
To prove Equation~\eqref{eqn: uB=vA-int}, first note that
\begin{align*}
B_n &= \sum_{l=0}^n \sum_{i=0}^l \frac{\Delta^{n-l+1}(v_i\rho_{l-i})}{2^{n-l+1}(n-l+1)!} + \sum_{i=0}^n v_i\rho_{n-i+1}\\
&= \sum_{i=0}^n \sum_{l=i}^n \frac{\Delta^{n-l+1}(v_{l-i}\rho_{i})}{2^{n-l+1}(n-l+1)!} + \sum_{i=1}^{n+1}\rho_i v_{n-i+1}.
\end{align*}
Therefore, we have
\begin{align*}
&\quad\,\int_{\mb T^d}\rho_0\sum_{n=0}^k u_{k-n}B_k \,\dd x
= \int_{\mb T^d} \rho_0\sum_{n=0}^k u_{k-n}\Big[\sum_{i=0}^n \sum_{l=i}^n \frac{\Delta^{n-l+1}(v_{l-i}\rho_{i})}{2^{n-l+1}(n-l+1)!} + \sum_{i=1}^{n+1}\rho_i v_{n-i+1}\Big]\,\dd x\\
&=\sum_{i=0}^k \sum_{n=i}^k\sum_{l=i}^n \int_{\mb T^d}\frac{\rho_0 u_{k-n}\Delta^{n-l+1}(v_{l-i}\rho_{i})}{2^{n-l+1}(n-l+1)!}\,\dd x + \sum_{i=1}^{k+1}\sum_{n=i-1}^k \int_{\mb T^d}\rho_0 u_{k-n} \rho_i v_{n-i+1}\,\dd x\\
&\stackrel{\ri}{=} \sum_{i=0}^k \sum_{n=i}^k\sum_{l=i}^n \int_{\mb T^d}\frac{v_{l-i}\rho_{i}\Delta^{n-l+1}(\rho_0 u_{k-n})}{2^{n-l+1}(n-l+1)!}\,\dd x + \sum_{i=1}^{k+1}\sum_{n=i-1}^k \int_{\mb T^d}\rho_0 u_{k-n} \rho_i v_{n-i+1}\,\dd x\\
&= \sum_{i=0}^k \sum_{l=i}^k \sum_{n=0}^{k-l} \int_{\mb T^d}\frac{v_{l-i}\rho_{i}\Delta^{k-l-n+1}(\rho_0 u_{n})}{2^{k-l-n+1}(k-l-n+1)!}\,\dd x + \sum_{i=1}^{k+1}\sum_{n=i-1}^k \int_{\mb T^d}\rho_0 u_{k-n} \rho_i v_{n-i+1}\,\dd x\\
&\stackrel{}{=} \sum_{i=0}^k \sum_{l=i}^k \sum_{n=0}^{k-l} \int_{\mb T^d}\frac{v_{l-i}\rho_{i}\Delta^{k-l-n+1}(\rho_0 u_{n})}{2^{k-l-n+1}(k-l-n+1)!}\,\dd x + \sum_{i=1}^{k+1}\sum_{n=i-1}^k \int_{\mb T^d}\rho_0 u_{k-n} \rho_i v_{n-i+1}\,\dd x\\
&\stackrel{\rii}{=} \sum_{i=0}^k \int_{\mb T^d}\rho_i \sum_{l=i}^k v_{l-i}A_{k-l}\,\dd x + \sum_{i=1}^{k+1}\sum_{n=i-1}^k \int_{\mb T^d}\rho_0 u_{k-n} \rho_i v_{n-i+1}\,\dd x.
\end{align*}
Here, (i) is due to integration by parts; (ii) is due to the definition of $A_{k-l}$.
Now, let us check the terms related to $\rho_i$ for $i = 0, 1, \cdots, k+1$ on the right-hand side.
The terms with $\rho_0$ is
\begin{align*}
\int_{\mb T^d} \rho_0 \sum_{l=0}^k v_l A_{k-l}\,\dd x.
\end{align*}
For $1\leq i\leq k$, the terms related to $\rho_i$ is
\begin{align*}
&\quad\,\int_{\mb T^d} \rho_i \sum_{l=i}^k v_{l-i} A_{k-l}\,\dd x + \sum_{n=i-1}^k\int_{\mb T^d} \rho_0 u_{k-n}\rho_i v_{n-i+1}\,\dd x
= \int_{\mb T^d}\rho_i\Big[\sum_{l=0}^{k-i}v_l A_{k-i-l} + \rho_0\sum_{n=0}^{k-i+1} v_n u_{k-i+1 - n}\Big]\,\dd x = 0,
\end{align*}
where the last equality is due to
\begin{align*}
\sum_{l=0}^{k-i} v_l A_{k-i-l}
&\stackrel{\ri}{=} \sum_{l=0}^{k-i} v_{k-i-l}[v_{l+1}^\dagger - u_{l+1}\rho_0] = \sum_{l=0}^{k-i} v_{l+1}^\dagger v_{k-i-l} - \rho_0 \sum_{l=0}^{k-i} u_{l+1} v_{k-i-l}\\
&\stackrel{\rii}{=} - v_0^\dagger v_{k-i+1} - \rho_0\sum_{l=0}^{k-i+1} v_l u_{k-i+1-l} + \rho_0 u_0 v_{k-i+1}\\
&\stackrel{\rii}{=} - \rho_0\sum_{l=0}^{k-i+1} v_l u_{k-i+1-l}.
\end{align*}
Here in the above equations, (i) follows from the expression of $A_{k-i-l}$ defined in~\eqref{eqn: defAB}; (ii) is due to $\sum_{l=0}^{k-i+1} v_l^\dagger v_{k-i+1 - l} = 0$ for $i\leq k$; (iii) is due to $\rho_0u_0 = 1/v_0 = v_0^\dagger$. 
The term with $\rho_{k+1}$ is
\begin{align*}
\int_{\mb T^d} \rho_0u_0\rho_{k+1}v_0\,\dd x
= \int_{\mb T^d} \rho_{k+1}\,\dd x
= 0.
\end{align*}
Therefore, we have shown the equality~\eqref{eqn: uB=vA-int}.

So far, we have shown the existence of $u_k$. Using~\eqref{eqn: bridge} and $u_0^\dagger > 0$, as well as the induction hypothesis, $v_k$ exists. 

\underline{Step 2.2: regularity of the solution.} Here, we only focus on the regularity of $u_k$, as we expect it has the same regularity as $v_k$, $u_k^\dagger$, and $v_k^\dagger$.
Using Lemma~\ref{lem: Schauder}, the solution to the PDE~\eqref{eqn: uk_PDE} satisfies
\begin{align*}
\|u_ku_0^\dagger\|_{\m C^{s+2,\alpha}} \leq C(\alpha, s,d,L_\rho,\|\rho_0\|_{\m C^{s+1,\alpha}})\Big\|\frac{\rho_0 F_{k-1}}{u_0}\Big\|_{\m C^{s,\alpha}}
\end{align*}
Some tedious calculations (see Appendix~\ref{app: calculation}) implies that~\eqref{eqn: uk_PDE} can also be formulated as
\begin{align}\label{eqn: xxx}
\begin{aligned}
\nabla\cdot\big(\rho_0\nabla(u_0^\dagger u_k)\big)
&= \rho_0\bigg[\sum_{i=1}^{k-1} u_{k-i}B_i - \sum_{i=1}^{k-1} v_{k-i}A_i\bigg] - \rho_0\bigg[v_0\sum_{l=0}^{k-1}\frac{\Delta^{k-l+1}(u_l\rho_0)}{2^{k-l+1}(k-l+1)!} - S_0S_{k-1}\bigg]\\
&\quad + v_0^\dagger\bigg[\frac{\Delta(u_0^\dagger S_{k-1})}{2} + v_0\rho_1S_{k-1} + \sum_{l=0}^{k-1}\sum_{i=0}^l \frac{\Delta^{k+1-l}(v_0\rho_{l-i})}{2^{k+1-l}(k+1-l)!} + \sum_{i=0}^{k-1}\frac{\Delta(v_i\rho_{k-i})}{2} + \sum_{i=0}^{k-1}v_i\rho_{k-i+1}\bigg].
\end{aligned}
\end{align}
Moreover, it can be shown (see Appendix~\ref{app: Holder norm}) that the $\m C^{s,\alpha}$-norm of the right-hand side can be bounded above by $d,s,k,\alpha$, $\|u_0\|_{\m C^{s+2k+2,\alpha}}, \|u_1\|_{\m C^{s+2k,\alpha}} \cdots, \|u_{k-1}\|_{\m C^{s+4,\alpha}}$, $\|v_0\|_{\m C^{s+2k,\alpha}}, \|v_1\|_{\m C^{s+2k-2,\alpha}}, \cdots, \|v_{k-1}\|_{\m C^{s+2,\alpha}}$, $\|v_1^\dagger\|_{\m C^{s+2,\alpha}}, \cdots, \|v_{k-1}^\dagger\|_{\m C^{s+2,\alpha}}$, $\|\rho_0\|_{\m C^{s+2k+2,\alpha}}, \|\rho_1\|_{\m C^{s+2k,\alpha}}, \cdots, \|\rho_{k+1}\|_{\m C^{s,\alpha}}$, $\|u_0^\dagger\|_{\m C^{s+2,\alpha}}, \|v_0^\dagger\|_{\m C^{s+2,\alpha}}$, and $\|v_0\|_{\m C^{s+2k+2,\alpha}}$.
Using induction hypothesis, all these terms can be bounded by a constant only depending on $d, s, k,\alpha$ and $\|\rho_0\|_{\m C^{s+2k+2,\alpha}}, \|\rho_1\|_{\m C^{s+2k,\alpha}}, \cdots, \|\rho_{k+1}\|_{\m C^{s,\alpha}}$.

Now, we have a control of $\|u_ku_0^\dagger\|_{\m C^{s+2,\alpha}}$. Then, the estimates of $\|u_k\|_{\m C^{s+2,\alpha}}$ follows from
\begin{align*}
\|u_k\|_{\m C^{s+2,\alpha}}
\leq C(d, s, k, \alpha)\|u_ku_0^\dagger\|_{\m C^{s+2,\alpha}}\|u_0\|_{\m C^{s+2,\alpha}}.
\end{align*}

Using induction, the statement is completed.
\end{proof}

\section{Quantitative stability of the Schr\"odinger Functional $\m I_{\mu, \nu}^\varepsilon$}\label{sec: stability}
In this section, we focus on deriving another key component for proving our main result, quantitative stability  of the Schr\"odinger functionals $\m I_{\mu, \nu}^\varepsilon$ and $\overline{\m I}_{\mu, \nu}^\varepsilon$ \eqref{eqn: dual functional}, namely, they are quadratically nondegenerate near the maximum.
Given a function $f$, its variance under a probability distribution $\mu\in\ms P(\mb T^d)$ is denoted as
\begin{align*}
\mb V_\mu(f) 
\coloneqq \int_{\mb T^d}\bigg(f - \int_{\mb T^d}f\,\dd\mu\bigg)^2\,\dd\mu
= \min_{c\in\mb R}\int_{\mb T^d} (f-c)^2\,\dd\mu.
\end{align*}
The goal of this section is to prove the following result.


\begin{theorem}\label{thm: stability}
Suppose $\mu(x)$ and $\nu(x)$ are probability density functions on $\mb T^d$. Assume there exist positive constants 
$m_\mu, m_\nu, M_\mu, M_\nu\in\mb R_+$ such that $\mu(x)\in[m_\mu, M_\mu]$ and $\nu(x)\in[m_\nu, M_\nu]$ hold for all $x\in\mb T^d$. 
Let $(\phi_\varepsilon, \psi_\varepsilon)$ be the Schr\"odinger potentials~\eqref{eqn: Schro_sys}.
Then, there exist positive constants $C_\nu$ and $C_\mu$ depending only on $m_\nu, M_\nu, d$ and $m_\mu, M_\mu, d$ respectively, such that for any functions $\chi,\eta\in \m C(\mb T^d)$, 
we have
\begin{subequations}\label{eqn: stability}
\begin{align}
\mb V_\mu(\chi) &\leq \Big(4\varepsilon + C_\nu e^{\frac{2R^2}{\alpha} + 8\pi^2 d} + \frac{8\osc(\chi)^2}{3\varepsilon}\Big)\big[\m I_{\mu, \nu}^\varepsilon[\phi_\varepsilon] - \m I_{\mu, \nu}^\varepsilon[\phi_\varepsilon + \chi]\big],\label{eqn: stability1}\\
\mb V_\nu(\eta) &\leq \Big(4\varepsilon + C_\mu e^{\frac{2R^2}{\alpha} + 8\pi^2 d} + \frac{8\osc(\eta)^2}{3\varepsilon}\Big)\big[\overline{\m I}_{\mu, \nu}^\varepsilon[\psi_\varepsilon] - \overline{\m I}_{\mu, \nu}^\varepsilon[\psi_\varepsilon + \eta]\big].\label{eqn: stability2}
\end{align}    
\end{subequations}
Here, $\osc(\chi) = \sup_x \chi(x) - \inf_x\chi(x)$ is the oscillation of $\chi$ on $\mb T^d$; $R$ 
and $\alpha$ are the  two universal positive constants introduced in Proposition~\ref{prop: 2norm_local_convex}.
\myqed
\end{theorem}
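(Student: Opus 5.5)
We prove \eqref{eqn: stability1}; \eqref{eqn: stability2} follows by exchanging the roles of $\mu$ and $\nu$. The plan is a second-order (Taylor with integral remainder) expansion of $t\mapsto \m I_{\mu,\nu}^\varepsilon[\phi_\varepsilon+t\chi]$ on $[0,1]$. By optimality of $\phi_\varepsilon$, the first derivative at $t=0$ vanishes. Hence
\begin{align*}
\m I_{\mu,\nu}^\varepsilon[\phi_\varepsilon] - \m I_{\mu,\nu}^\varepsilon[\phi_\varepsilon+\chi] = \int_0^1 (1-t)\Big(-\frac{\dd^2}{\dd t^2}\m I_{\mu,\nu}^\varepsilon[\phi_\varepsilon+t\chi]\Big)\dd t .
\end{align*}
A direct computation from the definition of $\m T_\mu^\varepsilon$ gives the second derivative:
\begin{align*}
-\frac{\dd^2}{\dd t^2}\m I_{\mu,\nu}^\varepsilon[\phi_\varepsilon+t\chi] = \frac{1}{\varepsilon}\int_{\mb T^d}\mathrm{Var}_{\gamma^t_y}(\chi)\,\dd\nu(y),
\end{align*}
where $\gamma^t_y(\dd x)\propto e^{(\phi_\varepsilon+t\chi)(x)/\varepsilon}\m K_\varepsilon(x-y)\,\dd\mu(x)$ is the conditional law of $x$ given $y$ under the tilted plan $\pi^t$.

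The core step is a lower bound on the conditional variances by $\mb V_\mu(\chi)$. There are two ingredients.
\begin{itemize}
\item[(a)] \emph{Perturbation back to $t=0$.} Since $\chi$ tilts $\gamma^0_y$ by a density proportional to $e^{t\chi/\varepsilon}$, which lies in a band of width $e^{t\osc(\chi)/\varepsilon}$, we have $\mathrm{Var}_{\gamma^t_y}(\chi)\ge e^{-t\osc(\chi)/\varepsilon}\mathrm{Var}_{\gamma^0_y}(\chi)$. Integrating $(1-t)e^{-t\osc/\varepsilon}$ over $[0,1]$ and optimizing via an elementary inequality of the form $\int_0^1(1-t)e^{-at}\dd t\ge \big(2+\tfrac{2a^2}{3}\big)^{-1}$-type bounds should produce the factors $4\varepsilon$ and $8\osc(\chi)^2/(3\varepsilon)$, once the $1/\varepsilon$ prefactor is combined with these terms.
\item[(b)] \emph{Variance at $t=0$.} Here $\gamma^0_y=\pi_\varepsilon(\cdot\mid y)$ is the optimal entropic coupling's conditional, which is a Gaussian-like kernel localized at scale $\sqrt\varepsilon$. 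We need a ``spectral gap'' of the form
\begin{align*}
\int \mathrm{Var}_{\pi_\varepsilon(\cdot|y)}(\chi)\,\dd\nu(y) \gtrsim \mb V_\mu(\chi)\times(\text{constant}),
\end{align*}
with the constant involving $C_\nu e^{2R^2/\alpha+8\pi^2 d}$ (appearing in the denominator). This is exactly what Proposition~\ref{prop: 2norm_local_convex} (with its constants $R,\alpha$) is meant to provide, namely local uniform log-concavity/positivity of the conditional densities on balls of radius $R\sqrt{\varepsilon}$. I would use it to compare $\pi_\varepsilon(\cdot\mid y)$ with a uniform/Gaussian measure on a ball, so that local variances are bounded below. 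Summing over a cover, local variances control $\mathrm{Var}$ against $\mu$ via a chaining/Poincaré-on-graph argument on the torus, with the $e^{8\pi^2 d}$ coming from the diameter of $\mb T^d$ and the torus-periodized kernel. The constants depend on $m_\nu,M_\nu$ through the density of $\nu$.
\end{itemize}

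Combining (a) and (b) in the Taylor identity and rearranging gives \eqref{eqn: stability1}.

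The main obstacle is (b): obtaining a lower bound of $\int\mathrm{Var}_{\pi_\varepsilon(\cdot|y)}\chi\,\dd\nu$ by $\mb V_\mu(\chi)$ that does not degrade exponentially in $1/\varepsilon$. Naive $L^\infty$ bounds on $\phi_\varepsilon,\psi_\varepsilon$ give $e^{1/\varepsilon}$ losses. I would first try to derive the bound from the Markov operator $P f(y)=\int f\,\dd\pi_\varepsilon(\cdot|y)$ as follows. Write $\int\mathrm{Var}_{\pi(\cdot|y)}\chi\,\dd\nu=\mb V_\mu(\chi)-\mb V_\nu(P\chi)$. Then bound $\|P\|$ on mean-zero $L^2$ strictly below $1$, using the kernel's positivity on $\sqrt\varepsilon$-balls from Proposition~\ref{prop: 2norm_local_convex}, and keep track of how the constant scales.
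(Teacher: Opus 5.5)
Your Taylor identity and the formula $-\frac{\dd^2}{\dd t^2}\m I^\varepsilon_{\mu,\nu}[\phi_\varepsilon+t\chi]=\frac1\varepsilon\int\mb V_{\gamma^t_y}(\chi)\,\dd\nu(y)$ match the paper's starting point. However, step (b) is false as stated, and the idea that replaces it is missing. Write $\Delta:=\m I^\varepsilon_{\mu,\nu}[\phi_\varepsilon]-\m I^\varepsilon_{\mu,\nu}[\phi_\varepsilon+\chi]$.

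The conditionals $\pi_\varepsilon(\cdot\,|\,y)$ live at scale $\sqrt\varepsilon$, so the expected conditional variance is only of order $\varepsilon\,\mb V_\mu(\chi)$. For $\mu=\nu$ uniform and $\chi=\sin x_1$ one has $P\chi=e^{-\varepsilon/2}\sin y_1$. Hence $\int\mb V_{\pi_\varepsilon(\cdot|y)}(\chi)\,\dd\nu=\frac12(1-e^{-\varepsilon})$, while $\mb V_\mu(\chi)=\frac12$. So no $\varepsilon$-independent lower bound exists. The correct target is $\int\mb V_{\pi_\varepsilon(\cdot|y)}(\chi)\,\dd\nu\ge c\,\varepsilon\,\mb V_\mu(\chi)$, i.e.\ $\mb V_\nu(P\chi)\le(1-c\varepsilon)\mb V_\mu(\chi)$. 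After the $1/\varepsilon$ prefactor, this is what produces the $O(1)$ term $C_\nu e^{2R^2/\alpha+8\pi^2 d}$, not the $4\varepsilon$ and $8\osc(\chi)^2/(3\varepsilon)$ terms, so your accounting of the constants is off.

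Your route to such a gap also does not work. Proposition~\ref{prop: 2norm_local_convex} is only the strong convexity of $\td(0_d,\cdot)^2$ on a \emph{fixed} ball of radius $R$; it says nothing about conditional kernels. A lower bound on $\pi_\varepsilon(x|y)\propto\mu(x)e^{(\phi_\varepsilon(x)-c_\varepsilon(x,y))/\varepsilon}$ on $\sqrt\varepsilon$-balls would need second-order control of $\phi_\varepsilon$ uniformly in $\varepsilon$, which bounded densities do not give. Lipschitz bounds only say that $(\phi_\varepsilon-c_\varepsilon(\cdot,y))/\varepsilon$ varies by $O(1/\sqrt\varepsilon)$ on such balls. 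That is an $e^{O(1/\sqrt\varepsilon)}$ distortion, exactly the loss you want to avoid.

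The missing idea is a Brascamp--Lieb-type inequality that needs no regularity of the potentials and holds at \emph{every} $\phi$, not only at $\phi_\varepsilon$. By the law of total variance (your identity with $P$), $\mb V_{\pi^\nu[\phi_\varepsilon+\lambda\chi]}(\chi)$ splits into two pieces. The expected conditional variance equals $-\varepsilon f_\chi''(\lambda)$. The variance of the conditional mean, $\mb V_\nu\big(\partial_\lambda\m T_\mu^\varepsilon[\phi_\varepsilon+\lambda\chi]\big)$, is bounded in the paper by $-\frac{\kappa_\nu A_\nu}{C_1}f_\chi''(\lambda)$. The latter bound is built as follows:
\begin{itemize}
\item The $1$-semiconcavity of $y\mapsto c_\varepsilon(x,y)$ and convexity of log-sum-exp give Proposition~\ref{prop: T semiconcave}.
\item Prékopa--Leindler against the log-concave reference $\gamma_Q\propto e^{-\td(x_0,\cdot)^2/\alpha}$ on balls of radius $r<R$ (this is where Proposition~\ref{prop: 2norm_local_convex} enters) makes $\phi\mapsto\log\int_Q e^{\m T_\mu^\varepsilon[\phi]}\,\dd\gamma_Q$ concave. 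Its second $\lambda$-derivative gives Proposition~\ref{prop: variance lower bound}.
\item The Boman chain condition globalizes this. The factor $e^{2r^2/\alpha+8\pi^2 d}$ comes from comparing $\nu_Q$ with the tilted $\gamma_Q[\phi]$ via $\osc(\m T_\mu^\varepsilon[\phi])\le\osc(c_\varepsilon)<4\pi^2 d$.
\end{itemize}

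Because this works along the whole segment, the paper never returns to $t=0$, so your step (a) is unnecessary. It is also too lossy. With $a=\osc(\chi)/\varepsilon$, $\int_0^1(1-t)e^{-at}\,\dd t=(a-1+e^{-a})/a^2\sim 1/a$; the bound $\ge(2+2a^2/3)^{-1}$ you quote even fails for small $a$, e.g.\ $a=0.1$. So even a corrected (b) would only give $\mb V_\mu(\chi)\lesssim(1+\osc(\chi)/\varepsilon)\Delta$. That cannot be absorbed into $C+\frac{8\osc(\chi)^2}{3\varepsilon}$ when, e.g., $\osc(\chi)=\sqrt\varepsilon$.

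The paper instead compares the tilted marginal with $\mu$ through Lemma~4.1 of \cite{chizat2025sharper}: $\mb V_{\pi^\nu}(\chi)\ge\frac12\mb V_\mu(\chi)-4\|\chi\|_\infty^2\KL(\mu\,\|\,\pi^\nu)$. It then bounds $\KL(\mu\,\|\,\pi^\nu[\phi_\varepsilon+\lambda\chi])\le\frac{\lambda}{\varepsilon}\Delta$ by duality and concavity of $\m I^\varepsilon_{\mu,\nu}$. Integrating against $(1-\lambda)$ gives the factors $\frac14$ and $\frac23$. That is where $4\varepsilon$, $4\kappa_\nu A_\nu/C_1$ and $8\|\chi\|_\infty^2/(3\varepsilon)$ come from.
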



The estimate~\eqref{eqn: stability} can be interpreted as a quantitative concavity of the functional $\phi\mapsto \m I_{\mu, \nu}^\varepsilon[\phi]$, which then is governed by the behavior of the functional  $\phi\mapsto \mb E_\nu\big[\m T_\mu^\varepsilon [\phi]\big]$. \cite{delalande2023quantitative} first considered this kind of result in optimal transport, and it was later applied to the quantitative stability of Wasserstein barycenter by~\cite{carlier2024quantitative}. A crucial idea to prove the concavity is to decompose the variance into two components by using the law of total variance (c.f. see Theorem 3.27 in~\citep{wasserman2013all}), and to subsequently prove that the variance of conditional expectation can be bounded above by the expectation of conditional variance; this can be treated as the quantitative concavity of $\m T_\mu^\varepsilon$. 

In entropic optimal transport, \cite{delalande2022nearly} and~\cite{kitagawa2025stability} also studied the concavity of the functional $\phi\mapsto \m I_{\mu,\nu}^\varepsilon[\phi]$ for the semi-discrete case and the continuous case, respectively. However, these results on EOT only established the lower bound of the Hessian of the loss functionial at the Schr\"odinger potential (e.g. Theorem 3.2 in~\cite{delalande2022nearly}), which corresponds to $\nabla^2\m I_{\mu, \nu}^\varepsilon[\phi_\varepsilon]$ in our setting. In contrast, deriving the control~\eqref{thm: stability} in Theorem~\ref{thm: stability} requires evaluating the Hessian at all functions connecting $\phi_\varepsilon$ and $\phi_\varepsilon + \chi$.

The proof of~\eqref{eqn: stability} consists of two steps.
We first prove in Section~\ref{sec: local strong concavity}, a quantitative concavity estimate of 
$\m T_\mu^\varepsilon$, locally  
on balls with sufficiently small radius by using the Prekopa--Leindler's inequality, as demonstrated in~\citep{delalande2022nearly, kitagawa2025stability}.
Then,  in Section~\ref{sec: BomanChain}, we use the Boman chain condition, a recently developed technique~\citep{kitagawa2025stability, letrouit2024gluing}, to glue the local inequalities into a global inequality over the entire Riemannian manifold ($\mb T^d$ in our setting). 
At last, in Section~\ref{sec: pf_stability}, the resulting global quantitative concavity estimate is translated into \eqref{eqn: stability}. The proof follows the argument in~\citep{chizat2025sharper} to derive an upper bound on the expectation of the conditional variance in terms of the difference of the functional values $\m I_{\mu, \nu}^\varepsilon$ (and $\overline{\m I}_{\mu,\nu}^\varepsilon$).

\subsection{Local quantitative stability
}\label{sec: local strong concavity}
For a given geodesically convex subset $S\subset\mb T^d$ and $\alpha \geq 0$, a function $f$ is $\alpha$-semiconcave on $S$ if
\begin{align*}
f(z) \geq (1-\lambda) f(x_0) + \lambda f(x_1) - \frac{\alpha\lambda(1-\lambda)}{2}\td(x_0, x_1)^2
\end{align*}
holds for all $\lambda \in[0, 1]$, $x_0, x_1\in S$, and the geodesic interpolant 
\begin{align*}
z\in Z^S_\lambda(x_0, x_1) \coloneqq \big\{z\in S: \td(z, x_0) = \lambda\td(x_0, x_1),\,\, \td(z, x_1) = (1-\lambda)\td(x_0, x_1)\big\}.
\end{align*}
Similarly,  $f$ is $\alpha$-strongly convex on $S$ if
\begin{align*}
f(z) \leq (1-\lambda) f(x_0) + \lambda f(x_1) - \frac{\alpha\lambda(1-\lambda)}{2}\td(x_0, x_1)^2
\end{align*}
holds for all $\lambda \in [0, 1]$ and $z\in Z^S_\lambda(x_0, x_1)$.

It is well known that globally $\alpha$-strongly convex functions do not exist on $\mb T^d$ for $\alpha > 0$. However, the squared distance function on a Riemannian manifold is locally strongly convex on  balls with sufficiently small radius. 

\begin{proposition}[local strong convexity of squared distance]\label{prop: 2norm_local_convex}
There exists constants $\alpha, R > 0$ such that the map $x\mapsto\td(0_{d}, x)^2$ is $\alpha$-strongly convex for all $x\in B_{\mb T^d}(0_d; R)$.
\myqed
\end{proposition}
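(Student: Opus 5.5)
We prove Proposition~\ref{prop: 2norm_local_convex} by reducing to Euclidean space, where the statement is exact. Take $R = \pi/2$; any $R<\pi$ would also work. On $B_{\mb T^d}(0_d;R)$ the torus distance to the origin agrees with the Euclidean norm. Indeed, if $x\in\mb T^d$ is represented by its lift $\tilde x\in(-\pi,\pi]^d$ with $\|\tilde x\|<R$, then $\|\tilde x - 2\pi k\| \geq 2\pi\|k\| - \|\tilde x\| > 2\pi - R > R$ for every $k\neq 0$. Hence $\td(0_d,x)^2 = \|\tilde x\|^2$, and the ball lifts isometrically to the Euclidean ball $B(0,R)\subset\mb R^d$.

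Next we identify geodesic interpolants. Let $x_0,x_1\in B_{\mb T^d}(0_d;R)$ have lifts $\tilde x_0,\tilde x_1\in B(0,R)$. Then $\|\tilde x_0-\tilde x_1\|<2R=\pi$. Every other translate satisfies $\|\tilde x_0-\tilde x_1-2\pi k\|\geq 2\pi-\pi=\pi$, so $\td(x_0,x_1)=\|\tilde x_0-\tilde x_1\|$. Now take $z\in Z^S_\lambda(x_0,x_1)$ with $S$ the ball, and let $\tilde z\in B(0,R)$ be its lift. By the same argument $\td(z,x_i)=\|\tilde z-\tilde x_i\|$. The equalities defining $Z_\lambda^S$ give equality in the Euclidean triangle inequality, which forces $\tilde z=(1-\lambda)\tilde x_0+\lambda\tilde x_1$. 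So $Z^S_\lambda(x_0,x_1)$ is the single Euclidean interpolant. It lies in $S$ because the Euclidean ball is convex, so $S$ is geodesically convex as the definition requires.

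The key fact is the exact Euclidean identity
\begin{align*}
\|(1-\lambda)a+\lambda b\|^2=(1-\lambda)\|a\|^2+\lambda\|b\|^2-\lambda(1-\lambda)\|a-b\|^2 .
\end{align*}
Applying it with $a=\tilde x_0$ and $b=\tilde x_1$ and translating back via the isometries above shows that $x\mapsto\td(0_d,x)^2$ is $\alpha$-strongly convex on the ball, with $\alpha=2$ in the convention of the definition. The main, though mild, obstacle is the bookkeeping in the second paragraph: ruling out competing lattice translates both for the distances and for the interpolant $z$. That is exactly why we choose $R<\pi/2\cdot 2$, i.e.\ $2R\le\pi$; strictness of the inequalities is what makes the interpolant unique.
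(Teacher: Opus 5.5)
Your proof is correct for the choice $R=\pi/2$, and it takes a different route from the paper, which gives no proof of this proposition. The paper simply states it as an instance of the standard Riemannian fact that the squared distance to a point is strongly convex on sufficiently small balls. That general appeal is what underwrites the paper's remark that the analysis extends to compact Riemannian manifolds. However, it leaves $\alpha$ and $R$ as unspecified constants, which later reappear in the factor $e^{2R^2/\alpha}$ of Theorem~\ref{thm: stability}.

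You instead exploit flatness. The open ball of radius $\pi/2$ lifts isometrically onto a Euclidean ball, $Z^S_\lambda(x_0,x_1)$ reduces to the single point $(1-\lambda)\tilde x_0+\lambda\tilde x_1$, and the Euclidean identity gives the defining inequality with equality. This buys explicit and sharp constants, $R=\pi/2$ and $\alpha=2$. These are consistent with the paper's later normalization that $\frac{1}{\alpha}\td(x_0,\cdot)^2=\frac12\td(x_0,\cdot)^2$ is $1$-strongly convex. By translation invariance, the same bookkeeping also shows that the balls $B_{\mb T^d}(x_0,r)$ with $r<R$ are geodesically convex, which the paper uses in the Pr\'ekopa--Leindler step of Proposition~\ref{prop: variance lower bound}.

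One correction: your aside that ``any $R<\pi$ would also work'' is false. Your closing sentence also conflates two conditions: $R<\pi/2\cdot 2$ is not the same as $2R\le\pi$. What your argument actually uses is $2R\le\pi$, i.e.\ $R\le\pi/2$, and this range is sharp.

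To see this, suppose $R>\pi/2$, pick $a\in(\pi/2,R)$, and set $x_0=ae_1$, $x_1=-ae_1$. Then $\td(x_0,x_1)=2\pi-2a$ and the minimizing geodesic runs through $\pi e_1$. For small $\lambda>0$, the point $z=(a+\lambda(2\pi-2a))e_1$ lies in $Z^S_\lambda(x_0,x_1)$. It satisfies $\td(0_d,z)^2>a^2=(1-\lambda)\td(0_d,x_0)^2+\lambda\td(0_d,x_1)^2$, so even plain convexity fails. Since you actually take $R=\pi/2$, your proof is unaffected; just replace the aside by ``any $R\le\pi/2$''.
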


We also have semi-concavity of the cost function $c_\varepsilon$. The uniform semi-concavity can be derived by using the explicit expression of $c_\varepsilon$; in~\citep{hamilton1993matrix}, the estimate of the semi-concavity of the logarithmic Gaussian kernel is provided for all compact manifolds with non-negative curvature.
\begin{proposition}\label{prop: semi-concave}
The map $y\mapsto c_\varepsilon(x, y)$ is $1$-semiconcave for all $x\in\mb T^d$ and $\varepsilon > 0$.
\end{proposition}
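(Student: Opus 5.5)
The plan is to reduce the claim to a Hessian bound for the lifted function on $\mb R^d$, namely $\nabla_y^2 c_\varepsilon(x,y)\preceq I_d$, and then integrate that bound along straight segments that project to minimizing geodesics. By translation invariance I may fix $x$ and write $g(z)\coloneqq c_\varepsilon(x, x+z) = -\varepsilon\log\m K_\varepsilon(z)$, where $z\in\mb R^d$ and $\m K_\varepsilon$ is viewed as a $2\pi\mb Z^d$-periodic function on $\mb R^d$ (note $\m K_\varepsilon$ is even, so the sign of the argument is irrelevant).

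\textbf{Step 1 (Hessian bound).} Write $\m K_\varepsilon(z) = (2\pi\varepsilon)^{-d/2}\sum_{k\in\mb Z^d} e^{f_k(z)}$ with $f_k(z) = -\|z-2\pi k\|^2/(2\varepsilon)$. The series and all its termwise derivatives converge locally uniformly, because of the Gaussian decay in $k$, so $\m K_\varepsilon$ is smooth and positive. Let $w_k(z) = e^{f_k(z)}/\sum_l e^{f_l(z)}$, which are probability weights. The standard log-sum-exp computation gives
\begin{align*}
\nabla^2\log\m K_\varepsilon(z) = \sum_k w_k\nabla^2 f_k + \sum_k w_k\Big(\nabla f_k - \sum_l w_l\nabla f_l\Big)\Big(\nabla f_k - \sum_l w_l\nabla f_l\Big)^{\top}.
\end{align*}
The second term is a covariance matrix and hence positive semidefinite. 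Since $\nabla^2 f_k = -\varepsilon^{-1}I_d$ for every $k$, it follows that $\nabla^2\log\m K_\varepsilon\succeq -\varepsilon^{-1} I_d$, and therefore $\nabla^2 g\preceq I_d$ everywhere on $\mb R^d$. Consequently $h(z)\coloneqq g(z)-\frac12\|z\|^2$ is concave on $\mb R^d$. This step is the heart of the argument, and it is fully explicit; the only care needed is justifying termwise differentiation, which the Gaussian decay handles.

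\textbf{Step 2 (from $\mb R^d$ to geodesics of $\mb T^d$).} Take $y_0,y_1\in\mb T^d$, $\lambda\in[0,1]$, and $z\in Z_\lambda(y_0,y_1)$. By the triangle inequality, equality in $\td(y_0,z)+\td(z,y_1)=\td(y_0,y_1)$ forces $z$ to lie on a minimizing geodesic from $y_0$ to $y_1$. On the flat torus such a geodesic is the projection of a straight segment $[\tilde y_0,\tilde y_1]\subset\mb R^d$ with $\|\tilde y_1-\tilde y_0\| = \td(y_0,y_1)$, and $z$ is the projection of $\tilde z = (1-\lambda)\tilde y_0+\lambda\tilde y_1$. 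The choice of lift does not matter, since $g$ is periodic after translating by a lift of $x$. Concavity of $h$ along the segment gives $h(\tilde z)\ge (1-\lambda)h(\tilde y_0)+\lambda h(\tilde y_1)$. Combining this with the Euclidean identity
\begin{align*}
(1-\lambda)\|a\|^2+\lambda\|b\|^2-\|(1-\lambda)a+\lambda b\|^2 = \lambda(1-\lambda)\|a-b\|^2
\end{align*}
yields
\begin{align*}
g(\tilde z)\ge (1-\lambda)g(\tilde y_0)+\lambda g(\tilde y_1)-\tfrac{\lambda(1-\lambda)}{2}\td(y_0,y_1)^2,
\end{align*}
which is exactly $1$-semiconcavity of $y\mapsto c_\varepsilon(x,y)$ on $\mb T^d$ with $S=\mb T^d$. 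The bound is uniform in $x$ and $\varepsilon>0$ because the constant in Step 1 does not depend on either.

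The main point requiring care is Step 2's identification of $Z_\lambda$-points with points on lifted straight segments whose length equals the torus distance. Once that identification is in place, the estimate is purely Euclidean.
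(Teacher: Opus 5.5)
Your proof is correct, and it follows a route the paper mentions but does not carry out. The paper gives no written argument. It remarks that the bound ``can be derived by using the explicit expression of $c_\varepsilon$'' and otherwise points to Hamilton's matrix Harnack estimate. Under Hamilton's curvature hypotheses, that estimate gives $\nabla^2\log \m K_\varepsilon \succeq -\varepsilon^{-1} I_d$ for the heat kernel of a compact manifold; note that $\m K_\varepsilon$ is the heat kernel of $\Delta$ at time $\varepsilon/2$, because of the $\frac12\Delta$ normalization.

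Your Step 1 is the explicit-expression route made rigorous. Writing $\log\m K_\varepsilon$ as a log-sum-exp of periodized Gaussians, its Hessian is an average of $-\varepsilon^{-1}I_d$ plus a covariance matrix. This yields the same lower bound, with the sharp constant, and needs no PDE machinery. The trade-off is scope: your argument uses that $\mb T^d$ is a quotient of $\mb R^d$, so $\m K_\varepsilon$ is a sum of Euclidean Gaussians. It therefore does not transfer to the curved manifolds in the paper's remark on general ambient spaces, whereas Hamilton's estimate does.

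Your Step 2, converting the Hessian bound into the $Z_\lambda$ form of semiconcavity, is left implicit in the paper, and your treatment is sound. It can be made even more direct without discussing minimizing geodesics:
\begin{itemize}
\item Choose lifts with $\|\tilde z-\tilde y_0\|=\td(z,y_0)$ and $\|\tilde y_1-\tilde z\|=\td(z,y_1)$.
\item Any two lifts satisfy $\|\tilde y_1-\tilde y_0\|\ge\td(y_0,y_1)$.
\item This forces equality in the Euclidean triangle inequality, so $\tilde z=(1-\lambda)\tilde y_0+\lambda\tilde y_1$.
\end{itemize}
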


With this, we can prove the following useful semicocavity type result for the operation $\phi \mapsto \m T_\mu^\varepsilon[\phi]$ defined in~\eqref{eqn: Schro_sys}.

\begin{proposition}\label{prop: T semiconcave}
For all $\lambda\in[0, 1]$ and $\phi,\wt\phi\in \m C(\mb T^d)$, 
$y_0, y_1 \in\mb T^d$ and  $z\in Z_\lambda^{\mb T^d}(y_0, y_1)$,
    \begin{align}\label{eqn: T semiconcave}
    \m T_\mu^\varepsilon[\lambda \phi + (1-\lambda)\wt\phi](z)
    \geq \lambda\m T_\mu^\varepsilon[\phi](y_1) + (1-\lambda)\m T_\mu^\varepsilon[\wt\phi](y_2)- \frac{\lambda(1-\lambda)}{2}\td(y_1, y_2)^2.
    \end{align}
\end{proposition}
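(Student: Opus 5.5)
The plan is to reduce \eqref{eqn: T semiconcave} to two pointwise facts about the integrand defining $\m T_\mu^\varepsilon$: the semiconcavity of $c_\varepsilon$ in Proposition~\ref{prop: semi-concave}, and H\"older's inequality in the $x$-variable. No Pr\'ekopa--Leindler argument is needed, because the interpolation happens only in the $y$-variable and the integration variable $x$ is shared.

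I read the two endpoints as $y_1,y_2$, matching the right-hand side; the statement writes $y_0,y_1$ in the quantifier. I take $z$ to be a geodesic interpolant giving weight $\lambda$ to $y_1$. In the notation of the definition of semiconcavity, this means $x_0=y_2$, $x_1=y_1$ and $z\in Z_\lambda^{\mb T^d}(y_2,y_1)$. Proposition~\ref{prop: semi-concave} then gives, for every $x\in\mb T^d$,
\begin{align*}
c_\varepsilon(x,z)\ \ge\ \lambda c_\varepsilon(x,y_1)+(1-\lambda)c_\varepsilon(x,y_2)-\frac{\lambda(1-\lambda)}{2}\td(y_1,y_2)^2 .
\end{align*}
Only the pointwise form of semiconcavity along this particular geodesic point is used, so it does not matter which minimizing geodesic is chosen when the geodesic is not unique.

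Next I would write $f(x)=e^{(\phi(x)-c_\varepsilon(x,y_1))/\varepsilon}$ and $g(x)=e^{(\wt\phi(x)-c_\varepsilon(x,y_2))/\varepsilon}$. The inequality above then yields the pointwise bound
\begin{align*}
e^{\frac{\lambda\phi(x)+(1-\lambda)\wt\phi(x)-c_\varepsilon(x,z)}{\varepsilon}}\ \le\ e^{\frac{\lambda(1-\lambda)}{2\varepsilon}\td(y_1,y_2)^2}\, f(x)^\lambda g(x)^{1-\lambda}.
\end{align*}
I would integrate this against $\mu$ and apply H\"older's inequality with exponents $1/\lambda$ and $1/(1-\lambda)$, which gives $\int f^\lambda g^{1-\lambda}\,\dd\mu\le(\int f\,\dd\mu)^\lambda(\int g\,\dd\mu)^{1-\lambda}$. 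The endpoint cases $\lambda\in\{0,1\}$ are trivial, since then $z$ coincides with $y_2$ or $y_1$.

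Finally, I would apply $-\varepsilon\log$, which is decreasing, to both sides. The left side becomes $\m T_\mu^\varepsilon[\lambda\phi+(1-\lambda)\wt\phi](z)$. The right side becomes $\lambda\m T_\mu^\varepsilon[\phi](y_1)+(1-\lambda)\m T_\mu^\varepsilon[\wt\phi](y_2)-\frac{\lambda(1-\lambda)}{2}\td(y_1,y_2)^2$. All integrals are finite and positive because $\phi,\wt\phi$ are continuous and $\m K_\varepsilon>0$ on the compact torus.

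The only real input is Proposition~\ref{prop: semi-concave}, which is assumed. So the main point of care is just the bookkeeping of which endpoint receives weight $\lambda$ in the geodesic interpolation convention.
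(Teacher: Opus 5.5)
Your proof is correct and follows the paper's argument. The paper uses the $1$-semiconcavity of $c_\varepsilon(x,\cdot)$ pointwise, pulls out the factor $e^{\lambda(1-\lambda)\td(y_1,y_2)^2/(2\varepsilon)}$, and then invokes ``convexity of LogSumExp'', which is exactly your H\"older step in $x$. Your reading of the endpoints as $y_1,y_2$ with $z\in Z_\lambda^{\mb T^d}(y_2,y_1)$ correctly fixes the index typo in the statement.
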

\begin{proof}
The $1$-semiconcavity of the cost function $c_\varepsilon$ implies 
\begin{align*}
c_\varepsilon(x, z) \geq \lambda c_\varepsilon(x, y_1) + (1-\lambda)c_\varepsilon(x, y_2) - \frac{\lambda(1-\lambda)}{2}\td(y_1, y_2)^2.
\end{align*}
Therefore, we have
\begin{align*}
&\quad\,\m T_\mu^\varepsilon[\lambda \phi + (1-\lambda)\wt\phi](z)
= -\varepsilon\log\bigg(\int_{\mb T^d} e^{\frac{\lambda\phi(x) + (1-\lambda)\wt\phi(x) - c_\varepsilon(x, z)}{\varepsilon}}\,\dd\mu(x)\bigg)\\
&\stackrel{\ri}{\geq} -\varepsilon\log\bigg(\int_{\mb T^d} \exp\Big\{\frac{\lambda\phi(x) + (1-\lambda)\wt\phi(x) - \lambda c_\varepsilon(x, y_1) - (1-\lambda) c_\varepsilon(x, y_2) + \frac{\lambda(1-\lambda)}{2}\td(y_1, y_2)^2}{\varepsilon}\Big\}\,\dd\mu(x)\bigg)\\
&=-\varepsilon\log\bigg(\int_{\mb T^d}\exp\Big\{\frac{\lambda}{\varepsilon}\Big[\phi(x) - c_\varepsilon(x, y_1)\Big] + \frac{1-\lambda}{\varepsilon}\Big[\wt\phi(x) - c_\varepsilon(x, y_2)\Big]\Big\}\,\dd\mu(x)\bigg) - \frac{\lambda(1-\lambda)}{2}\td(y_1, y_2)^2\\
&\stackrel{\rii}{\geq} \lambda\m T_\mu^\varepsilon[\phi](y_1) + (1-\lambda)\m T_\mu^\varepsilon[\wt\phi](y_2)- \frac{\lambda(1-\lambda)}{2}\td(y_1, y_2)^2.
\end{align*}
Here, (i) follows from the $1$-semiconcavity of $c_\varepsilon(x, \cdot)$, and (ii) follows from the convexity of the LogSumExp function.
\end{proof}

With this we prove the following local quantitative stability result towards Theorem~\ref{thm: stability}.
\begin{proposition}[Local quantitative stability]\label{prop: variance lower bound}
For any $x_0\in\mb T^d$, suppose $Q = B_{\mb T^d}(x_0, r)$ is a ball on $\mb T^d$ centered at $x_0$ with radius $r < R$, where $R$ is the constant in Proposition~\ref{prop: 2norm_local_convex}. Let $\nu_Q \coloneqq \frac{\nu 1_Q}{\nu(Q)}$ be the truncated probability distribution of $\nu$ on $Q$. Assume that there are constants $m_\nu, M_\nu\in\mb R_+$ such that $\nu(x)\in[m_\nu, M_\nu]$ holds for all $x\in Q$. 
Then, for every function $\chi,\phi \in \mathcal{C}(\mb T^d)$  and $\lambda\in [0, 1]$,
we have
\begin{align}\label{eqn: local stability}
\mb E_{Y\sim\nu_Q}\Big[ \frac{\dd^2}{\dd\lambda^2} \m T_\mu^\varepsilon[\phi + \lambda\chi](Y) \Big]
 \le - C_1
 \cdot \mb V_{Y\sim\nu_Q}
 \Big[  \frac{\dd}{\dd\lambda} \m T_\mu^\varepsilon [\phi + \lambda\chi](Y) \Big],
\end{align}
where 
\begin{align*}
    C_1= C_1(m_\nu, M_\nu, r, \alpha, d)  := \frac{m_\nu e^{-\frac{2r^2}{\alpha} - 8\pi^2 d}}{ M_\nu}
\end{align*}
with $\alpha$ being  the constant in Proposition~\ref{prop: 2norm_local_convex}.
\end{proposition}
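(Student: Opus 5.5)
The plan is to reduce \eqref{eqn: local stability} to a Brascamp--Lieb type inequality on $Q$, and to obtain that inequality from the Prekopa--Leindler inequality. The input is the joint semiconcavity in Proposition~\ref{prop: T semiconcave}, corrected by the local strong convexity of Proposition~\ref{prop: 2norm_local_convex}.

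\textbf{Step 1: rewrite both sides.} Write $T_\lambda \coloneqq \m T_\mu^\varepsilon[\phi+\lambda\chi]$. Introduce the Gibbs conditional $\pi^\lambda_y(\dd x)\propto e^{(\phi+\lambda\chi-c_\varepsilon(x,y))/\varepsilon}\,\dd\mu(x)$. Differentiating under the integral gives
\[
\tfrac{\dd}{\dd\lambda}T_\lambda(y) = -\mb E_{\pi^\lambda_y}[\chi], \qquad \tfrac{\dd^2}{\dd\lambda^2}T_\lambda(y) = -\tfrac1\varepsilon \mb V_{\pi^\lambda_y}(\chi).
\]
So \eqref{eqn: local stability} is the statement
\[
\mb V_{\nu_Q}\big(\mb E_{\pi^\lambda_Y}\chi\big)\le C_1^{-1}\varepsilon^{-1}\,\mb E_{\nu_Q}\big[\mb V_{\pi^\lambda_Y}\chi\big].
\]
This says that the variance of the conditional expectation is dominated by the expected conditional variance. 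It suffices to treat a fixed $\lambda$; by replacing $\phi$ with $\phi+\lambda\chi$, take $\lambda=0$.

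\textbf{Step 2: a jointly concave potential on $(s,y)\in\mb R\times Q$.} Proposition~\ref{prop: T semiconcave}, applied with $\phi+s_1\chi$ and $\phi+s_0\chi$, shows that $(s,y)\mapsto T_s(y)$ is jointly semiconcave with constant $1$ in $y$ and no loss in $s$. Adding $\frac{1}{\alpha}\td(x_0,y)^2$ on $Q$ makes the function
\[
(s,y)\mapsto T_s(y)+\tfrac{1}{\alpha}\td(x_0,y)^2
\]
jointly geodesically concave on $\mb R\times Q$. This uses Proposition~\ref{prop: 2norm_local_convex}, after adjusting the normalization of $\alpha$, and the fact that a ball of radius $r<R$ is geodesically convex. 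Now apply Prekopa--Leindler on $Q$ to the log-concave family of densities
\[
y\mapsto \exp\!\Big(\tfrac1\varepsilon\big[\theta\, T_s(y)+\tfrac{\theta}{\alpha}\td(x_0,y)^2\big]+ t\,\partial_sT_s(y)\Big)
\]
with respect to Lebesgue measure on $Q$. Here $\theta$ is a tuning parameter and $t$ is a tilt parameter. Prekopa--Leindler makes the marginal log-partition $\Lambda(s,t)$ concave (after the sign bookkeeping). Expanding $\Lambda$ to second order at $(0,0)$ and using that its Hessian is negative semidefinite, i.e.\ its determinant is nonnegative, yields a Cauchy--Schwarz type inequality. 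That inequality bounds $\mathrm{Var}_{\tilde\nu}(\partial_sT)$ by $\varepsilon^{-1}\mb E_{\tilde\nu}[-\partial_s^2T]$, where $\tilde\nu$ is the tilted measure on $Q$ with Lebesgue density proportional to $e^{\frac{\theta}{\alpha\varepsilon}\td(x_0,y)^2}$ times a factor involving $T_0$. This is the argument of \citep{delalande2022nearly, kitagawa2025stability}, transplanted to the present kernel.

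\textbf{Step 3: change of reference measure.} Pass from $\tilde\nu$ to $\nu_Q$ by bounding the ratio $\dd\nu_Q/\dd\tilde\nu$ from above and below. The density of $\nu$ contributes $m_\nu/M_\nu$. The weight $e^{\td(x_0,y)^2/(\alpha\varepsilon)}$, taken with $\theta\propto\varepsilon$ so that its exponent is order one, contributes at most $e^{2r^2/\alpha}$ on $Q$. The remaining oscillation of the periodized kernel contributes $e^{8\pi^2 d}$: one has $|c_\varepsilon(x,y)-\tfrac12\td(x,y)^2|$ bounded in terms of the torus diameter $\sqrt d\,\pi$, so $\td^2\le 4\pi^2 d$. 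A variance is minimal at the mean and both sides are integrals of nonnegative quantities, so this comparison costs exactly the product of these factors. That product gives $C_1$.

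The main obstacle is Step 2. One must choose $\theta$ and the tilt so that the Prekopa--Leindler concavity produces precisely the factor $\varepsilon^{-1}$ in front of $\mb E[\mb V_{\pi_Y}\chi]$, while the Gaussian correction stays within $e^{2r^2/\alpha}$. If the direct Hessian bookkeeping fails, I would instead follow \citep{delalande2023quantitative}: apply Prekopa--Leindler to $\lambda$-interpolations between $T_{s_0}$ and $T_{s_1}$ at finite separation, then let $s_1-s_0\to0$.
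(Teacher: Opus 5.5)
Step~2 as written fails, for two concrete reasons.

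First, the quadratic correction has the wrong sign. Proposition~\ref{prop: T semiconcave} makes $(s,y)\mapsto T_s(y)$ jointly $1$-semiconcave in $y$. Since $\frac1\alpha\td(x_0,\cdot)^2$ is $1$-strongly convex on $Q$, adding it makes things \emph{less} concave. The jointly concave function on $\mb R\times Q$ is $T_s(y)-\frac1\alpha\td(x_0,y)^2$, so the reference weight must be $e^{-\frac1\alpha\td(x_0,\cdot)^2}1_Q$.

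Second, and more seriously, the tilt cannot work. For any $\Lambda(s,t)=\log\int_Q\exp\big(F_s(y)+t\,\partial_sT_s(y)\big)\,\dd y$ one has $\partial_t^2\Lambda=\mb V(\partial_sT_s)\ge 0$ under the tilted measure. So $\Lambda$ is convex in $t$, and its Hessian is never negative semidefinite unless that variance vanishes. Correspondingly, the exponent is not jointly concave in $(s,t,y)$, so Prekopa--Leindler says nothing about the $t$ direction, and the determinant/Cauchy--Schwarz step collapses.

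Your worry about tuning $\theta$ to produce ``precisely the factor $\varepsilon^{-1}$'' is also misplaced. The target is $\mb V_{\nu_Q}(\partial_sT_s)\le C_1^{-1}\mb E_{\nu_Q}[-\partial_s^2T_s]$, and the $\varepsilon^{-1}$ already sits inside $\partial_s^2T_s=-\varepsilon^{-1}\mb V_{\pi_y}(\chi)$. Your stated Step~2 outcome, with an extra $\varepsilon^{-1}$ in front of $\mb E[-\partial_s^2T_s]$, would therefore be too weak. The right exponent is simply $T_s$ itself, i.e.\ $\theta=\varepsilon$, because the semiconcavity constant $1$ belongs to $T_s$, not to $T_s/\varepsilon$.

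Your fallback is the correct route, and it is exactly the paper's. Take $\gamma_Q\propto e^{-\frac1\alpha\td(x_0,\cdot)^2}1_Q$. The finite-separation inequality from Proposition~\ref{prop: T semiconcave} (exponentiated), together with weighted Prekopa--Leindler on the geodesically convex ball $Q$, shows that $g(s)=\log\int_Q e^{T_s}\,\dd\gamma_Q$ is concave. Then $g''(s)=\mb V_{\gamma_Q[s]}(\partial_sT_s)+\mb E_{\gamma_Q[s]}(\partial_s^2T_s)\le 0$, with $\gamma_Q[s]\propto e^{T_s}\gamma_Q$, is already the desired inequality under a comparable measure. No tilt parameter and no Hessian determinant are needed.

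Your Step~3 then goes through as you describe. The ratio $\nu_Q/\gamma_Q[s]$ lies between $m_\nu e^{-r^2/\alpha-\osc(T_s)}$ and $M_\nu e^{r^2/\alpha+\osc(T_s)}$, each times $\vol(Q)/\nu(Q)$. Here $\osc(T_s)\le\osc(c_\varepsilon)<4\pi^2 d$ for $\varepsilon<\pi^2/2$. Taking the ratio of the lower to the upper bound gives exactly $C_1$.
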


\begin{proof}
First, for every $x\in\mb T^d$, Proposition~\ref{prop: semi-concave} implies that $c_\varepsilon(x, \cdot)$ is $1$-semiconcave. With $\alpha$ from Proposition~\ref{prop: 2norm_local_convex}, define a probability density function 
$$
\gamma_Q(x) = Z_Q^{-1} e^{-\frac{1}{\alpha}\td(x_0, x)^2}1_Q(x)  
\quad \hbox{over the set $Q$}
$$ 
with $Z_Q = \int_Q e^{-\frac{1}{\alpha}\td(x_0, x)^2}\,\dd x$ being the normalizing constant.
We use $\m T_\mu^\varepsilon[\phi]$ in~\eqref{eqn: Schro_sys} to define a functional $\m G$ on $\mathcal{C}(\mb T^d)$,
\begin{align*}
\m G[\phi] \coloneqq \log \bigg(\int_{Q} e^{\m T_\mu^\varepsilon[\phi]}\,\dd\gamma_Q\bigg).
\end{align*}


\noindent\underline{Step 1: concavity of $\m G$.}
To prove $\m G$ is a concave functional, we only need to prove 
\begin{align*}
\m G[\lambda\phi + (1-\lambda)\wt\phi] \geq \lambda \m G[\phi] + (1-\lambda)\m G[\wt\phi]
\end{align*}
for all $\lambda\in[0, 1]$ and $\phi,\wt\phi\in \m C(\mb T^d)$. 
Define functions
\begin{align*}
    \hbox{
    $h(y) = e^{\m T_\mu^\varepsilon[\lambda\phi + (1-\lambda)\wt\phi](y)}$, $f(y) = e^{\m T_\mu^\varepsilon[\phi](y)}$, and $g(y) = e^{\m T_\mu^\varepsilon[\wt\phi](y)}$.
    }
\end{align*}
 Then, we only need to prove
\begin{align}\label{eqn: cond_prekopa}
\int_{Q} h\,\dd \gamma_Q \geq \bigg(\int_{Q} f\,\dd \gamma_Q\bigg)^\lambda\bigg(\int_{Q} g\,\dd\gamma_Q\bigg)^{1-\lambda}.
\end{align} 
Now, for any $z \in Z_\lambda(y_1, y_2)$, $\lambda\in[0, 1]$, and $y_1, y_2\in\mb T^d$,  taking exponential on both sides of \eqref{eqn: T semiconcave} yields
\begin{align}\label{eqn: ineq_hfg}
h(z) \geq e^{-\frac{\lambda(1-\lambda)}{2}\td(y_1, y_2)^2} f(y_1)^\lambda g(y_2)^{1-\lambda}.
\end{align}
Moreover, since $r < R$,
$Q$ is a geodesically convex subset. Therefore, applying the weighted Prekopa--Leindler's inequality (see e.g.~\citep[Theorem 2.9,][]{kitagawa2025stability} or~\citep[Theorem 1.4,][]{cordero2006prekopa}) leads to the desired inequality~\eqref{eqn: cond_prekopa}; notice that we used $\frac{1}{\alpha}\td(x_0, x)^2$ in the definition of $\gamma_Q$, which is locally $1$-strongly convex.

\vspace{0.5em}
\noindent\underline{Step 2: implication of concavity of $\m G$.}$\,\,$
Define a function $g_\chi(\lambda) \coloneqq \m G[\phi + \lambda\chi]$ for $\lambda\in[0, 1]$. Then, $g_\chi(\lambda)$ is concave with respect to $\lambda$ due to the concavity of the functional $\m G$. Now, define a probability distribution with the density function
\begin{align*}
\gamma_Q[\phi + \lambda\chi](y) \coloneqq \frac{\gamma_Q(y)e^{\m T_\mu^\varepsilon[\phi + \lambda\chi](y)}}{\int_{Q}e^{\m T_\mu^\varepsilon[\phi + \lambda\chi](y)}\,\dd\gamma_Q(y)} \in\ms P_{ac}(Q).
\end{align*}
With this notation, it is easy to check that
\begin{align*}
g_\chi''(\lambda) &= \mb V_{\gamma_Q[\phi + \lambda\chi]}\Big(\frac{\dd}{\dd\lambda}\m T_\mu^\varepsilon[\phi + \lambda\chi]\Big) + \mb E_{\gamma_Q[\phi + \lambda\chi]}\Big(\frac{\dd^2}{\dd\lambda^2}\m T_\mu^\varepsilon[\phi + \lambda\chi]\Big).
\end{align*}
Therefore, the concavity of $g_\chi$, that is, 
$g''_\chi  \le 0$,
implies 
\begin{align}\label{eqn: concave_g} 
  \mb E_{\gamma_Q[\phi + \lambda\chi]}\Big(\frac{\dd^2}{\dd\lambda^2}\m T_\mu^\varepsilon[\phi + \lambda\chi]\Big)
\le  - \mb V_{\gamma_Q[\phi + \lambda\chi]}\Big(\frac{\dd}{\dd\lambda}\m T_\mu^\varepsilon[\phi + \lambda\chi]\Big).
\end{align}
For any $\gamma_Q\in\ms P_{ac}(Q)$, we have
\begin{align*}
\mb E_{\gamma_Q}[f] = \mb E_{\nu_Q}\Big[\frac{\gamma_Q}{\nu_Q}f\Big]
\ge \inf_{y \in Q}\frac{\gamma_Q(y)}{\nu_Q (y)}  \mb E_{\nu_Q}(f)  
\end{align*}
for any nonnegative function $f$, and
\begin{align*}
\mb V_{\nu_Q}(f) = \min_{c\in\mb R}\int (f-c)^2\,\dd\nu_Q \leq 
 \sup_{y \in Q}\frac{\nu_Q(y)}{\gamma_Q (y)} 
\min_{c\in \mb R}\int(f-c)^2\,\dd\gamma_Q = 
\sup_{y \in Q}\frac{\nu_Q(y)}{\gamma_Q (y)}
\mb V_{\gamma_Q}(f)
\end{align*}
for any function $f$.
We also have the following result proved in Appendix~\ref{app: DensityRatioLB}.
\begin{lemma}\label{lem: DensityRatioLB}
For every $\phi \in \m C (\mb T^d)$, $y\in Q$,
it holds that
\begin{align*}
m_\nu e^{-\frac{r^2}{\alpha} - 4\pi^2 d}\cdot \frac{\vol(Q)}{\nu(Q)}
\leq
\frac{\nu_Q(y)}{\gamma_Q[\phi](y)} 
\leq M_\nu e^{\frac{r^2}{\alpha} + 4\pi^2 d}\cdot\frac{\vol(Q)}{\nu(Q)}.
\end{align*}
\end{lemma}
We know $\frac{\dd^2}{\dd\lambda^2}\m T_\mu^\varepsilon[\phi+\lambda\chi] \leq 0$ since $\m T_\mu^\varepsilon[\phi]$ is concave with respect to $\phi$ on $\m C(\mb T^d)$.
Combining these with \eqref{eqn: concave_g}, we get the desired inequality
\begin{align*}
  \mb E_{\nu_Q}\Big(\frac{\dd^2}{\dd\lambda^2}\m T_\mu^\varepsilon[\phi + \lambda\chi]\Big)
\le  - \frac{m_\nu e^{-\frac{2r^2}{\alpha} - 8\pi^2 d}}{ M_\nu} \mb V_{\nu_Q}\Big(\frac{\dd}{\dd\lambda}\m T_\mu^\varepsilon[\phi + \lambda\chi]\Big).
\end{align*}
This completes the proof.

\end{proof}

\subsection{Gluing local inequalities using Boman chain condition}\label{sec: BomanChain}
In this section, we introduce the Boman chain, recently developed in~\citep{kitagawa2025stability, letrouit2024gluing}, which plays a crucial role in the proof of the stability result (Theorem~\ref{thm: stability}). This technique allows us to glue the local inequality established in Proposition~\ref{prop: variance lower bound}, which only holds on balls with sufficiently small radius, into a global inequality valid over the entire space $\mb T^d$. To begin, we need the following definition of a doubling probability distribution.
\begin{defn}[doubling]
A probability distribution $\rho\in\ms P(\mb T^d)$ is \emph{doubling} with coefficient $c > 0$ if
\begin{align*}
\rho\big(B_{\mb T^d}(x, 2r)\big) \leq c \rho\big(B_{\mb T^d}(x, r)\big)
\end{align*}
holds for any $x\in\mb T^d$ and $r > 0$. 
\myqed
\end{defn}

The following result illustrates that probability distributions over $\mb T^d$ with positive upper and lower bound are doubling. We omit the proof as it is straightforward.

\begin{proposition}\label{prop: bd2doubling}
If a distribution $\rho\in\ms P_{ac}(\mb T^d)$ satisfying 
$0 < m_\rho \leq \rho(x) \leq M_\rho < \infty$ for some constants $m_\rho, M_\rho$, then $\rho$ is doubling with coefficient $\frac{2^dM_\rho}{m_\rho}$.
\end{proposition}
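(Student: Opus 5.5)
The plan is to compare ball masses directly through the two-sided density bounds, and then control the volume ratio of concentric balls on the flat torus. Fix $x\in\mb T^d$ and $r>0$. Since $m_\rho\le\rho\le M_\rho$, we have
\begin{align*}
\rho\big(B_{\mb T^d}(x,2r)\big)\le M_\rho\vol\big(B_{\mb T^d}(x,2r)\big),
\qquad
\rho\big(B_{\mb T^d}(x,r)\big)\ge m_\rho\vol\big(B_{\mb T^d}(x,r)\big).
\end{align*}
It therefore suffices to show that
\begin{align*}
\vol\big(B_{\mb T^d}(x,2r)\big)\le 2^d\vol\big(B_{\mb T^d}(x,r)\big)
\end{align*}
for every $r>0$. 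By translation invariance of the flat metric, we may take $x=0$.

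For the volume comparison, I would identify $B_{\mb T^d}(0,r)$ with the image of the Euclidean ball $B(0,r)\subset\mb R^d$ under the quotient map $p:\mb R^d\to\mb T^d$. Its torus volume is then $\mathrm{Leb}(B(0,r)\cap D)$, where $D$ is the Dirichlet (Voronoi) cell of $2\pi\mb Z^d$ at $0$, namely $D=\{z:\|z\|\le\|z-2\pi k\|\ \forall k\}$. This identification holds because $\td(0,p(z))=\|z\|$ exactly when $z\in D$. So define
\begin{align*}
f(r)=\mathrm{Leb}\big(B(0,r)\cap D\big).
\end{align*}

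The cell $D$ is star-shaped about $0$, and in fact convex, being an intersection of half-spaces containing $0$. Hence the dilation $z\mapsto 2z$ maps $B(0,r)\cap D$ into $B(0,2r)\cap 2D$. This gives the wrong inclusion direction, so instead I would use the contraction $z\mapsto z/2$. It maps $B(0,2r)\cap D$ into $B(0,r)\cap \tfrac12 D\subset B(0,r)\cap D$, where the last inclusion uses convexity together with $0\in D$. The Jacobian of this contraction is $2^{-d}$, which yields
\begin{align*}
f(2r)\,2^{-d}\le f(r).
\end{align*}
This is exactly the required volume comparison. Combining it with the two density bounds gives the doubling coefficient $2^dM_\rho/m_\rho$.

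The only point needing care is the identification of torus balls with $B(0,r)\cap D$ up to a null set, i.e., that $p$ restricted to the interior of $D$ is injective onto its image, measure-preserving, and of full measure. This is standard for the fundamental domain of a lattice. The case of large $r$, when the ball covers the whole torus, is also handled automatically by this argument.
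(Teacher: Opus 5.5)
Your proof is correct and is the standard argument the paper has in mind when it omits the proof as straightforward. You sandwich $\rho$ between $m_\rho$ and $M_\rho$ times Lebesgue measure, then use the flat-torus volume doubling $\vol(B_{\mb T^d}(x,2r))\le 2^d\vol(B_{\mb T^d}(x,r))$, which your contraction argument on the fundamental cell (here simply the cube $[-\pi,\pi]^d$) establishes cleanly for all $r>0$.
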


Next, we introduce the Boman chain condition on $\mb T^d$, of which a general version can be found in~\citep{kitagawa2025stability}.
\begin{defn}[Boman chain condition]\label{defn: Bowen}
A probability measure $\rho\in\ms P(\mb T^d)$ satisfies the \emph{Boman chain condition} with parameters $A, B, C > 1$, if there exists a covering $\m F$ of $\mb T^d$ by open balls such that:
\begin{itemize}
    \item for any $x\in\mb T^d$, $\sum_{Q\in\m F}1_{2Q}(x) \leq A$;
    
    \item given any fixed ball $Q_0\in\m F$, which is called the central ball, for every $Q\in\m F$, there exists a chain $Q_0, Q_1, \cdots, Q_N = Q$ of distinct balls from $\m F$ such that $Q\subset BQ_j$ holds for all $j\in\{0, 1, \cdots, N-1\}$;
    
    \item consecutive balls of the above chain satisfy
    \begin{align*}
        \rho(Q_j\cap Q_{j+1}) \geq C^{-1}\max\{\rho(Q_j), \rho(Q_{j+1})\},\quad\forall\, j\in\{0, 1, \cdots, N-1\}.
    \end{align*}
\end{itemize}
\myqed
\end{defn}

The following result plays a crucial role in gluing the local estimates of Proposition~\ref{prop: variance lower bound} to a global one, and it is a corollary of~\cite[Proposition 3.7, Lemma 3.8,][]{kitagawa2025stability}, together with the doubling coefficient from Proposition~\ref{prop: bd2doubling}. 
\begin{proposition}\label{prop: var_decompose}
Suppose a probability density function $\rho(x)$ satisfies $0<m_\rho\leq \rho(x) \leq M_\rho < \infty$ for all $x\in\mb T^d$. Then, $\rho\in\ms P_{ac}(\mb T^d)$ satisfies the Boman chain condition with constants $A, B, C > 1$ only depending on $d, m_\rho, M_\rho$. Moreover, there exists a constant $\kappa = \kappa(m_\rho, M_\rho, d) > 0$, such that
\begin{align*}
\mb V_\rho(f) \leq \kappa\sum_{Q\in\m F} \rho(Q)\mb V_{\rho_Q}(f)
\end{align*}
holds for every continuous function $f$ on $\mb T^d$. Recall that $\rho_Q = \frac{\rho 1_Q}{\rho(Q)}$ is the probability measure given as the restriction of $\rho$ on $Q\subset\mb T^d$.
\myqed
\end{proposition}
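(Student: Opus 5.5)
Proposition~\ref{prop: var_decompose} is essentially a specialization of \cite[Proposition 3.7, Lemma 3.8]{kitagawa2025stability} to the flat torus with a density bounded above and below. The plan is to verify the hypotheses of those results and to track the dependence of the constants.

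\emph{Step 1: doubling and the covering.} By Proposition~\ref{prop: bd2doubling}, $\rho$ is doubling with coefficient $2^dM_\rho/m_\rho$. For the Boman chain condition, fix a small radius $r_0$ depending only on $d$, for instance $r_0 < R/4$ with $R$ from Proposition~\ref{prop: 2norm_local_convex}, so that the balls are geodesically convex. Let $\m F$ be the family of balls $B_{\mb T^d}(x_i, r_0)$ centered at a maximal $r_0/2$-separated set $\{x_i\}$ of $\mb T^d$.
\begin{itemize}
\item \emph{Bounded overlap.} A volume-packing argument shows that the number of balls with $x\in 2Q$ is bounded by some $A=A(d)$.
\item \emph{Chains.} Because $\mb T^d$ has diameter $\sqrt d\,\pi$, any ball $Q$ is connected to a fixed central ball $Q_0$ by a chain of distinct balls. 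Follow the segment between the centers and take successive balls whose centers are within $r_0$ of each other. The chain length is bounded by $N(d)$, so choosing $B \gtrsim \mathrm{diam}(\mb T^d)/r_0$ gives $Q\subset BQ_j$ for all $j$.
\item \emph{Overlaps.} Consecutive balls have centers at distance at most $r_0$, so $Q_j\cap Q_{j+1}$ contains a ball of radius $r_0/2$. Using $m_\rho\le\rho\le M_\rho$, we get $\rho(Q_j\cap Q_{j+1})\ge C^{-1}\max\{\rho(Q_j),\rho(Q_{j+1})\}$ with $C = 2^dM_\rho/m_\rho$.
\end{itemize}
Thus $A,B,C$ depend only on $d,m_\rho,M_\rho$.

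\emph{Step 2: the variance decomposition.} Here I would invoke the gluing lemma of \cite{kitagawa2025stability} (see also \cite{letrouit2024gluing}). If I had to reprove it, I would proceed as follows. Write $\mb V_\rho(f)\le \int (f-c_{Q_0})^2\dd\rho \le \sum_Q \rho(Q)\,\mb E_{\rho_Q}(f-c_{Q_0})^2$, where $c_Q=\mb E_{\rho_Q}f$, and split
\[
\mb E_{\rho_Q}(f-c_{Q_0})^2\le 2\mb V_{\rho_Q}(f)+2(c_Q-c_{Q_0})^2 .
\]
Telescope along the chain, $|c_Q-c_{Q_0}|\le\sum_j|c_{Q_{j+1}}-c_{Q_j}|$, and bound each increment using the overlap condition:
\[
|c_{Q_{j+1}}-c_{Q_j}|^2\le 2C\big(\mb V_{\rho_{Q_j}}(f)+\mb V_{\rho_{Q_{j+1}}}(f)\big).
\]
To see this, average $|c_{Q_{j+1}}-c_{Q_j}|^2 \le 2|f-c_{Q_j}|^2+2|f-c_{Q_{j+1}}|^2$ over $Q_j\cap Q_{j+1}$.

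\emph{Step 3: summing up.} The main obstacle is controlling the double sum that results, where a local variance on $Q_j$ is charged by every $Q$ whose chain passes through $Q_j$. In the torus setting this is easy because $\m F$ is finite with cardinality $|\m F|\le N'(d)$, and each chain has length at most $N(d)$. Cauchy--Schwarz then gives $(c_Q-c_{Q_0})^2\le 4C N\sum_{Q'\in\m F}\mb V_{\rho_{Q'}}(f)$. Since $\rho(Q')\ge m_\rho \vol(B(r_0))$ while $\rho(Q)\le 1$, we have $\mb V_{\rho_{Q'}}(f)\le \rho(Q')\mb V_{\rho_{Q'}}(f)/(m_\rho\vol(B(r_0)))$. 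Combining these yields $\mb V_\rho(f)\le\kappa\sum_Q\rho(Q)\mb V_{\rho_Q}(f)$ with $\kappa=\kappa(d,m_\rho,M_\rho)$. This crude finite-cover argument avoids needing the full maximal-function machinery of the Boman chain lemma, which is only required for noncompact or scale-invariant settings.
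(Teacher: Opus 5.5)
Your proposal is correct, but it takes a different route from the paper. The paper gives no argument of its own: it cites \cite[Proposition 3.7, Lemma 3.8]{kitagawa2025stability}, namely the existence of a Boman cover for a doubling measure and the general Boman-type gluing lemma, and combines this with the doubling constant $2^dM_\rho/m_\rho$ from Proposition~\ref{prop: bd2doubling}. You instead build an explicit cover by balls of a fixed radius $r_0<R/4$. You then prove the variance inequality directly, by telescoping $c_Q-c_{Q_0}$ along chains, using only the overlap condition, distinctness of chain balls, and the finiteness of $\m F$. In particular you never use the condition $Q\subset BQ_j$, which is vacuous on the torus once $Br_0>\pi\sqrt d$.

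The citation buys a constant $\kappa$ depending only on $A,B,C$ and the doubling constant, with no factor counting the balls. This is exactly what the maximal-function argument behind the $B$-condition provides, and it keeps the estimate robust if the radius shrinks and valid on other compact manifolds, as in the paper's remark on general ambient spaces.

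Your route buys a self-contained elementary proof. It also makes explicit something the statement leaves implicit but the paper relies on later: the balls can be taken with radius below $R$. This is needed to apply Proposition~\ref{prop: variance lower bound} inside Proposition~\ref{prop: global second derivative} and in the proof of Corollary~\ref{coro: Brenier_map}. The price is a cruder constant, of order $C\,N\,|\m F|/(m_\rho\vol(B(r_0)))$, i.e.\ polynomial in $r_0^{-d}$. That is harmless here because $r_0$ depends only on $d$.

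One small imprecision: taking nearest net points at sample points spaced $\eta$ apart along the segment only guarantees consecutive centers within $r_0+\eta$, not within $r_0$. To fix it, either switch centers exactly where the segment crosses a Voronoi boundary (both centers are then within $r_0/2$ of the crossing point) or use a finer net. Either fix only changes $C$ by a dimensional factor.
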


With the Boman chain condition and  Proposition~\ref{prop: var_decompose}, we can globalize the estimate in Proposition~\ref{prop: variance lower bound}:
\begin{proposition}\label{prop: global second derivative}
Use the same notation and assumptions in Proposition~\ref{prop: variance lower bound}. Then, 
\begin{align*}
  \mb E_{Y\sim\nu}\Big[ \frac{\dd^2}{\dd\lambda^2} \m T_\mu^\varepsilon[\phi_\varepsilon + \lambda\chi](Y) \Big]  
  \le - \frac{C_1}{\kappa_\nu A_\nu}
  \mb V_{Y\sim\nu}\left(\frac{\dd}{\dd\lambda} \m T_\mu^\varepsilon [\phi + \lambda\chi](Y)\right).
\end{align*}
where $\kappa_\nu = \kappa_\nu(m_\nu, M_\nu, d) > 0$ and $A_\nu=A(d, m_\nu, M_\nu, d)>1$.
\end{proposition}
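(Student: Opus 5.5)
The plan is to apply Proposition~\ref{prop: var_decompose} to $\nu$ with the test function $f \coloneqq \frac{\dd}{\dd\lambda}\m T_\mu^\varepsilon[\phi+\lambda\chi]$, and then control each local variance by Proposition~\ref{prop: variance lower bound}.

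First we check that $f$ is continuous. We have
\[
f(y)=\int\chi\,\dd\pi_\lambda^y,
\qquad
\pi_\lambda^y(\dd x)\propto e^{(\phi+\lambda\chi-c_\varepsilon(x,y))/\varepsilon}\mu(\dd x),
\]
which is continuous in $y$ because $\chi,\phi$ and $c_\varepsilon$ are continuous on the compact torus. Proposition~\ref{prop: var_decompose} then gives a covering $\m F$ of $\mb T^d$ by balls, with Boman constants $A_\nu,B,C$ and a constant $\kappa_\nu$ depending only on $(d,m_\nu,M_\nu)$, such that
\[
\mb V_\nu(f)\le \kappa_\nu\sum_{Q\in\m F}\nu(Q)\,\mb V_{\nu_Q}(f).
\]

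The point needing care is that every ball $Q\in\m F$ must have radius $r<R$, so that Proposition~\ref{prop: variance lower bound} applies with a constant $C_1$ uniform in $Q$. I expect to revisit the construction in \cite{kitagawa2025stability}, which is built from Whitney/dyadic-type balls of small radius, and fix a radius $r\le R/2$. With that choice $A_\nu$ and $\kappa_\nu$ still depend only on $d,m_\nu,M_\nu$, since $R$ is universal. For each such $Q$, Proposition~\ref{prop: variance lower bound} gives
\[
\nu(Q)\,\mb V_{\nu_Q}(f)\le -\frac{\nu(Q)}{C_1}\,\mb E_{\nu_Q}\Big[\tfrac{\dd^2}{\dd\lambda^2}\m T_\mu^\varepsilon[\phi+\lambda\chi]\Big]
=\frac{1}{C_1}\int_Q g\,\dd\nu,
\]
where $g\coloneqq-\frac{\dd^2}{\dd\lambda^2}\m T_\mu^\varepsilon[\phi+\lambda\chi]\ge 0$. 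The sign of $g$ follows from concavity of $\m T_\mu^\varepsilon$; explicitly, $g$ is $\varepsilon^{-1}$ times the variance of $\chi$ under $\pi^y_\lambda$.

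Summing over $Q$, the bounded overlap property $\sum_{Q}1_Q\le\sum_Q 1_{2Q}\le A_\nu$ and the nonnegativity of $g$ give
\[
\sum_{Q\in\m F}\int_Q g\,\dd\nu\le A_\nu\int_{\mb T^d}g\,\dd\nu .
\]
Combining the three displays yields
\[
\mb V_\nu(f)\le\frac{\kappa_\nu A_\nu}{C_1}\,\mb E_\nu[g],
\]
which rearranges to the claim. The statement is written at $\phi_\varepsilon$, but the argument holds for any continuous $\phi$, in particular for $\phi_\varepsilon$.

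The main obstacle is the radius-compatibility issue: making sure the Boman covering supplied by Proposition~\ref{prop: var_decompose} can be taken with all radii below $R$ without degrading the constants. If that fails, the fallback is to rescale. One would apply the local bound on each ball of the covering that has radius below $R$, and cover the finitely many larger balls by a bounded number of small balls using doubling (Proposition~\ref{prop: bd2doubling}). A standard chaining argument then bounds the variance on a large ball by the variances on those small balls, at the cost of a dimensional constant absorbed into $\kappa_\nu$.
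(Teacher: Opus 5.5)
Your proposal is correct and follows the paper's proof: the variance decomposition of Proposition~\ref{prop: var_decompose}, the local inequality of Proposition~\ref{prop: variance lower bound} on each ball, and then the bounded-overlap bound $\sum_{Q}1_Q\le A_\nu$ together with the sign $\frac{\dd^2}{\dd\lambda^2}\m T_\mu^\varepsilon[\phi+\lambda\chi]\le 0$ to pass from $\sum_Q\nu(Q)\,\mb E_{\nu_Q}[\cdot]$ back to $\mb E_\nu[\cdot]$. The requirement that all covering balls have radius below $R$, and hence get a uniform $C_1$ because $C_1$ is decreasing in $r$, is left implicit in the paper's proof; it is stated only later, in the proof of Corollary~\ref{coro: Brenier_map}, so your primary route of fixing the covering scale at $r\le R/2$ is exactly what the paper relies on.
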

\begin{proof}
Let $\m F$ be a covering of $\mb T^d$ satisfying the Boman chain condition with constants $A_\nu, B_\nu, C_\nu > 1$ that only depend on $m_\nu, M_\nu$, and $d$. Since $\nu(x)\in[m_\nu, M_\nu]$ for all $x\in\mb T^d$, by Proposition~\ref{prop: var_decompose}, there exists a constant $\kappa_\nu = \kappa_\nu(m_\nu, M_\nu, d) > 0$ such that
\begin{align*}
\mb V_{Y\sim\nu}\left(\frac{\dd}{\dd\lambda} \m T_\mu^\varepsilon [\phi + \lambda\chi](Y)\right)
\leq \kappa_\nu \sum_{Q\in\m F} \nu(Q) \mb V_{Y\sim \nu_Q}
\left(\frac{\dd}{\dd\lambda} \m T_\mu^\varepsilon [\phi + \lambda\chi](Y))\right)
\end{align*}
Then, Propositions~\ref{prop: variance lower bound} implies that 
\begin{align*}
 \mb V_{Y\sim\nu}\left(\frac{\dd}{\dd\lambda} \m T_\mu^\varepsilon [\phi + \lambda\chi](Y)\right) \le - \,\kappa_\nu \sum_{Q\in\m F}\nu(Q) C_1^{-1}
 \mb E_{Y\sim\nu_Q}\Big[
 \frac{\dd^2}{\dd\lambda^2} \m T_\mu^\varepsilon[\phi_\varepsilon + \lambda\chi](Y)
 \Big].
\end{align*}
Recall that by concavity of $\phi \mapsto T_\mu^\varepsilon[\phi]$, we have $ \frac{\dd^2}{\dd\lambda^2} \m T_\mu^\varepsilon[\phi_\varepsilon + \lambda\chi](y) \le 0$ for all $y$. Thus, from the first part of Boman chain condition (Definition~\ref{defn: Bowen}), we can localize the expectation 
\begin{align*}
\mb E_{Y\sim\nu}\Big[ \frac{\dd^2}{\dd\lambda^2} \m T_\mu^\varepsilon[\phi_\varepsilon + \lambda\chi](Y) \Big]
\le A_\nu^{-1}  \sum_{Q\in\m F}  \nu(Q)
\mb E_{Y\sim\nu_Q}\Big[
 \frac{\dd^2}{\dd\lambda^2} \m T_\mu^\varepsilon[\phi_\varepsilon + \lambda\chi](Y)
 \Big].
\end{align*}
Combining this with the above inequality we get the desired result. 
\end{proof}

\subsection{Proof of Theorem~\ref{thm: stability}}\label{sec: pf_stability}

We first analyze the operation  $\phi \mapsto \m T_\mu^\varepsilon[\phi]$ by defining the following  probability measure for each continuous  function $\phi$,
\begin{align}\label{eqn: EOT_transition_kernel}
\pi_y[\phi](x) \coloneqq \frac{\mu(x)e^{\frac{\phi(x) - c_\varepsilon(x, y)}{\varepsilon}}}{\int_{\mb T^d}e^{\frac{\phi(x) - c_\varepsilon(x,y)}{\varepsilon}}\,\dd\mu(x)} \in\ms P_{ac}(\mb T^d).
\end{align}
This then gives the following useful (but straightforward) computation, which we state without proof:
\begin{lemma}\label{lem:pi-y}
For $\lambda \in \mathbb{R}$,  $\phi, \chi \in \mathcal{C}(\mathbb{T}^d)$, we have
\begin{align*}
\frac{\dd}{\dd\lambda} \m T_\mu^\varepsilon [\phi + \lambda\chi](y)
&= -\frac{\int_{\mb T^d}e^{\frac{\phi(x) + \lambda\chi(x) - c_\varepsilon(x, y)}{\varepsilon}}\chi(x)\,\dd\mu(x)}{\int_{\mb T^d}e^{\frac{\phi(x) + \lambda\chi(x) - c_\varepsilon(x,y)}{\varepsilon}}\,\dd\mu(x)} = -\mb E_{X\sim\pi_y[\phi + \lambda\chi]}[\chi(X)],\\
\frac{\dd^2}{\dd\lambda^2} \m T_\mu^\varepsilon[\phi + \lambda\chi](y)
&= -\frac{1}{\varepsilon}\mb V_{X\sim\pi_y[\phi + \lambda\chi]}[\chi(X)].
\end{align*}
\end{lemma}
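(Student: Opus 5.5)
We compute the derivatives of $\lambda\mapsto \m T_\mu^\varepsilon[\phi+\lambda\chi](y)$ directly from the definition $\m T_\mu^\varepsilon[\phi](y) = -\varepsilon\log Z_\lambda(y)$, where
\begin{align*}
Z_\lambda(y) \coloneqq \int_{\mb T^d} e^{\frac{\phi(x)+\lambda\chi(x)-c_\varepsilon(x,y)}{\varepsilon}}\,\dd\mu(x).
\end{align*}
Since $\phi,\chi$ are continuous on the compact torus and $c_\varepsilon(\cdot,y)$ is continuous, hence bounded, the integrand and all its $\lambda$-derivatives are bounded uniformly for $\lambda$ in compact sets. Dominated convergence therefore allows differentiation under the integral sign, and $Z_\lambda(y)>0$.

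\textbf{First derivative.} We have
\begin{align*}
\partial_\lambda Z_\lambda(y) = \frac{1}{\varepsilon}\int_{\mb T^d}\chi(x)\,e^{\frac{\phi+\lambda\chi-c_\varepsilon}{\varepsilon}}\,\dd\mu(x),
\end{align*}
so $\frac{\dd}{\dd\lambda}\m T_\mu^\varepsilon = -\varepsilon\,\partial_\lambda Z_\lambda/Z_\lambda$. This equals the stated ratio of integrals, and by the definition \eqref{eqn: EOT_transition_kernel} of $\pi_y[\phi+\lambda\chi]$ it is exactly $-\mb E_{\pi_y[\phi+\lambda\chi]}[\chi]$.

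\textbf{Second derivative.} Differentiate $-\varepsilon\,\partial_\lambda Z/Z$ once more:
\begin{align*}
-\varepsilon\Big(\frac{\partial_\lambda^2 Z}{Z}-\Big(\frac{\partial_\lambda Z}{Z}\Big)^2\Big).
\end{align*}
Here $\partial_\lambda^2 Z/Z = \varepsilon^{-2}\,\mb E_{\pi_y}[\chi^2]$ and $\partial_\lambda Z/Z = \varepsilon^{-1}\,\mb E_{\pi_y}[\chi]$. Substituting gives $-\frac{1}{\varepsilon}\big(\mb E[\chi^2]-\mb E[\chi]^2\big) = -\frac{1}{\varepsilon}\mb V_{\pi_y[\phi+\lambda\chi]}(\chi)$.

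There is no real obstacle here. The only point needing care is the justification of differentiating under the integral, which follows from continuity and compactness as noted above.
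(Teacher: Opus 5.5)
Your proposal is correct. It is exactly the direct computation the paper intends: the paper calls the lemma a ``straightforward computation'' and states it without proof. You differentiate $-\varepsilon\log Z_\lambda$ twice under the integral sign, justified by dominated convergence since the continuous integrand and its $\lambda$-derivatives are bounded on the compact torus, and then identify the two ratios as the mean and variance of $\chi$ under $\pi_y[\phi+\lambda\chi]$.
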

We now prove Theorem~\ref{thm: stability}.

\begin{proof}[\bf Proof of Theorem~\ref{thm: stability}]
It suffices to prove~\eqref{eqn: stability1}, as~\eqref{eqn: stability2} follows by applying the same argument with $\mu$ and $\nu$ interchanged. 
We will use optimality of $\phi_\varepsilon$ and the second-order derivative estimate in Proposition~\ref{prop: global second derivative}; this is a natural idea as also done in \citep{delalande2023quantitative}.

\vspace{0.5em}
\noindent\underline{Step 1.} Let $f_\chi(\lambda) = \m I_{\mu, \nu}^\varepsilon[\phi_\varepsilon + \lambda\chi]$. 
Then, from optimality of $\phi_\varepsilon = \argmax_{\phi\in L^1(\mu)}\m I_{\mu, \nu}^\varepsilon[\phi]$, we have $f'(0) = 0$. Therefore, we can write  
\begin{align}\label{eqn: diff_func_2nd}
 \m I_{\mu, \nu}^\varepsilon[\phi_\varepsilon] -\m I_{\mu, \nu}^\varepsilon[\phi_\varepsilon+\chi] 
&= f_\chi(0) - f_\chi(1)
=-\int_0^1\!\!\int_0^t f_\chi''(\lambda)\,\dd \lambda\dd t
= -\int_0^1(1-\lambda) f_\chi''(\lambda)\,\dd\lambda.
\end{align}

\vspace{0.5em}\noindent
\underline{Step 2.} Notice that  
$$
  f_\chi''(\lambda)=  \mb E_{Y\sim\nu}\Big[ \frac{\dd^2}{\dd\lambda^2} \m T_\mu^\varepsilon[\phi_\varepsilon + \lambda\chi](Y) \Big].
$$
To estimate this in terms of $\mb V_\mu(\chi)$ as desired, we will follow several steps. 
First,
 from  Lemma~\ref{lem:pi-y}, we get
\begin{align}\label{eqn:f''EV}
   f_\chi''(\lambda)=  
   - \frac{1}{\varepsilon}\mb E_{Y\sim\nu} \Big[\mb V_{X\sim\pi_y[\phi + \lambda\chi]}[\chi(X)]\Big].
\end{align}
Also, together with Proposition~\ref{prop: global second derivative} and Lemma~\ref{lem:pi-y}, 
\begin{align}\label{eqn:f''VE}   f_\chi''(\lambda)\le 
  &  - \frac{C_1}{\kappa_\nu A_\nu}
  \mb V_{Y\sim\nu}\left(\frac{\dd}{\dd\lambda} \m T_\mu^\varepsilon [\phi + \lambda\chi](Y)\right)
  =  - \frac{C_1}{\kappa_\nu A_\nu}
  \mb V_{Y\sim\nu}\left(
   \mb E_{X\sim\pi_y[\phi + \lambda\chi]}[\chi(X)]\right)
\end{align}

\vspace{0.5em}\noindent
\underline{Step 3.} We now apply the law of total variance (see e.g. Theorem 3.27 in~\cite{wasserman2013all}), namely, for a family of probability density functions $\pi_y$ and the measure $d\pi^\nu (x,y):= \pi_y (x) d\nu(y)$ with a probability measure $\nu$,
\begin{align*}
    \mb V_{\pi^{\nu}}[\chi] 
&= \mb V_{Y\sim\nu}\Big(\mb E_{X\sim\pi_Y}[\chi(X)]\Big) + \mb E_{Y\sim\nu}\Big[\mb V_{X\sim\pi_Y}[\chi(X)]\Big].
\end{align*}
Apply this principle to the  probability distribution
$\pi^\nu[\phi_\varepsilon + \lambda\chi] \in \ms P(\mb T^d) $ defined by
\begin{align*}
\pi^\nu[\phi_\varepsilon + \lambda\chi](x) \coloneqq \int_{\mb T^d}\pi_y[\phi_\varepsilon + \lambda\chi](x)\,\dd\nu(y).
\end{align*}
Then, we have 
\begin{align*}
\mb V_{\pi^{\nu}[\phi_\varepsilon + \lambda\chi]}[\chi] 
&= \mb V_{Y\sim\nu}\Big[\mb E_{X\sim\pi_Y[\phi_\varepsilon + \lambda\chi]}[\chi(X)]\Big] + \mb E_{Y\sim\nu}\Big[\mb V_{X\sim\pi_Y[\phi_\varepsilon + \lambda\chi]}[\chi(X)]\Big]\\
&\le - \left( \varepsilon + \frac{\kappa_\nu A_\nu}{C_1} \right) f_\chi'' (\lambda).
\end{align*}
where the last line follows from  \eqref{eqn:f''EV} and \eqref{eqn:f''VE}.

\vspace{0.5em}\noindent
\underline{Step 4.}
The following lemma relates $\mb V_{\pi^{\nu}[\phi_\varepsilon + \lambda\chi]}[\chi]$ with $\mb V_{\mu}[\chi]$. Its proof follows the arguments in~\citep{chizat2025sharper}, and is deferred to Appendix~\ref{app: integ_lb}.
\begin{lemma}\label{lem: integ_lb}
For every $\lambda\in[0, 1]$, we have
\begin{align*}
\mb V_{X\sim\pi^\nu[\phi_\varepsilon + \lambda\chi]}[\chi(X)]
\geq \frac{1}{2}\mb V_{X\sim\mu}[\chi(X)] - \frac{4\lambda}{\varepsilon}\|\chi\|_{L^\infty(\mb T^d)}^2\Big[\m I_{\mu, \nu}^\varepsilon[\phi_\varepsilon] - \m I_{\mu, \nu}^\varepsilon[\phi_\varepsilon + \chi]\Big].
\end{align*}
\end{lemma}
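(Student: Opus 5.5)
The plan is to compare the measure $\pi^\nu[\phi_\varepsilon+\lambda\chi]$ with $\mu$ in Kullback--Leibler divergence. Three facts drive the argument. First, $\pi^\nu[\phi_\varepsilon]=\mu$: this is the first marginal of the optimal coupling \eqref{eqn: SB_primal_sol}, using $\psi_\varepsilon=\m T^\varepsilon_\mu[\phi_\varepsilon]$. Second, the KL divergence between the two measures is controlled by the gap of the Schr\"odinger functional. Third, a Donsker--Varadhan (Gibbs variational) inequality turns a KL bound into a comparison of variances. Throughout, write $q\coloneqq\pi^\nu[\phi_\varepsilon+\lambda\chi]$, and note that $\pi_y[\phi](x)=\mu(x)e^{(\phi(x)-c_\varepsilon(x,y)+\m T^\varepsilon_\mu[\phi](y))/\varepsilon}$.

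\emph{Step 1 (KL via the functional gap).} By joint convexity of KL,
\begin{align*}
\KL(\mu\,\|\,q)\le\int_{\mb T^d}\KL\big(\pi_y[\phi_\varepsilon]\,\|\,\pi_y[\phi_\varepsilon+\lambda\chi]\big)\,\dd\nu(y).
\end{align*}
From the explicit form of $\pi_y$, the log-ratio of the two kernels equals $\varepsilon^{-1}\big(-\lambda\chi(x)+\m T^\varepsilon_\mu[\phi_\varepsilon](y)-\m T^\varepsilon_\mu[\phi_\varepsilon+\lambda\chi](y)\big)$. We integrate against $\pi_y[\phi_\varepsilon](\dd x)\,\nu(\dd y)$, whose $x$-marginal is $\mu$. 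This yields exactly
\begin{align*}
\KL(\mu\,\|\,q)\le\frac{1}{\varepsilon}\big(\m I^\varepsilon_{\mu,\nu}[\phi_\varepsilon]-\m I^\varepsilon_{\mu,\nu}[\phi_\varepsilon+\lambda\chi]\big).
\end{align*}
The function $g(\lambda)=\m I^\varepsilon_{\mu,\nu}[\phi_\varepsilon+\lambda\chi]$ is concave and maximized at $\lambda=0$. Hence, for $\lambda\in[0,1]$, we have $g(0)-g(\lambda)\le\lambda\,(g(0)-g(1))$. Therefore
\begin{align*}
\KL(\mu\,\|\,q)\le\frac{\lambda}{\varepsilon}\big(\m I^\varepsilon_{\mu,\nu}[\phi_\varepsilon]-\m I^\varepsilon_{\mu,\nu}[\phi_\varepsilon+\chi]\big).
\end{align*}

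\emph{Step 2 (variance transfer).} Let $c=\mb E_q[\chi]$ and $f=(\chi-c)^2$. Since $|c|\le\|\chi\|_{L^\infty}$, we have $0\le f\le4\|\chi\|_{L^\infty}^2$. By Donsker--Varadhan, for any $t>0$,
\begin{align*}
\mb V_\mu(\chi)\le\int f\,\dd\mu\le\frac{1}{t}\Big(\KL(\mu\,\|\,q)+\log\int e^{tf}\,\dd q\Big).
\end{align*}
Choose $t=1/(8\|\chi\|_{L^\infty}^2)$, so that $tf\le\frac12$. Convexity of the exponential gives $e^{s}\le1+2s$ on $[0,\frac12]$, and with $\log(1+z)\le z$ this yields $\log\int e^{tf}\,\dd q\le2t\int f\,\dd q=2t\,\mb V_q(\chi)$. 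Consequently
\begin{align*}
\mb V_\mu(\chi)\le8\|\chi\|_{L^\infty}^2\KL(\mu\,\|\,q)+2\mb V_q(\chi).
\end{align*}
Rearranging and inserting Step 1 gives the claimed inequality with the constants $\tfrac12$ and $\tfrac{4\lambda}{\varepsilon}$.

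The main point requiring care is Step 1. One must get the direction of the KL divergence and the signs of $\m T^\varepsilon_\mu$ right, so that the cross terms collapse exactly into the functional gap. This uses the identity $\int\pi_y[\phi_\varepsilon]\,\dd\nu(y)=\mu$, which is the Schr\"odinger system. The remaining steps are routine.
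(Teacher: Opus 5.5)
Your proof is correct and has the same architecture as the paper's proof. First, bound $\KL(\mu\,\|\,q)$, with $q=\pi^\nu[\phi_\varepsilon+\lambda\chi]$, by $\frac{1}{\varepsilon}\big(\m I^\varepsilon_{\mu,\nu}[\phi_\varepsilon]-\m I^\varepsilon_{\mu,\nu}[\phi_\varepsilon+\lambda\chi]\big)$; then use concavity in $\lambda$ to extract the factor $\lambda$; finally convert via $\mb V_q(\chi)\ge\frac12\mb V_\mu(\chi)-4\|\chi\|_{L^\infty}^2\KL(\mu\,\|\,q)$.

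The only differences are in two justifications. For the KL bound, the paper uses the exact identity $\KL(\mu\,\|\,q)=\frac{1}{\varepsilon}\big(\overline{\m I}^\varepsilon_{\mu,\nu}\big[\m T^\varepsilon_\mu[\phi_\varepsilon+\lambda\chi]\big]-\m I^\varepsilon_{\mu,\nu}[\phi_\varepsilon+\lambda\chi]\big)$ together with $\overline{\m I}^\varepsilon_{\mu,\nu}\le\eot_\varepsilon(\mu,\nu)=\m I^\varepsilon_{\mu,\nu}[\phi_\varepsilon]$, where you use joint convexity of KL and $\pi^\nu[\phi_\varepsilon]=\mu$. For the variance step, the paper cites Lemma 4.1 of \citep{chizat2025sharper}, which your Donsker--Varadhan argument reproves with the same constants.
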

We note that optimality of $\phi_\varepsilon$ is used to prove this lemma.

\vspace{0.5em}\noindent
\underline{Step 5.}  We now combine the above estimates to 
\eqref{eqn: diff_func_2nd}.
We get from Step 3, 
\begin{align*}
\m I_{\mu, \nu}^\varepsilon[\phi_\varepsilon] - \m I_{\mu, \nu}^\varepsilon[\phi_\varepsilon + \chi]
&\geq \left( \varepsilon + \frac{\kappa_\nu A_\nu}{C_1} \right)^{-1}
\int_0^1 (1-\lambda)\mb V_{\pi^{\nu}[\phi_\varepsilon + \lambda\chi]}[\chi]\,\dd\lambda.
\end{align*}
Applying Lemma~\ref{lem: integ_lb} yields 
\begin{align*}
\m I_{\mu, \nu}^\varepsilon[\phi_\varepsilon] - \m I_{\mu, \nu}^\varepsilon[\phi_\varepsilon + \chi]
&\geq \left( \varepsilon + \frac{\kappa_\nu A_\nu}{C_1} \right)^{-1}
\bigg[\frac{1}{4}\mb V_\mu[\chi] - \frac{2\|\chi\|^2_{L^\infty(\mb T^d)}}{3\varepsilon}\big[\m I_{\mu, \nu}^\varepsilon[\phi_\varepsilon] - \m I_{\mu, \nu}^\varepsilon[\phi_\varepsilon + \chi]\big]\bigg].
\end{align*}
That is,
\begin{align*}
\mb V_\mu(\chi) \leq \Big(
4 \varepsilon + 4\frac{\kappa_\nu A_\nu}{C_1}
 + \frac{8\|\chi\|^2_{L^\infty(\mb T^d)}}{3\varepsilon}\Big)\big[\m I_{\mu, \nu}^\varepsilon[\phi_\varepsilon] - \m I_{\mu, \nu}^\varepsilon[\phi_\varepsilon + \chi]\big].
\end{align*}
Observe that replacing $\chi$ with $\chi - \frac{\max_x \chi(x) + \min_x\chi(x)}{2}$  does not change $\mb V_\mu(\chi)$ and $\m I_{\mu, \nu}^\varepsilon[\phi_\varepsilon + \chi]$, but will replace $\|\chi\|_{L^\infty(\mb T^d)}$ with $\osc(\chi)$.  Now recall that as defined in Proposition~\ref{prop: variance lower bound}, 
\begin{align*}
    C_1= C_1(m_\nu, M_\nu, r, \alpha, d)  := \frac{m_\nu e^{-\frac{2r^2}{\alpha} - 8\pi^2 d}}{ M_\nu}.
\end{align*}
So, we get
\begin{align*}
\mb V_\mu(\chi) &\leq \Big(4\varepsilon + \frac{4\kappa_\nu A_\nu M_\nu e^{\frac{2r^2}{\alpha} + 8\pi^2 d}}{ m_\nu} + \frac{8\osc(\chi)^2}{3\varepsilon}\Big)\big[\m I_{\mu, \nu}^\varepsilon[\phi_\varepsilon] - \m I_{\mu, \nu}^\varepsilon[\phi_\varepsilon + \chi]\big]
\end{align*}
Taking $C_\nu = 4\kappa_\nu A_\nu M_\nu / m_\nu$ and using $r < R$ complete the proof of~\eqref{eqn: stability1}.
\end{proof}
\section{Proof of Main Results}\label{sec: main pf}

We are now ready to prove our main results, namely, Theorem~\ref{thm: multiSB_stable} and Theorem~\ref{thm: asymp_schro_potential}. Due to their logical connections we prove them in reverse order.

\subsection{Proof of Theorem~\ref{thm: asymp_schro_potential}}\label{sec: main pf expansion}
We will prove the statement for Schr\"odinger potentials at an arbitrary time point $t$. 
Fix $K \in \mb N^*$ and $t\in[0, 1)$. Let $(\phi_\varepsilon^\mu, \psi_\varepsilon^\mu)$ be the Schr\"odinger potentials defined through~\eqref{eqn: Schro_sys} for $\eot_\varepsilon(\rho_t^\mu, \rho_{t+\varepsilon}^\mu)$ with $0 < \varepsilon \leq 1 - t$. 
Here, $0< \varepsilon < \varepsilon_{\rm thres}$ for $\varepsilon_{\rm thres}$ to be determined below. We only need to find two sets of functions $\{f_k: 1\leq k < K\}$ and $\{g_k: 1\leq k < K\}$ depending on the time point $t$, such that
\begin{align}\label{eqn: expand_arbitrary_time}
\min_{c\in\mb R}\bigg\{\Big\|\phi_\varepsilon^\mu + c - \sum_{k=1}^{K-1} \varepsilon^k f_{k}\Big\|_{L^2(\rho_{t}^\mu)} +
\Big\|\psi_\varepsilon^\mu - c - \sum_{k=1}^{K-1} \varepsilon^k g_{k}\Big\|_{L^2(\rho_{t}^\mu)}\bigg\}
\leq O(\varepsilon^K).
\end{align}
To prove this result, use the notation \eqref{eqn:notation-ScB} to define
\begin{align*}
\big\{(u_k, v_k): k=0,1,\cdots,2K\big\} = \schro\big(\rho_t^\mu, \rho_t^{(1)}, \cdots, \rho_t^{(2K)}\big),
\end{align*}
and let
\begin{align*}
U_{2K}^\mu \coloneqq \sum_{k=0}^{2K}\varepsilon^k u_k,
\qquad
V_{2K}^\mu\coloneqq \sum_{k=0}^{2K}\varepsilon^k v_k.
\end{align*}
The main idea of proving~\eqref{eqn: expand_arbitrary_time} is as follows: First, from Theorem~\ref{thm: eot_expansion} it holds that the Schr\"odinger functionals $\m I_{\rho_t^\mu, \rho_{t+\varepsilon}^\mu}^\varepsilon$ and $\overline{\m I}_{\rho_t^\mu,\rho_{t+\varepsilon}^\mu}^\varepsilon$ evaluated at
\begin{align}\label{eqn: proxy_potential}
\wt\phi_\varepsilon^\mu \coloneqq \varepsilon \log U_{2K}^\mu
\quad\mx{and}\quad
\wt\psi_\varepsilon^\mu\coloneqq \varepsilon\log V_{2K}^\mu
\end{align}
approximate the value $\eot_\varepsilon(\rho_t^\mu, \rho_{t+\varepsilon}^\mu)$, which is their maximum value by the strong duality~\eqref{eqn: strong_dual}.  Then, we can apply 
the stability of the Schr\"odinger functionals established in Theorem~\ref{thm: stability} to estimate the difference between $(\phi_\varepsilon^\mu, \psi_\varepsilon^\mu)$ and $(\wt\phi_\varepsilon^\mu, \wt\psi_\varepsilon^\mu)$. 
Then the expansions of $\wt\phi_\varepsilon^\mu, \wt\psi_\varepsilon^\mu$ will give those $f_k, g_k$'s  in \eqref{eqn: expand_arbitrary_time}.
A detailed proof is presented below.

\vspace{0.5em}\noindent
\underline{Step 1: Control $\mb V_{\rho_t^\mu}\big(\phi_\varepsilon^\mu - \wt\phi_\varepsilon^\mu\big)$  using the stability of Schr\"odinger functionals.}
By Theorem~\ref{thm: stability}, there exists $C_1>0$ depending only on the constants $L_\rho$, $d$, and $\alpha, R$ introduced in Proposition~\ref{prop: 2norm_local_convex}, such that
\begin{align}\label{eqn: direct_apply_stability}
\mb V_{\rho_{t}^\mu}(\wt\phi_\varepsilon^\mu - \phi_\varepsilon^\mu) 
\leq \Big(4\varepsilon + C_1 + \frac{8\osc(\wt\phi_\varepsilon^\mu - \phi_\varepsilon^\mu)^2}{3\varepsilon}\Big)
\Big[\m I_{\rho_{t}^\mu, \rho_{t+\varepsilon}^\mu}^{\varepsilon}[\phi_\varepsilon^\mu] - \m I_{\rho_{t}^\mu, \rho_{t+\varepsilon}^\mu}^{\varepsilon}[\wt \phi_\varepsilon^\mu]\Big].
\end{align}

Now, let us estimate the oscillation. Note that
\begin{align*}
\osc\big(\wt\phi_\varepsilon^\mu - \phi_\varepsilon^\mu\big) 
 \leq \osc ( \wt\phi_\varepsilon^\mu) + \osc(\phi_\varepsilon^\mu)
\leq 2\big\|\wt\phi_\varepsilon^\mu\big\|_{L^\infty} + \osc(\phi_\varepsilon^\mu)
\leq 2 \varepsilon \big\|\log U_{2K}^\mu\big\|_{L^\infty} + 4\pi^2d.
\end{align*}
Here, the last inequality follows from the Schr\"odinger system $\phi_\varepsilon^\mu = \m T^\varepsilon_{\rho_{t+\varepsilon}^\mu}[\psi_\varepsilon^\mu]$ in~\eqref{eqn: Schro_sys} and the inequality~\eqref{eqn: osc_cost}. 

Next, we control the $L^\infty$-norm of $\log U_{2K}^\mu$. We know $\min_{x\in\mb T^d} u_0(x) > 0$ by Theorem~\ref{thm: existence}. Then, there exists 
\begin{align*}
    \hbox{$\varepsilon_{2K,1} > 0$ depending only
$\|u_1\|_{L^\infty}, \cdots, \|u_{2K}\|_{L^\infty}$ and the positive lower bound of $u_0$,}
\end{align*}
such that for all $\varepsilon \leq \varepsilon_{2K, 1}$ we have
\begin{align*}
\frac{1}{2}\min_{x\in\mb T^d} u_0(x) \geq \sum_{k=1}^{2K}\varepsilon^k \|u_k\|_{L^\infty}.
\end{align*}
This implies
\begin{align*}
0 < \frac{1}{2}\min_{x\in\mb T^d} u_0(x)
\leq \big\|U_{2K}^\mu\big\|_{L^\infty}
\leq \|u_0\|_{L^\infty} +  \frac{1}{2}\min_{x\in\mb T^d}u_0(x) 
\leq \frac{3}{2}\|u_0\|_{L^\infty}.
\end{align*}
Using Theorem~\ref{thm: existence}, there exists a constant $C_2 = C_2\big(d, L_\rho, \big\|\rho_t^\mu\big\|_{\m C^3}, \big\|\rho_t^{(1)}\big\|_{\m C^1}\big) \in (0, 1]$, such that
\begin{align*}
C_2 \leq \min_{x\in\mb T^d} u_0(x) \leq \|u_0\|_{L^\infty} \leq (C_2)^{-1}.
\end{align*}
Applying this to~\eqref{eqn: direct_apply_stability} yields for each $0< \varepsilon \le  \varepsilon_{2K, 1}$,
\begin{align}\label{eqn: estimate_through_stability}
\mb V_{\rho_{t}^\mu}(\wt\phi_\varepsilon^\mu - \phi_\varepsilon^\mu) 
\leq \Big[4\varepsilon + C_1 + \frac{8}{3\varepsilon}\Big(2 \varepsilon \log\frac{3}{2C_2} + 4\pi^2d\Big) ^2\Big]
\Big[\m I_{\rho_{t}^\mu, \rho_{t+\varepsilon}^\mu}^{\varepsilon}[\phi_\varepsilon^\mu] - \m I_{\rho_{t}^\mu, \rho_{t+\varepsilon}^\mu}^{\varepsilon}[\wt \phi_\varepsilon^\mu]\Big].
\end{align}
Again, due to Theorem~\ref{thm: existence}, we know that $\varepsilon_{2K, 1} > 0$ depends only on 
$$
\big\|\rho_t^\mu\big\|_{\m C^{4K+3}}, \big\|\rho_t^{(1)}\big\|_{\m C^{4K+1}}, \cdots, \big\|\rho_t^{(2K+1)}\big\|_{\m C^1}.
$$


\vspace{0.5em}\noindent
\underline{Step 2: Estimate the difference $\m I_{\rho_t^\mu, \rho_{t+\varepsilon}^\mu}^\varepsilon[\phi_\varepsilon^\mu] - \m I_{\rho_t^\mu, \rho_{t+\varepsilon}^\mu}^\varepsilon[\wt\phi_\varepsilon^\mu]$.}
Recall that we have strong duality
\begin{align*}
\m I_{\rho_t^\mu, \rho_{t+\varepsilon}^\mu}^\varepsilon\big[\phi_\varepsilon^\mu\big] = \eot_\varepsilon\big(\rho_t^\mu, \rho_{t+\varepsilon}^\mu\big).
\end{align*}
Applying the estimate of the upper bound of the EOT in\eqref{eqn: eot_ub}, there exist  constants $C_3>0$ and $\varepsilon_{2K, 2}>0$, 
such that for each $0 < \varepsilon \leq \varepsilon_{2K, 2}$,
\begin{align*}
\m I_{\rho_t^\mu, \rho_{t+\varepsilon}^\mu}^\varepsilon\big[\phi_\varepsilon^\mu\big]
\leq \varepsilon\int_{\mb T^d}\log U_{2K}^\mu\,\dd\rho_t^\mu + \varepsilon\int_{\mb T^d}\log V_{2K}^\mu\,\dd\rho_{t+\varepsilon}^\mu + C_3\varepsilon^{2K+1}.
\end{align*}
Here, both $C_3$ and $\varepsilon_{2K, 2}$ depend only on $L_\rho$, and the upper bound  of
\begin{align*}
&\sup\big\{\|\rho_t^{(k)}\|_{H^{2k'+s}}: t\in[0, 1], \,\,k,k'\in\mb N,\,\, k+k'\leq 2\big\},\\
& \|u_0\|_{H^{s+4}}, \cdots, \|u_{2K}\|_{H^{s+4}},
\quad\mx{and}\quad
\|v_0\|_{H^{s+4}}, \cdots, \|v_{2K}\|_{H^{s+4}}
\end{align*}
with $s = \lceil\frac{d+1}{2}\rceil$, and the lower bound of 
\begin{align*}
\min_{x\in\mb T^d}u_0(x)
\quad\mx{and}\quad
\min_{x\in\mb T^d} v_0(x).
\end{align*}
Applying Theorem~\ref{thm: existence}, these constants $C_3$ and $\varepsilon_{2K, 2}$  depend only  on $d,K$,
\begin{align*}
&\sup_{0\leq t\leq 1}\big\{\|\rho_t^{(k)}\|_{H^{2k'+s}}:k,k'\in\mb N,\,\, k+k'\leq 2\big\}
\quad\mx{and}\quad
\big\|\rho_t^\mu\big\|_{\m C^{s+4K+4}}, \big\|\rho_t^{(1)}\big\|_{\m C^{s+4K+2}}, \cdots, \big\|\rho_t^{(2K+1)}\big\|_{\m C^{s+2}}.
\end{align*}

Using the definition of the Schr\"odinger functional $\m I_{\rho_t^\mu, \rho_{t+\varepsilon}^\mu}^\varepsilon$ defined in~\eqref{eqn: dual functional} and Lemma~\ref{lem: estim_dual_lb}, we have
\begin{align*}
\m I_{\rho_t^\mu, \rho_{t+\varepsilon}^\mu}^\varepsilon\big[\wt \phi_\varepsilon^\mu\big]
&= \int_{\mb T^d} \wt\phi_\varepsilon^\mu\,\dd\rho_t^\mu + \int_{\mb T^d} \m T_{\rho_t^\mu}^\varepsilon\big[\wt\phi_\varepsilon^\mu\big]\,\dd\rho_{t+\varepsilon}^\mu\\
&\geq \varepsilon\int_{\mb T^d}\log U_{2K}^\mu\,\dd\rho_t^\mu + \varepsilon\int_{\mb T^d}\log V_{2K}^\mu\,\dd\rho_{t+\varepsilon}^\mu - C_4\varepsilon^{2K+2},
\end{align*}
where $C_4 \geq 0$ is a constant depending only on the upper bound of
\begin{align*}
&\|u_0\|_{\m C^{4K+2}}, \cdots, \|u_{2K}\|_{\m C^{4K+2}},
\|v_0\|_{\m C^{4K+2}}, \cdots, \|v_{2K}\|_{\m C^{4K+2}}
\quad\mx{and}\\
&\sup\big\{\|\rho_t^{(k)}\|_{\m C^{2k'}}: t\in[0,1],\,\, k+k' \leq 2K+1, k,k'\in\mb N\big\}.
\end{align*}
Therefore, we get
\begin{align}\label{eqn: diff_schro_func}
\m I_{\rho_t^\mu, \rho_{t+\varepsilon}^\mu}^\varepsilon\big[\phi_\varepsilon^\mu\big] - \m I_{\rho_t^\mu, \rho_{t+\varepsilon}^\mu}^\varepsilon\big[\wt \phi_\varepsilon^\mu\big]
\leq \big(C_3 + C_4\varepsilon\big)\varepsilon^{2K+1}.
\end{align}
Applying Theorem~\ref{thm: existence} again, we know that $C_4$ only depends on $d,K$
\begin{align*}
\sup_{0\leq t\leq 1}\big\{\|\rho_t^{(k)}\|_{\m C^{2k'}}: k,k'\in\mb N, k+k'\leq 2K+1\big\}
\quad\mx{and}\quad
\big\|\rho_t^\mu\big\|_{\m C^{8K+3}}, \big\|\rho_t^{(1)}\big\|_{\m C^{8K+1}}, \cdots, \big\|\rho_t^{(2K+1)}\big\|_{\m C^{4K+1}}.
\end{align*}




\vspace{0.5em}\noindent
\underline{Step 3: Summary of Steps 1 and 2.}
From the previous steps \eqref{eqn: estimate_through_stability} and~\eqref{eqn: diff_schro_func}, we now know there exist constants $C_5 > 0$ and $\varepsilon_{2K, 3}>0$, such that for each $\varepsilon \le \varepsilon_{2K, 3}$,
\begin{align*}
\mb V_{\rho_{t}^\mu}(\wt\phi_\varepsilon^\mu - \phi_\varepsilon^\mu) 
& \leq \Big[4\varepsilon + C_1 + \frac{8}{3\varepsilon}\Big(2\varepsilon \log\frac{3}{2C_2} + 4\pi^2d\Big) ^2\Big]\big(C_3 + C_4\varepsilon\big)\varepsilon^{2K+1}
\leq C_5\varepsilon^{2K}.
\end{align*}

Apply for $\psi^\mu_\varepsilon, \tilde \psi^\mu_\varepsilon$ exactly the same procedure as in Step 1 and Step 2, then we get 
constants $C_5' > 0$ and $\varepsilon_{2K, 3}'>0$, such that for each $\varepsilon \le \varepsilon_{2K, 3}'$,
\begin{align*}
\mb V_{\rho_{t+\varepsilon}^\mu}(\wt\psi_\varepsilon^\mu - \psi_\varepsilon^\mu) 
\leq C_5'\varepsilon^{2K}.
\end{align*}
Here, the constants $C_5$ and $C_5'$ depend on $L_\rho, d, \alpha, R$,
\begin{align*}
&\sup_{0\leq t\leq 1}\big\{\|\rho_t^{(k)}\|_{H^{2k'+s}}:k,k'\in\mb N,\,\, k+k'\leq 2\big\}
\quad\mx{and}\quad
\big\|\rho_t^\mu\big\|_{\m C^{s+4K+4}}, \big\|\rho_t^{(1)}\big\|_{\m C^{s+4K+2}}, \cdots, \big\|\rho_t^{(2K+1)}\big\|_{\m C^{s+2}},\\
&\sup_{0\leq t\leq 1}\big\{\|\rho_t^{(k)}\|_{\m C^{2k'}}: k,k'\in\mb N, k+k'\leq 2K+1\big\}
\quad\mx{and}\quad
\big\|\rho_t^\mu\big\|_{\m C^{8K+3}}, \big\|\rho_t^{(1)}\big\|_{\m C^{8K+1}}, \cdots, \big\|\rho_t^{(2K+1)}\big\|_{\m C^{4K+1}}.
\end{align*}
where $\alpha$ and $R$ are introduced in Proposition~\ref{prop: 2norm_local_convex}, and $s = \lceil\frac{d+1}{2}\rceil$;
the constants $\varepsilon_{2K, 3}, \varepsilon_{2K,3}'$ depend on
\begin{align*}
&d, K, \big\|\rho_t^\mu\big\|_{\m C^{s+4K+4}}, \big\|\rho_t^{(1)}\big\|_{\m C^{s+4K+2}}, \cdots, \big\|\rho_t^{(2K+1)}\big\|_{\m C^{s+2}}
\quad\mx{and}\quad
\sup_{0\leq t\leq 1}\big\{\|\rho_t^{(k)}\|_{H^{2k'+s}}:k,k'\in\mb N,\,\, k+k'\leq 2\big\}
.
\end{align*}

\vspace{0.5em}\noindent
\underline{Step 4: Converting the variance $\mb V_{\rho_t^\mu}$ to $L^2(\rho_t^\mu)$-norm.}
Note that for any constant $c\in\mb R$, we have
\begin{align}
\begin{aligned}\label{eqn: lhs_stability}
\mb V_{\rho_{t}^\mu}\big(\wt\phi_\varepsilon^\mu - \phi_\varepsilon^\mu\big) 
&= \mb V_{\rho_{t}^\mu}\big(\wt\phi_\varepsilon^\mu - \phi_\varepsilon^\mu - c\big)\\
&= \int_{\mb T^d}\big(\wt\phi_\varepsilon^\mu - \phi_\varepsilon^\mu - c\big)^2\,\dd\rho_{t}^\mu - \bigg(\int_{\mb T^d}(\wt\phi_\varepsilon^\mu - \phi_\varepsilon^\mu - c)\,\dd\rho_{t}^\mu\bigg)^2.
\end{aligned}
\end{align}
Therefore, from Step 3, for $\varepsilon \le \varepsilon_{2K, 3}$,
\begin{align*}
\int_{\mb T^d}\big(\wt\phi_\varepsilon^\mu - \phi_\varepsilon^\mu - c\big)^2\,\dd\rho_{t}^\mu 
\leq \bigg(\int_{\mb T^d}\wt\phi_\varepsilon^\mu - \phi_\varepsilon^\mu - c\,\dd\rho_{t}^\mu\bigg)^2 + C_5\varepsilon^{2K}.
\end{align*}
As $c\in\mb R$ is an arbitrary number, similarly, for $\varepsilon \le \varepsilon_{2K, 3}'$, 
\begin{align*}
\int_{\mb T^d}\big(\wt\psi_\varepsilon^\mu - \psi_\varepsilon^\mu + c\big)^2\,\dd\rho_{t+\varepsilon}^\mu 
\leq \bigg(\int_{\mb T^d}\wt\psi_\varepsilon^\mu - \psi_\varepsilon^\mu + c\,\dd\rho_{t+\varepsilon}^\mu\bigg)^2 + C_5'\varepsilon^{2K}.
\end{align*}
Summing the above two inequalities yields that there is a constant $\varepsilon_{\rm thres}>0$ such that for each $0< \varepsilon \le \varepsilon_{\rm thres}$ we have
\begin{align}\label{eqn:inter-expansion-proof}
\begin{aligned}
&\quad\,\min_{c\in\mb R} \big\|\wt\phi_\varepsilon^\mu - \phi_\varepsilon^\mu - c\big\|_{L^2(\rho_{t}^\mu)}^2 + \big\|\wt\psi_\varepsilon^\mu - \psi_\varepsilon^\mu + c\big\|_{L^2(\rho_{t+\varepsilon}^\mu)}^2\\
&\leq \min_{c\in\mb R} 
\left[\bigg(\int_{\mb T^d}\wt\phi_\varepsilon^\mu - \phi_\varepsilon^\mu - c\,\dd\rho_{t}^\mu\bigg)^2 + \bigg(\int_{\mb T^d}\wt\psi_\varepsilon^\mu - \psi_\varepsilon^\mu + c\,\dd\rho_{t+\varepsilon}^\mu\bigg)^2\right] + (C_5 + C_5')\varepsilon^{2K}\\
&= \frac{1}{2}\bigg(\int_{\mb T^d}\wt\phi_\varepsilon^\mu - \phi_\varepsilon^\mu \,\dd\rho_{t}^\mu + \int_{\mb T^d}\wt\psi_\varepsilon^\mu - \psi_\varepsilon^\mu \,\dd\rho_{t +\varepsilon}^\mu\bigg)^2 + (C_5 + C_5')\varepsilon^{2K}\\
&=\frac{1}{2}\bigg[\eot_{\varepsilon}(\rho_{t}^\mu, \rho_{t+\varepsilon}^\mu) - \bigg(\varepsilon\int_{\mb T^d}\log U_{2K}^{\mu}\,\dd\rho_{t}^\mu + \varepsilon\int_{\mb T^d}\log V_{2K}^{\mu}\,\dd\rho_{t+\varepsilon}^\mu\bigg)\bigg]^2 + (C_5 + C_5')\varepsilon^{2K}\\
&\leq \frac{C_6}{2}\varepsilon^{4K+2} + (C_5 + C_5')\varepsilon^{2K}.
\end{aligned}
\end{align}
Here, the second last line follows from the definition of $(\wt\phi_\varepsilon^\mu, \wt\psi_\varepsilon^\mu)$ and the strong duality \eqref{eqn: strong_dual}. The last line follows from Theorem~\ref{thm: eot_expansion}.
Here, the constant $\varepsilon_{\rm thres}$ depends only on
$K, d, L_\rho$, and 
\begin{align*}
\sup_{0\leq t\leq 1}\big\{\|\rho_t^{(k)}\|_{\m C^{2k'+s}}: k,k'\in\mb N,\,\,k+k'\leq 2\big\}
\quad\mx{and}\quad
\big\|\rho_t^\mu\big\|_{\m C^{s+4K+5}}, \big\|\rho_t^{(1)}\big\|_{\m C^{s+4K+3}}, \cdots, \big\|\rho_t^{(2K+2)}\big\|_{\m C^{s+1}}.
\end{align*}
(Here, recall the definition $s = \lceil\frac{d+1}{2}\rceil$.)
The constant $C_6 > 0$ depends only on \begin{align*}
\sup_{0\leq t\leq 1}\big\{\|\rho_t^{(k)}\|_{\m C^{2k'}}: k,k'\in\mb N,\,\,k+k'\leq 2K+1\big\}
\quad\mx{and}\quad
\big\|\rho_t^\mu\big\|_{\m C^{8K+3}}, \big\|\rho_t^{(1)}\big\|_{\m C^{8K+1}}, \cdots, \big\|\rho_t^{(2K+2)}\big\|_{\m C^{4K-1}}.
\end{align*}

\vspace{0.5em}\noindent
\underline{Step 5: Expansion in $\varepsilon$.} 
We now translate \eqref{eqn:inter-expansion-proof} into the desired polynomial expansion as in \eqref{eqn: expand_arbitrary_time}. For  $\big(\wt\phi_\varepsilon^\mu, \wt\psi_\varepsilon^\mu\big)$ in~\eqref{eqn: proxy_potential}, apply polynomial expansions in $\varepsilon$ that give coefficient functions as follows: 
\begin{align}\label{eqn: explicit_construct}
\begin{aligned}
& f_{1} = \log u_{0},\quad g_{1} = \log v_{0},\\
& f_{k} = \sum_{l=1}^{k-1} \frac{(-1)^{l+1}}{l! u_{0}^l}\sum_{\substack{s_1 + \cdots + s_l = k-1\\1\leq s_1, \cdots, s_l < k}} u_{s_1}\cdots u_{s_l}, \quad \forall\, 2\leq k \leq K-1,\\
& g_{k} = \sum_{l=1}^{k-1} \frac{(-1)^{l+1}}{l! v_{0}^l}\sum_{\substack{s_1 + \cdots + s_l = k-1\\1\leq s_1, \cdots, s_l < k}} v_{s_1}\cdots v_{s_l}, \quad \forall\, 2\leq k \leq K-1.
\end{aligned}
\end{align}
We now plug these back in \eqref{eqn:inter-expansion-proof} and get \eqref{eqn: expand_arbitrary_time}. This completes the proof.
\subsection{Proof of Theorem~\ref{thm: multiSB_stable}}\label{sec: pf_main_result}

In this section, we apply the previous  established
results to derive quantitative stability of multi-marginal Schr\"odinger bridges with respect to the marginals. 
First, we prove \eqref{eqn:KL-potentials} as a consequence of the structure of the EOT problem, including the Markov property of the solution:
\begin{lemma}\label{lem:KL-decomp-EOT}
Let $(\phi^\mu_{j}, \psi_{j}^\mu)$ and $(\phi^\nu_{j}, \psi^{\nu}_{j})$ be the Schr\"odinger potentials for solving $\eot_{\varepsilon_j}(\rho^\mu_{t_{j-1}}, \rho_{t_j}^\mu)$ and $\eot_{\varepsilon_j}(\rho_{t_{j-1}}^\nu, \rho_{t_j}^\nu)$ with $\varepsilon_j = t_{j} - t_{j-1}$, respectively. Then,
    \begin{align}\label{eqn: simplify_KL_MSB}
\KL(R^{\bm\nu^m}\,\|\,R^{\bm\mu^m})
\stackrel{}{=} \sum_{j=1}^m \frac{1}{\varepsilon_j}\bigg[\eot_{\varepsilon_j}(\rho_{t_{j-1}}^\nu, \rho_{t_j}^\nu) - \int_{\mb T^d}\phi_j^\mu\,\dd\rho_{t_{j-1}}^\nu - \int_{\mb T^d}\psi_j^\mu\,\dd\rho_{t_j}^\nu\bigg] + \sum_{j=0}^m  \KL(\rho_{t_j}^\nu\,\|\,\rho_{t_j}^\mu).
\end{align}
\end{lemma}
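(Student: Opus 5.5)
The plan is to use the Markov structure \eqref{eqn: Markov}--\eqref{eqn: MSB_Markov} to reduce the path-space KL to a KL between the finite-dimensional laws $\pi^{\bm\nu^m}$ and $\pi^{\bm\mu^m}$. Then we expand the log-density ratio of the transition kernels via \eqref{eqn: SB_primal_sol}.

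\emph{Step 1 (reduction to finitely many time points).} Both $R^{\bm\nu^m}$ and $R^{\bm\mu^m}$ are mixtures of the same conditional Wiener measure $W(\cdot\,|\,\omega_{t_0}=x_0,\dots,\omega_{t_m}=x_m)$. By the chain rule (disintegration) for KL divergence,
\begin{align*}
\KL(R^{\bm\nu^m}\,\|\,R^{\bm\mu^m}) = \KL(\pi^{\bm\nu^m}\,\|\,\pi^{\bm\mu^m}).
\end{align*}

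\emph{Step 2 (densities of the Markov laws).} By \eqref{eqn: SB_primal_sol}, the pair law of $(x_{j-1},x_j)$ under $\pi^{\bm\mu^m}$ is $\pi^\mu_{j-1,j}$, with density
\begin{align*}
\rho^\mu_{t_{j-1}}(x)\rho^\mu_{t_j}(y)\, e^{(\phi_j^\mu(x)+\psi_j^\mu(y)-c_{\varepsilon_j}(x,y))/\varepsilon_j}.
\end{align*}
The transition kernel $\pi_{j-1,j}(x,\dd y)$ therefore has density equal to this expression divided by $\rho^\mu_{t_{j-1}}(x)$. The full density is
\begin{align*}
\pi^{\bm\mu^m}(x_0,\dots,x_m)=\rho^\mu_{t_0}(x_0)\prod_{j=1}^m \rho^\mu_{t_j}(x_j)\,e^{(\phi_j^\mu(x_{j-1})+\psi_j^\mu(x_j)-c_{\varepsilon_j}(x_{j-1},x_j))/\varepsilon_j},
\end{align*}
and the same formula holds for $\nu$.

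\emph{Step 3 (the log ratio).} Taking the log ratio, the cost terms $c_{\varepsilon_j}$ cancel. Integrating against $\pi^{\bm\nu^m}$, whose two-time marginals are $\pi^\nu_{j-1,j}$ and one-time marginals are $\rho^\nu_{t_j}$, gives
\begin{align*}
\sum_{j=0}^m \KL(\rho^\nu_{t_j}\,\|\,\rho^\mu_{t_j}) + \sum_{j=1}^m \frac{1}{\varepsilon_j}\Big[\int\phi_j^\nu\,\dd\rho^\nu_{t_{j-1}}+\int\psi_j^\nu\,\dd\rho^\nu_{t_j} - \int\phi_j^\mu\,\dd\rho^\nu_{t_{j-1}}-\int\psi_j^\mu\,\dd\rho^\nu_{t_j}\Big].
\end{align*}
At optimality the exponential term in \eqref{eqn: duality} integrates to $1$, so $\int\phi_j^\nu\,\dd\rho^\nu_{t_{j-1}}+\int\psi_j^\nu\,\dd\rho^\nu_{t_j}=\eot_{\varepsilon_j}(\rho^\nu_{t_{j-1}},\rho^\nu_{t_j})$, which yields \eqref{eqn: simplify_KL_MSB}.

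\emph{Main obstacle.} The main technical point is Step 1, namely justifying that the KL divergence is unchanged when passing from path space to the marginals at the times $t_j$. This holds because the conditional laws coincide; one can cite the additivity of KL under disintegration (as in L\'eonard's survey). Finiteness and integrability are harmless, since all potentials are Lipschitz on the compact torus and the densities are bounded above and below.
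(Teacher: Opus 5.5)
Your proposal is correct and follows essentially the same route as the paper: reduce the path-space KL to the law of $(X_{t_0},\dots,X_{t_m})$ using the shared conditional Wiener measure, express the Markov law through the primal--dual relation \eqref{eqn: SB_primal_sol}, and finish with strong duality. The only difference is bookkeeping. The paper first writes $\KL(R^{\bm\nu^m}\,\|\,R^{\bm\mu^m})$ as a sum of pairwise KLs of the two-time couplings minus the interior one-time KLs, and then evaluates each pairwise KL. You instead write the full joint density as a single product, which gives $\sum_{j=0}^m \KL(\rho_{t_j}^\nu\,\|\,\rho_{t_j}^\mu)$ directly without the add-and-subtract step.
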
 
\begin{rem}
    Observe that in \eqref{eqn: simplify_KL_MSB} only the Schr\"odinger potentials $\phi_j^\mu,\psi_j^\mu$of $\mu_j$'s appear, which can be controlled by Assumption~\ref{assump: density_expansion}.  
\end{rem}
\begin{proof}

Using the form of the solution to multi-marginal Schr\"odinger bridge problems as in~\eqref{eqn: Markov}, we have
\begin{align*}
\frac{\dd R^{\bm \nu^m}}{\dd W}(X) = \frac{\dd R^{\bm\nu^m}_{t_0,\cdots, t_m}}{\dd W_{t_0,\cdots, t_m}}(X_{t_0}, \cdots, X_{t_m}).
\end{align*}
Due to the Markov property of Brownian motion and $R_{t_0, \cdots, t_m}^{\bm\nu^m}$ as in the representation~\eqref{eqn: MSB_Markov}, we have
\begin{align*}
\frac{\dd R^{\bm\nu^m}_{t_0,\cdots, t_m}}{\dd W_{t_0,\cdots, t_m}}(X_{t_0}, \cdots, X_{t_m})
= \frac{\dd R_{t_0}^{\bm\nu^m}}{\dd W_{t_0}}(X_{t_0}) \frac{\dd R^{\bm\nu^m}_{t_1|t_0}}{\dd W_{t_1|t_0}}(X_{t_0}, X_{t_1}) \cdots \frac{\dd R^{\bm\nu^m}_{t_m|t_{m-1}}}{W_{t_m|t_{m-1}}}(X_{t_{m-1}}, X_{t_m}),
\end{align*}
where $R_{t_{j-1}, t_j}^{\bm\nu^m}$ is the optimal coupling for solving $\eot_{\varepsilon_j}(\rho_{t_{j-1}}^\nu, \rho_{t_j}^\nu)$, and $R_{t_j|t_{j-1}}^{\bm\nu^m}$ is its conditional distribution.
Moreover, using the definition of conditional distribution, we have
\begin{align*}
R^{\bm\nu^m}_{t_j|t_{j-1}}(X_{t_{j-1}}, X_{t_j}) = \frac{R^{\bm\nu^m}_{t_{j-1}, t_j}(X_{t_{j-1}}, X_{t_j})}{R^{\bm\nu^m}_{t_{j-1}}(X_{t_{j-1}})},\quad j\in[m].
\end{align*}
Combining the above pieces  yields
\begin{align*}
\log\frac{\dd R^{\bm \nu^m}(X)}{\dd W (X)} 
&=\log\frac{\dd R_{t_0,\cdots, t_m}^{\bm \nu^m}(X_{t_0}, \cdots, X_{t_m})}{\dd W_{t_0,\cdots, t_m}(X_{t_0}, \cdots, X_{t_m})}\\
&= \sum_{j=1}^m \log\frac{\dd R^{\bm\nu^m}_{t_{j-1}, t_j}(X_{t_{j-1}}, X_{t_j})}{\dd W_{t_{j-1}, t_j}(X_{t_{j-1}}, X_{t_j})} - \sum_{j=1}^{m-1}\log\frac{\dd R^{\bm\nu^m}_{t_j}(X_{t_j})}{\dd W_{t_j}(X_{t_j})}.
\end{align*}
Now apply the straight formula for general probability measures $A, A', B$,  
\begin{align*}
\KL(A \, \|\,A') :=\int \log \left(\frac{\dd A}{\dd A'}\right)\dd R = 
\int \log \left(\frac{\dd A/\dd B}{\dd A'/\dd B}\right)\dd A=\int \Big( \log\frac{\dd A}{\dd B} -\log\frac{\dd A'}{\dd B}\Big)\dd A.
\end{align*}
Then piecing together the above computations and  from the marginal conditions $R_{t_j}^{\bm\mu^m} = \rho_{t_j}^{\mu}, \, R_{t_j}^{\bm\nu^m} = \rho_{t_j}^{\nu},$  we get
\begin{align}\label{eqn:KL-Markov-sum}
&\quad\,\KL(R^{\bm\nu^m}\,\|\,R^{\bm\mu^m})
= \sum_{j=1}^m \KL(R^{\bm\nu^m}_{t_{j-1}, t_j}\,\|\,R^{\bm\mu^m}_{t_{j-1}, t_j}) - \sum_{j=1}^{m-1}\KL(\rho_{t_j}^\nu\,\|\,\rho_{t_j}^\mu)
\end{align}
Now, by the optimality condition~\eqref{eqn: SB_primal_sol}, we know
\begin{align*}
R^{\bm\nu^m}_{t_{j-1}, t_j}(x_{j-1}, x_j) &= e^{\frac{\phi_j^\nu(x_{j-1}) + \psi_j^\nu(x_j) - c_{\varepsilon_j}(x_{j-1}, x_j)}{\varepsilon_j}} \rho_{t_{j-1}}^\nu(x_{j-1})\rho_{t_j}^\nu(x_j),\\
R^{\bm\mu^m}_{t_{j-1}, t_j}(x_{j-1}, x_j) &= e^{\frac{\phi_j^\mu(x_{j-1}) + \psi_j^\mu(x_j) - c_{\varepsilon_j}(x_{j-1}, x_j)}{\varepsilon_j}} \rho_{t_{j-1}}^\mu(x_{j-1})\rho_{t_j}^\mu(x_j).
\end{align*}
Therefore, we have
\begin{align*}
&\quad\,\KL(R^{\bm\nu^m}_{t_{j-1}, t_j}\,\|\,R^{\bm\mu^m}_{t_{j-1}, t_j})\\
&= \int_{\mb T^d\times\mb T^d} \left[\frac{\phi_j^\nu(x_{j-1}) + \psi_j^\nu(x_j) - \phi_j^\mu(x_{j-1}) - \psi_j^\mu(x_j)}{\varepsilon_j} + \log\frac{\rho^\nu_{t_{j-1}}(x_{j-1})}{\rho_{t_{j-1}}^\mu(x_{j-1})} + \log\frac{\rho_{t_j}^\nu(x_j)}{\rho_{t_j}^\mu(x_j)}\,\right]\dd R^{\bm\nu}_{t_{j-1},t_j}\\
&= \frac{1}{\varepsilon_j}\int_{\mb T^d}(\phi_j^\nu - \phi_j^\mu)\,\dd\rho_{t_{j-1}}^\nu + \frac{1}{\varepsilon_j}\int_{\mb T^d} (\psi_j^\nu - \psi_j^\mu)\,\dd\rho_{t_j}^\nu + \KL(\rho^\nu_{t_{j-1}}\,\|\,\rho^\mu_{t_{j-1}}) + \KL(\rho_{t_j}^\nu\,\|\,\rho_{t_j}^\mu)
\end{align*}
Combining this with \eqref{eqn:KL-Markov-sum} we get
\begin{align*}
&\quad\,\KL(R^{\bm\nu}\,\|\,R^{\bm\mu})
= \sum_{j=1}^m\bigg[\frac{1}{\varepsilon_j}\int_{\mb T^d}(\phi_j^\nu - \phi_j^\mu)\,\dd\rho_{t_{j-1}}^\nu + \frac{1}{\varepsilon_j}\int_{\mb T^d}(\psi_j^\nu - \psi_j^\mu)\,\dd\rho_{t_j}^\nu\bigg] + \sum_{j=0}^m  \KL(\rho_{t_j}^\nu\,\|\,\rho_{t_j}^\mu)\\
&\stackrel{}{=} \sum_{j=1}^m \frac{1}{\varepsilon_j}\bigg[\eot_{\varepsilon_j}(\rho_{t_{j-1}}^\nu, \rho_{t_j}^\nu) - \int_{\mb T^d}\phi_j^\mu\,\dd\rho_{t_{j-1}}^\nu - \int_{\mb T^d}\psi_j^\mu\,\dd\rho_{t_j}^\nu\bigg] + \sum_{j=0}^m  \KL(\rho_{t_j}^\nu\,\|\,\rho_{t_j}^\mu).
\end{align*}
Here, the last equality is due to the strong duality~\eqref{eqn: strong_dual}. We complete the proof of the equality~\eqref{eqn: simplify_KL_MSB}.
\end{proof}

We will also use an additional ingredient:
\begin{proposition}[Theorem 1.6 of \citep{conforti2021formula}]\label{lem:Conforti}
We have for any $t$, $\varepsilon >0$,
\begin{align}\label{eqn:Conforti21}
\eot_{\varepsilon}(\rho_{t}^\nu, \rho_{t+\varepsilon}^\nu)
&\leq \W_2^2(\rho_{t}^\nu, \rho_{t+\varepsilon}^\nu) - \frac{\varepsilon}{2}\big[\m H(\rho_{t}^\nu) + \m H(\rho_{t+\varepsilon}^\nu)\big] + \frac{\varepsilon}{8}\int_{t}^{t+\varepsilon}\!\int_{\mb T^d}\|\nabla\log\bar\rho_{s}^\nu\|^2\bar\rho_s^\nu\,\dd x\dd s.
\end{align}
Here, $(\bar\rho_s^\nu)_{s\in[t, t+\varepsilon]}$ is the  Wasserstein-2 constant-speed geodesics connecting  $\rho_{t}^\nu$ and $\rho_{t+\varepsilon}^\nu$  with the  velocity vector field $\nabla\bar\Phi_s^\nu$ satisfying
\[
\partial_s\bar\rho_s^\nu + \nabla\cdot(\bar\rho_s^\nu\nabla\bar\Phi_s^\nu) = 0,\qquad s\in[t, t+\varepsilon].
\]
\end{proposition}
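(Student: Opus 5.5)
The plan is to compare the EOT cost with the relative entropy of a path measure that is built from the geodesic, and then bound that entropy with Girsanov's theorem. Write $I=[t,t+\varepsilon]$ and let $W^{\rho_t^\nu}$ be the law of Brownian motion on $\mb T^d$ over $I$ started from $\rho_t^\nu$. Its endpoint law is $\pi_{\rho_t^\nu,\varepsilon}(x,y)=\rho_t^\nu(x)\m K_\varepsilon(y-x)$. As already used in the proof of the upper bound~\eqref{eqn: eot_ub}, every $\pi\in\Pi(\rho_t^\nu,\rho_{t+\varepsilon}^\nu)$ satisfies
\begin{align*}
\eot_\varepsilon(\rho_t^\nu,\rho_{t+\varepsilon}^\nu)\le \varepsilon\KL(\pi\,\|\,\pi_{\rho_t^\nu,\varepsilon})-\varepsilon\m H(\rho_{t+\varepsilon}^\nu).
\end{align*}
Take any path measure $P$ on $I$ with $P_t=\rho_t^\nu$ and $P_{t+\varepsilon}=\rho_{t+\varepsilon}^\nu$, and let $\pi$ be its endpoint law. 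By the data processing inequality, $\KL(\pi\,\|\,\pi_{\rho_t^\nu,\varepsilon})\le\KL(P\,\|\,W^{\rho_t^\nu})$. It therefore suffices to build a good $P$.

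For $P$ I would take the law of the diffusion
\begin{align*}
\dd X_s=\big[\nabla\bar\Phi_s^\nu+\tfrac12\nabla\log\bar\rho_s^\nu\big](X_s)\,\dd s+\dd W_s,\qquad X_t\sim\rho_t^\nu .
\end{align*}
Its Fokker--Planck equation reduces to the continuity equation $\partial_s\bar\rho_s^\nu+\nabla\cdot(\bar\rho_s^\nu\nabla\bar\Phi_s^\nu)=0$, because $\frac12\Delta\bar\rho=\nabla\cdot(\bar\rho\,\frac12\nabla\log\bar\rho)$. Hence $P_s=\bar\rho_s^\nu$, and both endpoint constraints hold. Girsanov's theorem, together with the fact that $P$ and $W^{\rho_t^\nu}$ have the same initial law, gives
\begin{align*}
\KL(P\,\|\,W^{\rho_t^\nu})=\frac12\int_I\!\!\int_{\mb T^d}\Big\|\nabla\bar\Phi_s^\nu+\tfrac12\nabla\log\bar\rho_s^\nu\Big\|^2\dd\bar\rho_s^\nu\,\dd s .
\end{align*}

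Expanding the square produces three terms.
\begin{itemize}
\item The kinetic term: by constant-speed Benamou--Brenier on an interval of length $\varepsilon$, and with the paper's normalization of $\W_2^2$ (which carries a factor $\frac12$), $\frac12\int_I\int\|\nabla\bar\Phi^\nu_s\|^2\dd\bar\rho_s^\nu\dd s=\varepsilon^{-1}\W_2^2(\rho_t^\nu,\rho_{t+\varepsilon}^\nu)$.
\item The cross term: $\frac12\int_I\int\langle\nabla\bar\Phi_s^\nu,\nabla\log\bar\rho_s^\nu\rangle\dd\bar\rho_s^\nu\dd s$. Since $\frac{\dd}{\dd s}\m H(\bar\rho_s^\nu)=\int\langle\nabla\bar\Phi_s^\nu,\nabla\bar\rho_s^\nu\rangle\dd x$, this equals $\frac12[\m H(\rho_{t+\varepsilon}^\nu)-\m H(\rho_t^\nu)]$.
\item The Fisher information term: $\frac18\int_I\int\|\nabla\log\bar\rho_s^\nu\|^2\dd\bar\rho_s^\nu\dd s$.
\end{itemize}
Multiplying by $\varepsilon$ and subtracting $\varepsilon\m H(\rho_{t+\varepsilon}^\nu)$, the entropy terms combine to $-\frac{\varepsilon}{2}[\m H(\rho_t^\nu)+\m H(\rho_{t+\varepsilon}^\nu)]$. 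This is exactly~\eqref{eqn:Conforti21}.

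The main obstacle is rigor at low regularity. On the geodesic, $\nabla\bar\Phi^\nu$ is only a Lipschitz-in-time optimal-transport velocity, and $\nabla\log\bar\rho^\nu$ may be singular. As a result, the SDE may lack a strong solution, and Girsanov's theorem and the entropy-derivative identity need justification. If the Fisher information integral is infinite, the bound is trivial, so I may assume it is finite. I would then avoid the SDE entirely. The plan is to apply the superposition principle (or the entropy formula for Markov diffusions whose marginal flow solves the Fokker--Planck equation with finite energy) to obtain $P$ with $\KL(P\,\|\,W^{\rho_t^\nu})$ bounded by the energy. Alternatively, I would mollify: replace $\bar\rho_s^\nu$ by its heat-flow regularization $\m K_\delta*\bar\rho_s^\nu$, which is smooth and positive with a smooth velocity field, run the argument there, and let $\delta\to0$. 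That limit uses lower semicontinuity of $\eot_\varepsilon$, of $\m H$, of the kinetic energy, and of the Fisher information, with the last step controlled by convexity of $(\rho,\nabla\rho)\mapsto|\nabla\rho|^2/\rho$. This limiting step is the part I expect to require the most care.
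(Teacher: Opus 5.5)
Your argument is correct, and the bookkeeping checks out. With the paper's factor $\frac12$ in $\W_2^2$, the kinetic part of the Girsanov energy times $\varepsilon$ is exactly $\W_2^2(\rho_t^\nu,\rho_{t+\varepsilon}^\nu)$. The cross term combines with $-\varepsilon\m H(\rho_{t+\varepsilon}^\nu)$ into the symmetric entropy term.

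The paper gives no proof of this proposition; it imports the bound from Conforti--Tamanini. There the inequality comes from the Benamou--Brenier-type (dynamic) formulation of the entropic cost, with the $\W_2$-geodesic plugged in as a competitor. Your data-processing plus Girsanov construction is the standard proof of the ``$\le$'' half of that dynamic formula. So you are re-deriving the cited result from first principles rather than taking a new route.

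What each buys:
\begin{itemize}
\item The citation handles the low-regularity justification for general marginals and manifolds at no cost.
\item Your version shows explicitly that the right-hand side equals $\varepsilon\KL(P\,\|\,W^{\rho_t^\nu})-\varepsilon\m H(\rho_{t+\varepsilon}^\nu)$ for the diffusion with drift $\nabla\bar\Phi^\nu+\frac12\nabla\log\bar\rho^\nu$. This is exactly the identity the paper uses, run in reverse, in Step~1 of the proof of Lemma~\ref{lem: simplification}.
\end{itemize}

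One correction to your mollification step. After convolving, the kinetic and Fisher terms sit on the right-hand side with a plus sign, so lower semicontinuity points the wrong way for them. What you need is that convolution with $\m K_\delta$ does not increase them.

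Set $\rho_s^\delta=\m K_\delta*\bar\rho_s^\nu$ and $v_s^\delta=\m K_\delta*(\bar\rho_s^\nu\nabla\bar\Phi_s^\nu)/\rho_s^\delta$. This $v_s^\delta$ is not a gradient, but that is harmless for Girsanov and for the entropy identity. Since $\m K_\delta$ is a probability density, joint convexity of $(\rho,m)\mapsto\|m\|^2/\rho$ and Jensen give
\begin{itemize}
\item $\int\|v_s^\delta\|^2\dd\rho_s^\delta\le\int\|\nabla\bar\Phi_s^\nu\|^2\dd\bar\rho_s^\nu$, and
\item $\int\|\nabla\log\rho_s^\delta\|^2\dd\rho_s^\delta\le\int\|\nabla\log\bar\rho_s^\nu\|^2\dd\bar\rho_s^\nu$.
\end{itemize}
So the mollified right-hand side is bounded directly by the original $\W_2^2$ and Fisher integral. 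Lower semicontinuity is the right tool only for $\eot_\varepsilon$ on the left and for $\m H$, which enters with a minus sign.
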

Now, comparing \eqref{eqn: simplify_KL_MSB} and \eqref{eqn:Conforti21}  and recalling the approximation result \eqref{eqn: eot_cost_expand}
motivates the definition of $S_j$ in the following lemma. It 
helps simplify the proof of the main theorem.
Its proof consists of calculations and simple estimates, and  will be deferred to Appendix~\ref{app: simplification}. 
\begin{lemma}\label{lem: simplification}
Let $u_{t_j, k}, v_{t_j, k}$  be given as in \eqref{eqn: eot_cost_expand} with $t= t_{j-1}$ and $\varepsilon=t_j-t_{j-1}$.
Recall the negative self-entropy functional $\m H(\rho)\coloneqq \int_{\mb T^d}\rho(x)\log\rho(x)\,\dd x$ for $\rho\in\ms P_{ac}(\mb T^d)$.
Define
\begin{align*}
S_j&\coloneqq\frac{1}{\varepsilon_j}\bigg[\W_2^2(\rho_{t_{j-1}}^\nu, \rho_{t_j}^\nu) - \frac{\varepsilon_j}{2}\big[\m H(\rho_{t_{j-1}}^\nu) + \m H(\rho_{t_j}^\nu)\big] + \frac{\varepsilon_j}{8}\int_{t_{j-1}}^{t_j}\!\int_{\mb T^d}\|\nabla\log\bar\rho_{t}^\nu\|^2\bar\rho_t^\nu\,\dd x\dd t\bigg]\\
&\qquad - \bigg[\int_{\mb T^d}\left(\log u_{t_{j-1},0} + \varepsilon_j\frac{u_{t_{j-1},1}}{u_{t_{j-1},0}}\right)\,\dd\rho_{t_{j-1}}^\nu + \int_{\mb T^d}
\left(\log v_{t_{j-1},0} + \varepsilon_j\frac{v_{t_{j-1},1}}{v_{t_{j-1},0}}\right)\,\dd\rho_{t_j}^\nu\bigg].
\end{align*}
Then, there are remainder terms $\delta_{1,j}$ and $\delta_{2,j}$ (c.f. Appendix~\ref{app: simplification} for explicit expressions) such that
\begin{align*}
S_j &= \frac{1}{2}\int_{t_{j-1}}^{t_j}\!\int_{\mb T^d}\Big\|\nabla\Phi_{t_{j-1}}^\mu - \nabla\bar \Phi_t^\nu - \frac{1}{2}\nabla\log\frac{\rho_{t_{j-1}}^\mu}{\bar\rho_t^\nu}\Big\|^2\,\dd\bar\rho_t^\nu\dd t 
- \KL(\rho_{t_{j-1}}^\nu\,\|\,\rho_{t_{j-1}}^\mu) - (\varepsilon_j\delta_{1,j} + \delta_{2,j}).
\end{align*}
Moreover, these remainder terms satisfy
\begin{align*}
\sum_{j=1}^m \varepsilon_j \delta_{1,j} &\leq \sup_{j\in[m]}\Lip\big(v_{t_{j-1}, 1}v_{t_{j-1}, 0}^\dagger\big)\cdot \sqrt{\sum_{j=1}^m\frac{\W_2^2(\rho_{t_{j-1}}^\nu, \rho_{t_j}^\nu)}{\varepsilon_j}}\cdot \max_{j\in[m]}\varepsilon_j\\
\sum_{j=1}^m \delta_{2,j} &\leq \frac{1}{4} \left[\sup_{t\in[0,1]} \Big\|\frac{\partial}{\partial t}v_{t,0}\Delta v_{t,0}^\dagger\Big\|_{\m C^0} + \sup_{j\in[m]}\Lip\big(v_{t_{j-1},0}\Delta v_{t_{j-1},0}^\dagger\big)\sqrt{\sum_{j=1}^m \frac{\W_2^2(\rho_{t_{j-1}}^\nu, \rho_{t_j}^\nu)}{\varepsilon_j}}\right]\cdot\max_{j\in[m]} \varepsilon_j.
\end{align*}
\myqed
\end{lemma}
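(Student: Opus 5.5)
The plan is to make every term of $S_j$ explicit in terms of $\Phi^\mu_{t_{j-1}}$, $\rho^\mu_{t_{j-1}}$ and the geodesic $(\bar\rho^\nu_t,\nabla\bar\Phi^\nu_t)$, and then complete a square.

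\emph{Step 1: identify $u_{t,0},v_{t,0}$.} By Theorem~\ref{thm: existence}, $\nabla\cdot(\rho_t^\mu\nabla\log u_{t,0})=\rho_t^{(1)}-\tfrac12\Delta\rho^\mu_t$. The continuity equation~\eqref{eqn: CE} gives $\rho^{(1)}_t=-\nabla\cdot(\rho_t^\mu\nabla\Phi^\mu_t)$. By uniqueness in Lemma~\ref{lem: Schauder}, we expect $\log u_{t,0}=-\Phi^\mu_t-\tfrac12\log\rho^\mu_t+c_t$. The relation $u_{t,0}v_{t,0}\rho^\mu_t=1$ then gives $\log v_{t,0}=\Phi^\mu_t-\tfrac12\log\rho^\mu_t-c_t$, and the constants cancel in $S_j$ because both $\rho^\nu$'s are probability measures. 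For the first-order terms, use~\eqref{eqn: asymp-Schro} with $k=1$: $v_1=-v_0^2v_1^\dagger$ and $v_1^\dagger=\tfrac12\Delta(u_0\rho_0)+u_1\rho_0$. Together with $u_0\rho_0=v_0^\dagger$ and $\rho_0v_0=u_0^\dagger=1/u_0$, this yields
\begin{align*}
\frac{v_1}{v_0}=-\frac{u_1}{u_0}-\frac12 v_0\Delta v_0^\dagger .
\end{align*}
Hence the $\varepsilon_j$-part of the bracket in $S_j$ becomes
\begin{align*}
\varepsilon_j\int \frac{u_1}{u_0}\,\dd(\rho^\nu_{t_{j-1}}-\rho^\nu_{t_j})-\frac{\varepsilon_j}2\int v_0\Delta v_0^\dagger\,\dd\rho^\nu_{t_j}.
\end{align*}
The first integral is bounded by $\Lip(u_1/u_0)\,\W_2(\rho^\nu_{t_{j-1}},\rho^\nu_{t_j})$, and $u_1/u_0=-v_1v_0^\dagger-\tfrac12v_0\Delta v_0^\dagger$; I will put this piece (essentially the $v_1v_0^\dagger$ part) into $\varepsilon_j\delta_{1,j}$. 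Cauchy--Schwarz over $j$ then produces the stated bound $\max_j\varepsilon_j\sqrt{\sum_j\varepsilon_j^{-1}\W_2^2}$.

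\emph{Step 2: rewrite the zeroth-order terms along the geodesic.} Write $\Phi=\Phi^\mu_{t_{j-1}}$ and $g=\log v_0^\dagger=-\Phi+\tfrac12\log\rho^\mu_{t_{j-1}}$.
\begin{itemize}
\item Inserting the formulas for $\log u_0,\log v_0$ and combining with $-\tfrac12[\m H(\rho^\nu_{t_{j-1}})+\m H(\rho^\nu_{t_j})]$ produces $\int\Phi\,\dd(\rho^\nu_{t_{j-1}}-\rho^\nu_{t_j})-\tfrac12\KL(\rho^\nu_{t_{j-1}}\|\rho^\mu_{t_{j-1}})-\tfrac12\KL(\rho^\nu_{t_j}\|\rho^\mu_{t_{j-1}})$.
\item Using the continuity equation for $\bar\rho^\nu$ on $[t_{j-1},t_j]$, write $\int\Phi\,\dd(\rho^\nu_{t_{j-1}}-\rho^\nu_{t_j})=-\int\!\!\int\langle\nabla\Phi,\nabla\bar\Phi^\nu_t\rangle\,\dd\bar\rho^\nu_t\dd t$.
\item Similarly, $\KL(\rho^\nu_{t_j}\|\rho^\mu_{t_{j-1}})-\KL(\rho^\nu_{t_{j-1}}\|\rho^\mu_{t_{j-1}})=\int\!\!\int\langle\nabla\log(\bar\rho^\nu_t/\rho^\mu_{t_{j-1}}),\nabla\bar\Phi^\nu_t\rangle\,\dd\bar\rho^\nu_t\dd t$. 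This turns the two half-KL terms into $-\KL(\rho^\nu_{t_{j-1}}\|\rho^\mu_{t_{j-1}})$ plus a cross term.
\item Benamou--Brenier with the $\tfrac12$ convention gives $\varepsilon_j^{-1}\W_2^2=\tfrac12\int\!\!\int\|\nabla\bar\Phi^\nu_t\|^2\,\dd\bar\rho^\nu_t\dd t$.
\item For the remaining term $-\tfrac12\int v_0\Delta v_0^\dagger\,\dd\rho^\nu_{t_j}$, I replace the measure $\rho^\nu_{t_j}$ by the time average $\varepsilon_j^{-1}\int_{t_{j-1}}^{t_j}\bar\rho^\nu_t\dd t$. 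The error is controlled by $\Lip(v_0\Delta v_0^\dagger)\W_2$, and this error goes into $\delta_{2,j}$. The part of $\delta_{2,j}$ involving $\sup_t\|\partial_t(v_{t,0}\Delta v^\dagger_{t,0})\|_{\m C^0}$ absorbs the mismatch of time indices when matching with the stated integrand.
\item Then use $v_0\Delta v_0^\dagger=\Delta g+\|\nabla g\|^2$ and integrate by parts: $-\tfrac12\int(\Delta g+\|\nabla g\|^2)\,\dd\bar\rho^\nu_t=\tfrac12\int\langle\nabla g,\nabla\log\bar\rho^\nu_t\rangle\,\dd\bar\rho^\nu_t-\tfrac12\int\|\nabla g\|^2\,\dd\bar\rho^\nu_t$.
\end{itemize}

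\emph{Step 3: complete the square.} Expand
\begin{align*}
\tfrac12\big\|\nabla\Phi-\nabla\bar\Phi^\nu_t-\tfrac12\nabla\log(\rho^\mu_{t_{j-1}}/\bar\rho^\nu_t)\big\|^2 .
\end{align*}
Then check term by term, where every term is integrated against $\dd\bar\rho^\nu_t\dd t$ over $[t_{j-1},t_j]$:
\begin{itemize}
\item $\tfrac12\|\nabla\bar\Phi^\nu\|^2$ comes from $\W_2^2$.
\item $-\langle\nabla\Phi,\nabla\bar\Phi^\nu\rangle$ comes from the potential term.
\item $\tfrac12\langle\nabla\bar\Phi^\nu,\nabla\log(\rho^\mu/\bar\rho^\nu)\rangle$ comes from the KL difference.
\item The Fisher term $\tfrac18\|\nabla\log\bar\rho^\nu\|^2$ appears directly in $S_j$.
\item The remaining pieces $\tfrac12\|\nabla\Phi\|^2$, $-\tfrac12\langle\nabla\Phi,\nabla\log(\rho^\mu/\bar\rho^\nu)\rangle$ and $\tfrac18\|\nabla\log\rho^\mu\|^2-\tfrac14\langle\nabla\log\rho^\mu,\nabla\log\bar\rho^\nu\rangle$ should all be supplied by the $v_0\Delta v_0^\dagger$ expression of Step 2, via $\nabla g=-\nabla\Phi+\tfrac12\nabla\log\rho^\mu$.
\end{itemize}

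The main obstacle is this bookkeeping. Specifically, I must verify that the sign and factor conventions (the $\tfrac12$ in $\W_2^2$, the $\tfrac12$ in the heat kernel) make the $v_0\Delta v_0^\dagger$ contribution match exactly. I must also place every measure-replacement error, namely $\rho^\nu_{t_j}$ versus $\bar\rho^\nu_t$ and $t_{j-1}$ versus $t$, into $\delta_{1,j}$ or $\delta_{2,j}$ with only Lipschitz constants of $v_1v_0^\dagger$ and $v_0\Delta v_0^\dagger$ appearing. I would first do the computation formally with all signs tracked, and then assign the error terms.
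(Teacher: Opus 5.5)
Your proposal is correct and follows essentially the paper's route: identify $\log u_{t,0},\log v_{t,0}=\mp\Phi^\mu_t-\tfrac12\log\rho^\mu_t$, use the $k=1$ identity $u_1u_0^\dagger+v_1v_0^\dagger=-\tfrac12 v_0\Delta v_0^\dagger$ to split off $\delta_{1,j}=\int v_1v_0^\dagger\,\dd(\rho^\nu_{t_j}-\rho^\nu_{t_{j-1}})$, time-average along the geodesic, integrate by parts, and complete the square; your term-by-term check in Step 3 is right, and the half-KL rewriting is only a cosmetic variant of the paper's handling of $-\m H(\rho^\nu_{t_j})$. Two small fixes: there is no time-index mismatch, because the target integrand and $v_{t_{j-1},0}$ are both frozen at $t_{j-1}$, so the $\sup_t\|\partial_t(v_{t,0}\Delta v_{t,0}^\dagger)\|_{\m C^0}$ term is never used; and to get the constant $\tfrac14$, merge the leftover $\tfrac{\varepsilon_j}{2}\int v_0\Delta v_0^\dagger\,\dd(\rho^\nu_{t_j}-\rho^\nu_{t_{j-1}})$ with $-\tfrac{\varepsilon_j}{2}\int v_0\Delta v_0^\dagger\,\dd\rho^\nu_{t_j}$ into $-\tfrac{\varepsilon_j}{2}\int v_0\Delta v_0^\dagger\,\dd\rho^\nu_{t_{j-1}}$ before time-averaging, so that $\delta_{2,j}=\tfrac12\int_{t_{j-1}}^{t_j}\!\int v_0\Delta v_0^\dagger\,\dd(\bar\rho^\nu_t-\rho^\nu_{t_{j-1}})\,\dd t\le\tfrac{\varepsilon_j}{4}\Lip(v_0\Delta v_0^\dagger)\,\W_2(\rho^\nu_{t_{j-1}},\rho^\nu_{t_j})$, whereas bounding the two pieces separately only gives $\tfrac34$.
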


Though $u_{t_{j-1},1}/u_{t_{j-1},0}$ and $v_{t_{j-1},1}/v_{t_{j-1},0}$ appear in the second-order expansion of Schr\"odinger potentials with respect to the regularization coefficient $\varepsilon_j$ as shown in Theorem~\ref{thm: asymp_schro_potential}, we would like to emphasize that the sum of their integrations only depends on $\nabla\Phi_t^\mu$, which is only related to the first-order time derivative $\partial_t\rho_t^\mu$, due to additional cancellation (c.f. Appendix~\ref{app: simplification} for more details).

Now, we are ready to present the proof of the main theorem of this paper.
\paragraph{Proof of Theorem~\ref{thm: multiSB_stable}}
For each $j$, let $\varepsilon_j = t_j - t_{j-1}.$ It follows from Theorem~\ref{thm: asymp_schro_potential} with $K=3$ and the expressions \eqref{eqn: explicit_construct}, that
\begin{align*}
&\quad\,\int_{\mb T^d}\phi_j^\mu\,\dd\rho_{t_{j-1}}^\nu + \int_{\mb T^d}\psi_j^\mu\,\dd\rho_{t_j}^\nu\\
&= \int_{\mb T^d} \left(\varepsilon_j\log u_{t_{j-1},0}+ \varepsilon_j^2 \frac{u_{t_{j-1}, 1}}{u_{t_{j-1}, 0}}\right)\,\dd\rho_{t_{j-1}}^\nu + \int_{\mb T^d} \left(\varepsilon_j\log v_{t_{j-1},0}+ \varepsilon_j^2 \frac{v_{t_{j-1}, 1}}{v_{t_{j-1}, 0}}\right)\,\dd\rho_{t_j}^\nu + O(\varepsilon_j^3),
\end{align*}
where the big-O notation omits a constant depending on
\begin{align*}
\alpha\in(0, 1),\,\, d,\,\, L_\rho,\,\,\mbox{and}\,\, \sup\big\{\|\rho_t^{(k)}\|_{\m C^{\max\{2+\lceil\frac{d+1}{2}\rceil, 12\}+k',\alpha}}: t\in[0, 1],\,\, 2k+k'=14,\,\,k,k'\in\mb N\big\}.
\end{align*}
Let $\varepsilon_{\rm max} \coloneqq \max_{j\in[m]}\varepsilon_j$. Combining with~\eqref{eqn: simplify_KL_MSB}, \eqref{eqn:Conforti21} and the definition of $S_j$ in Proposition~\ref{lem:Conforti} we have 
\begin{align}\label{eqn: temp_KL-ub}
&\KL(R^{\bm\nu^m}\,\|\,R^{\bm\mu^m})
\leq \sum_{j=1}^m \big[S_j + O(\varepsilon_j^2)\big] + \sum_{j=0}^m\KL(\rho_{t_j}^\nu\,\|\,\rho_{t_j}^\mu),
\end{align}
which from Lemma~\ref{lem: simplification} satisfies
\begin{align}\label{eqn: le e max}
\le \KL(\rho_1^\nu\,\|\,\rho_1^\mu) + \frac{1}{2}\sum_{j=1}^m\int_{t_{j-1}}^{t_j}\!\int_{\mb T^d}\Big\|\nabla\Phi_{t_{j-1}}^\mu - \nabla\bar \Phi_t^\nu - \frac{1}{2}\nabla\log\frac{\rho_{t_{j-1}}^\mu}{\bar\rho_t^\nu}\Big\|^2\,\dd\bar\rho_t^\nu\dd t + \sum_{j=1}^m\big(\varepsilon_j|\delta_{1,j}| + |\delta_{2,j}|\big) + O(\varepsilon_{\rm max}).
\end{align}
Here, we used $ \sum_{j=1}^m O(\varepsilon_j^2) \le O(\varepsilon_{\rm max})$.
Now Lemma~\ref{lem: simplification}  also implies
\begin{align*}
\sum_{j=1}^m\big(\varepsilon_j|\delta_{1,j}| + |\delta_{2,j}|\big)
\leq O(\varepsilon_{\max}), 
\end{align*}
where the big-O notation omits a constant depending only on
\begin{align*}
\sum_{j=1}^m \frac{\W_2^2(\rho_{t_{j-1}}^\nu, \rho_{t_j}^\nu)}{\varepsilon_j},\,\,
\sup_{t\in[0,1]} \Big\|\frac{\partial}{\partial t}\Phi_t^\mu\Big\|_{\m C^2},\,\,
\sup_{t\in[0,1]}\|\Phi_t^\mu\|_{\m C^{4}},\,\,
\sup\big\{\|\rho_t^{(k)}\|_{\m C^{4-2k,\alpha}}:t\in[0, 1],\, k=0,1,2\big\},\,\,
\alpha,\,\,
\mx{and}\,\, d
\end{align*}
for any fixed $\alpha \in(0,1)$.
 
To control the term $\Big\|\nabla\Phi_{t_{j-1}}^\mu - \nabla\bar \Phi_t^\nu - \frac{1}{2}\nabla\log\frac{\rho_{t_{j-1}}^\mu}{\bar\rho_t^\nu}\Big\|^2$ in \eqref{eqn: le e max}, observe that  by Cauchy--Schwarz inequality, we have
for any constant $C>0$ that \begin{align*}
\Big\|\nabla\Phi_{t_{j-1}}^\mu - \nabla\bar \Phi_t^\nu - \frac{1}{2}\nabla\log\frac{\rho_{t_{j-1}}^\mu}{\bar\rho_t^\nu}\Big\|^2
\leq \big(1 + \sqrt{C}\varepsilon_j\big)\bigg[\Big\|\nabla\bar \Phi_{t_{j-1}}^\mu - \nabla\bar \Phi_t^\nu - \frac{1}{2}\nabla\log\frac{\rho_{t_{j-1}}^\mu}{\bar\rho_t^\nu}\Big\|^2 + \frac{\big\|\nabla\Phi_{t_{j-1}}^\mu - \nabla\bar\Phi_{t_{j-1}}^\mu\big\|^2}{\sqrt{C}\varepsilon_j}\bigg].
\end{align*}
We will prove that whenever $\varepsilon_j = t_j - t_{j-1} < \bar \varepsilon_{\rm thres}$,
\begin{align}\label{eqn: change_velocity}
\int_{\mb T^d} \big\|\nabla\Phi_{t_{j-1}}^\mu - \nabla\bar\Phi_{t_{j-1}}^\mu\big\|^2\,\dd\rho_{t_{j-1}}^\nu \leq C \varepsilon_j^2
\end{align}
holds for a constant $C>0$ that depends  only on $L_\rho$ and $\sup_{t\in[0, 1]}\|\Phi_t^\mu\|_{\m C^3}$, and for a constant $\bar \varepsilon_{\rm thres} > 0$  that depends only on $\sup_{t\in[0, 1]}\|\Phi_t^\mu\|_{\m C^2}$.
{Assuming~\eqref{eqn: change_velocity}, we get}
\begin{align*}
&\quad\,\int_{\mb T^d}\Big\|\nabla\Phi_{t_{j-1}}^\mu - \nabla\bar \Phi_t^\nu - \frac{1}{2}\nabla\log\frac{\rho_{t_{j-1}}^\mu}{\bar\rho_t^\nu}\Big\|^2\,\dd\bar\rho_t^\nu\\
&\leq \big(1 + \sqrt{C}\varepsilon_j\big)\int_{\mb T^d} \Big\|\nabla\bar \Phi_{t_{j-1}}^\mu - \nabla\bar \Phi_t^\nu - \frac{1}{2}\nabla\log\frac{\rho_{t_{j-1}}^\mu}{\bar\rho_t^\nu}\Big\|^2\,\dd\bar\rho_t^\nu + \sqrt{C}\varepsilon_j + C\varepsilon_j^2.
\end{align*}
Combining with \eqref{eqn: le e max} 
it yields
\begin{align*}
\KL(R^{\bm\nu^m}\,\|\,R^{\bm\mu^m})
\le 
\KL(\rho_1^\nu\,\|\,\rho_1^\mu) + \frac{1}{2}\sum_{j=1}^m\int_{t_{j-1}}^{t_j}\!\int_{\mb T^d}\Big\|\nabla\bar\Phi_{t_{j-1}}^\mu - \nabla\bar \Phi_t^\nu - \frac{1}{2}\nabla\log\frac{\rho_{t_{j-1}}^\mu}{\bar\rho_t^\nu}\Big\|^2\,\dd\bar\rho_t^\nu\dd t + O(\varepsilon_{\rm max})
\end{align*}
which will complete the proof, modulo the proof of \eqref{eqn: change_velocity}.
\qed

\begin{proof}[Proof of estimate~\eqref{eqn: change_velocity}]
Now, we only need to prove~\eqref{eqn: change_velocity}. Without loss of generality, we can assume $j = 1$ and focus on the time interval $[0, t_1]$. We will need to prove:
$$
\int_{\mb T^d} \big\|\nabla\Phi_{0}^\mu - \nabla\bar\Phi_{0}^\mu\big\|^2\,\dd\rho_{0}^\nu \leq C t_1^2.
$$

Note that in general there is no reason why there is a relation between $\nabla\Phi_0^\mu$ and $\nabla\bar\Phi_{0}^\mu$ as in the estimate \eqref{eqn: change_velocity}, because $\nabla \bar\Phi_{0}^\mu$ depends only on the marginals $\rho^\mu_0, \rho^\mu_{t_1}$, while $\nabla\Phi_{0}^\mu$ (without the bar) is the vector field determined by the {\em given}  flow itself.
This is where the regularity assumption on the flow ($\rho_t$ and $\Phi_t^\mu$),  Assumption~\ref{assump: density_expansion}, is relevant; in other words, without such an assumption we cannot expect an estimate like ~\eqref{eqn: change_velocity}, nor the estimate in Theorem~\ref{thm: multiSB_stable}. However, even with such a regularity assumption, since $\nabla\Phi_{0}^\mu$ and $\nabla\bar \Phi_{0}^\mu$ share only  the initial and terminal marginals $\rho^\mu_0, \rho^\mu_{t_1}$, we will carry out estimates on their integrals individually then compare them, rather than trying to get pointwise estimates on their differences.

Using the fact that $L_\rho \leq \rho_t^\mu(x), \rho_t^\nu(x) \leq L_\rho^{-1}$ for all $x\in\mb T^d$ and $t\in[0, 1]$, we have
\begin{align}\label{eqn: upper-bd-deltav}
\begin{aligned}
&\quad\,\frac{1}{2}\int_{\mb T^d}\big\|\nabla\Phi_0^\mu - \nabla\bar\Phi_0^\mu\big\|^2\,\dd\rho_0^\nu
\leq \frac{1}{2L_\rho^2}\int_{\mb T^d}\big\|\nabla\Phi_0^\mu - \nabla\bar\Phi_0^\mu\big\|^2\,\dd\rho_0^\mu\\
&= \frac{1}{2L_\rho^2}\left[\int_{\mb T^d}\big\|\nabla\bar\Phi_0^\mu\big\|^2\,\dd\rho_0^\mu - 2\int_{\mb T^d}\big\langle\nabla\bar\Phi_0^\mu, \nabla\Phi_0^\mu\big\rangle\,\dd\rho_0^\mu + \int_{\mb T^d}\big\|\nabla\Phi_0^\mu\big\|^2\,\dd\rho_0^\mu\right].
\end{aligned}
\end{align}

\vspace{0.5em}
\noindent\underline{Step 1: Control the first integration in~\eqref{eqn: upper-bd-deltav}.}
Let $T^\mu$ be the optimal transport map from $\rho_0^\mu$ to $\rho_{t_1}^\mu$. Since $(\nabla\bar\Phi_t^\mu)_{t\in[0, t_1]}$ is the velocity vector field of constant-speed Wasserstein-2 geodesics $(\bar\rho_t^\mu)_{t\in[0, t_1]}$,  we have
\begin{align}\label{eqn: velocity2transport}
\nabla\bar\Phi_0^\mu(x) = \frac{T^\mu(x) - x}{t_1}.
\end{align}
Therefore, we have
\begin{align*}
\frac{1}{2}\int_{\mb T^d}\big\|\nabla\bar\Phi_0^\mu\big\|^2\,\dd\rho_0^\mu = \frac{1}{2t_1^2}\int_{\mb T^d}\|T^\mu(x) - x\|^2\,\dd\rho_0^\mu = \frac{1}{t_1^2}\W_2^2\big(\rho_0^\mu, \rho_{t_1}^\mu\big).
\end{align*}
Next, we aim to connect $\W_2^2\big(\rho_0^\mu, \rho_{t_1}^\mu\big)$ with the velocity vector field $\{\nabla\Phi_t^\mu: 0\leq t\leq t_1\}$ using Assumption~\ref{assump: density_expansion}. Consider the ordinary differential equation
\begin{align*}
\frac{\dd X_t}{\dd t} = \nabla\Phi_t^\mu(X_t)
\quad\mx{with}\quad X_0\sim \rho_0^\mu.
\end{align*}
Then, we have $X_t\sim\rho_t^\mu$ for all $t\in[0, 1]$. Note that
\begin{align*}
\frac{\dd}{\dd t}\nabla\Phi_t^\mu(X_t)
&= \nabla(\partial_t \Phi_t^\mu)(X_t) + \nabla^2\Phi_t^\mu(X_t)\frac{\dd X_t}{\dd t}
= \nabla(\partial_t \Phi_t^\mu)(X_t) + \nabla^2\Phi_t^\mu(X_t)\nabla\Phi_t^\mu(X_t).
\end{align*}
Thus, the Wasserstein-2 distance can be upper bounded by
\begin{align*}
\W_2^2\big(\rho_0^\mu, \rho_{t_1}^\mu\big) 
&\leq \frac{1}{2}\mb E\|X_{t_1} - X_0\|^2
= \frac{1}{2}\mb E\bigg\|\int_0^{t_1}\frac{\dd X_t}{\dd t}\,\dd t\bigg\|^2
= \frac{1}{2}\mb E\bigg\|\int_0^{t_1} \nabla\Phi_t^\mu(X_t)\,\dd t\bigg\|^2\\
&= \frac{1}{2}\mb E\bigg\| t_1\nabla\Phi_0^\mu(X_0) + \int_0^{t_1}\!\!\int_{0}^t\frac{\dd}{\dd s}\nabla\Phi_s^\mu(X_s)\,\dd s\dd t\bigg\|^2\\
&= \frac{1}{2}\mb E\bigg\|t_1\nabla\Phi_0^\mu(X_0) + \frac{t_1^2}{2}\big[\nabla(\partial_t\Phi_0^\mu)(X_0) + \nabla^2\Phi_0^\mu(X_0)\nabla\Phi_0^\mu(X_0)\big] + A(t_1)\bigg\|^2,
\end{align*}
where for simplicity, we let
\begin{align*}
A(t_1) = \int_0^{t_1}\!\!\int_0^t\!\!\int_0^s \frac{\dd^2}{\dd u^2}\nabla\Phi_u^\mu(X_u)\,\dd u\dd s\dd t = O(t_1^3).
\end{align*}
We also observe that the above estimate also gives 
$$\W_2^2\big(\rho_0^\mu, \rho_{t_1}^\mu\big)  = O(t_1^2).$$
Here, the big-O notation in these  has  hidden constants depending only on the regularity assumption, Assumption~\ref{assump: density_expansion}.

Combining together, we have derived 
\begin{align}\label{eqn: W2-to-velocity}
\begin{aligned}
\frac{1}{2}\int_{\mb T^d}\big\|\nabla\bar\Phi_0^\mu\big\|^2\,\dd\rho_0^\mu
&\leq \frac{1}{2}\mb E\bigg\|\nabla\Phi_0^\mu(X_0) + \frac{t_1}{2}\big[\nabla(\partial_t\Phi_0^\mu)(X_0) + \nabla^2\Phi_0^\mu(X_0)\nabla\Phi_0^\mu(X_0)\big] + \frac{A(t_1)}{t_1}\bigg\|^2\\
&= \frac{1}{2}\int_{\mb T^d}\big\|\nabla\Phi_0^\mu\big\|^2\,\dd\rho_0^\mu + \frac{t_1}{2}\int_{\mb T^d}\big\langle\nabla\Phi_0^\mu, \nabla(\partial_t\Phi_0^\mu) + (\nabla^2\Phi_0^\mu)\nabla\Phi_0^\mu\big\rangle\,\dd\rho_0^\mu + O(t_1^2).
\end{aligned}
\end{align}

\vspace{0.5em}
\noindent\underline{Step 2: Control the second term in~\eqref{eqn: upper-bd-deltav}.} 
Next, we will control the cross-product term 
$\int_{\mb T^d}\big\langle\nabla\bar\Phi_0^\mu, \nabla\Phi_0^\mu\big\rangle\,\dd\rho_0^\mu$ in~\eqref{eqn: upper-bd-deltav}. Define
\begin{align*}
H_t \coloneqq \int_{\mb T^d}\Phi_0^\mu\,\dd\rho_t^\mu,\quad t\in[0, t_1].
\end{align*}
We will analyze $H_{t_1} - H_0$ in two different approaches. On the one hand, we have
\begin{align}
H_{t_1} - H_0
&= \int_{\mb T^d}\Phi_0^\mu\,\dd\rho_{t_1}^\mu - \int_{\mb T^d}\Phi_0^\mu\,\dd\rho_0^\mu
= \int_{\mb T^d}\left(\Phi_0^\mu\big(T^\mu(x)\big) - \Phi_0^\mu(x)\right)\,\dd\rho_0^\mu\notag\\
&= \int_{\mb T^d} \big\langle\nabla\Phi_0^\mu(x), T^\mu(x) - x\big\rangle + \frac{1}{2}\big\langle T^\mu(x) - x, \nabla^2\Phi_0^\mu(x)\big(T^\mu(x) - x\big)\big\rangle\,\dd\rho_0^\mu + O\big(\|T^\mu - \id\|_{L^3(\rho_0^\mu)}^3\big)\notag\\
&= t_1\int_{\mb T^d}\big\langle\nabla\Phi_0^\mu, \nabla\bar\Phi_0^\mu\big\rangle\,\dd\rho_0^\mu + \frac{t_1^2}{2}\int_{\mb T^d}\big\langle\nabla\bar\Phi_0^\mu, (\nabla^2\Phi_0^\mu)\nabla\bar\Phi_0^\mu\big\rangle\,\dd\rho_0^\mu + O\big(\|T^\mu - \id\|^3_{L^3(\rho_0^\mu)}\big).\label{eqn: Ht-H0_approach1}
\end{align}
Here, the big-O notation omits a constant only depending on $\|\Phi_0^\mu\|_{\m C^3(\mb T^d)}$.
On the other hand, we have
\begin{align*}
H_{t_1} - H_0 
&= \int_0^{t_1}\frac{\dd H_t}{\dd t}\,\dd t
= \int_0^{t_1}\!\!\int_{\mb T^d}\Phi_0^\mu(x)\partial_t\rho_t^\mu(x)\,\dd x\dd t
\stackrel{\ri}{=} \int_0^{t_1}\!\!\int_{\mb T^d}\big\langle\nabla\Phi_0^\mu, \nabla\Phi_t^\mu\big\rangle\,\dd\rho_t^\mu\dd t\\
&= t_1\int_{\mb T^d}\big\|\nabla\Phi_t^\mu\big\|^2\,\dd\rho_0^\mu + \int_0^{t_1}\!\!\int_0^t\frac{\dd}{\dd s}\bigg[\int_{\mb T^d}\big\langle\nabla\Phi_0^\mu, \nabla\Phi_s^\mu\big\rangle\,\dd\rho_s^\mu\bigg]\,\dd s\dd t.
\end{align*}
Here, (i) follows from the continuity equation and integration by parts. Note that
\begin{align*}
\frac{\dd}{\dd s}\bigg[\int_{\mb T^d}\big\langle\nabla\Phi_0^\mu, \nabla\Phi_s^\mu\big\rangle\,\dd\rho_s^\mu\bigg]
&= \int_{\mb T^d}\big\langle\nabla\Phi_0^\mu, \nabla(\partial_s\Phi_s^\mu)\big\rangle\,\dd\rho_s^\mu + \int_{\mb T^d}\big\langle\nabla\Phi_0^\mu, \nabla\Phi_s^\mu\big\rangle\,\partial_s\rho_s^\mu\\
&= \int_{\mb T^d}\big\langle\nabla\Phi_0^\mu, \nabla(\partial_s\Phi_s^\mu)\big\rangle\,\dd\rho_s^\mu + \int_{\mb T^d}\Big\langle \nabla\big[\big(\nabla\Phi_0^\mu\big)^\top\nabla\Phi_s^\mu\big], \nabla\Phi_s^\mu\Big\rangle\,\dd\rho_s^\mu.
\end{align*}
Thus, we have
\begin{align}
H_{t_1} - H_0 
&= t_1\int_{\mb T^d}\big\|\nabla\Phi_t^\mu\big\|^2\,\dd\rho_0^\mu + \frac{t_1^2}{2}\bigg[\int_{\mb T^d}\big\langle\nabla\Phi_0^\mu, \nabla(\partial_t\Phi_0^\mu)\big\rangle\,\dd\rho_0^\mu + \int_{\mb T^d}\Big\langle \nabla\big\|\nabla\Phi_0^\mu\big\|^2, \nabla\Phi_0^\mu\Big\rangle\,\dd\rho_0^\mu\bigg]+ O(t_1^3)\notag\\
&= t_1\int_{\mb T^d}\big\|\nabla\Phi_t^\mu\big\|^2\,\dd\rho_0^\mu + \frac{t_1^2}{2}\bigg[\int_{\mb T^d}\left(\big\langle\nabla\Phi_0^\mu, \nabla(\partial_t\Phi_0^\mu)\big\rangle + 2\big\langle \big(\nabla^2\Phi_0^\mu\big)\nabla\Phi_0^\mu, \nabla\Phi_0^\mu\big\rangle\right)\,\dd\rho_0^\mu\bigg]+ O(t_1^3)\label{eqn: Ht-H0_approach2},
\end{align}
where the big-O notation omits a constant depending on $\sup_{t\in[0, t_1]}\|\Phi_t^\mu\|_{\m C^3(\mb T^d)}$ and $L_\rho$. Recall  that $\W_2(\rho_0^\mu, \rho_{t_1}^\mu) = O(t_1)$. Comparing~\eqref{eqn: Ht-H0_approach1} and~\eqref{eqn: Ht-H0_approach2} yields
\begin{align}\label{eqn: cross-term}
\begin{aligned}
&\int_{\mb T^d}\big\langle\nabla\Phi_0^\mu, \nabla\bar\Phi_0^\mu\big\rangle\,\dd\rho_0^\mu + \frac{t_1}{2}\int_{\mb T^d}\big\langle\nabla\bar\Phi_0^\mu, (\nabla^2\Phi_0^\mu)\nabla\bar\Phi_0^\mu\big\rangle\,\dd\rho_0^\mu + t_1^{-1}O\big(\|T^\mu - \id\|_{L^3(\rho_0^\mu)}^3\big)\\
&= \int_{\mb T^d}\big\|\nabla\Phi_t^\mu\big\|^2\,\dd\rho_0^\mu + \frac{t_1}{2}\bigg[\int_{\mb T^d}\big\langle\nabla\Phi_0^\mu, \nabla(\partial_t\Phi_0^\mu)\big\rangle + 2\big\langle \big(\nabla^2\Phi_0^\mu\big)\nabla\Phi_0^\mu, \nabla\Phi_0^\mu\big\rangle\,\dd\rho_0^\mu\bigg]+ O(t_1^2).
\end{aligned}
\end{align}

\vspace{0.5em}
\noindent\underline{Step 3: Estimate $\|T^\mu - \id\|_{L^3(\rho_0^\mu)}^3$.}
Using the estimate~\eqref{eqn: W2-to-velocity} and~\eqref{eqn: cross-term}, there exists a constant $C>0$ depending only on $L_\rho$ and $\sup_{t\in[0, 1]}\|\Phi_0^\mu\|_{\m C^2(\mb T^d)}$, such that
\begin{align*}
\int_{\mb T^d}\big\|\nabla\Phi_0^\mu - \nabla\bar\Phi_0^\mu\big\|^2\,\dd\rho_0^\mu
\leq Ct_1.
\end{align*}
Therefore, using the above estimate and~\eqref{eqn: velocity2transport}, we have
\begin{align*}
\int_{\mb T^d}\|T^\mu - \id\|^3\,\dd\rho_0^\mu
&= t_1^3\int_{\mb T^d}\big\|\nabla\bar\Phi_0^\mu\big\|^3\,\dd\rho_0^\mu
\leq 4t_1^3\int_{\mb T^d}\big\|\nabla\Phi_0^\mu\big\|^3 + \big\|\nabla\Phi_0^\mu - \nabla\bar\Phi_0^\mu\big\|^3\,\dd\rho_0^\mu\\
&\leq 4t_1^3\int_{\mb T^d}\big\|\nabla\Phi_0^\mu\big\|^3\,\dd\rho_0^\mu + 4Ct_1^4\big\|\nabla\Phi_0^\mu - \nabla\bar\Phi_0^\mu\big\|_{L^\infty}.
\end{align*}
Note that we have the following $L^\infty$ estimate
\begin{align*}
\big\|\nabla\Phi_0^\mu - \nabla\bar\Phi_0^\mu\big\|_{L^\infty}
\leq \|\Phi_0^\mu\|_{\m C^1(\mb T^d)} + t_1^{-1}\|T^\mu - \id\|_{L^\infty}
\leq \|\Phi_0^\mu\|_{\m C^1(\mb T^d)} + t_1^{-1}2\pi\sqrt{d}.
\end{align*}
Thus, combining with $\rho_0^\mu(x) \leq L_\rho^{-1}$, we have
\begin{align}\label{eqn: L^3-estimate}
\int_{\mb T^d} \|T^\mu - \id\|^3\,\dd\rho_0^\mu
\leq \big[4L_\rho^{-1}\|\Phi_0^\mu\|_{\m C^1}^3 + 4C\big(t_1\|\Phi_0^\mu\|_{\m C^1} + 2\pi\sqrt{d}\big)\big] t_1^3.
\end{align}

\vspace{0.5em}
\noindent\underline{Step 4. An $O(t_1^2)$-order estimate.}
Using the estimate~\eqref{eqn: cross-term} and~\eqref{eqn: L^3-estimate}, we get
\begin{align*}
\begin{aligned}
&\int_{\mb T^d}\big\langle\nabla\Phi_0^\mu, \nabla\bar\Phi_0^\mu\big\rangle\,\dd\rho_0^\mu + \frac{t_1}{2}\int_{\mb T^d}\big\langle\nabla\bar\Phi_0^\mu, (\nabla^2\Phi_0^\mu)\nabla\bar\Phi_0^\mu\big\rangle\,\dd\rho_0^\mu + O\big(t_1^2\big)\\
&= \int_{\mb T^d}\big\|\nabla\Phi_t^\mu\big\|^2\,\dd\rho_0^\mu + \frac{t_1}{2}\bigg[\int_{\mb T^d}\big\langle\nabla\Phi_0^\mu, \nabla(\partial_t\Phi_0^\mu)\big\rangle + 2\big\langle \big(\nabla^2\Phi_0^\mu\big)\nabla\Phi_0^\mu, \nabla\Phi_0^\mu\big\rangle\,\dd\rho_0^\mu\bigg]+ O(t_1^2).
\end{aligned}
\end{align*}
Using the above result and the estimate~\eqref{eqn: W2-to-velocity}, there exists a constant $C'>0$ depending only on $L_\rho$ and $\sup_{t\in[0, 1]}\|\Phi_t^\mu\|_{\m C^3(\mb T^d)}$, such that
\begin{align*}
&\int_{\mb T^d}\big\|\nabla\Phi_0^\mu - \nabla\bar\Phi_0^\mu\big\|^2\,\dd\rho_0^\mu
\leq t_1\int_{\mb T^d}\left(\big\langle \nabla\bar\Phi_0^\mu, (\nabla^2\Phi_0^\mu)\nabla\bar\Phi_0^\mu\big\rangle - \big\langle \nabla\Phi_0^\mu, (\nabla^2\Phi_0^\mu)\nabla\Phi_0^\mu\big\rangle\right)\,\dd\rho_0^\mu + C't_1^2\\
&\stackrel{}{=} t_1 \int_{\mb T^d}\left(\big[\nabla\bar\Phi_0^\mu - \nabla\Phi_0^\mu\big]^\top \big(\nabla^2\Phi_0^\mu\big)\big[\nabla\bar\Phi_0^\mu - \nabla\Phi_0^\mu\big] + 2\big[\nabla\bar\Phi_0^\mu - \nabla\Phi_0^\mu\big]^\top(\nabla^2\Phi_0^\mu)\nabla\Phi_0^\mu\right)\,\dd\rho_0^\mu + C't_1^2\\
&\leq t_1\sup_x\matnorm{\nabla^2\Phi_0^\mu}_{\rm op}\int_{\mb T^d}\left(\big\|\nabla\Phi_0^\mu - \nabla\bar\Phi_0^\mu\big\|^2 + 2\big\|\nabla\Phi_0^\mu\big\|\cdot\big\|\nabla\Phi_0^\mu - \nabla\bar\Phi_0^\mu\big\|\right)\,\dd\rho_0^\mu + C't_1^2.
\end{align*}
By using $\delta_\Phi^2$ to denote the left-hand side, we get
\begin{align*}
\delta_\Phi^2 \leq t_1\sup_x\matnorm{\nabla^2\Phi_0^\mu}_{\rm op}\bigg[\delta_\Phi^2  + 2\sup_x\big\|\nabla\Phi_0^\mu(x)\big\|\delta_\Phi\bigg] + C't_1^2.
\end{align*}
This implies
\begin{align*}
\frac{\delta_\Phi}{t_1} \leq \frac{2\sup_{x}\matnorm{\nabla^2\Phi_0^\mu(x)}_{\rm op}\sup_x\|\nabla\Phi_0^\mu(x)\|}{1 - t_1\sup_x\matnorm{\nabla^2\Phi_0^\mu(x)}_{\rm op}} + \sqrt{\frac{C'}{1 - t_1\sup_x\matnorm{\nabla^2\Phi_0^\mu(x)}}_{\rm op}}
\end{align*}
whenever $t_1\sup_x\matnorm{\nabla^2\Phi_0^\mu(x)}_{\rm op} < 1$. Thus, we know there is are constants $\bar \varepsilon_{\rm thres}> 0$ depending only on $\|\Phi_0^\mu\|_{\m C^2}$ and $C'' > 0$ depending only on $L_\rho, \sup_{t\in[0, 1]}\|\Phi_t^\mu\|_{\m C^3}$, such that
\begin{align*}
\int_{\mb T^d}\big\|\nabla\Phi_0^\mu - \nabla\bar\Phi_0^\mu\big\|^2\,\dd\rho_0^\mu \leq C'' t_1^2
\qquad\mx{for all}\,\,\, \,t_1\leq \bar\varepsilon_{\rm thres}.
\end{align*}
This completes the proof of~\eqref{eqn: change_velocity}, thus, of Theorem~\ref{thm: multiSB_stable}.
\end{proof}

\section*{Acknowledgement}
GS is supported by Canadian Institutes of Health Research (CIHR) and Michael Smith Health Research BC Scholar Program. 
YHK is partially supported by the Natural Sciences and Engineering Research Council of Canada (NSERC), with Discovery Grant RGPIN-2019-03926 and RGPIN-2025-06747. Both GS and YHK are partially supported by Exploration Grant (NFRFE-2019-00944) from the New Frontiers in Research Fund (NFRF). GS and YHK are members of the Kantorovich Initiative (KI), which is supported by the PIMS Research Network (PRN) program of the Pacific Institute for the Mathematical Sciences (PIMS). We thank PIMS for their generous support; report identifier PIMS-20260415-PRN01

\bibliography{ref}
\bibliographystyle{plainnat}
\newpage
\appendix
\section{Proof of Corollaries}

\subsection{Proof of Corollary~\ref{prop: vs_conforti}}\label{app: pf vs_conforti}

We first collect a few properties of Holder space. The detailed proofs are omitted.
\begin{proposition}\label{prop: Holder}
Let $k\in\mb N$ and $\alpha\in(0, 1)$. The Holder space $\m C^{k, \alpha}(\mb T^d)$ has the following properties.
\begin{itemize}
\item For any $f, g\in\m C^{k, \alpha}(\mb T^d)$, we have $fg\in\m C^{k,\alpha}(\mb T^d)$. Moreover, we have the estimate
\begin{align*}
\|fg\|_{\m C^{k,\alpha}} \leq C_1(k, \alpha, d)\|f\|_{\m C^{k,\alpha}}\|g\|_{\m C^{k,\alpha}}.
\end{align*}

\item For any $f\in\m C^{k,\alpha}(\mb T^d)$ and $g\in\m C^{k,\alpha}(\mb T^d; \mb T^d)$, we have
\begin{align*}
\|f\circ g\|_{\m C^{k,\alpha}} \leq 
\begin{cases}
C_2\big(\alpha, d, \lip(g)\big) \|f\|_{\m C^{k,\alpha}},\qquad\qquad & k=0\\
C_2'\big(k,\alpha,d,\|g\|_{\m C^{k,\alpha}}\big)\|f\|_{\m C^{k,\alpha}},\qquad\qquad & k \geq 1
\end{cases}
\quad.
\end{align*}

\item Let $A(x)$ be a symmetric matrix-valued function on $\mb T^d$, such that $A(x) \geq \lambda I_d$ is uniformly positive definite for all $x\in\mb T^d$. Define the Holder norm of $A(x)$ by
\begin{align*}
\|A\|_{\m C^{k,\alpha}} \coloneqq
\max_{|\beta| \leq k} \sup_{x\in\mb T^d}\matnorm{A(x)}_{\rm op} + \max_{\|\beta\| = k}\sup_{x\neq y} \frac{\matnorm{D^\beta A(x) - D^\beta A(y)}_{\rm op}}{|x-y|^\alpha}.
\end{align*}
Then, we have
\begin{align*}
\|A^{-1}\|_{\m C^{k,\alpha}} \leq C_3(k,\alpha, d, \lambda, \|A\|_{\m C^{k,\alpha}}).
\end{align*}
\myqed
\end{itemize}
\end{proposition}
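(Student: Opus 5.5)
We prove the three items separately. Each is a standard Hölder-calculus estimate on the compact, translation-invariant space $\mb T^d$, and each follows by induction on $k$ from the case $k=0$. Throughout, $\|f\|_{\m C^{k,\alpha}}=\sum_{|\beta|\le k}\|D^\beta f\|_{\m C^0}+\max_{|\beta|=k}[D^\beta f]_\alpha$, where $[h]_\alpha=\sup_{x\neq y}|h(x)-h(y)|/\td(x,y)^\alpha$. We use the elementary fact $[h]_\alpha\le 2\|h\|_{\m C^0}^{1-\alpha}\big(\sup\|\nabla h\|\big)^{\alpha}$ (up to a constant, with the gradient bound understood as a Lipschitz bound) whenever $h\in\m C^1$, and the embedding $\m C^{k,\alpha}\hookrightarrow\m C^{j,\alpha}$ for $j\le k$ on the compact torus.

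\emph{Product estimate.} For $k=0$, write
\[
f(x)g(x)-f(y)g(y)=f(x)\big(g(x)-g(y)\big)+g(y)\big(f(x)-f(y)\big).
\]
This gives $[fg]_\alpha\le\|f\|_{\m C^0}[g]_\alpha+\|g\|_{\m C^0}[f]_\alpha$, hence $\|fg\|_{\m C^{0,\alpha}}\le 2\|f\|_{\m C^{0,\alpha}}\|g\|_{\m C^{0,\alpha}}$. For general $k$, apply the Leibniz rule: $D^\beta(fg)=\sum_{\gamma\le\beta}\binom{\beta}{\gamma}D^\gamma f\,D^{\beta-\gamma}g$. Each factor lies in $\m C^{0,\alpha}$ with norm bounded by $\|f\|_{\m C^{k,\alpha}}$ or $\|g\|_{\m C^{k,\alpha}}$. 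Applying the $k=0$ case to each term and summing over $|\beta|\le k$ gives the constant $C_1(k,\alpha,d)$, which is a sum of binomial coefficients.

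\emph{Composition estimate.} For $k=0$, we have
\[
|f(g(x))-f(g(y))|\le[f]_\alpha\,\td(g(x),g(y))^\alpha\le[f]_\alpha\Lip(g)^\alpha\td(x,y)^\alpha,
\]
and $\|f\circ g\|_{\m C^0}\le\|f\|_{\m C^0}$. This yields $C_2=\max\{1,\Lip(g)^\alpha\}$. For $k\ge1$, lift $g$ locally to $\mb R^d$ so that its derivatives make sense; they are periodic. By Faà di Bruno, $D^\beta(f\circ g)$ with $|\beta|\le k$ is a finite sum, with combinatorial coefficients depending on $k$ and $d$, of terms
\[
(D^j f)\circ g\cdot\prod_{i=1}^{j}D^{\gamma_i}g,\qquad 1\le j\le|\beta|,\quad |\gamma_i|\ge1,\quad \sum_i|\gamma_i|=|\beta|.
\]
We bound the pieces as follows.
\begin{itemize}
\item Each factor $D^{\gamma_i}g$ has order at most $k$, so it lies in $\m C^{0,\alpha}$ with norm at most $\|g\|_{\m C^{k,\alpha}}$.
\item The factor $(D^jf)\circ g$ is handled by the $k=0$ case, with $\Lip(g)\le\|g\|_{\m C^{1}}\le\|g\|_{\m C^{k,\alpha}}$, which gives $\|(D^jf)\circ g\|_{\m C^{0,\alpha}}\le C\|f\|_{\m C^{k,\alpha}}$.
\end{itemize}
Combining these with the product estimate in $\m C^{0,\alpha}$ gives $C_2'(k,\alpha,d,\|g\|_{\m C^{k,\alpha}})$. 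The estimate is linear in $f$ because each term contains exactly one factor involving $f$.

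\emph{Inverse matrix estimate.} Since $A(x)\ge\lambda I_d$, we have $\matnorm{A^{-1}(x)}_{\rm op}\le\lambda^{-1}$, which handles $k=0$ at the $\m C^0$ level. The $\alpha$-seminorm follows from the resolvent identity
\[
A^{-1}(x)-A^{-1}(y)=A^{-1}(x)\big(A(y)-A(x)\big)A^{-1}(y),
\]
which gives $[A^{-1}]_\alpha\le\lambda^{-2}[A]_\alpha$. For $k\ge1$, differentiate to get $\partial_i(A^{-1})=-A^{-1}(\partial_iA)A^{-1}$. By induction on $k$, $D^\beta(A^{-1})$ is a finite sum of products of the form
\[
A^{-1}\,D^{\gamma_1}A\,A^{-1}\cdots D^{\gamma_j}A\,A^{-1},\qquad \sum_i|\gamma_i|=|\beta|.
\]
Applying the matrix version of the product estimate, with the $k=0$ bound $\|A^{-1}\|_{\m C^{0,\alpha}}\le\lambda^{-1}+\lambda^{-2}\|A\|_{\m C^{0,\alpha}}$, gives $C_3(k,\alpha,d,\lambda,\|A\|_{\m C^{k,\alpha}})$. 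The matrix product estimate holds because the operator norm is submultiplicative.

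The only real bookkeeping obstacle is the composition step. We must check that the Hölder seminorm of the top-order Faà di Bruno terms only needs $g\in\m C^{k,\alpha}$ and $f\in\m C^{k,\alpha}$. This holds because the highest derivative of $f$ that appears is $D^kf$, which is composed with a Lipschitz map, and the highest derivative of $g$ that appears is $D^kg$, which enters only through products of $\m C^{0,\alpha}$ functions. A secondary point is the periodicity of $g$ as a map into $\mb T^d$. We handle it by working with a local lift, using that $\td$ agrees with the Euclidean distance on small balls, and covering large separations trivially by the bound $2\|\cdot\|_{\m C^0}/r^\alpha$.
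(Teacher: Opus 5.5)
Your proof is correct. The paper states this proposition without proof (``The detailed proofs are omitted''), and your argument, which uses the Leibniz rule, Fa\`a di Bruno with the $k=0$ composition bound, and the resolvent identity together with the derivative formula for $A^{-1}$, is the standard verification the paper implicitly relies on.

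One small remark: the matrix norm in the statement literally takes $\max_{|\beta|\le k}\sup_x$ of the operator norm of $A(x)$ rather than of $D^\beta A(x)$, which is a typo. You used the intended definition, and on the compact torus the two versions are equivalent by interpolation, so your constant $C_3$ is unaffected.
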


\noindent
We will prove that there exists a constant $C>0$ depending only on 
\begin{align*}
d,\,\,
L_\rho, \,\,
\sup_{t\in[t_{j-1}, t_j]}\|\rho_t^\mu\|_{\m C^{2,1/2}}, 
\sup_{t\in[t_{j-1}, t_j]}\|\partial_t\rho_t^\mu\|_{\m C^{1,1/2}},
\quad\mx{and}\quad
\sup_{t\in[t){j-1}, t_j]}\|\Phi_t^\mu\|_{\m C^1},
\end{align*}
such that
\begin{align}\label{eqn: same-marginal-cts-discrete-1piece}
\int_{t_{j-1}}^{t_j}\!\int_{\mb T^d}\Big\|\nabla\bar \Phi_{t_{j-1}}^\mu - \nabla\bar\Phi_t^\mu - \frac{1}{2}\nabla\log\frac{\rho_{t_{j-1}}^\mu}{\bar\rho_t^\mu}\Big\|^2\,\dd \bar\rho_t^\mu\dd t 
\leq C(t_j - t_{j-1})^2
\end{align}
for all $j\in[m]$. 

Without loss of generality, we assume $j = 1$.
Let $T^\mu$ be the optimal transport map from $\rho_0^\mu$ to $\rho_{t_1}^\mu$. Recall that $(\bar\rho_t^\mu)_{0\leq t\leq t_1}$ is the constant-speed geodesics connecting $\rho_0^\mu$ and $\rho_{t_1}^\mu$. Let $T_t$ denote the optimal transport map from $\bar \rho_0^\mu$ to $\bar \rho_{t}^\mu$, defined by
\begin{align}\label{eqn: ot_interpolate}
T_t(x) = \frac{t}{t_1}T^\mu(x) + \Big(1 - \frac{t}{t_1}\Big)x,
\qquad t\in[0, t_1].
\end{align}
Since $\nabla\bar\Phi_t^\mu$ is the velocity vector field of piecewise Wasserstein geodesics $(\bar\rho_t^\mu)_{t\in[0, 1]}$, we have
\begin{align}\label{eqn: geodesics_vecfield}
\nabla\bar\Phi_t^\mu\Big(\frac{t}{t_1}T^\mu(x) + \frac{t_1 - t}{t_1} x\Big) = \frac{T^\mu(x) - x}{t_1},
\qquad t\in[0, t_1].
\end{align}
Applying Cauchy--Schwarz inequality yields
\begin{align*}
\Big\|\nabla\bar \Phi_{0}^\mu - \nabla\bar\Phi_t^\mu - \frac{1}{2}\nabla\log\frac{\rho_{{0}}^\mu}{\bar\rho_t^\mu}\Big\|^2
\leq 2\big\|\nabla\bar \Phi_{0}^\mu - \nabla\bar\Phi_t^\mu\big\|^2
+ \frac{1}{2}\Big\|\nabla\log\frac{\rho_{0}^\mu}{\bar\rho_t^\mu}\Big\|^2,
\quad t\in[0, t_1].
\end{align*}
Therefore, we only need to control the integration these two terms separately. 

Before proceeding, we will introduce the following regularity results of the solution to Monge--Ampere equations. The proofs are postponed to the end of this section.

\begin{lemma}\label{lem: Kanpot-HessLB}
Let $\phi_t$ and $\psi_t$ be the Kantorovich potentials for solving the optimal transport problem between $\rho_0^\mu$ and $\rho_t^\mu$. That is, 
\begin{align*}
(\id - \nabla\phi_t)_\# \rho_0^\mu = \rho_t^\mu
\qquad\mx{and}\qquad
(\id - \nabla\psi_t)_\# \rho_t^\mu = \rho_0^\mu,
\end{align*}
where the sums $\id - \nabla\phi_t$ and $\id - \nabla\psi_t$ are
to be intended modulo $2\pi\mb Z^d$.
Then for any $\alpha\in(0, 1)$, there exists $\sigma_{\rm min},\sigma_{\rm max} > 0$ depending on $d$, $L_\rho$,  $\|\rho_0^\mu\|_{\m C^{0,\alpha}}$, and $\|\rho_t^\mu\|_{\m C^{0,\alpha}}$, 
such that
\begin{align*}
\sigma_{\rm min} I_d \leq  I_d - \nabla^2\phi_t(x) \leq \sigma_{\rm max}I_d
\qquad\mx{and}\qquad
\sigma_{\rm min} I_d \leq  I_d - \nabla^2\phi_t(x) \leq \sigma_{\rm max}I_d
\end{align*}
hold for every $(t, x)\in[0, 1]\times \mb T^d$.
\myqed
\end{lemma}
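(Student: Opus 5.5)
The plan is to turn the statement into a uniform interior $\m C^{2,\alpha}$ estimate for a periodic Monge--Amp\`ere equation, and then read off both Hessian bounds from it. Lift everything to $\mb R^d$ and set $u_t(x) \coloneqq \frac{1}{2}\|x\|^2 - \phi_t(x)$. By the torus version of Brenier's theorem (Cordero-Erausquin), $u_t$ is convex on $\mb R^d$. Its gradient $\nabla u_t = \id - \nabla\phi_t$ is the optimal map, and $\nabla\phi_t$ is $2\pi\mb Z^d$-periodic with $\|\nabla\phi_t\|_{L^\infty}\le \pi\sqrt d$. The pushforward condition says that $u_t$ is an Alexandrov (equivalently Brenier) solution of
\begin{align*}
\det \nabla^2 u_t(x) = \frac{\rho_0^\mu(x)}{\rho_t^\mu(\nabla u_t(x))},\qquad x\in\mb R^d .
\end{align*}
The right-hand side lies in $[L_\rho^2, L_\rho^{-2}]$. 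Since the ``target'' is all of $\mb R^d$, which is convex, Caffarelli's regularity theory applies locally.

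\emph{Step 1 (upper bound for $\nabla^2 u_t$).} I would first prove that $u_t$ is strictly convex with a quantitative modulus, and hence $\m C^{1,\beta}$ with $\beta$ and constants depending only on $d$ and $L_\rho$. Then $\rho_t^\mu\circ\nabla u_t$ is H\"older, so the right-hand side is $\m C^{0,\alpha\beta}$ with norm controlled by $L_\rho$, $\|\rho_0^\mu\|_{\m C^{0,\alpha}}$ and $\|\rho_t^\mu\|_{\m C^{0,\alpha}}$. Caffarelli's interior $\m C^{2,\gamma}$ estimate on sections of a fixed height then gives $\|\nabla^2 u_t\|_{L^\infty(B(0,4\pi\sqrt d))}\le\sigma_{\max}$. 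By periodicity this bound holds on all of $\mb R^d$.

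\emph{Step 2 (lower bound).} The lower bound follows algebraically. The eigenvalues $\lambda_i$ of $\nabla^2u_t = I_d-\nabla^2\phi_t$ are positive and at most $\sigma_{\max}$, so
\begin{align*}
\lambda_{\min}\ge \frac{\det\nabla^2u_t}{\sigma_{\max}^{d-1}}\ge L_\rho^2\,\sigma_{\max}^{1-d} \eqqcolon \sigma_{\min}.
\end{align*}

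\emph{Step 3 (the inverse potential).} The same argument, with the roles of $\rho_0^\mu$ and $\rho_t^\mu$ swapped, applies to $\psi_t$, since $\id-\nabla\psi_t=(\id-\nabla\phi_t)^{-1}$. Alternatively, $I_d-\nabla^2\psi_t(y)=\big(I_d-\nabla^2\phi_t(x)\big)^{-1}$ with $y=\nabla u_t(x)$, which gives the bounds $\sigma_{\max}^{-1}$ and $\sigma_{\min}^{-1}$ directly. (The statement presumably intends the second inequality to concern $\psi_t$.)

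\emph{Main obstacle.} The hard part is Step 1: strict convexity must come with constants that are uniform in $t$, depending only on the listed quantities. I would argue by compactness and contradiction. Suppose the moduli degenerate along a sequence $(\rho_0^n,\rho_t^n)$ with uniformly bounded $\m C^{0,\alpha}$ norms and density bounds $L_\rho$. By Arzel\`a--Ascoli the densities converge uniformly. The convex potentials $u^n$, normalized by $u^n(0)=0$, have uniformly bounded $\nabla\phi^n$ and converge locally uniformly. Stability of optimal maps on $\mb T^d$ makes the limit an Alexandrov solution with right-hand side in $[L_\rho^2,L_\rho^{-2}]$.

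By Caffarelli's theorem, a non-strictly-convex solution would contain a line segment whose extremal points lie on the boundary of the domain. Here the domain is all of $\mb R^d$, so the segment would extend to an entire line on which $u$ is affine. That contradicts $u-\frac{1}{2}\|x\|^2$ being periodic, and hence bounded along lines. The limit is therefore strictly convex. Uniform strict convexity of the sections of the $u^n$ then follows from their local uniform convergence, which yields the uniform $\m C^{1,\beta}$ bound and closes the contradiction. The rest of Step 1 is a routine citation of Caffarelli's $\m C^{2,\alpha}$ theory.
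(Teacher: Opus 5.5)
Your argument is correct. The paper itself never proves this lemma. It is announced together with Lemmas~\ref{lem: jtsmooth-Kanpot} and~\ref{lem: uniform-in-time-OTmap} as ``postponed to the end of this section'', but only those two are proved, and elsewhere the paper just appeals to ``the regularity of the solution to the Monge--Amp\`ere equation''. What you wrote is the standard justification behind that appeal:
\begin{itemize}
\item Cordero-Erausquin's periodic Brenier theorem gives the convex potential $u_t$;
\item Caffarelli's localization theorem rules out a non-singleton contact set $\{u_t=\ell\}$, since such a set would have no extremal points in $\mathbb{R}^d$, which is impossible because $u_t-\ell$ grows quadratically;
\item compactness gives a uniform strict-convexity modulus;
\item Caffarelli's interior $C^{1,\beta}$ and $C^{2,\gamma}$ estimates give $\sigma_{\max}$;
\item $\lambda_{\min}\ge \det\nabla^2 u_t/\lambda_{\max}^{d-1}\ge L_\rho^2\sigma_{\max}^{1-d}$ gives $\sigma_{\min}$.
\end{itemize}
You are also right that the second inequality should concern $\psi_t$. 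From $I_d-\nabla^2\psi_t(\nabla u_t(x))=(I_d-\nabla^2\phi_t(x))^{-1}$ it holds with constants $\sigma_{\max}^{-1},\sigma_{\min}^{-1}$. So one pair of constants valid for both potentials is $\min\{\sigma_{\min},\sigma_{\max}^{-1}\}$ and $\max\{\sigma_{\max},\sigma_{\min}^{-1}\}$.

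Two steps deserve an extra line.

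First, ``Alexandrov (equivalently Brenier)'' holds here because $\nabla u_t$ commutes with $2\pi\mathbb{Z}^d$-translations and has a.e.\ inverse $\nabla u_t^*=\mathrm{Id}-\nabla\psi_t$. Using both push-forward relations, for Borel $E$ inside a fundamental cell one gets $\rho_t^\mu(\partial u_t(E))=\rho_0^\mu(E)$, hence $L_\rho^2|E|\le|\partial u_t(E)|\le L_\rho^{-2}|E|$.

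Second, the compactness step does not need the H\"older bounds. Monge--Amp\`ere measures are weakly continuous under local uniform convergence of convex functions, so the limit inherits $L_\rho^2\le\det\nabla^2u\le L_\rho^{-2}$ from the $L^\infty$ bounds alone. A strictly convex limit then forces a uniform modulus for large $n$: if $u^n(y_n)-u^n(x_n)-p_n\cdot(y_n-x_n)\to0$ with $|y_n-x_n|=r$ and $p_n\in\partial u^n(x_n)$, the limits produce a segment on which $u$ is affine. Thus $\beta$ and the $C^{1,\beta}$ bound depend only on $d$ and $L_\rho$. The H\"older norms of $\rho_0^\mu,\rho_t^\mu$ enter only through the $C^{0,\alpha\beta}$ norm of the right-hand side in the final $C^{2,\gamma}$ estimate, which is exactly the dependence the lemma claims.
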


The above statement provides the regularity of $\phi_t$ and $\psi_t$ for each fixed $t$. The following result then focuses on the time derivative $\dot\psi_t \coloneqq \partial_t\psi_t$. This kind of results is strongly related to linearized Monge--Ampere equation. We refer to~\citep{loeper2005regularity} for more discussion.

\begin{lemma}\label{lem: jtsmooth-Kanpot}
For any $k\in\mb N$ and $\alpha\in(0, 1)$, $t\mapsto \psi_t$ is a continuous map from $[0, 1]$ to $\m C^{k+2,\alpha}(\mb T^d)$. As a result, the map $t\mapsto \phi_t$ is also smooth. Moreover, $\dot\psi_t$ satisfies the following uniform elliptic PDE,
\begin{align*}
\nabla\cdot\Big[\rho_0^\mu\big(x - \nabla\psi_t(x)\big) \cof\big(I_d - \nabla^2\psi_t(x)\big) \nabla u(x)\Big] = -\partial_t\rho_t^\mu(x).
\end{align*}
Here, for a positive definite matrix $A$, we let $\cof(A) \coloneqq \det(A)A^{-1}$ be the cofactor matrix of $A$.
\myqed
\end{lemma}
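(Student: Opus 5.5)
The statement to prove is Lemma~\ref{lem: jtsmooth-Kanpot}: regularity of $t\mapsto\psi_t$ and the linearized Monge--Amp\`ere equation for $\dot\psi_t$. The plan is to combine the implicit function theorem with the uniform ellipticity from Lemma~\ref{lem: Kanpot-HessLB}.

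\textbf{Step 1: the Monge--Amp\`ere equation.} The map $y\mapsto y-\nabla\psi_t(y)$ pushes $\rho_t^\mu$ to $\rho_0^\mu$. Since Lemma~\ref{lem: Kanpot-HessLB} gives $I_d-\nabla^2\psi_t\ge\sigma_{\min}I_d>0$, this map is a diffeomorphism of $\mb T^d$, and the pushforward condition is equivalent to
\begin{align*}
\mathcal F(t,\psi) := \rho_0^\mu\big(y-\nabla\psi(y)\big)\det\big(I_d-\nabla^2\psi(y)\big) - \rho_t^\mu(y) = 0 .
\end{align*}
Caffarelli's theory on the torus (periodic setting, Cordero-Erausquin), using the Hölder bounds on $\rho^\mu$ and the two-sided bounds $L_\rho$, gives $\psi_t\in\m C^{k+2,\alpha}$ with bounds uniform in $t$. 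We normalize by $\int\psi_t\,\dd x=0$.

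\textbf{Step 2: the linearization.} Regard $\mathcal F$ as a $\m C^1$ map from $[0,1]\times \m C^{k+2,\alpha}_0$ into the zero-mean subspace of $\m C^{k,\alpha}$. Both $\rho_0^\mu\circ(\id-\nabla\psi)\det(I-\nabla^2\psi)$ and $\rho_t^\mu$ integrate to $1$, so $\mathcal F$ indeed takes zero-mean values. Differentiate in $\psi$ in a direction $u$, using $\frac{d}{ds}\det(A+sB)=\tr(\cof(A)B)$. This gives
\begin{align*}
D_\psi\mathcal F[u] = -\nabla\rho_0^\mu(\cdot)\cdot\nabla u\,\det(A) - \rho_0^\mu(\cdot)\,\tr\big(\cof(A)\nabla^2u\big),
\end{align*}
with $A=I-\nabla^2\psi$ and $\rho_0^\mu$ evaluated at $y-\nabla\psi(y)$.

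The key algebraic point is that this operator is in divergence form. By the Piola identity $\nabla\cdot\cof(A)=0$, applied to $A=D(\id-\nabla\psi)$, we have the symmetric identity
\begin{align*}
\cof(A)\,\nabla\big[\rho_0^\mu(\id-\nabla\psi)\big]=\det(A)\,\nabla\rho_0^\mu\circ(\id-\nabla\psi).
\end{align*}
Combining the two displays yields
\begin{align*}
D_\psi\mathcal F[u]=-\nabla\cdot\big[\rho_0^\mu(\id-\nabla\psi)\cof(A)\nabla u\big].
\end{align*}
Lemma~\ref{lem: Kanpot-HessLB} bounds the coefficient matrix between positive multiples of $I_d$, uniformly in $t$. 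Schauder theory (Lemma~\ref{lem: Schauder} extended to matrix coefficients) then makes $D_\psi\mathcal F$ an isomorphism from $\m C^{k+2,\alpha}_0$ onto zero-mean $\m C^{k,\alpha}$.

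\textbf{Step 3: implicit function theorem and identification.} We have $\partial_t\mathcal F=-\partial_t\rho_t^\mu$, which is smooth in $t$ by Assumption~\ref{assump: density_expansion}. The implicit function theorem gives a locally unique smooth branch $t\mapsto\psi_t$ in $\m C^{k+2,\alpha}$. By uniqueness of Kantorovich potentials up to constants, this branch coincides with the OT potential, and compactness of $[0,1]$ globalizes. Differentiating $\mathcal F(t,\psi_t)=0$ gives $D_\psi\mathcal F[\dot\psi_t]=\partial_t\rho_t^\mu$, which is the stated PDE with $u=\dot\psi_t$.

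For $\phi_t$: by $c$-duality for the quadratic cost on the torus, $\phi_t=\psi_t^c$, and $\id-\nabla\phi_t$ is the inverse diffeomorphism of $\id-\nabla\psi_t$. The inverse function theorem therefore transfers smoothness in $t$ to $\phi_t$.

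\textbf{Main obstacle.} I expect the main difficulty to be the uniform-in-$t$ Caffarelli estimates. The Hessian bounds come directly from Lemma~\ref{lem: Kanpot-HessLB}, but the higher $\m C^{k+2,\alpha}$ bounds must be obtained by bootstrapping. The idea is to treat the Monge--Amp\`ere equation as a uniformly elliptic fully nonlinear equation (concave in $\log\det$) and apply Evans--Krylov and then Schauder estimates. Checking carefully that these constants depend only on the stated norms of $\rho^\mu$ is the part I expect to be most delicate.
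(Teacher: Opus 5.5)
Your proposal is correct and follows essentially the same route as the paper. Both write the transport condition as the Monge--Amp\`ere map $\mathcal F(t,\psi)=0$ between zero-mean H\"older spaces and identify its $\psi$-linearization as the divergence-form operator $-\nabla\cdot[\rho_0^\mu(\id-\nabla\psi_t)\cof(I_d-\nabla^2\psi_t)\nabla u]$. Both invert this operator using the uniform ellipticity from Lemma~\ref{lem: Kanpot-HessLB} and Schauder theory, apply the implicit function theorem (the paper handles $t=0,1$ by extending the flow slightly past $[0,1]$), and differentiate $\mathcal F(t,\psi_t)=0$ to obtain the equation for $\dot\psi_t$.

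You in fact supply details the paper only asserts: the Piola/chain-rule check of the divergence form, the identification of the implicit-function branch with the Kantorovich potential, and the passage to $\phi_t$. The uniform-in-$t$ Evans--Krylov bootstrap you flag as the main obstacle is not needed for this lemma; regularity of $\psi_{t_0}$ at each fixed $t_0$, together with the local nature of smoothness in $t$, suffices.
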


The last key ingredient concerns the estimate on how the transport map $T^\mu$ is close to the identity map when the time gap $t_1$ is small.
\begin{lemma}\label{lem: uniform-in-time-OTmap}
There are constants $C = C(\alpha, d, \sup_{t\in[0, t_1]}\|\rho_t^\mu\|_{\m C^{1,\alpha}}, L_\rho)$ and $C' = C'(\alpha, d, \sup_{t\in[0, t_1]}\|\rho_t^\mu\|_{\m C^{2,\alpha}}, L_\rho)$, such that
\begin{align}
\matnorm{JT^\mu(x) - I_d}_{\rm op} &\leq Ct_1\sup_{0\leq t\leq t_1}\|\partial_t \rho_t^\mu\|_{\m C^{0,\alpha}},\label{eqn: smooth1}\\
\sum_{k=1}^d \Big\|\frac{\partial JT^\mu(x)}{\partial x_k}\Big\|_{\rm F}^2 &\leq  C't_1^2 \sup_{0\leq t\leq t_1}\|\partial_t\rho_t^\mu\|_{\m C^{1,\alpha}}^2.\label{eqn: smooth2}
\end{align}
\myqed
\end{lemma}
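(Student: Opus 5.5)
The plan is to treat $T^\mu$ as a perturbation of the identity by working with the Kantorovich potentials $\phi_t$ from Lemma~\ref{lem: Kanpot-HessLB}. Here $T^\mu = \id - \nabla\phi_{t_1}$ and $\phi_0\equiv 0$ (up to constants), so
\begin{align*}
JT^\mu - I_d = -\nabla^2\phi_{t_1} = -\int_0^{t_1}\nabla^2\dot\phi_t\,\dd t .
\end{align*}
Both estimates \eqref{eqn: smooth1} and \eqref{eqn: smooth2} then reduce to bounds on $\dot\phi_t$ that are uniform in $t\in[0,t_1]$:
\begin{itemize}
\item for \eqref{eqn: smooth1}, a bound on $\|\nabla^2\dot\phi_t\|_{\m C^0}$ by $C\|\partial_t\rho_t^\mu\|_{\m C^{0,\alpha}}$;
\item for \eqref{eqn: smooth2}, a bound on $\|\nabla^3\dot\phi_t\|_{\m C^0}$ by $C'\|\partial_t\rho_t^\mu\|_{\m C^{1,\alpha}}$.
\end{itemize}
Given these, Minkowski's integral inequality yields the factor $t_1$ in the first estimate and $t_1^2$ in the second, after squaring the derivative of $JT^\mu$.

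To control $\dot\phi_t$ I would use the linearized Monge--Amp\`ere equation of Lemma~\ref{lem: jtsmooth-Kanpot}, which is written for $\dot\psi_t$, and transfer the resulting bounds to $\dot\phi_t$. The transfer uses the conjugacy relation $\id-\nabla\phi_t = (\id-\nabla\psi_t)^{-1}$. Differentiating in $t$ gives
\begin{align*}
\nabla\dot\phi_t = -\big(I_d-\nabla^2\phi_t\big)\,\nabla\dot\psi_t\circ(\id-\nabla\phi_t).
\end{align*}
Hence $\nabla^2\dot\phi_t$ and $\nabla^3\dot\phi_t$ are controlled by $\dot\psi_t$ in $\m C^{2,\alpha}$ and $\m C^{3,\alpha}$ respectively, together with $\phi_t$ in $\m C^{3,\alpha}$ and $\m C^{4,\alpha}$. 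The composition and inverse-matrix estimates of Proposition~\ref{prop: Holder} handle this step. Alternatively, one can derive the analogous divergence-form equation for $\dot\phi_t$ directly and avoid composition altogether.

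The equation for $\dot\psi_t$ has the form $\nabla\cdot(A_t\nabla\dot\psi_t) = -\partial_t\rho_t^\mu$ with coefficient
\begin{align*}
A_t = \rho_0^\mu(\id-\nabla\psi_t)\,\cof(I_d-\nabla^2\psi_t).
\end{align*}
Lemma~\ref{lem: Kanpot-HessLB} makes $A_t$ uniformly elliptic, with bounds depending only on $d$, $L_\rho$ and the H\"older norms of the densities. The right-hand side has zero mean because $\rho_t^\mu$ has unit mass. The matrix analogue of Lemma~\ref{lem: Schauder} (divergence-form Schauder) then gives
\begin{align*}
\|\dot\psi_t\|_{\m C^{2,\alpha}}\le C\,\|\partial_t\rho_t^\mu\|_{\m C^{0,\alpha}}, \qquad \|\dot\psi_t\|_{\m C^{3,\alpha}}\le C'\,\|\partial_t\rho_t^\mu\|_{\m C^{1,\alpha}}.
\end{align*}
Strictly, divergence-form Schauder with $f\in\m C^{0,\alpha}$ needs $A_t\in\m C^{1,\alpha}$ and returns $\m C^{1,\alpha}$ under minimal assumptions. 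I would instead expand the equation into non-divergence form, which requires $A_t\in\m C^{0,\alpha}$ and yields $\m C^{2,\alpha}$. This needs $\psi_t\in\m C^{2,\alpha}$ for the first estimate and $\psi_t\in\m C^{3,\alpha}$ for the second. Those regularities come from Caffarelli's interior regularity on the torus (Cordero-Erausquin), given $\rho\in\m C^{0,\alpha}$ and $\rho\in\m C^{1,\alpha}$ respectively, matching the stated dependence of $C$ and $C'$.

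The main obstacle is making these constants uniform in $t$ with only the stated dependence, and in particular making $A_t$ H\"older uniformly. This requires uniform-in-$t$ $\m C^{2,\alpha}$ and $\m C^{3,\alpha}$ bounds on $\psi_t$, which I would obtain from the Monge--Amp\`ere regularity theory with constants depending on $L_\rho$ and $\sup_t\|\rho_t^\mu\|_{\m C^{k,\alpha}}$. A second point is justifying that differentiation in $t$ commutes with the regularity statements; this is provided by the continuity assertion of Lemma~\ref{lem: jtsmooth-Kanpot}. Finally, the mod-$2\pi\mb Z^d$ ambiguity must be fixed by noting that for small $t_1$ the map $T^\mu$ is near the identity, so no wrapping occurs.
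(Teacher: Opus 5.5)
Your proposal is correct and follows essentially the same route as the paper. You write $JT^\mu-I_d=-\nabla^2\phi_{t_1}=-\int_0^{t_1}\nabla^2\dot\phi_t\,\dd t$ and pass from $\dot\phi_t$ to $\dot\psi_t$ by differentiating the conjugacy $(\id-\nabla\psi_t)\circ(\id-\nabla\phi_t)=\id$ in time. You then bound $\dot\psi_t$ via the linearized Monge--Amp\`ere equation of Lemma~\ref{lem: jtsmooth-Kanpot} and Schauder estimates, control $\phi_t,\psi_t$ uniformly in $t$ by Monge--Amp\`ere regularity, and use one resp.\ two extra derivatives for \eqref{eqn: smooth1} and \eqref{eqn: smooth2}. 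The only cosmetic difference is your prefactor $I_d-\nabla^2\phi_t$ in the transfer formula, where the paper has $\big[I_d-\nabla^2\psi_t\circ(\id-\nabla\phi_t)\big]^{-1}$; these are the same matrix by the inverse-function rule.
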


\vspace{0.5em}
\noindent
Now, we are ready for the proof of the estimate~\eqref{eqn: same-marginal-cts-discrete-1piece} with $j=1$.

\vspace{0.5em}\noindent
\underline{Step 1.} We will prove that there exists a constant $C_1 > 0$ depending on $\alpha$, $d$, $L_\rho$, $\sup_{t\in[0, t_1]}\|\partial_t\rho_t^\mu\|_{\m C^{0,\alpha}}$, $\sup_{t\in[0,t_1]}\|\Phi_t^\mu\|_{\m C^1}$, and $\sup_{t\in[0, t_1]}\|\rho_t^\mu\|_{\m C^{1,\alpha}}$, such that
\begin{align*}
\int_{\mb T^d}\big\|\nabla\bar \Phi_{0}^\mu - \nabla\bar\Phi_t^\mu\big\|^2\,\dd\bar\rho_t^\mu \leq C_1t_1^2,
\qquad t\in[0, t_1].
\end{align*}
First, we need to simplify the above notation using the change of variable formula. It gives
\begin{align}
\int_{\mb T^d}\big\|\nabla\bar \Phi_{0}^\mu - \nabla\bar\Phi_t^\mu\big\|^2\,\dd\bar\rho_t^\mu
&= \int_{\mb T^d}\big\|\nabla\bar\Phi_0^\mu(T_t(x)) - \nabla\bar\Phi_t^\mu(T_t(x))\big\|^2\,\dd\rho_0^\mu(x)\notag \\
&= \int_{\mb T^d}\bigg\|\frac{T^\mu(T_t(x)) - T_t(x)}{t_1} - \frac{T^\mu(x) - x}{t_1}\bigg\|^2\,\dd\rho_0^\mu(x).\label{eqn: velocity2transport1}
\end{align}
Here, the first line follows from $\bar\rho_t^\mu = (T_t)_\#\rho_0^\mu$, and the second line is due to~\eqref{eqn: geodesics_vecfield}. 
By mean-value theorem, there exists $\xi_{t,x}\in\mb T^d$ on the segment connecting $x$ and $T_t(x)$, such that
\begin{align*}
T^\mu\big(T_t(x)\big) - T^\mu(x)
&= JT^\mu(\xi_{t,x})\big(T_t(x) - x\big),
\end{align*}
where $JT^\mu$ is the Jacobian matrix of the transport map $T^\mu$.
Combining with~\eqref{eqn: velocity2transport1}, we get
\begin{align*}
&\int_{\mb T^d}\big\|\nabla\bar \Phi_{0}^\mu - \nabla\bar\Phi_t^\mu\big\|^2\,\dd\bar\rho_t^\mu
= \frac{1}{t_1^2}\int_{\mb T^d}\Big\|\big(JT^\mu(\xi_{t,x}) - I_d\big)\big(T^\mu(x) - x\big)\Big\|^2\,\dd\rho_0^\mu(x)\\
&\leq \frac{\sup_x\matnorm{JT^\mu(x) - I_d}_{\rm op}^2}{t_1^2}\int_{\mb T^d}\big\|T^\mu(x) - x\big\|^2\,\dd\rho_0^\mu(x)
= \frac{\sup_x\matnorm{JT^\mu(x) - I_d}_{\rm op}^2}{t_1^2}\W_2^2\big(\rho_0^\mu, \rho_{t_1}^\mu\big)\\
&\leq C^2\sup_{0\leq t\leq t_1}\|\partial_t\rho_t^\mu\|_{\m C^{0,\alpha}}^2 \W_2^2(\rho_0^\mu, \rho_{t_1}^\mu).
\end{align*}
Here, the last inequality follows from Lemma~\ref{lem: uniform-in-time-OTmap}. Moreover, Benamous--Brenier formula implies
\begin{align*}
\W_2^2(\rho_0^\mu, \rho_{t_1}^\mu)
\leq \frac{t_1}{2}\int_0^{t_1}\!\!\!\int_{\mb T^d}\|\nabla\Phi_t^\mu\|^2\,\dd\rho_t^\mu\dd t
\leq \frac{t_1^2}{2}\sup_{0\leq t\leq t_1}\|\Phi_t^\mu\|_{\m C^1}^2.
\end{align*}
Therefore, we have
\begin{align*}
\int_{\mb T^d}\big\|\nabla\bar \Phi_{0}^\mu - \nabla\bar\Phi_t^\mu\big\|^2\,\dd\bar\rho_t^\mu
\leq \frac{C^2t_1^2}{2} \sup_{0\leq t\leq t_1}\|\partial_t\rho_t^\mu\|_{\m C^{0,\alpha}}^2\sup_{0\leq t\leq t_1}\|\Phi_t\|_{\m C^1}^2.
\end{align*}

\vspace{0.5em}
\noindent\underline{Step 2: control the relative Fisher information.}
Since $\bar\rho_t^\mu = (T_t)_\#\rho_0^\mu$, the change of measure formula implies 
\begin{align}\label{eqn: relative-FI}
\int_{\mb T^d}\Big\|\nabla\log\frac{\rho_{0}^\mu}{\bar\rho_t^\mu}(x)\Big\|^2\,\dd\bar\rho_t^\mu(x)
= \int_{\mb T^d}\Big\|\nabla\log\frac{\rho_0^\mu}{\bar\rho_t^\mu}\big(T_t(x)\big)\Big\|^2\,\dd\rho_0^\mu(x).
\end{align}
for $t\in[0, t_1]$.  
Also, the Monge--Ampere equation implies
\begin{align*}
\log \rho_0^\mu(x) &= \log\bar\rho_t^\mu\big(T_t(x)\big) + \log\det JT_t(x),\quad x\in\mb T^d.
\end{align*}
Taking derivatives on both sides with respect to $x$, we get
\begin{align*}
\nabla\log\rho_0^\mu(x) 
&= \nabla(\log\bar\rho_t^\mu\circ T_t)(x) + \nabla\log\det JT_t(x)\\
&= JT_t(x) \nabla\log\bar\rho_t^\mu\big(T_t(x)\big) + \nabla\log\det JT_t(x).
\end{align*}
Here, the last line follows from the chain rule. This implies
\begin{align*}
\nabla\log\bar\rho_t^\mu\big(T_t(x)\big) = \big[JT_t(x)\big]^{-1}\big[\nabla\log\rho_0^\mu(x) - \nabla\log\det JT_t(x)\big].
\end{align*}
Applying the above equality to~\eqref{eqn: relative-FI} yields
\begin{align}\label{eqn: upperbd_RFI}
\begin{aligned}
&\quad\,\int_{\mb T^d}\Big\|\nabla\log\frac{\rho_0^\mu}{\bar\rho_t^\mu}(x)\Big\|^2\,\dd\bar\rho_t^\mu
= \int_{\mb T^d}\Big\| \nabla\log\rho_0^\mu\big(T_t(x)\big) - \big[JT_t(x)\big]^{-1}\big[\nabla\log\rho_0^\mu(x) - \nabla\log\det JT_t(x)\big]\Big\|^2\,\dd\rho_0^\mu\\
&\leq 3 \int_{\mb T^d}\big\|\nabla\log\rho_0^\mu\big(T_t(x)\big) - \nabla\log\rho_0^\mu(x)\big\|^2 + \big\|\nabla\log\rho_0^\mu(x) - \big[JT_t(x)\big]^{-1}\nabla\log\rho_0^\mu(x)\big\|^2\\
&\qquad\qquad\qquad\qquad\qquad\qquad\qquad\qquad\qquad\qquad\qquad\qquad + \big\|\big[JT_t(x)\big]^{-1}\nabla\log\det JT_t(x)\big\|^2\,\dd\rho_0^\mu,
\end{aligned}
\end{align}
where the last line follows from the Cauchy--Schwarz inequality. Now, we will control these three terms separately.

\underline{Step 2.1:} For the first term in~\eqref{eqn: upperbd_RFI}, we have
\begin{align*}
&\quad\,\int_{\mb T^d}\big\|\nabla\log\rho_0^\mu\big(T_t(x)\big) - \nabla\log\rho_0^\mu(x)\big\|^2\,\dd\rho_0^\mu
\leq \sup_{x}\matnorm{\nabla^2\log\rho_0^\mu}_{\rm op}\int_{\mb T^d}\|T_t(x) - x\|^2\,\dd\rho_0^\mu(x)\\
&= \frac{t^2}{t_1^2} \sup_{x}\matnorm{\frac{\nabla^2\rho_0^\mu}{\rho_0^\mu} - \Big(\frac{\nabla\rho_0^\mu}{\rho_0^\mu}\Big)\Big(\frac{\nabla\rho_0^\mu}{\rho_0^\mu}\Big)^\top}_{\rm op} \W_2^2(\rho_0^\mu, \rho_{t_1}^\mu)
\leq \frac{t^2}{t_1^2}\Big(L_\rho^{-1}\|\rho_0^\mu\|_{\m C^2} + L_\rho^{-2}\|\rho_0^\mu\|_{\m C^1}^2\Big)\W_2^2\big(\rho_0^\mu, \rho_{t_1}^\mu\big)\\
&\leq \frac{t^2}{2}\sup_{0\leq t\leq t_1}\|\Phi_t^\mu\|_{\m C^1}^2\Big(L_\rho^{-1}\|\rho_0^\mu\|_{\m C^2} + L_\rho^{-2}\|\rho_0^\mu\|_{\m C^1}^2\Big).
\end{align*}

\underline{Step 2.2:} For the second term in~\eqref{eqn: upperbd_RFI}, we have
\begin{align*}
\int_{\mb T^d}\big\|\nabla\log\rho_0^\mu(x) - \big[JT_t(x)\big]^{-1}\nabla\log\rho_0^\mu(x)\big\|^2\,\dd\rho_0^\mu
&\leq \sup_x \matnorm{I_d - [JT_t(x)]^{-1}}_{\rm op}^2\int_{\mb T^d}\big\|\nabla\log\rho_0^\mu(x)\big\|^2\,\dd\rho_0^\mu.
\end{align*}
Due to~\eqref{eqn: ot_interpolate}, we have
\begin{align*}
JT_t(x) = \frac{t_1 -t}{t_1} I_d + \frac{t}{t_1}JT^\mu(x).
\end{align*}
Also note that
\begin{align*}
\matnorm{I_d - [JT_t(x)]^{-1}}_{\rm op}
= \matnorm{JT_t(x)^{-1}\big[JT_t(x) - I_d\big]}_{\rm op}
\leq \matnorm{JT_t(x)^{-1}}_{\rm op}\matnorm{JT_t(x) - I_d}_{\rm op}.
\end{align*}
Applying Lemma~\ref{lem: Kanpot-HessLB}, the first term is bounded by
\begin{align}\label{eqn: invHess-opnorm-ub}
\matnorm{JT_t(x)^{-1}}_{\rm op} 
\leq \frac{1}{1 + \frac{t}{t_1}(\sigma_{\rm min}-1)},
\end{align}
and we also have
\begin{align*}
\matnorm{JT_t(x) - I_d}_{\rm op} 
= \frac{t}{t_1}\matnorm{JT^\mu(x) - I_d}_{\rm op}
\leq C t\sup_{0\leq t\leq t_1}\|\partial_t\rho_t^\mu\|_{\m C^{0,\alpha}}.
\end{align*}
Thus, we get
\begin{align*}
\int_{\mb T^d}\big\|\nabla\log\rho_0^\mu(x) - \big[JT_t(x)\big]^{-1}\nabla\log\rho_0^\mu(x)\big\|^2\,\dd\rho_0^\mu\leq \bigg(\frac{Ct\sup_{0\leq t\leq t_1}\|\partial_t\rho_t\|_{\m C^{0,\alpha}}}{1 + \frac{t}{t_1}(\sigma_{\rm min}-1)}\bigg)^2\int_{\mb T^d}\big\|\nabla\log\rho_0^\mu(x)\big\|^2\,\dd\rho_0^\mu.
\end{align*}

\underline{Step 2.3:} For the third term in~\eqref{eqn: upperbd_RFI}, we have
\begin{align*}
&\quad\,\int_{\mb T^d}\big\|[JT_t(x)]^{-1} \nabla\log\det JT_t(x)\big\|^2\,\dd\rho_0^\mu
\leq \sup_{x}\matnorm{[JT_t(x)]^{-1}}_{\rm op}^2\int_{\mb T^d}\big\| \nabla\log\det JT_t(x)\big\|^2\,\dd\rho_0^\mu\\
&=\sup_{x}\matnorm{[JT_t(x)]^{-1}}_{\rm op}^2 \sum_{k=1}^d\int_{\mb T^d}\Big[\tr\Big(JT_t(x)^{-1}\frac{\partial JT_t(x)}{\partial x_k}\Big)\Big]^2\,\dd\rho_0^\mu\\
&= \frac{t^2}{t_1^2}\sup_{x}\matnorm{[JT_t(x)]^{-1}}_{\rm op}^2\sum_{k=1}^d\int_{\mb T^d}\Big[\tr\Big(JT_t(x)^{-1}\frac{\partial JT^\mu(x)}{\partial x_k}\Big)\Big]^2\,\dd\rho_0^\mu\\
&\leq \frac{t^2}{t_1^2}\sup_{x}\matnorm{[JT_t(x)]^{-1}}_{\rm op}^2\sup_x\big\|[JT_t(x)]^{-1}\big\|_{\rm F}^2 \sum_{k=1}^d \int_{\mb T^d}\Big\|\frac{\partial JT^\mu(x)}{\partial x_k}\Big\|_{\rm F}^2\,\dd\rho_0^\mu.
\end{align*}
Since $JT_t(x)$ is positive symmetric, applying~\eqref{eqn: invHess-opnorm-ub} yields
\begin{align*}
\big\|JT_t(x)^{-1}\big\|_{\rm F}^2
\leq d\matnorm{JT_t(x)^{-1}}_{\rm op}^2
\leq \frac{d}{[1 + \frac{t}{t_1}(\sigma_{\rm min}-1)]^2}
\end{align*}
Combining with Lemma~\ref{lem: uniform-in-time-OTmap}, we get
\begin{align*}
\int_{\mb T^d}\big\|[JT_t(x)]^{-1} \nabla\log\det JT_t(x)\big\|^2\,\dd\rho_0^\mu
\leq C't^2\frac{d\sup_{0\leq t\leq t_1}\|\partial_t\rho_t\|_{\m C^{1,\alpha}}^2}{[1+\frac{t}{t_1}(\sigma_{\rm min}-1)]^4}.
\end{align*}

Combining all the pieces above yields the result.
\qed

\subsubsection{Proof of Lemma~\ref{lem: jtsmooth-Kanpot}}

\begin{proof}
For simplicity, define the function space
\begin{align*}
\m C_0^{k,\alpha}(\mb T^d) \coloneqq \bigg\{f\in\m C^{k,\alpha}(\mb T^d): \int_{\mb T^d} f\,\dd x = 0\bigg\}.
\end{align*}
Consider the functional $\m A: [0, 1]\times \m C_0^{k+2,\alpha}(\mb T^d) \to \m C_0^{k,\alpha}(\mb T^d)$, defined by
\begin{align*}
\m A(t, \psi) = \rho_0^\mu \big(x - \nabla\psi(x)\big)\det\big[I_d - \nabla^2\psi(x)\big] - \rho_t^\mu(x).
\end{align*}
Note that $\psi_t$ is the solution to the Monge--Ampere equation $\m A(t, \psi_t) = 0$. Now, we will use implicit function theorem to prove the regularity of the map $t\mapsto \psi_t$. 

First, we claim that $\m A$ is jointly $\m C^\infty$. Since $\m A$ is additively separable in $t$ and $\psi$, we only need to show that the maps $t\mapsto \rho_t^\mu$ and $\psi\mapsto \rho_0^\mu(\id - \nabla\psi)\det[I_d - \nabla^2\psi]$ are $\m C^\infty$. The former is given by Assumption~\ref{assump: density_expansion}. 
For the latter, note that $\psi\mapsto \rho_0^\mu(\id -\nabla\psi)$ is a smooth map from $\m C_0^{k+2,\alpha}(\mb T^d)$ to $\m C^{k+1,\alpha}(\mb T^d)$, and the map $\psi\mapsto \det[I_d - \nabla^2\psi]$ is also smooth from $\m C_0^{k+2,\alpha}(\mb T^d)$ to $\m C^{k,\alpha}(\mb T^d)$.

Next, given $t\in[0, 1]$ we will show that the Frechet derivative $\nabla_\psi\m A(t, \psi_t): \m C_0^{k+2,\alpha}(\mb T^d)\to\m C_0^{k,\alpha}(\mb T^d)$ is an invertible operator. Note that
\begin{align*}
\nabla_\psi\m A(t, \psi_t)[u] 
&= -\big\langle \nabla\rho_0^\mu(\id - \nabla\psi_t), \nabla u\big\rangle \det\big[I_d - \nabla^2\psi_t\big] \\
&\qquad\qquad- \rho_0^\mu(\id - \nabla\psi_t) \det\big(I_d - \nabla^2\psi_t\big)\tr\Big[(I_d - \nabla^2\psi_t)^{-1}\nabla^2u\Big]\\
&= - \nabla\cdot\Big[\rho_0^\mu(\id - \nabla\psi_t)\det(I_d - \nabla^2\psi_t)\big(I_d - \nabla^2\psi_t\big)^{-1}\nabla u\Big]
\end{align*}
By Lemma~\ref{lem: Kanpot-HessLB}, the matrix $(I_d - \nabla^2\psi_t)^{-1}$ is smooth and uniformly positive definite. Thus, for any function $f\in\m C_0^{k, \alpha}(\mb T^d)$, the following PDE with respect to $u$,
\begin{align*}
\nabla_\psi\m A(t, \psi_t)[u] = f
\end{align*}
has a unique solution in $\m C_0^{k+2,\alpha}(\mb T^d)$. 

Now, using implicit function theorem, for any $t_0\in(0, 1)$, there exists an open interval containing $t_0$ where $t\mapsto \psi_t$ is a $\m C^\infty$ map. For the cases $t_0 = 0$ and $t_0 = 1$, since $\rho_0^\mu(x) > L_\rho$ and $\rho_1^\mu(x) > L_\rho$ for all $x\in\mb T^d$, we can extend the density flow $(\rho_t^\mu: 0\leq t\leq 1)$ to $(\rho_t^\mu: -\varepsilon\leq t\leq 1+\varepsilon)$ for an $\varepsilon > 0$ small enough. Then, we can use the implicit function theorem again at $t_0 = 0$ and $t_0 = 1$. We have concluded that the map $t\mapsto \psi_t$ is smooth on $t\in[0, 1]$.
\end{proof}

\subsubsection{Proof of Lemma~\ref{lem: uniform-in-time-OTmap}}
Recall that $I_d - \nabla^2\phi_{t_1}(x) = JT^\mu(x)$ and $\nabla^2\phi_0(x) = 0$. Therefore, we have
\begin{align*}
\matnorm{JT^\mu(x) - I_d}_{\rm op}
= \matnorm{\nabla^2\phi_{t_1}(x) - \nabla^2\phi_0(x)}_{\rm op}
\leq t_1 \sup_{0\leq t\leq t_1}\matnorm{\nabla^2\dot\phi_t(x)}_{\rm op}
< t_1\sup_{0\leq t\leq t_1}\big\|\nabla\dot\phi_t\big\|_{\m C^{1,\alpha}}
\end{align*}
and also
\begin{align*}
\Big\|\frac{\partial JT^\mu(x)}{\partial x_k}\Big\|_{\rm F}
= \Big\|\frac{\partial \nabla^2\phi_{t_1}(x)}{\partial x_k} - \frac{\partial \nabla^2\phi_0(x)}{\partial x_k}\Big\|_{\rm F}
\leq t_1 \sup_{0\leq t\leq t_1}\Big\|\frac{\partial\nabla^2\dot\phi_t(x)}{\partial x_k}\Big\|_{\rm F}
\leq t_1 d\big\|\nabla\dot\phi_t\big\|_{\m C^{2,\alpha}}.
\end{align*}
Now, the rest of the proof aims to control $\nabla\dot\phi_t$ in Holder norm.

Note that $(\id - \nabla\psi_t)\circ (\id - \nabla\phi_t) = \id$, i.e.
\begin{align*}
\big(x - \nabla\phi_t(x)\big) - \nabla\psi_t\big(x - \nabla\phi_t(x)\big) = x
\end{align*}
holds for all $t\in[0, t_1]$ and $x\in\mb T^d$. Taking the time derivative on both sides yields
\begin{align*}
\nabla\dot\phi_t(x) + \nabla\dot\psi_t\big(x - \nabla\phi_t(x)\big) - \nabla^2\psi_t\big(x - \nabla\phi_t(x)\big)\nabla\dot\phi_t(x) = 0
\end{align*}
Therefore, we get
\begin{align*}
\nabla\dot\phi_t = -\Big[I_d - \nabla^2\psi_t\big(x - \nabla\phi_t(x)\big)\Big]^{-1} \nabla\dot\psi_t\big(x - \nabla\phi_t(x)\big).
\end{align*}
Now, for any $k\in\mb Z_+$ and $\alpha\in(0, 1)$, we have
\begin{align*}
\big\|\nabla\dot\phi_t\big\|_{\m C^{k,\alpha}}
&= \Big\|\big[I_d - \nabla^2\psi_t\big(x - \nabla\phi_t(x)\big)\big]^{-1} \nabla\dot\psi_t\big(x - \nabla\phi_t(x)\big)\Big\|_{\m C^{k,\alpha}}\\
&\leq C_1(k,\alpha, d)\Big\|\big[I_d - \nabla^2\psi_t\big(x - \nabla\phi_t(x)\big)\big]^{-1} \Big\|_{\m C^{k,\alpha}}\Big\|\nabla\dot\psi_t\big(x - \nabla\phi_t(x)\big)\Big\|_{\m C^{k,\alpha}}\\
&\leq C_1(k,\alpha, d)C_3\Big(k,\alpha, d, \sigma_{\rm min},\big\|I_d - \nabla^2\psi_t\big(x - \nabla\phi_t(x)\big)\big\|_{\m C^{k,\alpha}}\Big)
C_2'\big(k,\alpha,d,\|\id - \nabla\phi_t\|_{\m C^{k,\alpha}}\big)\|\nabla\dot\psi_t\|_{\m C^{k,\alpha}}
\end{align*}
Here, the last two inequalities follow from Proposition~\ref{prop: Holder}. Again, we know
\begin{align*}
\big\|\nabla^2\psi_t\big(\id - \nabla\phi_t\big)\big\|_{\m C^{k,\alpha}}
\leq C_2'\big(k,\alpha, d, \|\id - \nabla\phi_t\|_{\m C^{k,\alpha}}\big)\big\|\nabla^2\psi_t\big\|_{\m C^{k,\alpha}}
\end{align*}
Therefore, there is a function $C_{k,\alpha,d}\big(\sigma_{\rm min}, \|\phi_t\|_{\m C^{k+1,\alpha}}, \|\psi_t\|_{\m C^{k+2,\alpha}}\big)$, such that
\begin{align*}
\big\|\nabla\dot\phi_t\big\|_{\m C^{k,\alpha}}
\leq C_{k,\alpha,d}\big(\sigma_{\rm min}, \|\phi_t\|_{\m C^{k+1,\alpha}}, \|\psi_t\|_{\m C^{k+2,\alpha}}\big) \big\|\nabla\dot\psi_t\big\|_{\m C^{k,\alpha}}.
\end{align*}

Applying Lemma~\ref{lem: jtsmooth-Kanpot} and Lemma~\ref{lem: Schauder}, there is another constant $C'_{k,\alpha,d}(\sigma_{\rm min}, \sigma_{\rm max}, \|\psi_t\|_{\m C^{k+2}}, \|\rho_0\|_{\m C^{k,\alpha}}) > 0$, such that
\begin{align*}
\big\|\nabla\dot\psi_t\big\|_{\m C^{k,\alpha}} \leq C'_{k,\alpha, d}\big(\sigma_{\rm min}, \sigma_{\rm max}, \|\psi_t\|_{\m C^{k+2,\alpha}}, \|\rho_0\|_{\m C^{k,\alpha}}\big)\|\partial_t\rho_t^\mu\|_{\m C^{k-1,\alpha}}.
\end{align*}
Therefore, we have
\begin{align*}
\big\|\nabla\dot\phi_t\big\|_{\m C^{k,\alpha}}
\leq C_{k,\alpha,d}\big(\sigma_{\rm min}, \|\phi_t\|_{\m C^{k+1,\alpha}}, \|\psi_t\|_{\m C^{k+2,\alpha}}\big) C'_{k,\alpha,d}\big(\sigma_{\rm min}, \sigma_{\rm max}, \|\psi_t\|_{\m C^{k+2,\alpha}}, \|\rho_0\|_{\m C^{k,\alpha}}\big)\|\partial_t\rho_t^\mu\|_{\m C^{k-1,\alpha}}.
\end{align*}
Due to the regularity of the solution to the Monge--Ampere equation, $\|\phi_t\|_{\m C^{k+2,\alpha}}$ and $\|\psi_t\|_{\m C^{k+2,\alpha}}$ can be upper bounded by constants depending only on $k,\alpha, d, \|\rho_0^\mu\|_{\m C^{k,\alpha}}, \|\rho_t^\mu\|_{\m C^{k,\alpha}}$, and $L_\rho$. Taking $k=1$ and $k=2$ leads to the upper bounds~\eqref{eqn: smooth1} and~\eqref{eqn: smooth2} respectively.
\qed

\subsection{Proof of Corollary~\ref{coro: Brenier_map}}\label{app: pf Brenier_map}

Since $\phi_\varepsilon^{\rm self}\in\m C^\infty(\mb T^d)$ for any given $\varepsilon > 0$, there exists a constant $\beta\geq 0$ (possibly depending on $\varepsilon$), such that $\phi_\varepsilon^{\rm self}$ is $\beta$ semi-convex, i.e. $-\phi_\varepsilon^{\rm self}$ is $\beta$ semi-concave. Let $\alpha$ and $R$ be the constants in Proposition~\ref{prop: 2norm_local_convex}, so that $\td(0_d,\cdot)^2$ is $\alpha$-strongly convex for all $x\in B_{\mb T^d}(0_d, R)$.
Now, for any $x_0\in\mb T^d$, define
\begin{align*}
\varphi^{x_0}_\varepsilon\coloneqq \phi_\varepsilon^{\rm self} + \frac{\beta}{\alpha}\td(x_0,\cdot)^2
\end{align*}
Then, $\varphi_\varepsilon^{x_0}$ is a convex function on $B_{\mb T^d}(x_0, R)$.

Now, define $\{(u_k, u_k): k=0,1,\cdots, S\} = \schro(\rho, 0, \cdots, 0)$, where we let $S = 3K$ for simplicity. Take
\begin{align*}
f_1 &= \log u_0,
\quad\mx{and}\quad 
f_k =  \sum_{l=1}^{k-1} \frac{(-1)^{l+1}}{lu_0^l} \sum_{\substack{s_1 + \cdots + s_l = k-1\\1\leq s_1,\cdots,s_l<k}} u_{s_1}\cdots u_{s_l}
\quad\forall k>1.
\end{align*}
For simplicity, let $F_{S-1}\coloneqq \sum_{k=1}^{S-1} \varepsilon^k f_k$.
Then, Theorem~\ref{thm: asymp_schro_potential} implies
\begin{align*}
2\sqrt{\mb V_\rho\big(\phi_\varepsilon - F_{S-1}\big)}
&= \min_{c\in\mb R}\Big\{\big\|\phi_\varepsilon + c - F_{S-1}\big\|_{L^2(\rho)} + \big\|\phi_\varepsilon - c - F_{S-1}\big\|_{L^2(\rho)}\Big\} \lesssim O(\varepsilon^S).
\end{align*}
Note that there also exists $\varepsilon_0(S, \rho) > 0$, such that $U_{S-1}$ is also $\beta$ semi-convex whenever $\varepsilon \leq \varepsilon_0(S, \rho)$.

By Proposition~\ref{prop: var_decompose}, there exists an open covering $\m F$ consisting of only balls with radius no greater than $R$. Moreover, $\m F$ satisfies the Boman chain condition with constants $A, B, C > 1$. Then, we have
\begin{align*}
&\int_{\mb T^d}\big\|\nabla\phi_\varepsilon^{\rm self} - \nabla U_{S-1}\big\|^2\,\dd\rho
\leq \sum_{Q_{x_0,r}\in\m F}\int_{Q_{x_0,r}}\Big\|\nabla\varphi_\varepsilon^{x_0} - \nabla\Big[\frac{\beta}{\alpha}\td(x_0, \cdot)^2 + U_{S-1}\Big]\Big\|^2\,\dd\rho\\
&= \sum_{Q_{x_0,r}\in\m F}\rho(Q_{x_0,r}) \int_{Q_{x_0,r}}\Big\|\nabla\varphi_\varepsilon^{x_0} - \nabla\Big[\frac{\beta}{\alpha}\td(x_0, \cdot)^2 + U_{S-1}\Big]\Big\|^2\,\dd\rho_{Q_{x_0,r}}\\
&\stackrel{\ri}{\lesssim} \sum_{Q_{x_0,r}\in\m F}\rho(Q_{x_0,r}) \Big[\mb V_{\rho_{Q_{x_0,r}}}\Big(\varphi_\varepsilon^{x_0} - \frac{\beta}{\alpha}\td(x_0, \cdot)^2 - U_{S-1}\Big)\Big]^{\frac{1}{3}}\\
&\stackrel{\rii}{\leq} \Big[\sum_{Q_{x_0,r}\in\m F}\rho(Q_{x_0,r})\Big]^{\frac{2}{3}}\Big[\sum_{Q_{x_0,r}\in\m F}\rho(Q_{x_0,r})\mb V_{\rho_{Q_{x_0,r}}}\Big(\varphi_\varepsilon^{x_0} - \frac{\beta}{\alpha}\td(x_0, \cdot)^2 - U_{S-1}\Big)\Big]^{\frac{1}{3}}.
\end{align*}
Here, (i) follows from Eqn. (4.4) and Proposition 4.3 in~\cite{kitagawa2025stability}, where a constant depending on $L_\rho$ and $d$ is omitted, and (ii) follows from Holder's inequality. 

Note that the second term in the above inequality is
\begin{align*}
&\sum_{Q\in\m F}\rho(Q)\cdot\inf_{c_Q\in\mb R}\int_Q \big(\phi_\varepsilon^{\rm self} - U_{S-1} - c_Q\big)^2\,\dd\rho_Q
= \sum_{Q\in\m F}\inf_{c_Q\in\mb R}\int_{\mb T^d} \big(\phi_\varepsilon^{\rm self} - U_{S-1} - c_Q\big)^2 1_Q\,\dd\rho\\
&\leq \inf_{c\in\mb R}\int_{\mb T^d} \big(\phi_\varepsilon^{\rm self} - U_{S-1} - c\big)^2 \sum_{Q\in\m F}1_Q\,\dd\rho
\stackrel{\ri}{\leq} A\inf_{c\in\mb R} \int_{\mb T^d}\big(\phi_\varepsilon^{\rm self} - U_{S-1} - c_Q\big)^2\,\dd\rho\\
&= A \mb V_\rho\big(\phi_\varepsilon^{\rm self} - U_{S-1}\big).
\end{align*}
Here, (i) follows from the first point of Boman chain condition:
\begin{align*}
\sum_{Q\in\m F} \rho(Q) = \int_{\mb T^d}\sum_{Q\in\m F} 1_Q\,\dd\rho \leq A,
\end{align*}
which only depends on $d$ and $L_\rho$ according to Proposition~\ref{prop: var_decompose}.
Therefore, we get 
\begin{align*}
\int_{\mb T^d}\big\|\nabla\phi_\varepsilon^{\rm self} - \nabla U_{S-1}\big\|^2\,\dd\rho
\lesssim \big[\mb V_\rho\big(\phi_\varepsilon^{\rm self} - U_{S-1}\big)\big]^{\frac{1}{3}} = O\big(\varepsilon^{\frac{2S}{3}}\big) = O(\varepsilon^{2K}).
\end{align*}
This implies
\begin{align*}
\int_{\mb T^d}\Big\|\nabla\phi_\varepsilon^{\rm self} - \sum_{k=1}^{K-1}\varepsilon^k \nabla f_k\Big\|^2\,\dd\rho \leq O(\varepsilon^{2K}).
\end{align*}
The proof is completed.

\section{Technical Lemmas}
\subsection{Proof of Lemma~\ref{lem: int_remainder}}\label{sec: pf_int_remainder}
By the definitions of $U_K$ and $V_K$ in~\eqref{eqn: proxy_Schro_sys}, we have
\begin{align*}
\frac{R_\varepsilon(y)}{\rho_\varepsilon(y)} 
&= 1 - \Big[\sum_{k=0}^K \varepsilon^k v_k(y)\Big] \int_{\mb T^d}\m K_\varepsilon(y-x)\Big[\sum_{k=0}^K \varepsilon^k u_k(x)\Big]\rho_0(x)\,\dd x\\
&= 1 - \sum_{k=0}^K \varepsilon^k\sum_{l=0}^kv_l(y)\m K_\varepsilon\ast(u_{k-l}\rho_0)(y) - \sum_{k=K+1}^{2K}\varepsilon^k \sum_{l=k-K}^Kv_l(y)\m K_\varepsilon\ast(u_{k-l}\rho_0)(y),
\end{align*}
where $\m K_\varepsilon\ast(u_{k-l}\rho_0)$ is the convolution of the Gaussian kernel $\m K_\varepsilon$ and $u_{k-l}\rho_0$.
Using the initial condition~\eqref{eqn: init}, we have $\lim_{\varepsilon\to 0^+}R_\varepsilon = \rho_0 - \rho_0 v_0u_0\rho_0 = 0$, where the convergence is in the sense of $L^2(\mb T^d)$ and pointwise.
For every $p\in\mb N^\ast$ , the $p$-th order derivative of $\frac{R_\varepsilon}{\rho_\varepsilon}$ with respect to $\varepsilon$ is 
\begin{align}\label{eqn: p-deri}
&\,\,\,\,\partial_\varepsilon^p \Big(\frac{R_\varepsilon}{\rho_\varepsilon}\Big)
= - \sum_{s=0}^p\binom{p}{s}\sum_{k=0}^K\big(\partial_\varepsilon^s\varepsilon^k\big)\sum_{l=0}^{k}v_l\partial_\varepsilon^{p-s}\m K_\varepsilon\ast(u_{k-l}\rho_0) - \sum_{s=0}^p\binom{p}{s}\sum_{k=K+1}^{2K}\big(\partial_\varepsilon^s\varepsilon^k\big)\sum_{l=k-K}^{K}v_l\partial_\varepsilon^{p-s}\m K_\varepsilon\ast(u_{k-l}\rho_0)\\ \nonumber
&\stackrel{\ri}{=} - \sum_{s=0}^p\binom{p}{s}\sum_{k=0}^K\big(\partial_\varepsilon^s\varepsilon^k\big)\sum_{l=0}^{k}\frac{v_l}{2^{p-s}}\m K_\varepsilon\ast\big[\Delta^{p-s}(u_{k-l}\rho_0)\big] - \sum_{s=0}^p\binom{p}{s}\sum_{k=K+1}^{2K}\big(\partial_\varepsilon^s\varepsilon^k\big)\sum_{l=k-K}^{K}\frac{v_l}{2^{p-s}}\m K_\varepsilon\ast\big[\Delta^{p-s}(u_{k-l}\rho_0)\big].
\end{align}
Here, in (i), we use the identity $\partial_\varepsilon[\m K_\varepsilon\ast (u_{k-l}\rho_0)] = \frac{1}{2}(\Delta\m K_\varepsilon)\ast (u_{k-l}\rho_0) = \frac{1}{2}\m K_\varepsilon\ast[\Delta (u_{k-l}\rho_0)]$ when $u_{k-l}\rho_0\in H^2(\mb T^d)$.
When $1\leq p\leq K$, taking the limit as $\varepsilon\to 0^+$ yields
\begin{align*}
\lim_{\varepsilon\to 0^+}\partial_\varepsilon^p \Big(\frac{R_\varepsilon}{\rho_\varepsilon}\Big) 
&\stackrel{\ri}{=} - \sum_{s=0}^p\binom{p}{s}s!\cdot \sum_{l=0}^{s}\frac{v_l \Delta^{p-s}(u_{s-l}\rho_0)}{2^{p-s}}
= - p!\sum_{s=0}^p\sum_{l=0}^{s}\frac{v_l \Delta^{p-s}(u_{s-l}\rho_0)}{2^{p-s}(p-s)!}.
\end{align*}
Here, the convergence is in $L^2(\mb T^d)$ sense in (i), where we use the facts that if $\Delta^{p-s}(u_{k-l}\rho_0)\in L^2(\mb T^d)$, then $\m K_\varepsilon\ast [\Delta^{p-s}(u_{k-l}\rho_0)] \stackrel{L^2(\mb T^d)}{\longrightarrow} \Delta^{p-t-s}(u_{k-l}\rho_0)$ and that $u_0\rho_0,\cdots,u_{2K}\rho_0\in H^{2p}(\mb T^d)$. Note that
\begin{align*}
&\quad\,\sum_{s=0}^p\sum_{l=0}^{s}\frac{v_l \Delta^{p-s}(u_{s-l}\rho_0)}{2^{p-s}(p-s)!}
= \sum_{l=0}^pv_l\sum_{s=l}^p\frac{\Delta^{p-s}(u_{s-l}\rho_0)}{2^{p-s}(p-s)!} \stackrel{}{=}\sum_{l=0}^pv_l\sum_{t=0}^{p-l}\frac{\Delta^{p-l-t}(u_{t}\rho_0)}{2^{p-l-t}(p-l-t)!}\stackrel{\ri}{=} \sum_{l=0}^p v_lv_{p-l}^\dagger
\stackrel{\rii}{=} 0,
\end{align*}
where (i) is due to Equation~\eqref{eqn: iter_dagger}, and (ii) is due to Equation~\eqref{eqn: iter_fourier}. Therefore, we have $\lim_{\varepsilon\to 0^+}\partial_\varepsilon^p\big(\frac{R_\varepsilon}{\rho_\varepsilon}\big)= 0$ in $L^2(\mb T^d)$ sense for $0\leq p\leq K$. 
Since $\frac{R_\varepsilon}{\rho_\varepsilon}$ is smooth with respect to $\varepsilon\geq0$, Taylor expansion implies
\begin{align*}
\Big\|\frac{R_\varepsilon}{\rho_\varepsilon}\Big\|_{L^2(\mb T^d)} &= \bigg\|\int_0^\varepsilon \partial_\varepsilon^{K+1}\Big(\frac{R_\varepsilon}{\rho_\varepsilon}\Big)\Big|_{\varepsilon = t} \cdot \frac{(\varepsilon-t)^{K}}{K!}\,\dd t\bigg\|_{L^2(\mb T^d)}\\
&\leq \sup_{0\leq t\leq \varepsilon}\Big\|\partial_\varepsilon^{K+1}\Big(\frac{R_\varepsilon}{\rho_\varepsilon}\Big)\Big|_{\varepsilon=t}\Big\|_{L^2(\mb T^d)} \cdot \int_0^\varepsilon \frac{(\varepsilon-t)^K}{K!}\,\dd t
= \frac{\varepsilon^{K+1}}{(K+1)!}\cdot \sup_{0\leq t\leq \varepsilon}\Big\|\partial_\varepsilon^{K+1}\Big(\frac{R_\varepsilon}{\rho_\varepsilon}\Big)\Big|_{\varepsilon=t}\Big\|_{L^2(\mb T^d)}.
\end{align*}
Note that the supremum can be bounded by a constant only depending on $\|u_0\|_{H^{2K+2}}, \cdots, \|u_K\|_{H^{2K+2}}$, $\|v_0\|_{\m C^0}, \cdots, \|v_K\|_{\m C^0}$, and $\|\rho_0\|_{\m C^{2K+2}}$, e.g. because of \eqref{eqn: p-deri}.

The remaining steps in estimating $\|\frac{Q_\varepsilon}{\rho_0}\|_{L^2(\mb T^d)}$ are similar, so we only present the key results from the calculation. Again, using the definition~\eqref{eqn: proxy_Schro_sys} of the functions $U_K$ and $V_K$, we have
\begin{align*}
\frac{Q_\varepsilon(x) }{\rho_0(x)}
&= 1 - \sum_{k=0}^K\varepsilon^k\sum_{l=0}^k u_l(x)\m K_\varepsilon\ast(v_{k-l}\rho_\varepsilon)(x) - \sum_{k=K+1}^{2K}\varepsilon^k\sum_{l=k-K}^K u_l(y)\m K_\varepsilon\ast(v_{k-l}\rho_\varepsilon)(x).
\end{align*}
It then follows that $\lim_{\varepsilon\to 0^+}Q_\varepsilon = 0$, and for all $1\leq p\leq K$, we have
\begin{align*}
\lim_{\varepsilon\to 0^+}\partial_\varepsilon^p\Big(\frac{Q_\varepsilon}{\rho_0}\Big)
&= -\sum_{s=0}^p \sum_{t=0}^{p-s}\binom{p}{s, t, p-s-t}s!\sum_{l=0}^s \frac{u_l}{2^t} (p-s-t)!\Delta^{t}(v_{s-l}\rho_{p-s-t})\\
&= -p!\sum_{s=0}^p\sum_{t=0}^{p-s}\sum_{l=0}^s \frac{u_l\Delta^t(v_{s-l}\rho_{p-s-t})}{2^t t!} 
= -p!\sum_{l=0}^p u_l\Big[\sum_{s=l}^p\sum_{t=0}^{p-s}\frac{\Delta^t(v_{s-l}\rho_{p-s-t})}{2^tt!}\Big]\\
&\stackrel{\ri}{=} -p!\sum_{l=0}^p u_lu_{p-l}^\dagger 
\stackrel{\rii}{=} 0
\end{align*}
Here, (i) follows from~\eqref{eqn: iter_dagger}, and (ii) follows from~\eqref{eqn: iter_fourier}. We also use the fact that $v_i\rho_j \in H^{2(p-i-j)}(\mb T^d)$ for all integers $i, j$ satisfying $0 \leq i+j\leq p$. Therefore, by the Taylor expansions, we conclude that $\|Q_\varepsilon\|_{L^2(\mb T^d)} = O(\varepsilon^{K+1})$, where the big-O notation omits a constant depending only on $\|u_0\|_{\m C^0}, \cdots, \|u_K\|_{\m C^0}$, $\|v_0\|_{H^{2K+2}}, \cdots, \|v_K\|_{H^{2K+2}}$, $\|\rho_0\|_{\m C^0}$ and $\sup_{0\leq t\leq \varepsilon}\|\rho_t^{(k)}\|_{\m C^{2k'}}$ for all $k,k'\in\mb N$ and $k+k'\leq K+1$.
This completes the proof.


\subsection{Proof of Lemma~\ref{lem: exist_Fredholm}}\label{sec: exist_Fredholm}
\noindent \underline{Step 1: existence for $\varepsilon \geq 0$.} The existence when $\varepsilon = 0$ is obvious as $R_0=Q_0=0$, so we just consider the case $\varepsilon > 0$. Define a functional $L_\varepsilon: \m C(\mb T^d)\times \m C(\mb T^d)/_\sim \to \m C(\mb T^d)\times\m C(\mb T^d)$ by $L_\varepsilon(f, g) = (\m K_\varepsilon \ast g+ f, \m K_\varepsilon \ast f + g)$. Here, the equivalence $\sim$ is defined by $(f, g) \sim (f-c, g+c)$ for any constant $c\in\mb R$. Then, Equation~\eqref{eqn: Fredholm} is equivalent to $L_\varepsilon(r_U, r_V) = (Q_\varepsilon, R_\varepsilon)$. Note that $L_\varepsilon = \id + K_\varepsilon$, where $\id(f, g) = (f, g)$ and $K_\varepsilon(f, g) = (\m K_\varepsilon\ast g, \m K_\varepsilon\ast f)$.

To prove $L_\varepsilon$ is \emph{injective}, if there exists $(f, g)$ and $(u, v)$ such that $L_\varepsilon(f, g) = L_\varepsilon(u, v)$, i.e.
\begin{align}\label{eqn: inject}
\m K_\varepsilon \ast f + g = \m K_\varepsilon\ast u + v
\qquad\mx{and}\qquad \m K_\varepsilon \ast g + f = \m K_\varepsilon \ast v + u,
\end{align}
we have
\begin{align*}
f+g - u - v = -\m K_\varepsilon\ast(f+g - u - v),   
\end{align*}
which implies that
\begin{align*}
0\leq \|f+g - u - v\|^2_{L^2(\mb T^d)} = \langle f+g - u - v, -\m K_\varepsilon\ast(f+g - u - v)\rangle_{L^2(\mb T^d)} \leq 0.
\end{align*}
So, we have $f+g = u+v$. Combining this equation with~\eqref{eqn: inject} yields $\m K_\varepsilon\ast (f-u) = f-u$, i.e. $f-u$ is the eigenfunction of the Gaussian convolution operator, indicating that $f-u$ is a constant. So, we know $(f, g)\sim(u, v)$, indicating that $L_\varepsilon$ is an injective on $\m C(\mb T^d)\times\m C(\mb T^d)/_\sim$.

Let $\m X := \m C(\mb T^d)\times\m C(\mb T^d)/_\sim$ with the norm 
\begin{align*}
\|(f, g)\|_{\m X} \coloneqq \sup_{c\in\mb R}\|f - c\|_\infty + \|g + c\|_{\infty}
\end{align*}
be a Banach space, and treat $L_\varepsilon$ as a transformation on $\m X$. By definition, if $L_\varepsilon(f, g) \sim (0, 0)$, then there exists a constant $c\in\mb R$, such that
\begin{align*}
\m K_\varepsilon\ast g + f = -c
\quad\mx{and}\quad
\m K_\varepsilon\ast f + g = c.
\end{align*}
So, we have $(f+g) + \m K_\varepsilon\ast (f+g) = 0$, indicating that $f+g = 0$ as shown in the previous arguments. Therefore, we have $\m K_\varepsilon\ast f-f = c$, indicating that
\begin{align*}
c = \int_{\mb T^d} \m K_\varepsilon\ast f(x) - f(x)\,\dd x = \int_{\mb T^d}\!\int_{\mb T^d}\m K_\varepsilon(x-y) f(y)\,\dd x\dd y - \int_{\mb T^d}f(x)\,\dd x = 0.
\end{align*}
Thus, the above argument implies $\m K_\varepsilon \ast f = -g = f$, i.e. $f$ is a constant. So, we know $(f, g)\sim (0, 0)$. It is obvious that $K_\varepsilon$ is a compact operator on $\m C(\mb T^d)\times \m C(\mb T^d)$, and thus a compact operator on $\m X$. Then by Fredholm alternative theorem~\cite[Theorem 6.6,][]{brezis2011functional}, $L_\varepsilon = \id + K_\varepsilon$ is an invertible operator on $\m X$.

Now, we know there exists $r_U^\varepsilon, r_V^\varepsilon\in\m C(\mb T^d)$ and a constant $c\in\mb R$, such that
\begin{align*}
\m K_\varepsilon\ast r_U^\varepsilon + r_V^\varepsilon = R_\varepsilon - c
\quad\mx{and}\quad
\m K_\varepsilon\ast r_V^\varepsilon + r_U^\varepsilon = Q_\varepsilon + c.
\end{align*}
Therefore, we know
\begin{align*}
\int_{\mb T^d}R_\varepsilon(x)\,\dd x - c
&= \int\!\!\int_{\mb T^d\times\mb T^d}\m K_\varepsilon(x-y)r_U^\varepsilon(y)\,\dd y\dd x + \int_{\mb T^d} r_V^\varepsilon(x)\,\dd x\\
&=\int_{\mb T^d} r_U^\varepsilon(y)\,\dd y + \int_{\mb T^d}r_V^\varepsilon(x)\,\dd x\\
&= \int\!\!\int_{\mb T^d\times\mb T^d}\m K_\varepsilon(x-y)r_V^\varepsilon(y)\,\dd y\dd x + \int_{\mb T^d} r_U^\varepsilon(x)\,\dd x
= \int_{\mb T^d}Q_\varepsilon(x) + c.
\end{align*}
Therefore, we have
\begin{align*}
2c = \int_{\mb T^d}R_\varepsilon(x)\,\dd x - \int_{\mb T^d}Q_\varepsilon(x)\,\dd x.
\end{align*}
However, we have
\begin{align}\label{eqn: same_integral}
\begin{aligned}
\int_{\mb T^d}R_\varepsilon(x)\,\dd x
&= \int_{\mb T^d}\rho_\varepsilon(x) - \rho_\varepsilon(x) V_K(x)\cdot\m K_\varepsilon\ast (\rho_0 U_K)(x)\,\dd x\\
&= 1 - \big\langle \rho_\varepsilon V_K, \m K_\varepsilon\ast(\rho_0 U_K)\rangle_{L^2(\mb T^d)}\\
&=1 - \big\langle \m K_\varepsilon\ast(\rho_\varepsilon V_K), \rho_0 U_K\rangle_{L^2(\mb T^d)}\\
&= \int_{\mb T^d} Q_\varepsilon(x)\,\dd x.
\end{aligned}
\end{align}
So, we know $c=0$, and there exists a unique solution $(r_U^\varepsilon, r_V^\varepsilon)\in\m C(\mb T^d)\times\m C(\mb T^d)$, up to shifting a constant, to Equation~\eqref{eqn: Fredholm}.

\vspace{0.5em}
\noindent\underline{Step 2: explicit form via Fourier series.}
As $R_\varepsilon, Q_\varepsilon, r_U^\varepsilon, r_V^\varepsilon, \m K_\varepsilon \in\m C(\mb T^d)$, for any $z\in\mb Z^d$, computing the Fourier series of both sides in~\eqref{eqn: Fredholm} implies
\begin{align*}
e^{-\frac{\varepsilon\|z\|^2}{2}} \wht r_U^\varepsilon(z) + \wht r_V^\varepsilon(z) &= \wht R_\varepsilon(z)\\
e^{-\frac{\varepsilon\|z\|^2}{2}} \wht r_V^\varepsilon(z) + \wht r_U^\varepsilon(z) &= \wht Q_\varepsilon(z),
\end{align*}
i.e.
\begin{align}\label{eqn: Fourier_sol}
\wht r_U^\varepsilon(z) = \frac{\wht Q_\varepsilon(z) - e^{-\frac{\varepsilon\|z\|^2}{2}} \wht R_\varepsilon(z) }{1 - e^{-{\varepsilon\|z\|^2}{}}}
\quad\mx{and}\quad
\wht r_V^\varepsilon(z) = \frac{\wht R_\varepsilon(z) - e^{-\frac{\varepsilon\|z\|^2}{2}} \wht Q_\varepsilon(z) }{1 - e^{-{\varepsilon\|z\|^2}{}}},\quad\forall\, z\in\mb Z^d-\{0_d\}.
\end{align}
By~\eqref{eqn: same_integral}, we have
\begin{align*}
\wht R_\varepsilon(0_d) 
&= \frac{1}{(2\pi)^d}\int_{\mb T^d} R_\varepsilon(x) e^{-i 0_d^\top x}\,\dd x 
= \frac{1}{(2\pi)^d}\int_{\mb T^d} Q_\varepsilon(x) e^{-i 0_d^\top x}\,\dd x = \wht Q_\varepsilon(0_d).
\end{align*}
Then, we can take
\begin{align*}
r_U^\varepsilon(x) = \sum_{z\in\mb Z^d} \wht r_U^\varepsilon(z)e^{i z^\top x}
\quad\mx{and}\quad 
r_V^\varepsilon(x) = \sum_{z\in\mb Z^d} \wht r_V(z)e^{i z^\top x},
\end{align*}
with $\wht r_U^\varepsilon(0) = \wht r_V^\varepsilon(0) = \frac{1}{2}\wht R_\varepsilon(0) = \frac{1}{2}\wht Q_\varepsilon(0)$.

\vspace{0.5em}
\noindent\underline{Step 3: continuity and limits when $\varepsilon\to 0^+$.} Note that both $Q_\varepsilon$ and $R_\varepsilon$ are smooth with respect to $\varepsilon > 0$.
When $K = 0$, we have
\begin{align*}
R_\varepsilon = \rho_\varepsilon - \rho_\varepsilon v_0\cdot \m K_\varepsilon\ast(\rho_0 u_0)
\quad\mx{and}\quad
Q_\varepsilon = \rho_0 - \rho_0 u_0\cdot\m K_\varepsilon\ast(\rho_\varepsilon v_0).
\end{align*}
Then, we have
\begin{align}\label{eqn: RQ_epsilon0}
\lim_{\varepsilon\to 0^+} R_\varepsilon = \lim_{\varepsilon\to 0^+} Q_\varepsilon = \rho_0 - \rho_0^2 v_0 u_0 = 0,
\end{align}
and
\begin{align*}
\frac{\partial R_\varepsilon}{\partial\varepsilon}\bigg|_{\varepsilon=0} &= \rho_1 - \rho_1v_0\rho_0 u_0 - \rho_0v_0\cdot\frac{1}{2}\Delta(\rho_0 u_0) \stackrel{\ri}{=} -\frac{1}{2}u_0^\dagger\Delta v_0^\dagger,\\
\frac{\partial Q_\varepsilon}{\partial\varepsilon}\bigg|_{\varepsilon=0} &= -\rho_0 u_0\rho_1 v_0 - \rho_0 u_0\cdot\frac{1}{2}\Delta(\rho_0 v_0) \stackrel{\rii}{=} -\rho_1 - \frac{1}{2}v_0^\dagger\Delta u_0^\dagger.
\end{align*}
Here both (i) and (ii) use the definition $u_0v_0\rho_0 = u_0u_0^\dagger = v_0v_0^\dagger = 1$. Using Equation~\eqref{eqn: init_dagger} yields
\begin{align*}
\frac{\partial Q_\varepsilon}{\partial\varepsilon}\bigg|_{\varepsilon=0} - \frac{\partial R_\varepsilon}{\partial\varepsilon}\bigg|_{\varepsilon=0}
= \frac{1}{2}[u_0^\dagger\Delta v_0^\dagger - v_0^\dagger\Delta u_0^\dagger] - \rho_1 = 0.
\end{align*}
Therefore, combining Equation~\eqref{eqn: Fourier_sol} with Equation~\eqref{eqn: RQ_epsilon0} yields
\begin{align}\label{eqn: limit_Fourier}
\lim_{\varepsilon\to 0^+}\wht r_U^\varepsilon(z) = \frac{\wht R_0(z)}{2} = 0
\quad\mx{and}\quad
\lim_{\varepsilon\to 0^+} \wht r_V^\varepsilon(z) = \frac{\wht Q_0(z)}{2} = 0
\end{align}
for all $z\in\mb Z^d - \{0_d\}$. Moreover, we have $\lim_{\varepsilon\to 0^+} \wht r_U^\varepsilon(0_d) = \lim_{\varepsilon\to 0^+} \wht r_V^\varepsilon(0_d) = 0$. Therefore, we have $\lim_{\varepsilon\to 0^+}(r_U^\varepsilon, r_V^\varepsilon) = (0, 0)$. Note that this is a solution of Equation~\eqref{eqn: Fredholm} when $\varepsilon = 0$, so we know the solution $r_U^\varepsilon$ and $r_V^\varepsilon$ are continuous with respect to $\varepsilon \geq 0$.

When $K\geq 1$, we can similarly derive
\begin{align*}
\frac{\partial R_\varepsilon}{\partial\varepsilon}\bigg|_{\varepsilon = 0} &= -\rho_0^2 u_0v_1 - \rho_0^2 v_0u_1 - \frac{1}{2} u_0^\dagger\Delta v_0^\dagger,\\
\frac{\partial Q_\varepsilon}{\partial\varepsilon}\bigg|_{\varepsilon = 0} &= -\rho_0^2u_0v_1 - \rho_0^2 v_0u_1 - \frac{1}{2}v_0^\dagger\Delta u_0^\dagger - \rho_1.
\end{align*}
This implies the same limit as in Equation~\eqref{eqn: limit_Fourier}, which leads to $\lim_{\varepsilon \to 0^+}(r_U^\varepsilon, r_V^\varepsilon) = (0, 0)$. Thus, we have finished the proof of continuity of $(r_U^\varepsilon, r_V^\varepsilon)$ with respect to $\varepsilon\geq 0$.

\vspace{0.5em}
\noindent\underline{Step 4: control $L^2$ bound.}
Note that for every $z\in\mb Z^d-\{0_d\}$, we have
\begin{align}\label{eqn: fouriersq_ub}
\begin{aligned}
&|\wht r_U^\varepsilon(z)|^2 + |\wht r_V^\varepsilon(z)|^2
\stackrel{\ri}{=} \frac{1}{(1 - e^{-\varepsilon\|z\|^2})^2}\Big(\big|\wht Q_\varepsilon(z) - e^{-\frac{\varepsilon\|z\|^2}{2}}\wht R_\varepsilon(z)\big|^2 + \big|\wht R_\varepsilon(z) - e^{-\frac{\varepsilon\|z\|^2}{2}}\wht Q_\varepsilon(z)\big|^2\Big)\\
&= \frac{1}{(1 - e^{-\varepsilon\|z\|^2})^2}\Big[\big(1 + e^{-\frac{\varepsilon\|z\|^2}{2}}\big)^2\Big(\big|\wht R_\varepsilon(z)\big|^2 + \big|\wht Q_\varepsilon(z)\big|^2\Big) - 2e^{-\frac{\varepsilon\|z\|^2}{2}}\big|\wht R_\varepsilon(z) - \wht Q_\varepsilon(z)\big|^2\Big]\\
&\leq \big[1 - e^{-\frac{\varepsilon\|z\|^2}{2}}\big]^{-2}\Big(\big|\wht R_\varepsilon(z)\big|^2 + \big|\wht Q_\varepsilon(z)\big|^2\Big)
\leq \big[1 - e^{-\frac{\varepsilon}{2}}\big]^{-2}\Big(\big|\wht R_\varepsilon(z)\big|^2 + \big|\wht Q_\varepsilon(z)\big|^2\Big)\\
&\stackrel{\rii}{\leq} \frac{16}{\varepsilon^2}\Big(\big|\wht R_\varepsilon(z)\big|^2 + \big|\wht Q_\varepsilon(z)\big|^2\Big).
\end{aligned}
\end{align}
Here, (i) follows from~\eqref{eqn: Fourier_sol}, and (ii) holds whenever $\varepsilon\in(0,2]$.
By Parseval's identity~\cite{stein2011fourier}, we have
\begin{align*}
&\quad\,\|r_U^\varepsilon\|_{L^2(\mb T^d)}^2 + \|r_V^\varepsilon\|^2_{L^2(\mb T^d)}
= \sum_{z\in\mb Z^d} |\wht r_U^\varepsilon(z)|^2 + |\wht r_V^\varepsilon(z)|^2\\
&\stackrel{\ri}{\leq} \frac{1}{2}|\wht R_\varepsilon(0)|^2 + \frac{16}{\varepsilon^2}\sum_{z\in\mb Z^d-\{0_d\}}\big[|\wht R_\varepsilon(z)|^2 + |\wht Q_\varepsilon(z)|^2\big]\\
&\stackrel{\rii}{\leq} \frac{16}{\varepsilon^2}\Big[\|R_\varepsilon\|^2_{L^2(\mb T^d)} + \|Q_\varepsilon\|^2_{L^2(\mb T^d)}\Big].
\end{align*}
Here, (i) follows from the estimate~\eqref{eqn: fouriersq_ub}, and (ii) is due to Parseval's identity again.

\vspace{0.5em}
\noindent\underline{Step 5: control $\m C^0$-norm of $r_U^\varepsilon$ and $r_V^\varepsilon$.}
By taking $s = \lceil \frac{d+1}{2}\rceil$ as the smallest integer greater than $d/2$, we have~\cite[Chapter 7.6,][]{cerda2010linear} 
\begin{align*}
\|r_U^\varepsilon\|_{H^s(\mb T^d)}^2 \lesssim_s \sum_{z\in\mb Z^d} \big(1 + \|z\|^2\big)^s|\wht r_U^\varepsilon(z)|^2
\quad\mx{and}\quad
\|r_V^\varepsilon\|_{H^s(\mb T^d)}^2 \lesssim_s \sum_{z\in\mb Z^d} \big(1 + \|z\|^2\big)^s|\wht r_V^\varepsilon(z)|^2
\end{align*}
where the notation $\lesssim_s$ omits universal constants $C_s > 0$ depending only on $s$ varying from lines to lines. Therefore, by Sobolev embedding theorem, it holds that 
\begin{align}\label{eqn: Linf_estimate}
\begin{aligned}
\|r_U^\varepsilon\|_{L^\infty(\mb T^d)}^2 + \|r_V^\varepsilon\|_{L^\infty(\mb T^d)}^2
&\lesssim_{d, s} \|r_U^\varepsilon\|_{H^s(\mb T^d)}^2 + \|r_V^\varepsilon\|_{H^s(\mb T^d)}^2 
\lesssim_{s} \sum_{z\in\mb Z^d} \big(1 + \|z\|^2\big)^s\Big(|\wht r_U^\varepsilon(z)|^2 + |\wht r_V^\varepsilon(z)|^2 \Big)\\
&\stackrel{\ri}{\leq} \frac{1}{2}\big|\wht R_\varepsilon(0)\big|^2 + \sum_{z\in\mb Z^d-\{0_d\}}\big(1 + \|z\|^2\big)^s\cdot\frac{16}{\varepsilon^2}\Big(\big|\wht R_\varepsilon(z)\big|^2 + \big|\wht Q_\varepsilon(z)\big|^2\Big)\\
&\leq \sum_{z\in\mb Z^d}\big(1 + \|z\|^2\big)^s\cdot\frac{16}{\varepsilon^2}\Big(\big|\wht R_\varepsilon(z)\big|^2 + \big|\wht Q_\varepsilon(z)\big|^2\Big)\\
&\lesssim_s \frac{16}{\varepsilon^2}\Big[\|R_\varepsilon\|_{H^s(\mb T^d)}^2 + \|Q_\varepsilon\|_{H^s(\mb T^d)}^2\Big].
\end{aligned}
\end{align}
Here, (i) follows from the estimate~\eqref{eqn: fouriersq_ub} and $\wht r_U^\varepsilon(0_d) = \wht r_V^\varepsilon(0_d) = \frac{1}{2}\wht R_\varepsilon(0_d)$.

Now, let us control $\|R_\varepsilon\|_{H^s(\mb T^d)}$ and $\|Q_\varepsilon\|_{H^s(\mb T^d)}$. Following the similar approach as in Lemma~\ref{lem: int_remainder}, we have
\begin{align*}
\|R_\varepsilon\|_{H^s(\mb T^d)}
&= \bigg\|\int_0^\varepsilon\partial_\varepsilon^2 R_\varepsilon\big|_{\varepsilon=t}\cdot (\varepsilon- t)\,\dd t\bigg\|_{H^s(\mb T^d)}
\leq \frac{\varepsilon^2}{2}\sup_{0\leq t\leq \varepsilon} \big\|\partial_\varepsilon^2 R_\varepsilon |_{\varepsilon = t}\big\|_{H^s(\mb T^d)},
\end{align*}
and similarly $\|Q_\varepsilon\|_{H^s(\mb T^d)} \leq \frac{\varepsilon^2}{2}\sup_{0\leq t\leq \varepsilon}\|\partial_\varepsilon^2 Q_\varepsilon|_{\varepsilon=t}\|_{H^s(\mb T^d)}$.
$H^s(\mb T^d)$ is a Banach algebra since $s > d/2$, indicating that
\begin{align*}
\|fg\|_{H^s} \leq C_{s,d}\|f\|_{H^s}\|g\|_{H^s}
\end{align*}
holds for all $f,g\in H^s(\mb T^d)$ with a universal constant $C_{s,d} > 0$.
Particularly, we have
\begin{align*}
\big\|\partial_\varepsilon^2 R_\varepsilon\big\|_{H^s(\mb T^d)}
&= \bigg\|\rho_\varepsilon\partial_\varepsilon^2\frac{R_\varepsilon}{\rho_\varepsilon} + 2\partial_\varepsilon\rho_\varepsilon \partial_\varepsilon\frac{R_\varepsilon}{\rho_\varepsilon} + \frac{R_\varepsilon}{\rho_\varepsilon}\partial_\varepsilon^2\rho_\varepsilon\bigg\|_{H^s(\mb T^d)}\\
&\lesssim_{s, d} \|\rho_\varepsilon\|_{H^s(\mb T^d)}\bigg\|\partial_s^2\frac{R_\varepsilon}{\rho_\varepsilon}\bigg\|_{H^s(\mb T^d)} + \|\partial_\varepsilon\rho_\varepsilon\|_{H^s(\mb T^d)}\bigg\|\partial_s\frac{R_\varepsilon}{\rho_\varepsilon}\bigg\|_{H^s(\mb T^d)} + \|\partial_\varepsilon^2\rho_\varepsilon\|_{H^s(\mb T^d)}\bigg\|\frac{R_\varepsilon}{\rho_\varepsilon}\bigg\|_{H^s(\mb T^d)}
\end{align*}

Combining with Equation~\eqref{eqn: p-deri} in Section~\ref{sec: pf_int_remainder}, we can control $\sup_{t\in[0, \varepsilon]}\|\partial_t^2 R_t\|_{H^s}$ through a constant depending only on $\sup\big\{\|\rho_t^{(k)}\|_{H^{2k'+s}}: t\in[0,\varepsilon], \,\,k,k'\in\mb N,\,\, k+k'\leq 2\big\}$, $\|u_0\|_{H^{s+4}}, \cdots, \|u_K\|_{H^{s+4}}$, and $\|v_0\|_{H^{s+4}}, \cdots, \|v_K\|_{H^{s+4}}$. A similar upper bound also applies to $\sup_{t\in[0, \varepsilon]}\|\partial_t^2 Q_t\|_{H^s}$.

Recall that $s = \lceil\frac{d+1}{2}\rceil$. Combining the above estimate with the upper bound in Equation~\eqref{eqn: Linf_estimate}, the proof is completed.
\qed

\subsection{Proof of Lemma~\ref{lem: Schauder}}\label{sec:Schauder-Lemma}
Since $\mb T^d$ is a compact flat smooth manifold without boundaries, we can directly use the Schauder interior estimate (see e.g. Corollary 2.29 in~\cite{fernandez2022regularity}) to get that
\begin{align}\label{eqn: Schauder1}
\|u\|_{\m C^{s+2,\alpha}} \leq C'\big(\|u\|_{\m C^0} + \|f\|_{\m C^{s,\alpha}}\big)
\end{align}
holds for some constant $C' = C'(\alpha, s, d, L_\rho, \|\rho\|_{\m C^{s+1,\alpha}}) > 0$. The key is to further estimate $\|u\|_{\m C^0}$. In fact, we will show that there exists a constant $C'' = C''(\alpha, d, L_\rho, \|\rho\|_{\m C^{1,\alpha}}) > 0$, such that
\begin{align}\label{eqn: Schauder2}
\|u\|_{\m C^0} \leq C''\|f\|_{\m C^{0,\alpha}}.
\end{align}

To prove this result, define
\begin{align*}
\m S &\coloneqq \bigg\{u\in\m C^{2,\alpha}: \|u\|_{\m C^{0,\alpha}}=1, \int_{\mb T^d} u\,\dd x  =0\bigg\}\\
\m P_M &\coloneqq \big\{\rho\in\m C^{1,\alpha}: 0< L_\rho \leq \rho(x)\leq L_\rho^{-1} < \infty\,\,\mx{and}\,\,\|\rho\|_{\m C^{1,\alpha}} \leq M\big\}.
\end{align*}
Moreover, define
\begin{align*}
\epsilon(\rho)\coloneqq \inf_{u\in\m S}\big\|\nabla\cdot(\rho\nabla u)\big\|_{\m C^{0,\alpha}}
\quad\mx{and}\quad \epsilon_0 = \inf_{\rho\in\m P_M} \epsilon(\rho).
\end{align*}
With these notation, we know
\begin{align*}
\Big\|\frac{f}{\|u\|_{\m C^{0,\alpha}}}\Big\|_{\m C^{0,\alpha}} &= \Big\|\nabla\cdot\Big(\rho_0\nabla\frac{u}{\|u\|_{\m C^{0,\alpha}}}\Big)\Big\|_{\m C^{0,\alpha}} \geq \epsilon(\rho_0) \geq \epsilon_0.
\end{align*}
Therefore, we have 
\begin{align*}
\|u\|_{\m C^{0}} \leq \|u\|_{\m C^{0,\alpha}} \leq \epsilon_0^{-1}{\|f\|_{\m C^{0,\alpha}}}.
\end{align*}
Since $\epsilon_0$ only depends on $\alpha$, $d$, $L_\rho$ and $M$, we only need to prove $\epsilon_0 > 0$.

Suppose we have $\epsilon_0 = 0$. By definition, there exists a sequence $\{\rho_n\}_{n\in Z_+} \subset \m P_M$, so that $\epsilon(\rho_n) \leq \frac{1}{n}$. By the definition of $\epsilon(\rho_n)$, there exists $u_n\in\m S$, such that
\begin{align}\label{eqn: f0a-norm}
\|f_n\|_{\m C^{0,\alpha}} \leq \epsilon(\rho_n) + \frac{1}{n} \leq \frac{2}{n},
\end{align}
where we let $f_n \coloneqq \nabla\cdot(\rho_n\nabla u_n)$ for simplicity. Combining with the Schauder interior estimate~\eqref{eqn: Schauder1}, we have
\begin{align*}
\|u_n\|_{\m C^{2,\alpha}} \leq C'\big[\|u_n\|_{\m C^{0}} + \|f_n\|_{\m C^{0,\alpha}}\big]
\stackrel{\ri}{\leq} C'\Big[1 + \frac{2}{n}\Big] \leq 3C'.
\end{align*}
Here, (i) follows from $u_n\in\m S$ and the estimate~\eqref{eqn: f0a-norm}.

For the sequence $\{f_n\}_{n\in\mb Z_+}$, using the estimate~\eqref{eqn: f0a-norm} again, we know $\{f_n\}_{n\in\mb Z_+}$ is uniformly bounded in $\m C^0(\mb T^d)$ and satisfies
\begin{align*}
|f_n(x) - f_n(x')| \leq \|f_n\|_{\m C^{0,\alpha}} \td(x,x')^\alpha \leq 2\td(x,x')^\alpha,
\end{align*}
indicating that $\{f_n\}_{n\in\mb Z_+}$ is equicontinuous. By Arzela--Ascoli Theorem, there exists a subsequence (we still use $\{f_n\}_{n\in\mb Z_+}$ for simplicity) such that $f_n\to f^\ast$ in $\m C^0$. Similarly, since $\|\rho_n\|_{\m C^{1,\alpha}}$ is uniformly bounded, using the above argument twice, we know there exists $\rho^\ast\in\m C^{1}$ such that $\rho_n\to \rho^\ast$ in $\m C^1$. We can also assume $u_n\to u^\ast$ in $\m C^2$ for some $u^\ast$. Therefore, we have
\begin{align*}
\nabla\cdot(\rho^\ast\nabla u^\ast) 
= \lim_{n\to\infty} \nabla\cdot(\rho_n\nabla u_n) = \lim_{n\to\infty} f_n = f^\ast.
\end{align*}
However, we know $f^\ast = 0$ due to the estimate~\eqref{eqn: f0a-norm}. This implies $u^\ast$ is a constant, which is impossible as we need
\begin{align*}
\int_{\mb T^d}u^\ast\,\dd x = \lim_{n\to\infty}\int_{\mb T^d} u_n\,\dd x = 0
\end{align*}
and $\|u^\ast\|_{\m C^{0,\alpha}} = \lim_{n\to\infty}\|u_n\|_{\m C^{0,\alpha}} = 1$, leading to a contradiction.

Now, we know $\epsilon_0 > 0$, indicating that~\eqref{eqn: Schauder2} is true. We finis the proof.

\subsection{Proof of Lemma~\ref{lem: DensityRatioLB}}\label{app: DensityRatioLB}

Recall that
\begin{align*}
\gamma[\phi](y) = \frac{\gamma(y)e^{\m T_\mu^\varepsilon[\phi](y)}}{\int_{\mb T^d}\gamma(y)e^{\m T_\mu^\varepsilon[\phi](y)}\,\dd y}.
\end{align*}
Therefore, we have
\begin{align*}
\frac{\nu_Q(y)}{\gamma[\phi](y)}
= \frac{\nu(y)}{\nu(Q)\gamma(y)}\cdot \frac{e^{\m T_\mu^\varepsilon[\phi](y)}}{\int_{\mb T^d}e^{\m T_\mu^\varepsilon[\phi](y)}\,\dd\gamma(y)},
\end{align*}
indicating that
\begin{align*}
\inf_{y\in Q}\frac{\nu(y)}{\gamma(y)} \cdot \nu(Q)^{-1} e^{-\osc(\m T_\mu^\varepsilon[\phi])}
\leq\frac{\nu(y)}{\gamma[\phi](y)}
\leq \sup_{y\in Q}\frac{\nu(y)}{\gamma(y)} \cdot\nu(Q)^{-1} e^{\osc(\m T_\mu^\varepsilon[\phi])},
\end{align*}
where $\osc(f) \coloneqq \sup_{x\in\mb T^d} f(x) - \inf_{x\in\mb T^d}f(x)$ is the oscillation of the function $f$.
Note that
\begin{align}
\osc(\m T_\mu^\varepsilon[\phi]) 
&\stackrel{\ri}{\leq} \sup_{x, y\in\mb T^d} c_\varepsilon(x, y) - \inf_{x, y\in\mb T^d} c_\varepsilon(x, y)\notag\\
&=\osc(c_\varepsilon)= \varepsilon\Big[\sup_{x\in\mb T^d}\log\m K_\varepsilon(x) - \inf_{y\in\mb T^d}\log\m K_\varepsilon(y)\Big]
\stackrel{\rii}{<} 4\pi^2d.\label{eqn: osc_cost}
\end{align}
Here, (i) is due to Equation (17) in~\citep{chizat2022trajectory}, and (ii) is due to Proposition~\ref{prop: bound_Gauss_ker} and $\varepsilon < \frac{\pi^2}{2}$. Moreover, we have
\begin{align*}
m_\nu e^{-\frac{r^2}{\alpha}}\vol(Q)
\leq\frac{\nu(y)}{\gamma(y)} = \nu(y) e^{\frac{1}{\alpha}\td(x_0, y)^2} \int_Q e^{-\frac{1}{\alpha}\td(x_0, z)^2}\,\dd z 
\leq M_\nu e^{\frac{r^2}{\alpha}}\vol(Q).
\end{align*}
Therefore, combining the pieces above yields
\begin{align*}
m_\nu e^{-\frac{r^2}{\alpha} - 4\pi^2 d}\cdot \frac{\vol(Q)}{\nu(Q)}
\leq\frac{\nu_Q(y)}{\gamma[\phi](y)} 
\leq M_\nu e^{\frac{r^2}{\alpha} + 4\pi^2 d}\cdot\frac{\vol(Q)}{\nu(Q)}.
\end{align*}
\subsection{Proof of Lemma~\ref{lem: integ_lb}}\label{app: integ_lb}
Recall that
\begin{align*}
\pi^\nu[\phi_\varepsilon + \lambda\chi](x) 
&=\int_{\mb T^d}\pi_y[\phi_\varepsilon + \lambda\chi](x)\,\dd\nu(y)\\
&= \int_{\mb T^d}\frac{\mu(x)e^{\frac{\phi_\varepsilon(x) + \lambda\chi(x) - c_\varepsilon(x, y)}{\varepsilon}}}{\int_{\mb T^d}e^{\frac{\phi_\varepsilon(x) + \lambda\chi(x) - c_\varepsilon(x,y)}{\varepsilon}}\,\dd\mu(x)}\,\dd\nu(y)
= \mu(x)e^{\frac{[\phi_\varepsilon+\lambda\chi](x)}{\varepsilon}}e^{-\frac{\m T_\nu^\varepsilon\circ\m T_\mu^\varepsilon[\phi_\varepsilon+\lambda\chi](x)}{\varepsilon}}.
\end{align*}
So, we have $\pi^\nu[\phi_\varepsilon + \lambda\chi] \ll \mu$ and $\mu\ll \pi^\nu[\phi_\varepsilon + \lambda\chi]$. Applying~\cite[Lemma 4.1,][]{chizat2025sharper} implies
\begin{align}\label{eqn: var_LB}
\mb V_{X\sim\pi^\nu[\phi_\varepsilon + \lambda\chi]}[\chi(X)]
\geq \frac{1}{2}\mb V_{X\sim\mu}[\chi(X)] - 4\|\chi\|_{L^\infty(\mb T^d)}^2\KL(\mu\,\|\,\pi^\nu[\phi_\varepsilon + \lambda\chi]).
\end{align}
Note that
\begin{align*}
&\quad\,\KL(\mu\,\|\,\pi^\nu[\phi_\varepsilon + \lambda\chi]) 
= \mb E_\mu\Big[\log\frac{\mu}{\pi^\nu[\phi_\varepsilon + \lambda\chi]}\Big]
=\frac{1}{\varepsilon}\mb E_\mu\Big[\m T_\nu^\varepsilon\circ\m T_\mu^\varepsilon[\phi_\varepsilon+\lambda\chi] - [\phi_\varepsilon+\lambda\chi]\Big]\\
&= \frac{1}{\varepsilon}\bigg[\int\m T_\nu^\varepsilon\circ\m T_\mu^\varepsilon[\phi_\varepsilon+\lambda\chi]\,\dd\mu + \int \m T_\mu^\varepsilon[\phi_\varepsilon+\lambda\chi]\,\dd\nu\bigg] - \frac{1}{\varepsilon}\bigg[\int\phi_\varepsilon+\lambda\chi\,\dd\mu + \int\m T_\mu^\varepsilon[\phi_\varepsilon+\lambda\chi]\,\dd\nu\bigg]\\
&=\frac{1}{\varepsilon}\Big[\overline{\m I}_{\mu, \nu}^\varepsilon\big[\m T_\mu^\varepsilon[\phi_\varepsilon+\lambda\chi]\big] - \m I_{\mu, \nu}^\varepsilon[\phi_\varepsilon+\lambda\chi]\Big]\\
&\leq \frac{1}{\varepsilon}\big[\m I_{\mu, \nu}^\varepsilon[\phi_\varepsilon] - \m I_{\mu, \nu}^\varepsilon[\phi_\varepsilon + \lambda\chi]\big].
\end{align*}
Here, the last inequality is again due to the strong duality~\eqref{eqn: strong_dual} and the optimality of $\phi_\varepsilon$, i.e.
\begin{align*}
\m I_{\mu, \nu}^\varepsilon[\phi_\varepsilon]
= \eot_\varepsilon(\mu, \nu)
= \overline{\m I}_{\mu, \nu}^\varepsilon[\psi_\varepsilon] \geq \overline{\m I}_{\mu, \nu}^\varepsilon[\m T_\mu^\varepsilon\big[\phi_\varepsilon + \lambda\chi]\big].
\end{align*}
Note that $\m I_{\mu, \nu}^\varepsilon$ is concave, so we have
\begin{align*}
\m I_{\mu, \nu}^\varepsilon[\phi_\varepsilon + \lambda\chi] \geq (1-\lambda)\m I_{\mu, \nu}^\varepsilon[\phi_\varepsilon] + \lambda\m I_{\mu, \nu}^\varepsilon(\phi_\varepsilon + \chi).
\end{align*}
Therefore, we have
\begin{align*}
\KL(\mu\,\|\,\pi^\nu[\phi_\varepsilon + \lambda\chi]) \leq \frac{\lambda}{\varepsilon} \Big[\m I_{\mu, \nu}^\varepsilon[\phi_\varepsilon] - \m I_{\mu, \nu}^\varepsilon[\phi_\varepsilon + \chi]\Big].
\end{align*}
Plugging the above inequality in~\eqref{eqn: var_LB} yields the result.

\subsection{Proof of Lemma~\ref{lem: simplification}}\label{app: simplification}
The proof consists of a few steps.

\vspace{0.5em}
\noindent\underline{Step 1: terms in the first square brackets.}
Note that
\begin{align*}
\m H(\bar\rho_{t_j}^\nu) - \m H(\bar \rho_{t_{j-1}}^\nu)
&= \int_{t_{j-1}}^{t_j}
\frac{\dd}{\dd t} \m H(\bar\rho_t^\nu) \,\dd t
= \int_{t_{j-1}}^{t_j}\!\int_{\mb T^d}(1+\log\bar\rho_t^\nu)\cdot\partial_t\bar\rho_t^\nu\,\dd x\dd t\\
&= -\int_{t_{j-1}}^{t_j}\!\int_{\mb T^d}(1+\log\bar\rho_t^\nu)\nabla\cdot(\bar\rho_t^\nu\nabla\bar \Phi_t^\nu)\,\dd x\dd t
= \int_{t_{j-1}}^{t_j}\!\int_{\mb T^d}\big\langle \nabla\bar \Phi_t^\nu, \nabla\log\bar\rho_t^\nu\big\rangle\,\dd\bar\rho_t^\nu\dd t,
\end{align*}
and also Benamou--Brenier's formula implies
\begin{align}\label{eqn: BB-formula}
\W_2^2(\rho_{t_{j-1}}^\nu, \rho_{t_j}^\nu)
&= \frac{\varepsilon_j}{2}\int_{t_{j-1}}^{t_j}\!\!\int_{\mb T^d}\|\nabla\bar \Phi_t^\nu\|^2\bar\rho_t^\nu\,\dd x\dd t.
\end{align}
Therefore, we have
\begin{align*}
&\quad\,\frac{1}{\varepsilon_j}\bigg[\W_2^2(\rho_{t_{j-1}}^\nu, \rho_{t_j}^\nu) - \frac{\varepsilon_j}{2}\big[\m H(\rho_{t_{j-1}}^\nu) + \m H(\rho_{t_j}^\nu)\big] + \frac{\varepsilon_j}{8}\int_{t_{j-1}}^{t_j}\!\int_{\mb T^d}\|\nabla\log\bar\rho_{t}^\nu\|^2\bar\rho_t^\nu\,\dd x\dd t\bigg]\\
&= \frac{1}{2}\int_{t_{j-1}}^{t_j}\!\int_{\mb T^d}\|\nabla\bar \Phi_t^\nu\|^2\,\dd\bar\rho_t^\nu\dd t - \m H(\rho_{t_j}^\nu) + \frac{1}{2}\big[\m H(\rho_{t_j}^\nu) - \m H(\rho_{t_{j-1}}^\nu)\big] +  \frac{1}{8}\int_{t_{j-1}}^{t_j}\!\int_{\mb T^d}\|\nabla\log\bar\rho_t^\nu\|^2\,\dd\bar\rho_t^\nu\dd t\\
&= \frac{1}{2}\int_{t_{j-1}}^{t_j}\!\int_{\mb T^d}\|\nabla\bar \Phi_t^\nu\|^2\,\dd\bar\rho_t^\nu\dd t - \m H(\rho_{t_j}^\nu) + \frac{1}{2}\int_{t_{j-1}}^{t_j}\!\int_{\mb T^d}\big\langle\nabla\bar \Phi_t^\nu, \nabla\log\bar\rho_t^\nu\big\rangle\,\dd\bar\rho_t^\nu\dd t +  \frac{1}{8}\int_{t_{j-1}}^{t_j}\!\int_{\mb T^d}\|\nabla\log\bar\rho_t^\nu\|^2\,\dd\bar\rho_t^\nu\dd t\\
&= \frac{1}{2}\int_{t_{j-1}}^{t_j}\!\int_{\mb T^d}\Big\|\nabla\bar \Phi_t^\nu + \frac{1}{2}\nabla\log\bar\rho_t^\nu\Big\|^2\,\dd\bar\rho_t^\nu\dd t - \m H(\rho_{t_j}^\nu).
\end{align*}

\vspace{0.5em}
\noindent\underline{Step 2: terms in the second square brackets.}
Simple rearranging leads to
\begin{align*}
&\quad\,\int_{\mb T^d}\log u_{t_{j-1},0} + \varepsilon_j\frac{u_{t_{j-1},1}}{u_{t_{j-1},0}}\,\dd\rho_{t_{j-1}}^\nu + \int_{\mb T^d}\log v_{t_{j-1},0} + \varepsilon_j\frac{v_{t_{j-1},1}}{v_{t_{j-1},0}}\,\dd\rho_{t_j}^\nu\\
&= \bigg(\int_{\mb T^d}\log u_{t_{j-1},0}\,\dd\rho_{t_{j-1}}^\nu
+ \int_{\mb T^d}\log v_{t_{j-1},0} \,\dd\rho_{t_j}^\nu\bigg)
+ \varepsilon_j\bigg(\int_{\mb T^d}\frac{u_{t_{j-1},1}}{u_{t_{j-1},0}}\,\dd\rho_{t_{j-1}}^\nu
+ \int_{\mb T^d}\frac{v_{t_{j-1},1}}{v_{t_{j-1},0}}\,\dd\rho_{t_j}^\nu\bigg).
\end{align*}

\underline{Step 2.1: terms in the first parentheses.}
 Recall that we have $\partial_t\rho_t^\mu + \nabla\cdot(\rho_t^\mu\nabla\Phi_t^\mu) = 0$. Also, Theorem~\ref{thm: existence} implies
\begin{align*}
\nabla\cdot(\rho_t^\mu\nabla\log u_{t,0}) = \partial_t\rho_t^\mu - \frac{\Delta\rho_t^\mu}{2}
\quad\mx{and}\quad
\nabla\cdot(\rho_t^\mu\nabla\log v_{t,0}) = -\partial_t\rho_t^\mu - \frac{\Delta\rho_t^\mu}{2},
\end{align*}
so, we have
\begin{align}\label{eqn: 1st-order_sol}
\log u_{t,0} = -\Phi_t^\mu - \frac{1}{2}\log\rho_t^\mu
\quad\mx{and}\quad
\log v_{t,0} = \Phi_t^\mu - \frac{1}{2}\log\rho_t^\mu.
\end{align}
Therefore, we have
\begin{align*}
&\quad\,\int_{\mb T^d}\log u_{t_{j-1},0}\,\dd\rho_{t_{j-1}}^\nu + \int_{\mb T^d}\log v_{t_{j-1},0}\,\dd\rho_{t_j}^\nu\\
&\stackrel{\ri}{=} \int_{\mb T^d} - \Phi_{t_{j-1}}^\mu - \frac{1}{2}\log\rho_{t_{j-1}}^\mu\,\dd\rho_{t_{j-1}}^\nu + \int_{\mb T^d} \Phi_{t_{j-1}}^\mu - \frac{1}{2}\log\rho_{t_{j-1}}^\mu\,\dd\rho_{t_j}^\nu\\
&= \int_{\mb T^d}\Phi_{t_{j-1}}^\mu + \frac{1}{2}\log\rho_{t_{j-1}}^\mu\,\dd[\rho_{t_j}^\nu - \rho_{t_{j-1}}^\nu] - \int_{\mb T^d}\log\rho_{t_{j-1}}^\mu\,\dd\rho_{t_j}^\nu\\
&= \int_{t_{j-1}}^{t_j}\!\int_{\mb T^d}\Big(\Phi_{t_{j-1}}^\mu + \frac{1}{2}\log\rho_{t_{j-1}}^\mu\Big)\partial_t\bar\rho_t^\nu\,\dd x\dd t - \int_{\mb T^d}\log\rho_{t_{j-1}}^\mu\,\dd\rho_{t_j}^\nu\\
&\stackrel{\rii}{=} \int_{t_{j-1}}^{t_j}\!\int_{\mb T^d}\Big\langle\nabla\Phi_{t_{j-1}}^\mu + \frac{1}{2}\nabla\log\rho_{t_{j-1}}^\mu, \nabla\bar\Phi_t^\nu\Big\rangle\,\dd\bar\rho_t^\nu\dd t - \int_{\mb T^d}\log\rho_{t_{j-1}}^\mu\,\dd\rho_{t_j}^\nu
\end{align*}
Here, (i) follows from~\eqref{eqn: 1st-order_sol}, and (ii) follows from the continuity equation $\partial_t\bar\rho_t^\nu + \nabla\cdot(\bar\rho_t^\nu\nabla\bar\Phi_t^\nu) = 0$ and integration by parts.
Note that we have
\begin{align*}
\int_{\mb T^d}\log\rho_{t_{j-1}}^\mu\,\dd[\rho_{t_j}^\nu - \rho_{t_{j-1}}^\nu]
&= \int_{t_{j-1}}^{t_j}\!\int_{\mb T^d}\log\rho_{t_{j-1}}^\mu\cdot \partial_t\bar\rho_t^\nu\,\dd x\dd t
= \int_{t_{j-1}}^{t_j}\!\int_{\mb T^d} \big\langle\nabla\log\rho_{t_{j-1}}^\mu, \nabla\bar\Phi_t^\nu\big\rangle\,\dd\bar\rho_t^\nu\dd t.
\end{align*}
Here, the last equality again follows from the continuity equation and integration by parts. Therefore, 
\begin{align*}
\int_{\mb T^d}\log\rho_{t_{j-1}}^\mu\,\dd\rho_{t_j}^\nu 
= \int_{\mb T^d}\log\rho_{t_{j-1}}^\mu\,\dd\rho_{t_{j-1}}^\nu + \int_{t_{j-1}}^{t_j}\!\int_{\mb T^d} \big\langle\nabla\log\rho_{t_{j-1}}^\mu, \nabla\bar\Phi_t^\nu\big\rangle\,\dd\bar\rho_t^\nu\dd t.
\end{align*}
To sum up, we get
\begin{align*}
\int_{\mb T^d}\log u_{t_{j-1},0}\,\dd\rho_{t_{j-1}}^\nu + \int_{\mb T^d}\log v_{t_{j-1},0}\,\dd\rho_{t_j}^\nu
&= \int_{t_{j-1}}^{t_j}\!\int_{\mb T^d}\Big\langle\nabla\Phi_{t_{j-1}}^\mu + \frac{1}{2}\nabla\log\rho_{t_{j-1}}^\mu, \nabla\bar\Phi_t^\nu\Big\rangle\,\dd\bar\rho_t^\nu\dd t\\
&\qquad -\int_{\mb T^d}\log\rho_{t_{j-1}}^\mu\,\dd\rho_{t_{j-1}}^\nu - \int_{t_{j-1}}^{t_j}\!\int_{\mb T^d} \big\langle\nabla\log\rho_{t_{j-1}}^\mu, \nabla\bar\Phi_t^\nu\big\rangle\,\dd\bar\rho_t^\nu\dd t.
\end{align*}

\underline{Step 2.2: terms in the second parentheses.}
By definition~\eqref{eqn: asymp-Schro}, we have 
\begin{align*}
0 &= v_{t,0}^\dagger v_{t,1} + v_{t,0}v_{t,1}^\dagger
= v_{t,0}^\dagger v_{t,1} + v_{t,0}\Big[\frac{\Delta(u_{t,0}\rho_t)}{2} + u_{t,1}\rho_t\Big]\\
&= v_{t,0}^\dagger v_{t,1} + v_{t,0}\rho_t u_{t,1} + \frac{1}{2} v_{t,0}\Delta(u_{t,0}\rho_t)\\
&= v_{t,0}^\dagger v_{t,1} + u_{t,0}^\dagger u_{t,1} + \frac{1}{2}v_{t,0}\Delta v_{t,0}^\dagger.
\end{align*}
Here, in the last two equalities, we use $u_{t,0}^\dagger = 1/u_{t,0} = \rho_t v_{t,0}$ and $v_{t, 0}^\dagger = 1/v_{t,0} = u_{t,0}\rho_{t}$. So, we have
\begin{align*}
&\quad\,\int_{\mb T^d}\frac{u_{t_{j-1},1}}{u_{t_{j-1}, 0}}\,\dd\rho_{t_{j-1}}^\nu + \int_{\mb T^d}\frac{v_{t_{j-1},1}}{v_{t_{j-1},0}}\,\dd\rho_{t_j}^\nu
= \int_{\mb T^d}{u_{t_{j-1},1}}{u_{t_{j-1}, 0}^\dagger}\,\dd\rho_{t_{j-1}}^\nu + \int_{\mb T^d}{v_{t_{j-1},1}}{v_{t_{j-1},0}^\dagger}\,\dd\rho_{t_j}^\nu\\
&= \underbrace{\int_{\mb T^d} v_{t_{j-1},1}v_{t_{j-1},0}^\dagger\,\dd\big[\rho_{t_j}^\nu - \rho_{t_{j-1}}^\nu\big]}_{\eqqcolon\delta_{1,j}} - \frac{1}{2}\int_{\mb T^d} \frac{\Delta v_{t_{j-1},0}^\dagger}{v_{t_{j-1}, 0}^\dagger}\,\dd\rho_{t_{j-1}}^\nu.
\end{align*}
Since $\log v_{t_{j-1},0}^\dagger = -\log v_{t_{j-1}, 0} = -\Phi_{t_{j-1}}^\mu + \frac{1}{2}\log\rho_{t_{j-1}}^\mu$, which indicates that
\begin{align*}
\frac{\Delta v_{t_{j-1}, 0}^\dagger}{v_{t_{j-1},0}^\dagger} 
= \Delta\log v_{t_{j-1}, 0}^\dagger + \big\|\nabla\log v_{t_{j-1},0}^\dagger\big\|^2
= \Big[\frac{1}{2}\Delta\log\rho_{t_{j-1}}^\mu - \Delta\Phi_{t_{j-1}}^\mu\Big] + \Big\|\frac{1}{2}\nabla\log\rho_{t_{j-1}}^\mu - \nabla\Phi_{t_{j-1}}^\mu\Big\|^2,
\end{align*}
we have
\begin{align*}
&\quad\,\varepsilon_j\bigg[\int_{\mb T^d}\frac{u_{t_{j-1},1}}{u_{t_{j-1}, 0}}\,\dd\rho_{t_{j-1}}^\nu + \int_{\mb T^d}\frac{v_{t_{j-1},1}}{v_{t_{j-1},0}}\,\dd\rho_{t_j}^\nu - \delta_{1,j}\bigg]\\
&= -\frac{\varepsilon_j}{2}\int_{\mb T^d}\Delta\log v_{t_{j-1},0}^\dagger + \big\|\nabla\log v_{t_{j-1},0}^\dagger\big\|^2\,\dd\rho_{t_{j-1}}^\nu\\
&= - \frac{\varepsilon_j}{2}\int_{\mb T^d}\Big[\frac{1}{2}\Delta\log\rho_{t_{j-1}}^\mu - \Delta\Phi_{t_{j-1}}^\mu\Big] + \Big\|\frac{1}{2}\nabla\log\rho_{t_{j-1}}^\mu - \nabla\Phi_{t_{j-1}}^\mu\Big\|^2\,\dd\rho_{t_{j-1}}^\nu\\
&= - \frac{1}{2}\int_{t_{j-1}}^{t_j}\!\int_{\mb T^d}\Big[\frac{1}{2}\Delta\log\rho_{t_{j-1}}^\mu - \Delta\Phi_{t_{j-1}}^\mu\Big] + \Big\|\frac{1}{2}\nabla\log\rho_{t_{j-1}}^\mu - \nabla\Phi_{t_{j-1}}^\mu\Big\|^2\,\dd\bar\rho_{t}^\nu\dd t + \delta_{2,j},
\end{align*}
where we define the error term
\begin{align*}
\delta_{2,j}&\coloneqq
\frac{1}{2}\int_{t_{j-1}}^{t_j}\!\int_{\mb T^d}\Big[\frac{1}{2}\Delta\log\rho_{t_{j-1}}^\mu - \Delta\Phi_{t_{j-1}}^\mu\Big] + \Big\|\frac{1}{2}\nabla\log\rho_{t_{j-1}}^\mu - \nabla\Phi_{t_{j-1}}^\mu\Big\|^2\,\dd\bar\rho_{t}^\nu\dd t\\
&\qquad\qquad- \frac{\varepsilon_j}{2}\int_{\mb T^d}\Big[\frac{1}{2}\Delta\log\rho_{t_{j-1}}^\mu - \Delta\Phi_{t_{j-1}}^\mu\Big] + \Big\|\frac{1}{2}\nabla\log\rho_{t_{j-1}}^\mu - \nabla\Phi_{t_{j-1}}^\mu\Big\|^2\,\dd\rho_{t_{j-1}}^\nu.
\end{align*}
Using integration by parts, we have
\begin{align*}
\int_{t_{j-1}}^{t_j}\!\int_{\mb T^d}\Big[\frac{1}{2}\Delta\log\rho_{t_{j-1}}^\mu - \Delta\Phi_{t_{j-1}}^\mu\Big]\,\dd\bar\rho_{t}^\nu\dd t
= -\int_{t_{j-1}}^{t_j}\!\int_{\mb T^d}\Big\langle\frac{1}{2}\nabla\log\rho_{t_{j-1}}^\mu - \nabla\Phi_{t_{j-1}}^\mu, \nabla\log\bar\rho_t^\nu\Big\rangle\,\dd\bar\rho_t^\nu\dd t.
\end{align*}
Therefore, we get 
\begin{align*}
&\quad\,\varepsilon_j\bigg[\int_{\mb T^d}\frac{u_{t_{j-1},1}}{u_{t_{j-1}, 0}}\,\dd\rho_{t_{j-1}}^\nu + \int_{\mb T^d}\frac{v_{t_{j-1},1}}{v_{t_{j-1},0}}\,\dd\rho_{t_j}^\nu\bigg]\\
&= -\frac{1}{2}\int_{t_{j-1}}^{t_j}\!\int_{\mb T^d}\Big\|\frac{1}{2}\nabla\log\rho_{t_{j-1}}^\mu - \nabla\Phi_{t_{j-1}}^\mu\Big\|^2 - \Big\langle\frac{1}{2}\nabla\log\rho_{t_{j-1}}^\mu - \nabla\Phi_{t_{j-1}}^\mu, \nabla\log\bar\rho_t^\nu\Big\rangle\,\dd\bar\rho_{t}^\nu\dd t + \varepsilon_j\delta_{1,j} + \delta_{2,j} .
\end{align*}

\vspace{0.5em}
\noindent\underline{Step 3: combining all pieces.} Using the results derived so far, we directly have
\begin{align*}
S_j &= \frac{1}{2}\int_{t_{j-1}}^{t_j}\!\int_{\mb T^d}\Big\|\nabla\Phi_{t_{j-1}}^\mu - \nabla\bar \Phi_t^\nu - \frac{1}{2}\nabla\log\frac{\rho_{t_{j-1}}^\mu}{\bar\rho_t^\nu}\Big\|^2\,\dd\bar\rho_t^\nu\dd t 
- \KL(\rho_{t_{j-1}}^\nu\,\|\,\rho_{t_{j-1}}^\mu) - (\varepsilon_j\delta_{1,j} + \delta_{2,j}).
\end{align*}
We complete the proof of the equality.

\vspace{0.5em}
\noindent\underline{Step 4: control remainder terms.}
To control $\delta_{1,j}$, we can use the Kantorovich--Rubinstein duality and the fact that $\W_1\leq \W_2$ to get
\begin{align*}
\delta_{1,j} \leq \Lip\big(v_{t_{j-1}, 1} v_{t_{j-1},0}^\dagger\big) \W_2(\rho_{t_{j-1}}^\nu, \rho_{t_j}^\nu).
\end{align*}
To control $\delta_{2,j}$, first recall that for every $t\in[0,1)$, we have
\begin{align}\label{eqn: vt-Delta-vt}
v_{t,0}\Delta v_{t,0}^\dagger = \Big[\frac{1}{2}\Delta\log\rho_t^\mu - \Delta\Phi_t^\mu\Big] + \Big\|\frac{1}{2}\nabla\log\rho_t^\mu - \nabla\Phi_t^\mu\Big\|^2.
\end{align}
Therefore, we have
\begin{align*}
\delta_{2,j} 
&= \frac{1}{2} \int_{t_{j-1}}^{t_j}\!\int_{\mb T^d} v_{t_{j-1},0}\Delta v_{t_{j-1},0}^\dagger\,\dd[\bar\rho_t^\nu - \rho_{t_{j-1}}^\nu]\dd t\\
&\stackrel{\ri}{\leq} \frac{1}{2}\Lip\big(v_{t_{j-1},0}\Delta v_{t_{j-1},0}^\dagger\big)\int_{t_{j-1}}^{t_j}\W_2(\bar\rho_t^\nu, \rho_{t_{j-1}}^\nu)\,\dd t\\
&\stackrel{\rii}{=} \frac{\varepsilon_j}{4}\W_2(\rho_{t_{j-1}}^\nu, \rho_{t_j}^\nu)\,\Lip\big(v_{t_{j-1},0}\Delta v_{t_{j-1},0}^\dagger\big).
\end{align*}
Here, (i) again follows from the Kantorovich--Rubinstein duality and the fact that $\W_1\leq \W_2$, while (ii) is due to $\W_2(\bar\rho_t^\nu, \rho_{t_{j-1}}^\nu) = \frac{t - t_{j-1}}{t_j - t_{j-1}}\W_2(\rho_{t_j}^\nu, \rho_{t_{j-1}}^\nu)$.
\section{Facts of Gaussian Kernel on $\mb T^d$}
\begin{proposition}[Uniform bound of $\m K_\varepsilon$]\label{prop: bound_Gauss_ker}
For every $\varepsilon < \frac{\pi^2}{2}$, we have
\begin{align*}
\frac{1}{(2\pi\varepsilon)^\frac{d}{2}} e^{-\frac{\td(x, 0_d)^2}{2\varepsilon}}
\leq \m K_\varepsilon(x)
\leq 2^{d+1}e^{5d}\cdot \frac{1}{(2\pi\varepsilon)^\frac{d}{2}} e^{-\frac{\td(x, 0_d)^2}{2\varepsilon}}.
\end{align*}
\end{proposition}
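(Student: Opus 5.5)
We prove the two-sided bound on $\m K_\varepsilon$ by working directly with the periodized sum \eqref{eqn: Gaussian kernel}. Fix $x\in\mb T^d$ and choose the representative $x\in[-\pi,\pi)^d$, so that $\td(x,0_d)=\|x\|$. The lower bound is then immediate: every term of the sum is nonnegative, and the $k=0$ term equals $(2\pi\varepsilon)^{-d/2}e^{-\|x\|^2/(2\varepsilon)}$.

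For the upper bound we factor out the leading term and control the ratio
\begin{align*}
\frac{\m K_\varepsilon(x)}{(2\pi\varepsilon)^{-d/2}e^{-\|x\|^2/(2\varepsilon)}}=\sum_{k\in\mb Z^d}e^{-\frac{\|x-2\pi k\|^2-\|x\|^2}{2\varepsilon}}=\prod_{i=1}^d\sum_{k_i\in\mb Z}e^{-\frac{(x_i-2\pi k_i)^2-x_i^2}{2\varepsilon}}.
\end{align*}
The product structure holds because the Gaussian factorizes over coordinates. It therefore suffices to bound each one-dimensional sum by a constant $c_1$ with $c_1^d\le 2^{d+1}e^{5d}$; for instance, $c_1\le 2e^5$ suffices, since $(2e^5)^d\le 2^{d+1}e^{5d}$.

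In one dimension, with $|x_i|\le\pi$, the exponent satisfies
\[
(x_i-2\pi k)^2-x_i^2=4\pi k(\pi k-x_i)\ge 4\pi^2|k|(|k|-1)\ge0 .
\]
The terms with $k=0$ and $k=\operatorname{sgn}(x_i)$ each contribute at most $1$, which gives $2$ in total. For all other $k$ we have $|k|\ge2$ and hence $|k|(|k|-1)\ge|k|$, so their contribution is at most $2\sum_{n\ge1}e^{-2\pi^2 n/\varepsilon}$. Using $\varepsilon<\pi^2/2$ gives $2\pi^2/\varepsilon>4$, so this tail is bounded by $2\sum_{n\ge1} e^{-4n}<1$. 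Each one-dimensional sum is therefore at most $3\le 2e^5$, and the stated upper bound follows with room to spare.

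The only delicate point is the bookkeeping of the neighbouring image $k=\operatorname{sgn}(x_i)$, whose exponent can vanish when $|x_i|=\pi$. This term cannot be absorbed into the geometric tail and must be counted separately, which is why the constant carries a factor $2^{d}$. The rest of the argument is routine.
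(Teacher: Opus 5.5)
Your coordinatewise (tensorization) argument works and gives more than the paper's, but one sentence is false as written. Besides $k=0$ and $k=\operatorname{sgn}(x_i)$, the index $k=-\operatorname{sgn}(x_i)$ also has $|k|=1$ (both $k=\pm1$ when $x_i=0$). So ``for all other $k$ we have $|k|\ge2$'' fails, and your displayed bound $4\pi^2|k|(|k|-1)$ gives nothing for these indices.

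The repair is one line. When $kx_i\le0$ the exact exponent is $4\pi^2k^2+4\pi|k||x_i|\ge4\pi^2k^2$, so each such term is at most $e^{-2\pi^2/\varepsilon}$. The $|k|\ge2$ terms are at most $2\sum_{n\ge2}e^{-2\pi^2n/\varepsilon}$. Together they are still covered by your bound $2\sum_{n\ge1}e^{-2\pi^2n/\varepsilon}<1$, in both cases $x_i\neq0$ and $x_i=0$. Hence the one-dimensional bound $3$ stands. Even bounding all three terms with $|k|\le1$ crudely by $1$ gives at most $4\le2e^5$.

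The paper proves the same lower bound (keep the nearest image), but its upper bound does not factorize. It splits $\mb Z^d$ into two regions:
\begin{itemize}
\item $\|k\|\ge2\sqrt d$, where it uses $\|x-2\pi k\|\ge\pi\|k\|$;
\item $\|k\|<2\sqrt d$, where it bounds each term by the leading one.
\end{itemize}
It then counts lattice points on the spheres $\|k\|^2=s$ via $2^d\binom{s+d-1}{d-1}\le2^de^{d-1}e^s$. This counting is where the constant $2^{d+1}e^{5d}$ comes from.

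Your reduction to an exact one-dimensional identity uses that $2\pi\mb Z^d$ is a product lattice. It avoids all counting and yields the much better constant $3^d$.

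It is also more robust near the endpoint. In the paper's shell sum the crude factor $e^s$ competes with $e^{-\pi^2s/(2\varepsilon)}$, so the geometric ratio is $e^{1-\pi^2/(2\varepsilon)}$, which tends to $1$ as $\varepsilon\uparrow\pi^2/2$. The step marked (iii) in the paper then fails there: its left side blows up while its right side is at most $2^de^{2d}$. Your ratio $e^{-2\pi^2/\varepsilon}<e^{-4}$ is uniform, so your argument actually establishes the proposition on the whole stated range $\varepsilon<\pi^2/2$.

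What the paper's norm-based splitting buys in exchange is independence from the coordinate factorization. It would adapt, with different constants, to a non-rectangular flat torus $\mathbb{R}^d/\Lambda$.
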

\begin{proof}
For the lower bound, simply note that
\begin{align*}
\m K_\varepsilon(x)
= \frac{1}{(2\pi\varepsilon)^\frac{d}{2}}\sum_{k\in\mb Z^d}e^{-\frac{\|x - 2\pi k\|^2}{2\varepsilon}}
\geq \frac{1}{(2\pi\varepsilon)^\frac{d}{2}}\min_{k\in\mb Z^d}e^{-\frac{\|x-2\pi k\|^2}{2\varepsilon}} = \frac{1}{(2\pi\varepsilon)^\frac{d}{2}}e^{-\frac{\td(x, 0_d)^2}{2\varepsilon}}.
\end{align*}
For the upper bound, note that
\begin{align*}
\|x - 2\pi k\| \geq 2\pi\|k\| - \|x\| \geq 2\pi\|k\| - 2\pi\sqrt{d} \geq \pi\|k\|
\end{align*}
holds for every $k\in\mb Z^d$ such that $\|k\| \geq 2\sqrt{d}$. So, we know
\begin{align*}
\sum_{k\in\mb Z^d: \|k\| \geq 2\sqrt{d}} e^{-\frac{\|x-2\pi k\|^2}{2\varepsilon}}
&\leq \sum_{k\in\mb Z^d: \|k\| \geq 2\sqrt{d}} e^{-\frac{\pi^2\|k\|^2}{2\varepsilon}}
= \sum_{s = 4d}^\infty e^{-\frac{\pi^2 s}{2\varepsilon}}\cdot\#\{k\in\mb Z^d: \|k\|^2 = s\}\\
&\stackrel{\ri}{\leq} \sum_{s=4d}^\infty e^{-\frac{\pi^2 s}{2\varepsilon}} \cdot 2^d\binom{s+d-1}{d-1}
\stackrel{\rii}{\leq} 2^d\sum_{s=4d}^\infty e^{-\frac{\pi^2 s}{2\varepsilon}} \Big(\frac{e(s+d-1)}{d-1}\Big)^{d-1}\\
&\leq 2^d \sum_{s=4d}^\infty e^{-\frac{\pi^2 s}{2\varepsilon}} e^{d-1}e^s
= 2^de^{d-1}\cdot\frac{e^{-4d(\frac{\pi^2}{2\varepsilon}-1)}}{1 - e^{1-\frac{\pi^2}{2\varepsilon}}}\\
&\stackrel{\riii}{<} 2^d e^{2d} e^{-\frac{\td(x, 0_d)^2}{2\varepsilon}}
\end{align*}
for every $\varepsilon < \frac{\pi^2}{2}$.
Here, in (i), we use the fact that
\begin{align}\label{eqn: count}
\#\{k\in\mb Z^d: \|k\|^2 = s\}
\leq 2^d\cdot\#\{k\in\mb N^d: k_1 + \cdots + k_d = s\} = 2^d\binom{s+d-1}{d-1};
\end{align}
in (ii), we use the inequality $\binom{n}{k}\leq (\frac{en}{k})^k$; in (iii), we use $\td(x, 0_d) \leq \pi\sqrt{d}$.
We also have
\begin{align*}
\sum_{k\in\mb Z^d: \|k\| < 2\sqrt{d}} e^{-\frac{\|x-2\pi k\|^2}{2\varepsilon}}
&\leq e^{-\frac{\td(x, 0_d)^2}{2\varepsilon}}\cdot \#\{k\in\mb Z^d: \|k\| < 2\sqrt{d}\}\\
&\stackrel{\ri}{\leq} e^{-\frac{\td(x, 0_d)^2}{2\varepsilon}}\cdot \Big[1 + 2^d\sum_{s=1}^{4d}\Big(\frac{e(s+d-1)}{d-1}\Big)^{d-1}\Big]\\
&= e^{-\frac{\td(x, 0_d)^2}{2\varepsilon}}\cdot\Big[1 + 2^d e^{d-1}\sum_{s=1}^{4d}e^s\Big]\\
&< 2^de^{5d}e^{-\frac{\td(x, 0_d)^2}{2\varepsilon}}
\end{align*}
Here, in (i) we use~\eqref{eqn: count} again. Therefore, we have
\begin{align*}
\frac{1}{(2\pi\varepsilon)^\frac{d}{2}} e^{-\frac{\td(x, 0_d)^2}{2\varepsilon}}
\leq \m K_\varepsilon(x)
\leq 2^{d+1}e^{5d}\cdot \frac{1}{(2\pi\varepsilon)^\frac{d}{2}} e^{-\frac{\td(x, 0_d)^2}{2\varepsilon}}
\end{align*}
for all $\varepsilon < \frac{\pi^2}{2}$.
\end{proof}



\section{Controlling Holder Norm}
In this appendix, we aim to provide more details on the derivation of~\eqref{eqn: xxx} in the proof of Theorem~\ref{thm: existence}, where we have defined
\begin{align*}
A_{k-1} &\coloneqq \sum_{l=0}^{k-1} \frac{\Delta^{k-l}(u_l\rho_0)}{2^{k-l}(k-l)!},\\
B_{k-1} &\coloneqq \sum_{l=0}^{k-1}\sum_{i=0}^l \frac{\Delta^{k-l}(v_i\rho_{l-i})}{2^{k-l}(k-l)!} + \sum_{i=0}^{k-1}v_i\rho_{k-i},\\
C_{k-1} &\coloneqq -u_0\sum_{i=1}^{k-1}u_{k-i}u_i^\dagger,\\
D_{k-1} &\coloneqq -v_0\sum_{i=1}^{k-1}v_{k-i}v_i^\dagger.
\end{align*}
We also have (see Equation~\eqref{eqn: bridge})
\begin{align*}
v_0^\dagger v_k + u_0^\dagger u_k = u_0^\dagger C_{k-1} - u_0B_{k-1} = v_0^\dagger D_{k-1} - v_0A_{k-1} \eqqcolon S_{k-1},
\end{align*}
where $S_{k-1}$ only relates to variables with indices smaller than $k$. Note that $C_0 = D_0 = 0$, so we have
\begin{align*}
S_0 = -v_0A_0 = -u_0B_0.
\end{align*}
The goal is to reformulate~\eqref{eqn: uB=vA}, i.e.
\begin{align*}
\sum_{i=0}^k u_{k-i}B_i = \sum_{i=0}^k v_{k-i}A_i,
\end{align*}
as a equation of $u_k$.
\subsection{Derivation of Equation~\eqref{eqn: xxx}}\label{app: calculation}
First, note that the above equation is exactly
\begin{align}\label{eqn: eqn1}
\big(v_kA_0 + v_0A_k\big) - \big(u_kB_0 + u_0B_k\big) = \sum_{i=1}^{k-1} u_{k-i}B_i - \sum_{i=1}^{k-1} v_{k-i}A_i,
\end{align}
where the right-hand side does not depend on $u_k, v_k, u_k^\dagger$, or $v_k^\dagger$. Now, we only need to calculate the left-hand side.

Let us consider the first term in~\eqref{eqn: eqn1}. Note that
\begin{align*}
v_kA_0 + v_0A_k 
&= v_kA_0 + v_0\sum_{l=0}^k \frac{\Delta^{k-l+1}(u_l\rho_0)}{2^{k-l+1}(k-l+1)!}
= \Big[v_k A_0 + \frac{v_0\Delta(u_k\rho_0)}{2}\Big] + v_0\sum_{l=0}^{k-1}\frac{\Delta^{k-l+1}(u_l\rho_0)}{2^{k-l+1}(k-l+1)!}.
\end{align*}
Also, we have
\begin{align*}
A_0 v_k &= A_0 v_0 \cdot v_0^\dagger v_k
= A_0v_0\big(S_{k-1} - u_0^\dagger u_k\big)
= S_0\big(u_0^\dagger u_k - S_{k-1}\big),
\end{align*}
and
\begin{align*}
\frac{v_0\Delta(u_k\rho_0)}{2} &= \frac{v_0}{2}\Delta[(u_ku_0^\dagger)v_0^\dagger]
= \frac{v_0}{2}\big[v_0^\dagger\Delta(u_ku_0^\dagger) + 2\big\langle\nabla v_0^\dagger, \nabla(u_k u_0^\dagger)\big\rangle + u_ku_0^\dagger \Delta v_0^\dagger\big]\\
&= \frac{1}{2}\Delta(u_ku_0^\dagger) + \big\langle\nabla\log v_0^\dagger, \nabla(u_k u_0^\dagger)\big\rangle + \frac{v_0\Delta v_0^\dagger}{2}\cdot u_ku_0^\dagger.
\end{align*}
Therefore, we have
\begin{align*}
v_kA_0 + v_0A_k
&= \frac{1}{2}\Delta(u_ku_0^\dagger) + \big\langle\nabla\log v_0^\dagger, \nabla(u_ku_0^\dagger)\big\rangle + \Big[\frac{v_0\Delta v_0^\dagger}{2} + S_0\Big]u_ku_0^\dagger\\
&\qquad\qquad\qquad\qquad\qquad\qquad\quad + \bigg[v_0\sum_{l=0}^{k-1}\frac{\Delta^{k-l+1}(u_l\rho_0)}{2^{k-l+1}(k-l+1)!} - S_0S_{k-1}\bigg]
\end{align*}

Now, let us calculate the second term in~\eqref{eqn: eqn1}. Note that we have
\begin{align*}
&u_0B_k + u_kB_0
= u_0\bigg[\sum_{l=0}^k\sum_{i=0}^l \frac{\Delta^{k+1-l}(v_i\rho_{l-i})}{2^{k+1-l}(k+1-l)!} + \sum_{i=0}^k v_i\rho_{k-i+1}\bigg] + u_kB_0\\
&=\Big[u_kB_0 + \frac{u_0\Delta(v_k\rho_0)}{2} + u_0v_k\rho_1\Big] + u_0\bigg[\sum_{l=0}^{k-1}\sum_{i=0}^l \frac{\Delta^{k+1-l}(v_0\rho_{l-i})}{2^{k+1-l}(k+1-l)!} + \sum_{i=0}^{k-1}\frac{\Delta(v_i\rho_{k-i})}{2} + \sum_{i=0}^{k-1}v_i\rho_{k-i+1}\bigg].
\end{align*}
Only the first term above involves $u_k$ and $v_k$. Note that
\begin{align*}
\frac{u_0}{2}\Delta(v_k\rho_0) 
&= \frac{u_0}{2}\Delta(v_kv_0^\dagger u_0^\dagger)
= \frac{u_0}{2}\Delta\big[u_0^\dagger(S_{k-1} - u_0^\dagger u_k)\big]\\
&= -\frac{u_0}{2}\Delta[u_0^\dagger(u_0^\dagger u_k)] + \frac{u_0}{2}\Delta(u_0^\dagger S_{k-1})\\
&=-\frac{u_0}{2}\big[u_0^\dagger\Delta(u_0^\dagger u_k) + 2\big\langle \nabla u_0^\dagger, \nabla(u_0^\dagger u_k)\big\rangle + u_0^\dagger u_k\Delta u_0^\dagger\big] + \frac{u_0}{2}\Delta(u_0^\dagger S_{k-1})\\
&=-\frac{1}{2}\Delta(u_0^\dagger u_k) - \big\langle\nabla\log u_0^\dagger, \nabla(u_0^\dagger u_k)\big\rangle - \frac{u_0\Delta u_0^\dagger}{2} u_0^\dagger u_k + \frac{u_0}{2}\Delta(u_0^\dagger S_{k-1}),
\end{align*}
and we also have
\begin{align*}
u_0v_k\rho_1 
&= u_0\rho_1v_0 v_0^\dagger v_k = u_0\rho_1v_0(S_{k-1} - u_0^\dagger u_k)
= -u_0v_0\rho_1 u_0^\dagger u_k + u_0v_0\rho_1 S_{k-1}.
\end{align*}
Therefore, we have
\begin{align*}
u_0B_k + u_kB_0
&= -\frac{1}{2}\Delta(u_0^\dagger u_k) - \big\langle\nabla\log u_0^\dagger, \nabla(u_0^\dagger u_k)\big\rangle - \Big[u_0v_0\rho_1 + \frac{u_0\Delta u_0^\dagger}{2} - B_0u_0\Big] u_0^\dagger u_k\\
&\qquad + u_0\bigg[\frac{\Delta(u_0^\dagger S_{k-1})}{2} + v_0\rho_1S_{k-1} + \sum_{l=0}^{k-1}\sum_{i=0}^l \frac{\Delta^{k+1-l}(v_0\rho_{l-i})}{2^{k+1-l}(k+1-l)!} + \sum_{i=0}^{k-1}\frac{\Delta(v_i\rho_{k-i})}{2} + \sum_{i=0}^{k-1}v_i\rho_{k-i+1}\bigg].
\end{align*}

Using the above piece, we get
\begin{align*}
&\quad\,\big(v_kA_0 + v_0A_k\big) - \big(u_kB_0 + u_0B_k\big)\\
&= \Delta(u_ku_0^\dagger) + \big\langle\nabla\log \rho_0,\nabla(u_0^\dagger u_k)\big\rangle + \Big[\frac{v_0\Delta v_0^\dagger}{2} + S_0 + u_0v_0\rho_1 + \frac{u_0\Delta u_0^\dagger}{2} - B_0u_0\Big]u_0^\dagger u_k\\
&\qquad - u_0\bigg[\frac{\Delta(u_0^\dagger S_{k-1})}{2} + v_0\rho_1S_{k-1} + \sum_{l=0}^{k-1}\sum_{i=0}^l \frac{\Delta^{k+1-l}(v_0\rho_{l-i})}{2^{k+1-l}(k+1-l)!} + \sum_{i=0}^{k-1}\frac{\Delta(v_i\rho_{k-i})}{2} + \sum_{i=0}^{k-1}v_i\rho_{k-i+1}\bigg]\\
&\qquad + \bigg[v_0\sum_{l=0}^{k-1}\frac{\Delta^{k-l+1}(u_l\rho_0)}{2^{k-l+1}(k-l+1)!} - S_0S_{k-1}\bigg].
\end{align*}
Note that we have
\begin{align*}
\frac{v_0\Delta v_0^\dagger}{2} + S_0 + u_0v_0\rho_1 + \frac{u_0\Delta u_0^\dagger}{2} - B_0u_0
&= \frac{u_0v_0}{2}\big[v_0^\dagger \Delta u_0^\dagger - u_0^\dagger \Delta v_0^\dagger + 2\rho_1\big] + 2S_0 + v_0\Delta v_0^\dagger\\
&= 2S_0 + v_0\Delta v_0^\dagger
= v_0\Delta v_0^\dagger - 2v_0A_0\\
&= v_0\Delta v_0^\dagger - v_0\Delta (u_0\rho_0)\\
&=0,
\end{align*}
and also
\begin{align*}
\Delta(u_ku_0^\dagger) + \big\langle\nabla\log \rho_0,\nabla(u_0^\dagger u_k)\big\rangle
= \frac{1}{\rho_0}\nabla\cdot\big(\rho_0(u_0^\dagger u_k)\big).
\end{align*}
Therefore, we have
\begin{align*}
&\quad\,\big(v_kA_0 + v_0A_k\big) - \big(u_kB_0 + u_0B_k\big)\\
&= \frac{1}{\rho_0}\nabla\cdot\big(\rho_0\nabla(u_0^\dagger u_k)\big) + \bigg[v_0\sum_{l=0}^{k-1}\frac{\Delta^{k-l+1}(u_l\rho_0)}{2^{k-l+1}(k-l+1)!} - S_0S_{k-1}\bigg]\\
&\qquad - u_0\bigg[\frac{\Delta(u_0^\dagger S_{k-1})}{2} + v_0\rho_1S_{k-1} + \sum_{l=0}^{k-1}\sum_{i=0}^l \frac{\Delta^{k+1-l}(v_0\rho_{l-i})}{2^{k+1-l}(k+1-l)!} + \sum_{i=0}^{k-1}\frac{\Delta(v_i\rho_{k-i})}{2} + \sum_{i=0}^{k-1}v_i\rho_{k-i+1}\bigg].
\end{align*}
Combining with~\eqref{eqn: eqn1} yields
\begin{align*}
\nabla\cdot\big(\rho_0\nabla(u_0^\dagger u_k)\big)
&= \rho_0\bigg[\sum_{i=1}^{k-1} u_{k-i}B_i - \sum_{i=1}^{k-1} v_{k-i}A_i\bigg] - \rho_0\bigg[v_0\sum_{l=0}^{k-1}\frac{\Delta^{k-l+1}(u_l\rho_0)}{2^{k-l+1}(k-l+1)!} - S_0S_{k-1}\bigg]\\
&\qquad + v_0^\dagger\bigg[\frac{\Delta(u_0^\dagger S_{k-1})}{2} + v_0\rho_1S_{k-1} + \sum_{l=0}^{k-1}\sum_{i=0}^l \frac{\Delta^{k+1-l}(v_0\rho_{l-i})}{2^{k+1-l}(k+1-l)!} + \sum_{i=0}^{k-1}\frac{\Delta(v_i\rho_{k-i})}{2} + \sum_{i=0}^{k-1}v_i\rho_{k-i+1}\bigg]
\end{align*}

\subsection{Controlling Holder norm of right-hand side}\label{app: Holder norm}

\noindent\underline{Step 1: Holder norm of $A_i$, $B_i$ and $S_{k-1}$.}
By the definitions of $A_i$, we have
\begin{align*}
\|A_i\|_{\m C^{s,\alpha}}
&\leq \sum_{l=0}^{i}\frac{\|\Delta^{i-l+1}(u_l\rho_0)\|_{\m C^{s,\alpha}}}{2^{i-l+1}(i-l+1)!}
\leq \sum_{l=0}^i \frac{\|u_l\rho_0\|_{\m C^{s+2i-2l+2,\alpha}}}{2^{i-l+1}(i-l+1)!}
\lesssim_{d,s,\alpha,i} \|\rho_0\|_{\m C^{s+2i+2,\alpha}}\sum_{l=0}^i \frac{\|u_l\|_{\m C^{s+2i-2l+2,\alpha}}}{2^{i-l+1}(i-l+1)!}.
\end{align*}
Similarly, for $B_i$, we have
\begin{align*}
\|B_i\|_{\m C^{s,\alpha}}
&\leq \sum_{l=0}^{i}\sum_{j=0}^l\frac{\|\Delta^{i-l+1}(v_j\rho_{l-j})\|_{\m C^{s,\alpha}}}{2^{i-l+1}(i-l+1)!} + \sum_{j=0}^i \|v_j\rho_{i-j+1}\|_{\m C^{s,\alpha}}\\
&\lesssim_{d,s,\alpha,i}\sum_{l=0}^i \sum_{j=0}^l \frac{\|v_j\|_{\m C^{s+2i-2l+2,\alpha}}\|\rho_{l-j}\|_{\m C^{s+2i-2l+2,\alpha}}}{2^{i-l+1}(i-l+1)!} + \sum_{j=0}^i \|v_j\|_{\m C^{s,\alpha}}\|\rho_{i-j+1}\|_{\m C^{s,\alpha}}\\
&\leq \sum_{j=0}^i \|v_j\|_{\m C^{s+2i-2j+2,\alpha}}\sum_{l=j}^i \frac{\|\rho_{l-j}\|_{\m C^{s+2i-2l+2,\alpha}}}{2^{i-l+1}(i-l+1)!} + \sum_{j=0}^i\|v_j\|_{\m C^{s,\alpha}}\|\rho_{i-j+1}\|_{\m C^{s,\alpha}}\\
&\lesssim_{d,s,\alpha,i} \sum_{j=0}^i\|v_j\|_{\m C^{s+2i-2j+2,\alpha}}\sum_{l=0}^i\|\rho_l\|_{\m C^{s+2i-2l+2,\alpha}} + \sum_{j=0}^i\|v_j\|_{\m C^{s,\alpha}} \|\rho_{i-j+1}\|_{\m C^{s,\alpha}}.
\end{align*}
For $S_{k-1}$, we have
\begin{align*}
\|S_{k-1}\|_{\m C^{s,\alpha}}
&= \|v_0A_{k-1} - v_0^\dagger D_{k-1}\|_{\m C^{s,\alpha}}
= \bigg\|v_0 A_{k-1} + \sum_{i=1}^{k-1}v_{k-i}v_i^\dagger\bigg\|\\
&\lesssim_{d,s,\alpha}\|v_0\|_{\m C^{s,\alpha}}\|A_{k-1}\|_{\m C^{s,\alpha}} + \sum_{i=1}^{k-1}\|v_{k-i}\|_{\m C^{s,\alpha}}\|v_i^\dagger\|_{\m C^{s,\alpha}}.
\end{align*}

\vspace{0.5em}
\noindent\underline{Step 2: control the Holder norm of each part.}

\underline{Term 1.} Note that
\begin{align*}
\|\rho_0 u_{k-i}B_i\|_{\m C^{s,\alpha}}
\lesssim_{d,s,\alpha} \|\rho_0\|_{\m C^{s,\alpha}}\|u_{k-i}\|_{\m C^{s,\alpha}}\|B_i\|_{\m C^{s,\alpha}}.
\end{align*}

\underline{Term 2.} Note that
\begin{align*}
\|\rho_0 v_{k-i}A_i\|_{\m C^{s,\alpha}}
\lesssim_{d,s,\alpha} \|\rho_0\|_{\m C^{s,\alpha}}\|v_{k-i}\|_{\m C^{s,\alpha}}\|A_i\|_{\m C^{s,\alpha}}.
\end{align*}

\underline{Term 3.} We have
\begin{align*}
&\bigg\|\rho_0v_0\sum_{l=0}^{k-1}\frac{\Delta^{k-l+1}(u_l\rho_0)}{2^{k-l+1}(k-l+1)!}\bigg\|_{\m C^{s,\alpha}}
\lesssim_{d,s,\alpha} \|u_0^\dagger\|_{\m C^{s,\alpha}}\sum_{l=0}^{k-1}\frac{\|\Delta^{k-l+1}(u_l\rho_0)\|_{\m C^{s,\alpha}}}{2^{k-l+1}(k-l+1)!}\\
&\lesssim_{d,s,\alpha}\|u_0^\dagger\|_{\m C^{s,\alpha}}\sum_{l=0}^{k-1}\frac{\|u_l\rho_0\|_{\m C^{s+2k-2l+2,\alpha}}}{2^{k-l+1}(k-l+1)!}
\lesssim_{d,s,\alpha,k} \|u_0^\dagger\|_{\m C^{s,\alpha}}\|\rho_0\|_{\m C^{s+2k+2,\alpha}}\sum_{l=0}^{k-1}\frac{\|u_l\|_{\m C^{s+2k-2l+2,\alpha}}}{2^{k-l+1}(k-l+1)!}.
\end{align*}

\underline{Term 4.}
\begin{align*}
\|\rho_0S_0S_{k-1} + \rho_1S_{k-1}\|_{\m C^{s,\alpha}}
&\lesssim_{d,s,\alpha}\|\rho_0S_0 + \rho_1\|_{\m C^{s,\alpha}}\|S_{k-1}\|_{\m C^{s,\alpha}}
= \Big\|\rho_1 - \frac{u_0^\dagger\Delta v_0^\dagger}{2}\Big\|_{\m C^{s,\alpha}}\|S_{k-1}\|_{\m C^{s,\alpha}}\\
&\lesssim_{d,s,\alpha}\big[\|\rho_1\|_{\m C^{s,\alpha}} + \|u_0^\dagger\|_{\m C^{s,\alpha}}\|v_0^\dagger\|_{\m C^{s+2,\alpha}}\big]\|S_{k-1}\|_{\m C^{s,\alpha}}.
\end{align*}

\underline{Term 5.}
\begin{align*}
\Big\|\frac{\Delta(u_0^\dagger S_{k-1})}{2}\Big\|_{\m C^{s,\alpha}}
\leq \frac{1}{2}\|u_0^\dagger S_{k-1}\|_{\m C^{s+2,\alpha}}
\lesssim_{d,s,\alpha} \|u_0^\dagger\|_{\m C^{s+2,\alpha}}\|S_{k-1}\|_{\m C^{s+2,\alpha}}
\end{align*}

\underline{Term 6.}
\begin{align*}
&\bigg\|\sum_{l=0}^{k-1}\sum_{i=0}^l \frac{\Delta^{k+1-l}(v_0\rho_{l-i})}{2^{k+1-l}(k+1-l)!}\bigg\|_{\m C^{s,\alpha}}
\leq \sum_{l=0}^{k-1}\sum_{i=0}^l \frac{\|v_0\rho_{l-i}\|_{\m C^{s+2k-2l+2,\alpha}}}{2^{k+1-l}(k+1-l)!}\\
&\lesssim_{d,s,\alpha,k}\sum_{l=0}^{k-1}\sum_{i=0}^l \frac{\|v_0\|_{\m C^{s+2k-2l+2,\alpha}}\|\rho_{l-i}\|_{\m C^{s+2k-2l+2,\alpha}}}{2^{k+1-l}(k+1-l)!}
\lesssim_{d,s,\alpha,k} \|v_0\|_{\m C^{s+2k+2,\alpha}}\sum_{l=0}^{k-1}\sum_{i=0}^l \frac{\|\rho_{l-i}\|_{\m C^{s+2k-2l+2,\alpha}}}{2^{k+1-l}(k+1-l)!}\\
&\lesssim_{d,s,\alpha,k} \|v_0\|_{\m C^{s+2k+2,\alpha}}\sum_{l=0}^{k-1}\frac{(k-l)\|\rho_l\|_{\m C^{s+2k-2l+2,\alpha}}}{8}.
\end{align*}

\underline{Term 7.}
\begin{align*}
\bigg\|\sum_{i=0}^{k-1}\frac{\Delta(v_i\rho_{k-i})}{2}\bigg\|_{\m C^{s,\alpha}}
\lesssim_{d,s,\alpha} \frac{1}{2}\sum_{i=0}^{k-1}\|v_i\rho_{k-i}\|_{\m C^{s+2,\alpha}}
\lesssim_{d,s,\alpha} \frac{1}{2}\sum_{i=0}^{k-1}\|v_i\|_{\m C^{s+2,\alpha}}\|\rho_{k-i}\|_{\m C^{s+2,\alpha}}.
\end{align*}

\underline{Term 8.}
\begin{align*}
\bigg\|\sum_{i=0}^{k-1}v_i\rho_{k-i+1}\bigg\|_{\m C^{s,\alpha}}
\lesssim_{d,s,\alpha}\sum_{i=0}^{k-1}\|v_i\|_{\m C^{s,\alpha}}\|\rho_{k-i+1}\|_{\m C^{s,\alpha}}
\end{align*}

\noindent\underline{Step 3: conclusion.}
By checking every items shown above, we conclude that the upper bound of $\m C^{s,\alpha}$-norm only depends on
\begin{align*}
&d,s,k\in\mb Z_+, \alpha\in(0, 1)\\
& \|u_0\|_{\m C^{s+2k+2,\alpha}}, \|u_1\|_{\m C^{s+2k,\alpha}} \cdots, \|u_{k-1}\|_{\m C^{s+4,\alpha}}\\
& \|v_0\|_{\m C^{s+2k,\alpha}}, \|v_1\|_{\m C^{s+2k-2,\alpha}}, \cdots, \|v_{k-1}\|_{\m C^{s+2,\alpha}}\\
& \|v_1^\dagger\|_{\m C^{s+2,\alpha}}, \cdots, \|v_{k-1}^\dagger\|_{\m C^{s+2,\alpha}}\\
& \|\rho_0\|_{\m C^{s+2k+2,\alpha}}, \|\rho_1\|_{\m C^{s+2k,\alpha}}, \cdots, \|\rho_{k+1}\|_{\m C^{s,\alpha}}\\
& \|u_0^\dagger\|_{\m C^{s+2,\alpha}}, \|v_0^\dagger\|_{\m C^{s+2,\alpha}}, \|v_0\|_{\m C^{s+2k+2,\alpha}}.
\end{align*}

\end{document}